\def\makeautorefname#1#2{\expandafter\def\csname#1autorefname\endcsname{#2}}
\def\equationautorefname~#1\null{(#1)\null}
\newtheorem{thm}{Theorem}[section]
\newtheorem{fthm}{Folklore Theorem}[section]
\newtheorem{mthm}{Main Theorem}[section]
\newtheorem{cor}{Corollary}[section]
\newtheorem{prop}{Proposition}[section]
\newtheorem{lem}{Lemma}[section]
\theoremstyle{definition}
\newtheorem{defn}{Definition}[section]
\newtheorem{ass}{Assumption}[section]
\newtheorem{con}{Construction}[section]
\newtheorem{exmp}{Example}[section]
\newtheorem{notn}{Notation}[section]
\newtheorem{notns}{Notations}[section]
\newtheorem{rem}{Remark}[section]
\newtheorem{warn}{Warning}[section]
\newtheorem{adden}{Addendum}[section]
\newtheorem{obs}{Observation}[section]
\newtheorem{conv}{Convention}[section]
\let\c@conv=\c@thm
\let\c@obs=\c@thm
\let\c@cor=\c@thm
\let\c@prop=\c@thm
\let\c@lem=\c@thm
\let\c@prob=\c@thm
\let\c@mthm=\c@thm
\let\c@con=\c@thm
\let\c@conj=\c@thm
\let\c@defn=\c@thm
\let\c@notn=\c@thm
\let\c@notns=\c@thm
\let\c@exmp=\c@thm
\let\c@ax=\c@thm
\let\c@pro=\c@thm
\let\c@ass=\c@thm
\let\c@warn=\c@thm
\let\c@rem=\c@thm
\let\c@sch=\c@thm
\let\c@adden=\c@thm
\let\c@equation\c@thm
\numberwithin{equation}{section}
\definecolor{orange}{rgb}{1,0.5,0}
\newcommand{\mb}[1]{\mathbf{#1}}
\newcommand{\gen}{$\bF_\bullet$}
\newcommand{\cof}{$\bF_\bullet$}
\newcommand{\lbull}{^{\bullet}S^{\star}}
\newcommand{\lbullv}{^{\bullet}S^{V}}
\newcommand{\lbullov}{^{\bullet}S_0^{V}}
\newcommand{\lbullo}{^{\bullet}S_0^{\star}}
\newcommand{\rbull}{S^{\star}{^{\bullet}}}
\newcommand{\rbullo}{S_0^{\star}{^{\bullet}}}
\newcommand{\rbulll}{S_1^{\star}{^{\bullet}}}
\newcommand{\rbullov}{{S_0^V}{^{\bullet}}}
\newcommand{\rbulllv}{{S_1^V}{^{\bullet}}}
\newcommand{\rbullv}{S^{V}{^{\bullet}}}
\newcommand{\rbullpv}{S^{V'}{^{\bullet}}}
\newcommand{\rbullw}{S^{W}{^{\bullet}}}
\newcommand{\rbullvw}{S^{V\oplus W}{^{\bullet}}}
\newcommand{\nSv}{^n\!(S^V)}
\newcommand{\nSvw}{^n\!(S^{V\oplus W})}
\newcommand{\mSv}{^m\!(S^V)}
\newcommand{\sSv}{^s\!(S^V)}
\newcommand{\phSv}{^{\ph_j}\!(S^V)}
\newcommand{\Svn}{(S^{V})^n}
\newcommand{\Svm}{(S^{V})^m}
\newcommand{\Svs}{(S^{V})^s}
\newcommand{\inj}{\Lambda}
\newcommand{\bi}{\mathbf{i}}
\newcommand{\bj}{\mathbf{j}}
\newcommand{\bk}{\mathbf{k}}
\newcommand{\bm}{\mathbf{m}}
\newcommand{\bn}{\mathbf{n}}
\newcommand{\bp}{\mathbf{p}}
\newcommand{\bq}{\mathbf{q}}
\newcommand{\TG}{\sT_G}
\newcommand{\UG}{\ul{G\sU_{\ast}}}
\newcommand{\FdashG}{\sF[G\sT]}
\newcommand{\PIdashG}{\PI[G\sT]}
\newcommand{\FsubG}{\sF_G[\sT_G]}
\newcommand{\PIsubG}{\PI_G[\sT_G]}
\newcommand{\DdashG}{\sD[\sT_G]}
\newcommand{\DsubG}{\sD_G[\sT_G]}
\newcommand{\EdashG}{\sE[\sT_G]}
\newcommand{\EsubG}{\sE_G[\sT_G]}
\newcommand{\Dalg}{\bD[\PI[G\sT]]}
\newcommand{\DGalg}{\bD_G[\PI_G[\sT_G]]}
\title{Equivariant infinite loop space theory, 
The space level story}
\author{J. Peter May}
\address{Department of Mathematics, The University of Chicago, Chicago, IL 60637}
\email{may@math.uchicago.edu}
\author{Mona Merling}
\address{department of Mathematics, Johns Hopkins University, Baltimore, MD 21218}
\email{mmerling@math.jhu.edu}
\author{Ang\'elica M. Osorno}
\address{Department of Mathematics, Reed College, Portland, OR 97202}
\email{aosorno@reed.edu}
\subjclass{Primary 55P42, 55P43, 55P91;\\
Secondary 18A25, 18E30, 55P48, 55U35}
\begin{document}

\begin{abstract}
We rework and generalize equivariant infinite loop space theory, which shows
how to construct $G$-spectra from $G$-spaces with suitable structure.  There is a 
classical version which gives classical $\OM$-$G$-spectra for any topological group $G$, but our 
focus is on the construction of genuine $\OM$-$G$-spectra when $G$ is finite. We also show
what is and is not true when $G$ is a compact Lie group.

We give new information about the Segal and operadic equivariant infinite loop space
machines, supplying many details that are missing from the literature, and we prove by direct 
comparison that the two machines give equivalent output when fed equivalent input. The proof of the 
corresponding nonequivariant uniqueness theorem, due to May and Thomason, works for classical 
$G$-spectra for general $G$ but fails for genuine $G$-spectra.  Even in the nonequivariant case, our comparison theorem is considerably more precise, giving an illuminating direct point-set level comparison.  

We have taken the opportunity to update this general area, equivariant and nonequivariant,
giving many new proofs, filling in some gaps, and giving a number of corrections to results and proofs in the literature.

\end{abstract}

\maketitle

\tableofcontents


\section*{Introduction}

Equivariant homotopy theory is much richer than nonequivariant homotopy theory.  Equivariant generalizations
of nonequivariant theory are often non-trivial and often admit several variants.  Nonequivariantly,
symmetric monoidal (or equivalently permutative) categories and $E_{\infty}$ spaces give rise to spectra.  
There are several ``machines'' that take such categorical or space level input and deliver spectra as
output \cite{BVbook, MayGeo, Seg} and there are comparison theorems showing that all such machines are 
equivalent \cite{MayIMon, MayPerm2, MT}.

Equivariantly, there are different choices of $G$-spectra that can be taken as output of infinite loop space machines. One choice is classical $G$-spectra, which are simply spectra with a $G$-action.  That is, in the simplest formulation, they are sequences of based $G$-spaces $E_n$ and based $G$-maps $\SI E_n\rtarr E_{n+1}$; they are $\OM$-$G$-spectra if the adjoint maps $E_n\rtarr \OM E_{n+1}$ are weak $G$-equivalences.  They have $H$-fixed point spectra for $H\subset G$, and a map of classical $G$-spectra is a weak equivalence if it induces a nonequivariant weak equivalence on each fixed point  spectrum.  For finite groups $G$, or compact Lie groups $G$, these are often called ``naive'' $G$-spectra in the literature.  We use the word ``classical" rather than ``naive'' to avoid prejudice.   

Classical $G$-spectra really are ``naive'', in the sense that they do not know about crucial structure.  For example, one cannot prove any version of Poincar\'e duality in the cohomology theories they represent.  They are indexed on the natural numbers, and $n$ should be thought of geometrically as a stand-in for $\mathbb{R}^n$ or $S^n$, with trivial $G$-action.  When $G$ is a compact Lie group,  we also have genuine $G$-spectra, which are indexed on representations or, more precisely, real vector
spaces (better, inner product spaces) $V$ with an action of $G$.  In the simplest formulation, they are systems of based $G$-spaces $E_V$ and  based $G$-maps $\SI^W E_V\rtarr E_{V\oplus W}$; they are $\OM$-$G$-spectra if the adjoint maps $E_V\rtarr \OM^{W}\OM E_{V\oplus W}$ are weak $G$-equivalences.  They have $H$-fixed point spectra for $H\subset G$, and a map of genuine $G$-spectra is a weak equivalence if it induces a nonequivariant weak equivalence on each fixed point  spectrum.  

Since suspending and looping with respect to the spheres $S^V$ does not commute with passage to $H$-fixed points when the action of $G$ on $V$ is nontrivial, the $H$-fixed spectra retain homotopical information about the representations of $G$. That information is encoded in the notion of equivalence just given.  One can also restrict attention to subclasses of representations and obtain a plethora of kinds of $G$-spectra intermediate between the classical ones (indexed only on trivial representations) and the genuine ones (indexed on all finite dimensional representations).

We shall refer to ``genuine" structures when dealing with structures that relate to deloopings with respect to all finite-dimensional representations, not just the trivial ones.  To simplify the exposition, we shall assume that $G$ is finite  until \autoref{general}.  There we shall make clear that everything that comes earlier works just as well for compact Lie groups, provided that we restrict attention everywhere to representations whose isotropy groups $H$ are of finite index in $G$.  

Thus, with $G$ finite, our main theme is equivariant infinite loop space machines whose inputs are $G$-spaces $X$ with extra structure and whose outputs are genuine $\OM$-$G$-spectra whose zeroth spaces are equivariant ``group completions" of $X$.  We recall the homological definition of a group completion of a (homotopy associative and commutative) Hopf space in \autoref{gpcomp}, and we explain how unique such a group completion is in \autoref{CCMT}. We recall the corresponding homological definition of a group completion of a Hopf $G$-space in \autoref{Defngpcomp}.

Nonequivariantly, infinite loop space machines give a ``recognition principle'', the point of which is to recognize what extra structure on a Hopf space $X$ is sufficient to ensure that its group completion is an infinite loop space and to do so in a way that facilitates concrete calculations. One machine is the operadic one developed by the first author in \cite{MayGeo} and another is the approach using $\GA$-spaces introduced by Segal in \cite{Seg}.  The opposite category of Segal's $\GA$ is the category $\sF$ (denoted elsewhere as $\bf{Fin}_*$)  of finite based sets, and we shall call $\GA$-spaces $\sF$-spaces. 
We require them to take values in the category $\sT$ of nondegenerately based spaces when applying infinite loop space machines.\footnote{We warn the reader that when revising this paper in 2021, we found several previously unnoticed subtleties in the verification that basepoints are nondegenerate, hence we have been much more careful about basepoints in the current version.}

We emphasize the group completion property. We say that a Hopf space $X$ is grouplike if $\pi_0(X)$ is a group, and then $X$ is its own group completion.  It is emphatically {\em not} the case in the most interesting examples that $X$ itself is grouplike, and the process of group completion has a dramatic effect on homotopy groups.  The book \cite{CLM} gives reams of concrete calculations based on Dyer-Lashof operations visible on the input and output of the operadic machine and visibly preserved by group completion. For example, the homologies of free $E_{\infty}$-algebras  
$\bC X$ and their group completions $QX = \OM^{\infty}\SI^{\infty}X$ are computed with all structure in sight (Hopf algebra structure, Dyer Lashof and Steenrod operations) as explicit functors of the homology of $X$. In particular, $\bC S^0$ is the disjoint union of the classifying spaces of the symmetric groups  $\SI_n$, whereas the homotopy groups of $Q S^0$ are the stable homotopy groups of spheres.  The group completion property here is the Barratt-Priddy-Quillen theorem, a dramatic example discovered in the 1970's. Many other applications have followed since.  

The paper \cite{FM} shows how the Dyer-Lashof  operations can also be constructed, albeit considerably less conveniently, using the Segal machine.  A key point is that the higher homotopies implicit in the Segal machine are given by {\em properties} (certain maps are homotopy equivalences with unprescribed homotopy inverses) whereas the higher homotopies implicit in the operadic machine are given by {\em structure}.  It is structure that most easily leads to explicit formulas.  It pays to have both machines, and thus it becomes necessary to know how they compare.

Categories of operators were introduced in \cite{MT}\footnote{Modulo multicategorical generalization, Lurie's $\infty$-operads \cite{lurieHA} are essentially defined as categories of operators.  The equivariant generalization of categories of operators is central to our work here and should lead to an equivariant version of $\infty$-operads.} 
to give a home for a common generalization of operadic input, namely  algebras over an $E_{\infty}$ operad, and Segalic input, namely special $\sF$-spaces.   To give context, we recall from \cite{MT} the nonequivariant definition of an infinite loop space machine from and the uniqueness theorem that characterizes them.  

\begin{defn}[{\cite[Definition 2.1]{MT}}]\label{machine}  Let $\sD[\sT]$ be the category of special $\sD$-spaces, where $\sD$ is an $E_{\infty}$ category of operators; an object $X\in \sD[\sT]$ has a first space $X_1$. Let $\bf{Sp}_{c}$ be the category of connective $\OM$-spectra.  An infinite loop space machine is a functor 
$$\bE \colon \sD[\sT] \rtarr \bf{Sp}_c$$
together with a natural group completion $\io\colon X_1\rtarr \bE_0X$. 
\end{defn}

\begin{rem} Machines appear most naturally in an equivalent form that more readily generalizes equivariantly.   The target category can be {\em positive} connective $\OM$-spectra, for which the adjoint structure maps  $E_n \rtarr \OM E_{n+1}$ are weak equivalences for $n\geq 1$.  We then require a natural map  $\nu\colon X_1 \rtarr E_0X$ such that the composite  of $\nu$ and $E_0X \rtarr \OM E_1X$ is a group completion.
\end{rem}

A category of operators $\sD$ comes with a map $\xi\colon \sD\rtarr \sF$ of categories of operators, and there is a Segal machine $\bS$ defined on special $\sF$-spaces. There is also a (derived) functor $\xi_*\colon \sD[\sT] \rtarr \sF[\sT]$. The following result compares machines, showing that  any machine is equivalent to the Segal machine.

\begin{thm}[{\cite[Corollary 2.6]{MT}}]\label{MT} For any infinite loop space machine $\bE$, there is a natural equivalence of spectra between $\bE X$ and  $\bS\xi_*X$.
\end{thm}

\begin{rem}\label{char}  On homotopy categories, any such machine $\bE$ restricts to an equivalence from the full subcategory of grouplike special $\sD$-spaces to the category of connective spectra.   To see that, it suffices to prove it for any one machine, and it has been proven for both the operadic and Segal machines. 
We call such a result a ``characterization principle'' rather than a recognition principle.  One can prove a characterization principle without any consideration of group completion (in the homological sense) of  non-grouplike objects.  Quite a few papers, starting with Bousfield-Friedlander \cite{BF}, consider characterization theorems without ever considering group completion.  Others consider group completion in senses alternative to our homological version.
\end{rem}

The proof of \autoref{MT} depends heavily on the combinatorial properties of the original Segal machine \cite{Seg} and does not work in the genuine context, where one must implicitly keep track of deloopings with respect to the representation spheres $S^V$ for finite dimensional $G$-representations $V$.  Our main result is a similar  comparison theorem in the equivariant context. It can be stated roughly as follows.  A precise statement is given in \autoref{WOW}. 

\begin{mthm}\label{MAIN0}
Given equivalent input, the equivariant versions of the Segal and operadic infinite loop space machines give equivalent $\OM$-$G$-spectra as output.
\end{mthm}

The genuine equivariant generalization of the operadic approach was worked out by Costenoble and Waner \cite{CW}, following an earlier but unpublished treatment of Hauschild, May, and Waner.  Their treatment was quite brief since the point of their paper was a new machine defined on appropriate presheaves of spaces on the orbit category of $G$ rather than on $G$-spaces and since they had the unpublished treatment on hand.  The operadic approach has been worked out more fully and quite recently in \cite{GM3}, which can be viewed in part as a prequel to this paper.  It includes discussion of the recognition principle for $V$-fold loop spaces for representations $V$ of $G$, as we will recall briefly in \autoref{operadicmachine}.

The genuine equivariant version of Segal's approach was first studied by Segal \cite{Seg2} in an influential, albeit flawed,\footnote{The definition in \cite{Seg2} of the specialness condition on $\sF$-$G$-spaces needed to get  $\OM$-$G$-spectra as output is incorrect, and the mistake was followed in several later papers published by others.} unpublished preprint.   The first published version is due to Shimakawa \cite{Shim}.  He followed a modification due to Woolfson \cite{Woolf} of Segal's original nonequivariant machine \cite{Seg} and he corrected \cite{Seg2} to give the first proof that the construction gives genuine $\OM$-$G$-spectra when fed appropriate input. In order to give a self-contained treatment,  we give the details of the proof, following \cite{Seg2, Shim}, in \autoref{heredity}.  As discussed in \autoref{Shimgp}, Shimakawa outlined one proof of the group completion property \cite[p.357]{Shim1}.  We give details of a quite different proof, and  the length of our treatment of the Segal machine is largely due to subtleties in proving that property. The work of Segal and Shimakawa was later revisited by Blumberg \cite{Blum}, whose focus was on axiomatizing what can be said in the case of compact Lie groups $G$.  As said before, we defer discussion of the compact Lie case to \autoref{general}.

We shall introduce $G$-categories of operators in Sections \ref{GCF}. They come in two flavors, one with a map 
$\xi\colon \sD \rtarr \sF$ and the other with a map $\xi_G\colon \sD_G\rtarr \sF_G$, where $\sF_G$ is the category of finite based $G$-sets.  We shall develop equivalent infinite loop $G$-space machines starting from either flavor.  With definitions of as yet undefined terms to be supplied later, here is the equivariant analogue of \autoref{machine}.  We say that a $G$-spectrum $E$ is a positive $\Omega$-$G$-spectrum if its adjoint structure maps $E(V) \rtarr \Omega^WE(V\oplus W)$ are weak $G$-equivalences when $V^G\neq 0$ and is connective if its fixed point spectra are all connective.

\begin{defn}\label{Gmachine}  Let $G$ be a finite group.  Let $\sD[G\sT]$ be the category of \gen-special $\sD$-spaces (as defined in \autoref{weakFn}), where $\sD$ is an $E_{\infty}$ $G$-category of operators over $\sF$ (as defined in \autoref{Einf}); an object $X\in \sD[G\sT]$ has a first $G$-space $X({\bf 1})$. Let $G\bf{Sp}_{cp}$ be the category of connective positive $\OM$-$G$-spectra.
An infinite loop $G$-space machine is a functor 
$$\bE\colon \sD[G\sT] \rtarr G\bf{Sp}_{cp}$$
 together with a natural map of $G$-spaces 
$$\nu\colon X({\bf 1}) \rtarr \bE X(S^0)$$
such that the composite of $\nu$ with the adjoint structure map
\[  \bE (S^0) \rtarr  \Omega^{V}\bE X(S^V)\]  
is a group completion for all $V$ such that  $V^G\neq 0$.
\end{defn}

Replacing $\sD$ and $\sF$ by $\sD_G$ and $\sF_G$ and replacing \gen-special by special (as defined in \autoref{PIGSpec}) gives the definition of an infinite loop $G$-space machine starting from an $E_{\infty}$ $G$-category of operators over $\sF_G$. Incorporating a characterization principle, we require that, on homotopy categories,  $\bE$ restricts to an equivalence from the full subcategory of grouplike special $\sD_G$-spaces to the category of connective positive $\OM$-$G$-spectra.  Either way, equivariant infinite loop space machines restrict to give underlying classical machines, and the group completion property is seen there.  

\begin{rem}\label{omega}   Machines appear most naturally as prescribed in \autoref{Gmachine}. However, if we set 
$\bE'X(V) = \OM \bE X(V\oplus \bR)$, then the adjoint structure maps $\bE X(V) \rtarr \bE'X(V)$ specify 
an equivalence from $\bE X$ to an $\OM$-$G$-spectrum whose zeroth space is a group completion of $X({\bf 1})$.
\end{rem}

There are implications common to any equivariant infinite loop space machine, and we refer the reader to \cite[\S2.3]{GM3} for a discussion.  For example, the group completion property directly implies that any machine commutes with products and with passage to fixed points.  

Our goal here is to give a self-contained development of the Segal and operadic machines, both generalized to categories of operators, and to prove that they are equivalent.  
Due to their very different constructions, these machines have different advantages and disadvantages, as already indicated in the nonequivariant case.  The Segal machine works simplicially and seems to be the machine of choice in proving a motivic recognition principle.  In the framework of $\infty$-categories, Elmanto, Hoyois, Khan, Sosnilo, and Yakerson \cite{motivic} have adapted it to construct a motivic infinite loop space machine.
The operadic machine  generalizes directly to give machines that manufacture intermediate types of $G$-spectra from intermediate types of input data, but we have not worked out a Segal type analogue.  Due to its more topological flavor, the operadic machine was used to produce genuine $G$-spectra from elementary categorical data in \cite{GM3}, where the machine was used to give categorical proofs of topological results, especially a strong categorical version of the Barratt-Priddy-Quillen theorem. 

\subsection*{Outline and commentary}
Aside from \autoref{prelim}, which gives preliminaries relevant to both machines, all sections have their own introductions.  We therefore limit the rest of the introduction to a brief overview.  

We have several ways to generalize the Segal machine equivariantly, and we have comparisons among them.  We develop the one closest to Segal's original version in \autoref{SegSec},  highlighting the role of its inductive simplicial definition in proving the group completion property.  This version of the machine has not previously been developed equivariantly, and we shall use it to prove the equivariant group completion property for our other versions.   Following Segal \cite{Seg} nonequivariantly, we compare that machine to a conceptual version that is defined by categorical prolongation of functors defined on $\sF$ first to functors defined on $\sF_G$ and from there to functors defined on the category $\sW_G$ of finite  based $G$-CW complexes.  Reproving a result of Shimakawa \cite{Shim2}, we show in \autoref{compFFG} that the input categories of $\sF$-$G$-spaces and of $\sF_G$-$G$-spaces are equivalent.  The comparison of the relevant ``specialness'' conditions and of equivalences sheds considerable light on the underlying homotopy theory and is revisited model categorically in \autoref{epilogue}.

The homotopical conditions needed to make the conceptual machine useful are seldom satisfied by the examples that arise in nature.  We recall the homotopically well-behaved Segal machine in \autoref{SegHom}.  It is based on the categorical two-sided bar construction.  This version  was first defined nonequivariantly by Woolfson \cite{Woolf}, before the relevant bar constructions had been formalized.  However, the obvious equivariant generalization of his definition starting from $\sF$-$G$-spaces fails to be well-behaved homotopically.  Again following Shimakawa \cite{Shim, Shim2, Shim1}, we instead focus on an equivariant generalization that starts from the category of $\sF_G$-$G$-spaces.  The definition of our preferred homotopical Segal machines starting from $\sF$ or from $\sF_G$ is given in \autoref{idstar}.  The statement that they give equivalent infinite loop space machines in the sense of \autoref{Gmachine} is made more precise and proven in \autoref{bigSegal}.  We summarize the conclusions in the following rough statement.  

\begin{thm}   There are equivalent Segal infinite loop $G$-space machines that take \gen-special $\sF$-$G$-spaces or special $\sF_G$-$G$-spaces to (genuine)  $\OM$-$G$-spectra.
\end{thm}

To explain ideas without undue technical clutter, we defer the longer proofs about the Segal machine from Sections \ref{SegSec} and \ref{SegHom} to \autoref{SEGALPf}.

To prove that the Segal and operadic machines are equivalent, we must of course first redevelop and generalize both so that they do in fact  accept the same input.  The generalizations follow the corresponding nonequivariant theory in May and Thomason \cite{MT}. We have $E_{\infty}$ $G$-categories of operators $\sD$ over $\sF$ and $E_{\infty}$ $G$-categories of operators $\sD_G$ over $\sF_G$, and  we have algebras over each.  Again, by \autoref{compGGG}, these categories of input data are equivalent. Still ignoring operads, we develop and compare Segal machines that produce genuine $G$-spectra from such algebras in \autoref{SegGen}.   For an $E_{\infty}$ $G$-category of operators $\sD$ over $\sF$ with prolonged $E_{\infty}$ $G$-category of operators $\sD_G$ over $\sF_G$, we show that the input categories of $\sF$, $\sF_G$, $\sD$, and $\sD_G$ $G$-spaces are 
all equivalent in Theorems \ref{Segalin2} and \ref{Segalin4}.  We show that the Segal machines starting from these four input categories are all equivalent in \autoref{Segalouttoo}.  Again, we give a rough statement of the conclusions.

\begin{thm}   For a category of operators $\sD$ over $\sF$ with associated category of operators $\sD_G$ over 
$\sF_G$, there are equivalent Segal infinite loop space machines that take \gen-special $\sD$-$G$-spaces or special $\sD_G$-$G$-spaces to (genuine)  $\OM$-$G$-spectra.
\end{thm}

We turn to the operadic machine in \autoref{OperadGen}.  We first recall the equivariant Steiner operads and the classical operadic machine whose input is $\sC_G$-algebras for an $E_{\infty}$ operad $\sC_G$.  We show how to interpret it as an infinite loop space machine in the sense of \autoref{Gmachine} in \autoref{classic} and \autoref{omega}.  This machine is defined in terms of a monadic $2$-sided bar construction,  starting from the monad $\bC_G$ associated to $\sC_G$ whose algebras are the $\sC_G$-algebras.  We show how to construct $G$-categories of operators $\sD$ over $\sF$ and $\sD_G$ over $\sF_G$ from $\sC_G$, and we show how to construct monads $\bD$ and $\bD_G$ from $\sD$ and $\sD_G$ whose algebras are the $\sD$-algebras and $\sD_G$-algebras.    That allows us to define infinite loop space machines on $\sD$-algebras and $\sD_G$-algebras  in \autoref{DGmachine}.  In marked contrast to the Segalic context, we show in \autoref{DDG} that the categories of $\bD$-algebras and of $\bD_G$-algebras are equivalent.  Thus there is no particular need to consider $\sD_G$ rather than $\sD$ when developing the operadic machine, although use of $\sD_G$ is more convenient for purposes of comparison. 

We give formal comparisons of $\sC_G$-algebras, $\sD$-algebras, and $\sD_G$-algebras in Propositions \ref{formalMT} and \ref{formalMT2} and homotopical comparisons in \autoref{homotopMT}.  These allow us to conclude in \autoref{Mayouttoo} that the machines defined on $\sC_G$-spaces and on special $\sD_G$-$G$-spaces are equivalent.  This is conceptually the same as in May and Thomason \cite{MT}, but the key equivariant proof is more intricate and is deferred to \autoref{SegOpPf}.   The following rough statement summarizes the conclusions.

\begin{thm}   For an $E_{\infty}$ $G$-operad $\sC_G$, there is an infinite loop $G$-space machine that takes $\sC_G$-algebras to (genuine) $\OM$-$G$-spectra.  If $\sD$ and $\sD_G$ are the categories of operators over $\sF$ and $\sF_G$ constructed from $\sC_G$, this machine extends to equivalent infinite loop $G$-space machines defined on  \gen-special $\sD$-$G$-spaces or on special $\sD_G$-$G$-spaces.
\end{thm}

The comparison of the Segal and operadic machines starting from the same input is given in \autoref{Equivalence}, where \autoref{MAIN0} is proven.  The comparison seems quite amazing to us.   Even nonequivariantly, it is far more precise than the comparison given in \cite{MT}.  The Steiner operads are variants of the little cubes and little discs operads that share the good properties
and lack the bad properties of each of those, as explained in \cite[\S3]{Rant1}.  Nonequivariantly, they have played an important role in infinite loop space theory ever since their introduction in 1979.  We see here that they mediate between the Segal and operadic  infinite loop space machines just as if they had been invented for that purpose.  That is wholly unexpected and truly uncanny.

A little more background may help explain why we find such a precise point-set level comparison so surprising.  Expanding on the property versus structure dichotomy,
in the Segal machine, higher homotopies are encoded in the specialness {\em property}  of the structure 
map $\de\colon X_n\rtarr X_1^n$ relating the $n$th space to the $n$th power of the first space of an $\sF$-$G$-space.  In the operadic machine, they are encoded in the {\em structure} given by the action maps $\sC_G(n)\times X^n \rtarr X$ of a
$\sC_G$-algebra, which is why the operadic machine is more useful for purposes of computation.  Actions by the $G$-category of  operators  $\sD_G$  associated to $\sC_G$ encode both  sources of higher homotopies in the 
same structure and yet, up to equivalence, carry no more information than either does alone.

The difference is perhaps illuminated by thinking about the commutativity operad $\sN$ with $\sN(j) = \ast$.  
The $G$-category $\sD(\sN)$ over $\sF$ is just $\sF$ itself.   An $\sN$-$G$-space $X = X_1$ is the same thing as an $\sF$-$G$-space with $X_n = X_1^n$.   Both just give $X$ the structure of a commutative monoid in $G\sT$, and that is far too restricted to give the domain of  an infinite loop space machine: the fixed point spaces of the infinite loop $G$-spaces resulting from such input are equivalent to products of Eilenberg-MacLane $G$-spaces.  From the point of view of the Segal machine, we are replacing $\sN$-$G$-spaces with homotopically well-behaved $\sF$-$G$-spaces as input.  From the point of view of the operadic machine, we are replacing $\sN$-$G$-spaces by $\sC_G$-spaces for any chosen $E_{\infty}$ operad $\sC_G$.

In \autoref{general}, we explain what is true for general topological groups $G$ and for compact Lie groups $G$. For general topological groups $G$, there are classical Segal  and operadic infinite loop space machines. They construct classical $\OM$-$G$-spectra from special $\sF$-$G$-spaces or from algebras over classical $E_{\infty}$ operads. When generalized to equivariant categories of operators, they are equivalent.

Restricting to compact Lie groups and considering genuine $\OM$-$G$-spectra, we reach the following conclusion.  See \autoref{WOWW} for a precise statement of the extended version of \autoref{MAIN0}.
\begin{thm}   Without exception, all results in this paper generalize from finite groups to compact Lie groups, provided that we restrict attention to representations whose isotropy groups have finite index in $G$.
\end{thm}
That is a severe restriction, but it is a very natural one here: those are the subgroups $H$ such that $G/H$ is a finite $G$-set.   With this restriction, all of our details when $G$ is finite make sense and apply equally well when $G$ is compact Lie.

As noted by the referee, model categorical interpretations were originally conspicuous by their absence. We have sketchily rectified that in  \autoref{epilogue}, which is an epilogue  by the senior author.  As we explain there, it is implicit or explicit in the literature, for example in \cite{MM, MMSS, Sant, Oster}, that  chains of Quillen equivalences connect all of the input categories of our machines and that the machines themselves construct connective Quillen equivalences, meaning that they are Quillen adjunctions that induce equivalences of homotopy categories between the full subcategories of grouplike objects on the input side and the full subcategory of connective $G$-spectra on the output side.  The references just cited proceed by analogy with the classical nonequivariant treatment of Bousfield and Friedlander \cite{BF}.  As there, the model category theory of the cited references makes no mention of the group completion property.  That property is the main point  of the theory and its applications, but  it is irrelevant model categorically.  

Even aside from the missing group completion property, the model theoretic discussion of the Segal and operadic machines will expose some limitations of thinking model categorically in the present context; see Remarks \ref{quibble1} and \ref{quibble2}.   As we will explain, it is folklore that, with appropriate mixed model structures, the bar constructions that we use to approximate the input data of both machines can be interpreted as highly structured model categorical cofibrant approximations.   That structure is central to our work, but the model theoretic interpretation is not.

Philosophically, in this paper we are interested in understanding and preserving  as much point-set level structure as possible, which we find illuminating and useful.  While of course more abstract frameworks have their advantages, such point-set level precision is precisely what they are designed to avoid.

\subsection*{Omissions}

We do not consider categorical input in this paper, but that feeds in effortlessly.  It is shown in \cite{GM3} that permutative $G$-categories give natural input to our operadic machine, and we showed how to strictify symmetric monoidal $G$-categories to equivalent permutative $G$-categories in \cite{GMMOAddCat1}.    

More deeply, we do not considerable {\em multiplicative} infinite loop space theory here.  That is needed to give ring and module $G$-spectra, rather than just the additive  $G$-spectra we obtain here.  Starting from $G$-space level input,  we solve that problem in \cite{GMMO}, again with Guillou. There we develop a variant of the Segal machine that starts from and is equivalent to the version given here,  but that gives a symmetric monoidal functor from the category of $\sF$-$G$-spaces to the category of $G$-spectra.  Therefore any multiplicative structure present on the level of $\sF$-$G$-spaces gives rise to corresponding structure on the level of $G$-spectra.   

However, that multiplicative machine has a serious defect: it does not accept categorical input crucial to the applications to equivariant algebraic $K$-theory that we have in mind.  For that, it seems essential to work $2$-categorically with lax functors, or at least pseudofunctors, rather than just with categories and  functors, That requires quite different categorical underpinnings than are discussed in this paper or in \cite{GM3, GMMO}.  It is treated in \cite{GMMO3}.  A $2$-monadic alternative treatment will be given in  \cite{MayMOM}. 

\subsection*{Acknowledgements}  This  work  was  partially  supported  by Simons  Foundation grant No. 359449, a Woodrow Wilson Career Enhancement Fellowship, and NSF grant DMS-1709302,  held by Osorno, and by an AMS Simons Travel grant, NSF grant DMS-1709461/1850644, NSF CAREER grant DMS-1943925, and NSF FRG grant DMS-2052988 held by Merling. An NSF RTG grant supported a month long visit of Osorno and a week long visit of Merling to Chicago. Part of the writing took place while Merling was in residence at the  Hausdorff Research Institute for Mathematics, and she is thankful for their hospitality. 

We would like to thank the following people for many insightful discussions that helped shape ideas during the long course of this project: Clark Barwick, Andrew Blumberg, Anna Marie Bohmann, Emanuele Dotto, Bertrand Guillou, Nick Gurski, Lars Hesselholt, Mike Hill, Niles Johnson, Cary Malkiewich, Kristian Moi, Irakli Patchkoria, Emily Riehl, Jonathan Rubin, and Stefan Schwede. 

We are especially grateful to the anonymous referee for very helpful comments and questions. 
Answering some of them led us to give a self contained treatment of some of the proofs in the literature and also led us to some new results. In particular, May thanks Stefan Waner for discussion of a partial generalization of the approximation theorem to compact Lie groups (see \autoref{opercom})  and he thanks Mike Shulman for help with thinking through the relationship between bar constructions and cofibrant approximations (see \autoref{Shul}).

\section{Preliminaries}\label{prelim}
  
\subsection{Preliminaries about $G$-spaces and Hopf $G$-spaces}\label{prelimspace}  We repeat that we assume that $G$ is finite until \autoref{general}.  We let $\sU$ be the category of compactly generated weak Hausdorf spaces, let $\sU_{\ast}$ be the category of  based spaces in $\sU$, and let $\sT$ be its subcategory of nondegenerately based spaces.  We let $G\sU$, $G\sU_{\ast}$ and $G\sT$ be the categories of $G$-spaces, based $G$-spaces, and nondegenerately  based $G$-spaces, with left action by $G$; basepoints of $G$-spaces are fixed by $G$.  Maps in these categories are $G$-maps.  While $G\sU_{\ast}$ is convenient for formal arguments,  $G\sT$ is required when questions of homotopy type are at issue, and we shall focus on that category. 
In our treatment of the Segal machine, we will also need to consider the full subcategory $G\sW \subset G\sT$ of based finite $G$-CW complexes.  

Properties of $G$-spaces are very often defined by passage to fixed point spaces. 
For example, a $G$-space $X$ is said to be $G$-connected if $X^H$ is (path) connected for all $H\subset G$.
 
\begin{defn} Let $f\colon K\rtarr L$ be a map of $G$-spaces. We say that $f$ is a {\em weak $G$-equivalence} if 
$f^H\colon K^H\rtarr L^H$ is a weak equivalence for all $H\subset G$.  A family of subgroups 
of $G$ is a set of subgroups closed under subconjugacy. For a family $\bF$ of subgroups of $G$, we say that
$f$ is a {\em weak $\bF$-equivalence} if $f$ is a weak $H$-equivalence for all $H\in \bF$.  
We often omit the word weak, taking it to be understood throughout.
\end{defn}

The following families are central to equivariant bundle theory 
and to the analysis of equivariant infinite loop space machines. 
They will be used ubiquitously. Let
$\SI_n$ denote the $n$th symmetric group.

\begin{defn}\label{famFn} For a subgroup $H$ of $G$ and a homomorphism $\al\colon H\rtarr \SI_n$, 
let $\LA_{\al}$ be the subgroup $\{(h,\al(h))\,|\, h\in H\}$ of $G\times \SI_n$; thus $\LA_{\al}$ is the graph of $\al$.  
Note that projection to the first coordinate gives an isomorphism  $\pi\colon \LA_{\al}\rtarr H$.
All subgroups $\LA$ 
of $G\times \SI_n$ such that $\LA\cap \SI_n =\{e\}$ are of this form. Let $\bF_n$ denote the family of 
all such subgroups. Taking $\al$ to be trivial, we see that $H\in\bF_n$ for all $n$ and all $H\subset G$.
\end{defn}  

\begin{defn}\label{Xal} Let $Y$ be a $(G\times \Sigma_n)$-space and $\al\colon G\rtarr \Sigma_n$ be a homomorphism. We define $Y^\al$ to be the $G$-space with underlying space $Y$ and with a new $G$-action $\cdot_\al$ given by
$g\cdot_\al y= (g,\al(g))\cdot y$.  Thus $Y^{\al}$ is $Y$ with $G$-action twisted by $\al$.  Said another way $Y^{\al}$ is just notation for $Y$ with its action by the subgroup $\LA_{\al}$ of $G\times \SI_n$, pulled back along the isomorphism  $\pi^{-1}\colon G\rtarr \LA_{\al}$.   Thus, on passage to fixed point spaces,
$$   (Y^{\al})^G \iso Y^{\LA_{\al}}.$$
\end{defn} 

Of course, we can replace  $G$ by $H\subset G$ in \autoref{Xal}.   We use this definition to fix conventions about finite $G$-sets.

\begin{conv}\label{finiteGset}
Let $\mathbf{n}$ denote the based set $\{0, 1, \dots, n\}$ with 
basepoint $0$. For a group $G$ and a homomorphism $\al\colon G\rtarr \SI_n$,  note that $\bn^{\al}$ is the based $G$-set specified by letting $G$ act on $\mathbf{n}$ by $g\cdot i = \al(g)(i)$  for $1\leq i\leq n$.  Conversely, a based $G$-action on $\mathbf{n}$ determines a $G$-homomorphism 
$\al$ by the same formula. Every based  finite $G$-set with $n$ non-basepoint elements is  isomorphic to one of the form $\bn^{\al}$ for some $\al$. We understand based finite $G$-sets to be of this form throughout.
\end{conv}

We highlight the following special case of \autoref{Xal}.  

\begin{exmp}\label{prodal}
Let $X$ be a based $G$-space and consider the based $(G\times \Sigma_n)$-space $X^n$.  Then we have the identification $(X^n)^\al=X^{\mathbf{n}^\al}$, where  $X^{\mathbf{n}^\al}$ denotes the set of based maps $\bn^{\al}\rtarr X$ with $G$ acting by conjugation.   Explicitly, this is  the cartesian power $X^n$ with twisted action by $G$ given by
\[  g(x_1,\dots,x_n) = (gx_{\alpha(g^{-1})(1)},\dots, gx_{\alpha(g^{-1})(n)}). \]
\end{exmp}

We also need some preliminaries about $H$-spaces, which we call Hopf spaces
to avoid confusion with subgroups of $G$. Recall that a Hopf space is a based space $X$ with a product such 
that the basepoint is a two-sided unit up to homotopy.   For simplicity, we assume once and for all that our
Hopf spaces are homotopy associative and homotopy commutative\footnote{It would suffice to assume that 
left and right translation by any element are homotopic.}, since that holds in our examples.
We say that a Hopf space is grouplike if, in addition, $\pi_0(X)$ is a group, necessarily abelian.

\begin{defn}\label{gpcomp} A Hopf map $f\colon X\rtarr Y$ is a {\em group completion} if
$Y$ is grouplike, $f_{*}\colon \pi_0(X) \rtarr \pi_0(Y)$ is the Grothendieck group of the commutative
monoid $\pi_0(X)$, and for every field of coefficients,  $f_{*}\colon H_{*}(X)\rtarr H_{*}(Y)$
is the algebraic localization obtained by inverting the elements of the submonoid $\pi_0(X)$ 
of $H_{*}(X)$.\footnote{Segal \cite[\S4]{Seg} describes the notion of group completion a bit differently, in a form less amenable 
to equivariant generalization, and he makes several reasonable restrictive hypotheses in 
his proof of the group completion property. In particular, he assumes that $X$ is a
topological monoid and that $\pi_0(X)$ contains a cofinal free abelian monoid.}
\end{defn}

\begin{rem}\label{CCMT} Universal properties of group completions are studied in \cite[\S 1]{CCMT}.  The discussion there implies 
that if $Y$ and $Y'$ are group completions of $X$, then  $Y$ and $Y'$ are weakly homotopy equivalent.
\end{rem}

\begin{defn}\label{Defngpcomp} A Hopf $G$-space is a based $G$-space $X$ with a product $G$-map such that its basepoint $e$ is a 
two-sided unit element, in the sense that left or right multiplication by $e$ is a weak $G$-equivalence $X\rtarr X$.  Then each $X^H$ is
a Hopf space, and we assume as before that each $X^H$ is homotopy associative and commutative. A Hopf $G$-space $X$ is grouplike 
if each $X^H$ is grouplike.  A Hopf $G$-map $f\colon X\rtarr Y$ is a group completion if $Y$ is grouplike and the fixed point maps 
$f^H$ are all nonequivariant group completions. Clearly a group completion of a $G$-connected Hopf $G$-space is a weak $G$-equivalence.
\end{defn}

\subsection{Preliminaries about $G$-cofibrations and simplicial $G$-spaces}\label{simpre}

Since $G$-cofibrations play an important role in our work, we insert some standard remarks about them.\footnote{They are part of the 
$h$-model structure on $G\sU$, as in \cite{morecon} nonequivariantly.} 

\begin{rem} A map is a $G$-cofibration if it satisfies the $G$-homotopy extension property and a basepoint $\ast\in X$ is nondegenerate
if the inclusion $\ast\rtarr X$ is a $G$-cofibration. Since we are working in $G\sU$, a 
$G$-cofibration is an inclusion with closed image \cite[Problem 5.1]{Concise}.  By \cite[Proposition A.2.1]{BV} (or \cite[p. 43]{Concise}),  
if $i\colon A\rtarr B$ is a closed inclusion
of $G$-spaces,  then $i$ is a $G$-cofibration if and only if $(B,A)$ is a $G$-NDR pair.   Using this criterion, we see that
$i$ is then also an $H$-cofibration for any  subgroup $H$ of $G$ and that passage to orbits or to fixed points over $H$ 
gives a cofibration.  Moreover, just as nonequivariantly, a pushout of a map of $G$-spaces along a $G$-cofibration is a $G$-cofibration. 
\end{rem}

\begin{defn}\label{latch} Let $X_{*}$ be a simplicial $G$-space with $G$-space of $n$-simplices $X_n$.  The $n$th latching space of $X$ is 
given by
\[L_nX = \bigcup _{i=0}^{n-1} s_i(X_{n-1}).\]
It is a $G$-space, and the  inclusion $L_nX\rtarr X_n$ is a $G$-map. We say that $X_{*}$ is Reedy cofibrant if this map 
is a $G$-cofibration for each $n$.
\end{defn}

With different nomenclature, this concept was studied nonequivariantly in the early 1970's 
(e.g. \cite[\S11]{MayGeo}, \cite[Appendix]{MayPerm}, \cite[Appendix A]{Seg}). We will use the following standard results. 

\begin{lem}\label{PlenzReedy}  
A simplicial $G$-space $X_{*}$ is Reedy cofibrant if all degeneracy operators $s_i$ are $G$-cofibrations.
\end{lem}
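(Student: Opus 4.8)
The plan is to show that Reedy cofibrancy follows from the degeneracies being $G$-cofibrations by analyzing the filtration of the latching space $L_nX = \bigcup_{i=0}^{n-1} s_i(X_{n-1})$ by the partial unions $L_n^{(k)}X = \bigcup_{i=0}^{k} s_i(X_{n-1})$, with $L_n^{(-1)}X = \ast$ (or the basepoint-image, if we are in the based setting) and $L_n^{(n-1)}X = L_nX$. First I would record the key combinatorial fact from the simplicial identities: for $i < j$ one has $s_i s_j = s_{j+1} s_i$, which implies that the intersection $s_i(X_{n-1}) \cap (\bigcup_{l<i} s_l(X_{n-1}))$ inside $X_n$ is precisely the image under $s_i$ of $\bigcup_{l<i} s_l(X_{n-2})$, i.e. of $L_{n-1}^{(i-1)}X$. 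This is the standard computation (it appears in the cited references \cite[\S11]{MayGeo}, \cite[Appendix]{MayPerm}, \cite[Appendix A]{Seg}); the upshot is that each stage of the filtration is obtained from the previous one by a pushout.

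Concretely, the square
\[
\xymatrix{
L_{n-1}^{(i-1)}X \ar[r] \ar[d]_{s_i} & L_n^{(i-1)}X \ar[d] \\
X_{n-1} \ar[r] & L_n^{(i)}X
}
\]
is a pushout of $G$-spaces, where the top map is the inclusion of the appropriate sub-latching space and the bottom maps are the induced inclusions. Granting that $s_i\colon X_{n-1}\rtarr X_n$ restricts to a map $s_i\colon L_{n-1}^{(i-1)}X\rtarr L_n^{(i-1)}X$, which again is immediate from the simplicial identities, the left vertical map $s_i\colon L_{n-1}^{(i-1)}X\rtarr X_{n-1}$ is a $G$-cofibration: it factors the degeneracy $s_i\colon X_{n-1}\rtarr X_n$ through $X_{n-1}$, and since $s_i$ on $X_{n-1}$ is assumed to be a $G$-cofibration (hence a closed inclusion that is a $G$-NDR pair), its restriction to the closed subspace $L_{n-1}^{(i-1)}X$ is again a $G$-cofibration by the remarks on $G$-cofibrations recalled just above. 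By the stability of $G$-cofibrations under pushout noted in that same remark, the right vertical map $L_n^{(i-1)}X\rtarr L_n^{(i)}X$ is a $G$-cofibration.

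Composing the $G$-cofibrations $L_n^{(-1)}X\rtarr L_n^{(0)}X \rtarr \cdots \rtarr L_n^{(n-1)}X = L_nX$ shows that the inclusion $\ast\rtarr L_nX$ is a $G$-cofibration; what we actually want is that $L_nX\rtarr X_n$ is a $G$-cofibration. For this I would argue by a parallel induction: thinking of the inclusion $L_n^{(i)}X\rtarr X_n$, the pushout description exhibits $L_n^{(i)}X$ as glued onto $L_n^{(i-1)}X$ along the image of $s_i$, and one checks that $L_n^{(i)}X \rtarr X_n$ is a $G$-cofibration whenever $L_n^{(i-1)}X\rtarr X_n$ is, using that $s_i\colon X_{n-1}\rtarr X_n$ is a $G$-cofibration together with the gluing lemma / pushout-product type stability for $G$-NDR pairs; starting from the trivial cofibration $\ast\rtarr X_n$ gives the base case, and the final stage $i = n-1$ yields that $L_nX\rtarr X_n$ is a $G$-cofibration, as required. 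Doing this for all $n$ finishes the proof.

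The main obstacle is the bookkeeping for the intersection formula $s_i(X_{n-1})\cap L_n^{(i-1)}X = s_i(L_{n-1}^{(i-1)}X)$ and verifying that the resulting squares really are pushouts in $G\sU$; once that is in hand everything reduces to the two elementary closure properties of $G$-cofibrations recalled before the statement (restriction to a closed invariant subspace and stability under pushout), plus the fact that a finite composite of $G$-cofibrations is a $G$-cofibration. Everything is entirely parallel to the nonequivariant arguments in the cited sources, the only equivariant input being that these closure properties hold in $G\sU$, which was recorded in the preceding remark via the $G$-NDR criterion.
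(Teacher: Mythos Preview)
Your pushout square is mislabeled and the argument that the left vertical map is a $G$-cofibration is wrong. In the square
\[
\xymatrix{
L_{n-1}^{(i-1)}X \ar[r]^-{s_i} \ar[d] & L_n^{(i-1)}X \ar[d] \\
X_{n-1} \ar[r]^-{s_i} & L_n^{(i)}X
}
\]
the horizontal maps are induced by $s_i$ and the left vertical map is the \emph{inclusion} $L_{n-1}^{(i-1)}X \hookrightarrow X_{n-1}$, not $s_i$. Your justification that this inclusion is a $G$-cofibration (``restriction of $s_i$ to a closed subspace'') describes a different map, and in any case restricting a cofibration $A\to B$ to a closed subspace $C\subset A$ does \emph{not} in general give a cofibration $C\to B$; nothing in the preceding remark on $G$-NDR pairs says this. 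What you actually need here is the inductive hypothesis at level $n-1$, namely that $L_{n-1}^{(i-1)}X \hookrightarrow X_{n-1}$ is already known to be a $G$-cofibration. So the argument must be a double induction on $n$ and $i$, not a single induction on $i$.

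More seriously, your pushout filtration only yields that each $L_n^{(i-1)}X \to L_n^{(i)}X$ is a $G$-cofibration, hence that $\ast \to L_nX$ is one; it does not give $L_nX \to X_n$. Your ``parallel induction'' for the latter---showing $L_n^{(i)}X \to X_n$ is a $G$-cofibration from $L_n^{(i-1)}X \to X_n$ and $s_i(X_{n-1})\to X_n$---is exactly the statement that the union of two cofibrant subspaces of $X_n$ whose intersection is also cofibrant in $X_n$ is again cofibrant. That is Lillig's union theorem, not a consequence of the pushout stability or NDR criterion you cite. The paper's proof simply invokes Lillig's theorem (in its equivariant form, \cite[Theorem~A.2.7]{BVbook}) directly: the intersections $\bigcap_{i\in I} s_i(X_{n-1}) = s_I(X_{n-|I|})$ are images of composites of degeneracies, hence cofibrant in $X_n$, and Lillig gives $L_nX = \bigcup_i s_i(X_{n-1}) \to X_n$ a $G$-cofibration in one stroke. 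Your route, once corrected, unwinds to the same thing but with extra bookkeeping and the key step left unstated.
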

\begin{proof}  The nonequivariant statement is proven by an inductive application of Lillig's union theorem stating that the union of cofibrations is a cofibration \cite{Lillig}
(or \cite[Lemma A.6]{MayGeo}).  The proof can be found in \cite[proof of A.5]{Seg} or \cite[proof of 2.4.(b)]{LewisOMSIX}. The equivariant proof is the same, using the equivariant version of Lillig's theorem, which is a particular case of \cite[Theorem A.2.7]{BVbook}. 
\end{proof}

The converse to \autoref{PlenzReedy} is proved in \cite[Proposition 4.11]{Plenz}, but we shall not use it.   

\begin{thm}\label{PlenzWeak}
Let $f_{*}\colon X_{*}\rtarr Y_{*}$ be a map of Reedy cofibrant simplicial $G$-spaces such that each $f_n$ is a weak $G$-equivalence. 
Then the realization $ |f_{*}|\colon |X_{*}|\rtarr  |Y_{*}|$ is a weak $G$-equivalence.
\end{thm}
\begin{proof} Nonequivariantly this is in \cite[Theorem A.4]{MayPerm}.   Since passage to fixed points is a finite limit, it commutes with geometric realization. Therefore  the equivariant version follows by application of the  nonequivariant version to fixed point spaces.
\end{proof}

The following companion result is well-known, but since we could not find a proof in the published literature, we provide one in \autoref{COF}.\footnote{We were inspired by \cite{Plenz}, an unpublished masters thesis, which gives a detailed exposition of simplicial spaces. However, its statement of \autoref{PlenzCof} is missing a necessary hypothesis.}

\begin{thm}\label{PlenzCof} 
Let $f_{*}\colon X_{*}\rtarr Y_{*}$ be a map of Reedy cofibrant simplicial $G$-spaces such that each $f_n$ is a $G$-cofibration. 
Then the realization $|f_{*}|\colon |X_{*}|\rtarr |Y_{*}|$ is a $G$-cofibration.
\end{thm}

\begin{rem}\label{blanket}  
Without exception, every simplicial $G$-space used in this paper is Reedy cofibrant.  In each case, we can check from the definitions and 
the fact that we are working with nondegenerately based $G$-spaces that all $s_i$ are $G$-cofibrations.  For the examples appearing in the
Segal machine, the verifications are straightforward.  For the examples appearing in the operadic machine, the verifications follow those in
\cite[Proposition A.10]{MayGeo} and elaborations of the arguments there.
\end{rem}

\subsection{Categorical preliminaries and basepoints}\label{catpre}
Some familiarity with enriched category theory, especially in equivariant contexts, may be helpful.   A more thorough
treatment of the double enrichment present here is given in \cite{GMR}.  We first recall some general definitions that start with a closed symmetric monoidal category $\sV$ with unit object $U$ and product denoted by $\otimes$. 
Closed means that we have internal function objects $\ul{\sV}(V,W)$ in $\sV$ giving an adjunction
$$ \ul{\sV} (X\otimes Y, Z) \iso \ul{\sV}(X,\ul{\sV}(Y,Z))$$
in $\sV$.  We write $\ul{\sV}$ instead of $\sV$ when we wish to emphasize the closed structure.

A $\sV$-category $\sE$ is a category enriched in $\sV$.
This means that for each pair $(m,n)$ of objects of $\sE$ there is an object $\sE(m,n)$ of $\sV$ and there are unit and composition maps
$I\colon U\rtarr \sE(m,m)$ and $C\colon \sE(n,p)\otimes \sE(m,n)\rtarr \sE(m,p)$ satisfying the identity and associativity axioms.  
A $\sV$-functor 
$F\colon \sE \rtarr \sE'$  between $\sV$-categories is a functor enriched in $\sV$.  This means that for each pair $(m,n)$, there is a map 
$$F\colon \sE(m,n) \rtarr \sE' (F(m), F(n)) $$
in $\sV$, and these maps are compatible with the unit and composition of $\sE$ and $\sE'$.   A $\sV$-transformation $\et\colon F \rtarr F'$ 
is given by maps $\et_m\colon U \rtarr \sE'(F(m),F'(m))$ in $\sV$ such that the evident naturality diagram commutes in $\sV$.
\[ \xymatrix{ \sE(m,n) \ar[r]^-F \ar[d]_{F'}  & \sE'(F(m), F(n) )  \ar[d]^{(\et_n)_{*} }\\
\sE'(F'(m),F'(n) ) \ar[r]_{(\et_m)^*} & \sE'(F(m),F'(n)) \\}  \]

As is standard, we use the same name for morphism spaces (or $G$-spaces) as for the underlying  morphism sets (or $G$-sets).   In view of the double enrichment, in spaces and in $G$-spaces, that is present equivariantly (and is discussed in general in \cite{GMR}), we carefully establish notations for the ambient categories in which we work.

\begin{notns}\label{ambient}
Nonequivariantly, $\sU(X,Y)$ denotes the space of maps $X\rtarr Y$.  Equivariantly, for $G$-spaces $X$ and $Y$,  $G\sU(X,Y)$ denotes the space of  $G$-maps $X\rtarr Y$ of the category $G\sU$. In conformity with the categorical notation  $\ul{\sV}$, we let $\ul{G\sU}(X,Y)$ denote the $G$-space of maps $X\rtarr Y$, with $G$ acting by conjugation: for a map $f\colon X\rtarr Y$, $(gf)(x)=g\cdot f(g^{-1}\cdot x)$.  For based spaces or based $G$-spaces, we have the analogous based spaces $\sU_{\ast}(X,Y)$ and $G\sU_{\ast}(X,Y)$ and the analogous based $G$-space 
$\ul{G\sU_{\ast}}(X,Y)$; here the basepoints are the trivial based maps.

We are especially interested in the categories $\sT$ and $G\sT$, but they are not closed since their hom spaces need not be nondegenerately based in general.   Therefore, for nondegenerately based spaces $X$ and $Y$, we agree to write $\sT(X,Y) = \sU_{\ast}(X,Y)$ for the based space of maps $X\rtarr Y$.  That expresses the enrichment of $\sT$ in $\sU_{\ast}$, but not necessarily in $\sT$.   For nondegenerately based $G$-spaces $X$ and $Y$, we write $G\sT(X,Y) = G\sU_{\ast}(X,Y)$ for the based space of based $G$-maps $X\rtarr Y$, expressing the enrichment of $G\sT$ in $G\sU_{\ast}$.  We write $\sT_G(X,Y) = \ul{G\sU_{\ast}}(X,Y)$ for the $G$-space of based maps  $X\rtarr Y$, with $G$-acting by conjugation, expressing the enrichment of $G\sT$ in $G\sU_*$.\footnote{We avoid the notation ${\ul{G\sT}}$ since we reserve the underline notation for closed categories.}   The notation is meant to emphasize the implied assumption that the object spaces $X$ and $Y$ are nondegenerately based.  Thus $\sT_G(X,Y)$ is a based $G$-space with fixed point space $G\sT(X,Y)$.  Viewing  $\sT_G$ as a $G$-category such that $G$ acts trivially on objects, we can view $G\sT$ as the fixed point category of  $\sT_G$.

We have the full subcategory $\sW_G\subset \sT_G$ of based finite $G$-CW complexes. It is enriched in $\ul{G\sU_{\ast}}$, and we can view $G\sW$ as the fixed point category of $\sW_G$.
\end{notns}

Of course, the category $\sU_\ast$ has two symmetric monoidal products, $\sma$ with unit $S^0$ and $\times$ with unit $\ast$.  When enriching in $\sU_{\ast}$, we must use $\sma$ since we must use the closed structure given by the spaces of based maps.  However, we occasionally use the forgetful functors $\sU_{\ast}\rtarr \sU$ and $G\sU_{\ast} \rtarr G\sU$ to forget basepoints in our enrichments, and implicitly we are then thinking about $\times$. 

\begin{rem}\label{UTbit}  For a $G\sU$-category $\sE$ and a $G\sU_\ast$-category $\sE'$, we can add disjoint basepoints to the hom objects of $\sE$ to form a $G\sU_{\ast}$-category $\sE_+$, and we can forget basepoints to regard $\sE'$ as a $G\sU$-category $\bU \sE'$.  Via the adjunction between $(-)_+$ and $\bU$, $G\sU$-functors $\sE\rtarr \bU\sE'$ can be identified with $G\sU_{\ast}$-functors $\sE_+ \rtarr \sE'$.
\end{rem} 

In all variants and  generalizations of the Segal machine, we start with a category $\sE$ a priori enriched over $G\sU$ (although sometimes with $G$ acting trivially on morphisms in $\sE$) such that  $\sE$ has a zero object $0$,  so that there are unique maps $0 \to n$ and $n \to 0$ for every object $n\in \sE$.  Then assigning the map  $m \to 0 \to n$ as the basepoint of $\sE(m,n)$ makes $\sE$ into a category enriched over $G\sU_\ast$, as composition in $\sE$ descends to a map out of the smash product.
We are concerned with $G\sU_{\ast}$-functors defined on $\sE$. The following trivial observation has been overlooked since the start of this subject.  It depends only on basepoint information, not on the topology or the $G$-action.

\begin{lem}\label{duh}  Let $\sE$ be a $G\sU$-category with a zero object, considered as a category enriched over $G\sU_\ast$ as above. Then, 
\begin{enumerate}[(i)]
\item for any $G\sU_{\ast}$-functor $X\colon \sE \rtarr \UG$, $X(0) = \ast$;
\item conversely, any $G\sU$-functor $Y\colon \bU\sE \rtarr \ul{G\sU}$ such that $Y(0)=\ast$ can be considered as a $G\sU_\ast$-functor $\sE \rtarr \UG$.
\end{enumerate}
\end{lem}
\begin{proof} The unique map $0\to 0$ in $\sE(0,0)$ is both the basepoint and the identity; $X\colon \sE(0,0) \rtarr \sU_\ast(X(0),X(0))$ must send it to 
both the trivial map $X(0) \rtarr X(0)$ that sends all points to the basepoint and to the identity map. This can only happen if $X(0)$ is a point, proving (i).

For (ii), if $Y\colon \bU\sE \rtarr \ul{G\sU}$ is such that $Y(0)=\ast$,  the map $Y(0)\rtarr Y(n)$ induced by $0\rtarr n$ gives each $Y(n)$ a basepoint, and these are preserved by the maps induced by maps in $\sE$. Moreover, $Y$ gives a $G\sU_{\ast}$-functor $\sE\rtarr \UG$ since $Y$ then sends the zero map $m\rtarr 0 \rtarr n$  to the trivial map 
$$Y(m)\rtarr \ast=X(0) \rtarr X(n). \qedhere$$
\end{proof}

\begin{rem}\label{UTU}
Let $\sE$ be a $G\sU_{\ast}$-category with a zero object. A $G\sU$-functor $Y\colon \bU \sE \rtarr  \ul{G\sU}$ is said to be \emph{reduced} if $Y(0)$ is a point.  Thus the previous lemma can be interpreted as saying that there is a one-to-one correspondence between reduced $G\sU$-functors $\bU\sE \rtarr \ul{G\sU}$ and $G\sU_\ast$-functors $\sE \rtarr \UG$. If moreover the maps $ \ast=Y(0)\rtarr Y(n)$ are $G$-cofibrations,
$Y$ will restrict to give a $G\sU_\ast$-functor $\sE\rtarr \TG$.
 \end{rem}

Let $\sE$ be a $G\sU_{\ast}$-category with a zero object $0$, and let $X$ and $Y$ be covariant and contravariant $G\sU_{\ast}$-functors $\sE \rtarr \UG$, respectively.
Then the tensor product of functors $Y\otimes _\sE X$ is defined as the coequalizer of the diagram
\[\xymatrix{
\displaystyle{\bigvee _{m,n} Y_n \sma \sE(m,n) \sma X_m}  \ar@<.7ex>[r]  \ar@<-.7ex>[r] & \displaystyle{\bigvee_n Y_n \sma X_n,}
} \] 
where the arrows are given by the action of $\sE$ on $X$ and on $Y$, respectively.

We obtain $G\sU$-functors $\bU X$, $\bU Y\colon \bU\sE \rtarr \ul{G\sU}$ by forgetting basepoints. 
The tensor product of functors $\bU Y\otimes_{\bU \sE}\bU X$ is the coequalizer of the diagram
\[\xymatrix{
\displaystyle{\coprod _{m,n} Y_n \times \sE(m,n) \times X_m}  \ar@<.7ex>[r]  \ar@<-.7ex>[r] & \displaystyle{\coprod_n Y_n \times X_n.}
} \]

The following result shows that the difference between these two constructions is only apparent.   Later on, we shall 
sometimes use wedges and smash products and sometimes instead use disjoint unions and products, whichever seems convenient.

\begin{lem}\label{OK} With $\sE$, $X$, and $Y$ as above, there is a natural isomorphism
\[\bU(Y \otimes_\sE X) \iso \bU Y \otimes_{\bU \sE} \bU X.\]

\end{lem}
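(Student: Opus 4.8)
The plan is to exhibit both sides of the claimed isomorphism as the same coequalizer, by comparing the two defining coequalizer diagrams term by term. The key observation is Lemma~\ref{duh} (together with Remark~\ref{UTU}): because $X$ and $Y$ are $\sT$-functors on a category $\sE$ with zero object, we have $X(0) = Y(0) = \ast$, and the basepoints of $X_n$ and $Y_n$ are the images of these under the maps induced by $0\to n$. Hence the ``coordinate subspaces'' that get collapsed when forming smash products out of products are already forced to be identified in the tensor product by the action relations coming from the zero maps $m\to 0\to n$ in $\sE$. This is the conceptual content: the wedge/smash version of $Y\otimes_\sE X$ does not actually lose information relative to the disjoint-union/product version, because the coequalizer relation already performs the relevant collapse.

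First I would set up the comparison map. There is an evident natural map of diagrams from the product/disjoint-union diagram to the wedge/smash diagram, induced levelwise by the quotient maps $A\times B \rtarr A\sma B$ and $\coprod \rtarr \bigvee$; this is compatible with both pairs of parallel arrows (the two arrows are built from the $\sE$-actions on $X$ and $Y$, which are maps of based $G$-spaces). Passing to coequalizers gives a natural map $\bU Y\otimes_{\bU\sE}\bU X \rtarr Y\otimes_\sE X$ of $G$-spaces; note the target is a priori a based $G$-space, but we regard it via $\bU$. I would then produce the inverse by a point-set argument: an element of $\bU Y\otimes_{\bU\sE}\bU X$ is represented by a pair $(y,x)\in Y_n\times X_n$, and I claim that whenever $x$ is the basepoint of $X_n$ (respectively $y$ the basepoint of $Y_n$), the class $[y,x]$ is independent of $y$ (respectively of $x$) and equals a single distinguished element. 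Indeed, if $x = \ast_{X_n}$ is the image of $X(0\to n)(\ast)$, then using the $\sE$-action relation along the factorization through $0$ one rewrites $(y,\ast_{X_n})$ as coming from $Y_0\times X_0 = \ast\times\ast$, so $[y,\ast_{X_n}]$ is the same element for all $y$; symmetrically for $y=\ast_{Y_n}$. Therefore the surjection $\coprod_n Y_n\times X_n \rtarr \bU Y\otimes_{\bU\sE}\bU X$ factors through $\bigvee_n Y_n\sma X_n$, and one checks the two relations descend, yielding a map $Y\otimes_\sE X \rtarr \bU Y\otimes_{\bU\sE}\bU X$ inverse to the one above.

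The two maps are mutually inverse by a straightforward diagram chase on representatives, and naturality in $X$ and $Y$ is immediate since everything in sight is natural. I would also remark that the isomorphism is a homeomorphism: both coequalizers carry the quotient topology from (wedges of) products of $G$-spaces, and the comparison maps are continuous in both directions, being induced by continuous maps on the defining diagrams. The step I expect to be the main (though still minor) obstacle is the bookkeeping in the point-set argument showing $[y,\ast_{X_n}]$ is well-defined independent of $y$: one must chase through the action of the zero morphism $m\to 0\to n$ and keep track of which coequalizer relation is being invoked, being careful that $\sE(0,n)$ and $\sE(m,0)$ are single points so that no extra identifications are accidentally introduced or missed. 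Everything else is formal manipulation of coequalizers and the adjunction $(-)_+ \dashv \bU$ recalled in Remark~\ref{UTbit}.
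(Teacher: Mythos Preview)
Your proposal is correct and follows essentially the same approach as the paper: both arguments show that the coequalizer relation in $\bU Y\otimes_{\bU\sE}\bU X$ already collapses all pairs $(y,\ast_n)$ and $(\ast_n,x)$ to a single point by routing through the zero object, so that the quotient map from the $\times$/$\coprod$ diagram to the $\sma$/$\bigvee$ diagram induces an isomorphism on coequalizers. The paper's proof is more terse, writing out only the key chain $(y,\ast_n)\sim(0_{0,n}^*(y),\ast_0)=(\ast_0,\ast_0)$, whereas you have spelled out the construction of both maps and added remarks on continuity and naturality; the underlying idea is identical.
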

\begin{proof}
We show that the quotient of $\coprod _n Y_n \times X_n$ given by the coequalizer encodes the required basepoint identifications. Let $(y,\ast_n)\in Y_n\times X_n$. As noted above, the basepoint  in $X_n$ is given by $\ast_n=(0_{0,n})_{*}(\ast_0)$, where $\ast_0$ is the unique point in $X_0$ and $0_{0,n}$ is the unique map $0 \rtarr n$ in $\sE$. Then 
\[(y,\ast_n) =(y,(0_{0,n})_{*}(\ast_0))\sim (0_{0,n}^*(y),\ast_0) = (\ast_0,\ast_0),\]
the last equation following from the fact that $Y_0$ is a singleton. A similar argument shows that $(\ast_n,x)\sim (\ast_0,\ast_0)$.
\end{proof}
 
\subsection{Spectrum level preliminaries}\label{specpre}

\begin{defn}  A classical $G$-spectrum,\footnote{We remind the reader that ``classical'' is often called ``naive''.} which we prefer to call a $G$-prespectrum, is a sequence of based $G$-spaces $\{T_n\}_{n\geq 0}$ and based $G$-maps $\si_n\colon \SI T_n\rtarr T_{n+1}$. It is a classical $\OM$-$G$-spectrum if the adjoint maps $\tilde{\si}_n\colon T_n \rtarr \OM T_{n+1}$ 
are weak $G$-equivalences.  It is a positive $\OM$-$G$-spectrum if the $\tilde{\si}_n$ are weak $G$-equivalences 
for $n\geq 1$. We let $G\sP$ denote the category of $G$-prespectra.  We call zeroth spaces $T_0$ of classical $\OM$-$G$-spectra
classical infinite loop $G$-spaces.
\end{defn} 
In this paper, our preferred category of (genuine) $G$-spectra is the
category $G\sS$ of orthogonal $G$-spectra. Orthogonal $G$-spectra and their 
model structures are studied in \cite{MM}, to which we refer the reader for details and discussion of 
the following definition.  We use \autoref{UTbit}.

\begin{defn}\label{OrthSpec}  Let $\sI_G$ be the $G\sU$-category
of finite dimensional
real $G$-inner product spaces and linear isometric isomorphisms, with $G$ acting on morphism spaces by
conjugation.  Note that $\sI_G$ is symmetric monoidal under $\oplus$.  An $\sI_G$-$G$-space is a $G\sU$-functor
${\sI_G} \rtarr  \UG$ \footnote{To be consistent with the notation of \autoref{UTbit} we should write $\bU(\UG)$ for the target here, but we have not done so to avoid cluttering the notation.} or, equivalently by \autoref{UTbit}, a $G\sU_{\ast}$-functor  ${\sI_G}_+\rtarr  \ul{G\sU_{\ast}}$.
The sphere $\sI_G$-$G$-space $S$ is given by $S(V) = S^V$.  The external smash
product 
$$X\barwedge Y\colon \sI_G\times \sI_G\rtarr \sT_G$$ 
of $\sI_G$-$G$-spaces $X$ and $Y$ is 
the $G\sU$-functor given by 
$$(X\barwedge Y)(V,W) = X(V)\sma Y(W).$$
A (genuine orthogonal) $G$-spectrum is an $\sI_G$-$G$-space $E\colon \sI_{G} \rtarr \sT_G$ together 
with a  $G\sU$-transformation $E\barwedge S\rtarr E\com \oplus$ between $G\sU$-functors $\sI_G\times \sI_G \rtarr \sT_G$.  Thus we have $G$-spaces $E(V)$,
morphism $G$-maps 
$$\sI_{G}(V,V') \rtarr \sU_G(E(V), E(V'))$$  
and structure $G$-maps
$$\si\colon E(V) \sma S^W \rtarr E(V\oplus W).$$ 
natural in $V$ and $W$. Note in particular that $\sI_G(V,V)$ is the orthogonal group $O(V)$, with $G$
acting by conjugation, so that $E(V)$ is both a $G$-space and an $O(V)$-space and $\si$ is a map of both $G$-spaces and
$O(V)\times O(W)$-spaces.  A $G$-spectrum $E$ is an $\OM$-$G$-spectrum if the adjoint
maps 
$$\tilde{\si}\colon E(V)\rtarr \OM^W E(V\oplus W)$$ 
are weak $G$-equivalences.  It is a positive $\OM$-$G$-spectrum
if these maps are weak $G$-equivalences when $V^G\neq 0$.  We let $G\sS$ denote the category of $G$-spectra.
We call zeroth spaces $E(0)$ of $\OM$-$G$-spectra genuine infinite loop $G$-spaces, or simply infinite loop $G$-spaces.
\end{defn}

\begin{defn} The forgetful functor $i^*\colon G\sS\rtarr G\sP$ sends a $G$-spectrum $X$ to the (classical) $G$-prespectrum
with $n$th space $X_n = X(\bR^n)$.
\end{defn}

\begin{rem}\label{Vgpcomp}  If $V^G\neq 0$, we can write $V \cong \bR\oplus W$ and thus $S^V \cong S^1\sma S^W$ and
$\OM^V \cong \OM\OM^W$.  Then 
$\tilde{\si}\colon X_0\rtarr \OM^V X(V)$ factors as the composite
\[ \xymatrix@1{ X_0 \ar[r]^-{\tilde{\si}} &  \OM X_1 \ar[r]^-{\OM\tilde{\si}} & \OM \OM^W X(\bR\oplus W) \cong \OM^V X(V).\\} \]
If $X$ is a positive $\OM$-$G$-spectrum, then the second arrow is a weak $G$-equivalence.  Therefore, if $X_0\rtarr \OM X_1$  is a group completion, then so is $X_0 \rtarr \OM^V X(V)$ for all $V$ such that  $V^G\neq 0$. 
\end{rem}

\begin{rem} We take for granted the basic homotopy theory of orthogonal $G$-spectra.  In particular, the homotopy groups $\pi_q^H$ of $G$-spectra are defined in \cite[Definition 3.2]{MM}, and a map of $G$-spectra is a stable equivalence if it induces an isomorphism of homotopy groups. 
\end{rem}

Geometric realization of simplicial objects can be carried out in any bicomplete category that is tensored over spaces, using the usual coend definition, and we shall make use of the geometric realization of simplicial orthogonal  $G$-spectra.  See \cite[Definition X.1.1]{EKMM} for a discussion in the case of EKMM spectra.  In the case of orthogonal $G$-spectra, since colimits and tensors with spaces are constructed levelwise, we see that for a simplicial orthogonal $G$-spectrum $T_{*}$, the $V$-th level of the geometric realization is given by
\[|T_{*} |(V)=|T_{*}(V)|.\] 

We shall need the spectrum level analogue of \autoref{PlenzWeak}. That requires a notion of Reedy cofibrancy for simplicial orthogonal $G$-spectra, which we now explain.  We can define latching objects $L_n T$ for simplicial objects $T$ in any cocomplete category as the evident colimit (see, for example, \cite[Definition 15.2.5]{Hirsc} or \cite{reedypractice}); \autoref{latch} is a specialization. Since colimits of orthogonal spectra are computed levelwise, we have that $(L_n T)(V) = L_n (T(V))$ for a simplicial orthogonal $G$-spectrum $T$.
 
\begin{defn}\label{ReedySpec}  
A map of orthogonal $G$-spectra $A \rtarr X$ is an \emph{$h$-cofibration} if it satisfies the homotopy extension property (see \cite[\S A.5]{HHR}, \cite[\S I.4]{MM}, \cite[\S 5]{MMSS}).   A simplicial orthogonal $G$-spectrum is \emph{Reedy $h$-cofibrant} if the latching map $L_n T \rtarr T_n$ is an $h$-cofibration for each $n$.
\end{defn}

\begin{prop}\label{reedyforspectra}
Let $f_{*} \colon T_{*} \rtarr T'_{*}$ be a map of Reedy $h$-cofibrant simplicial orthogonal $G$-spectra that is a stable equivalence at each simplicial level. Then the map of orthogonal $G$-spectra $|f_{*}|$ obtained by geometric realization is a stable equivalence.
\end{prop}
\begin{proof}
The proof is the same as for the space level analogue, using the construction of the filtration on geometric realization via pushouts (see \cite[Theorem 4.15]{Plenz}  and \cite[Theorem X.2.4]{EKMM}). The  key facts we need about $h$-cofibrations of orthogonal $G$-spectra are that they are stable under pushouts  \cite[Proposition A.62]{HHR},  that they satisfy the analogue of \cite[Lemma X.2.3]{EKMM} so that the relevant realizations are constructed by pushouts along $h$-cofibrations, and that the gluing lemma for $h$-cofibrations and stable equivalences holds (see \cite[Corollary B.21]{HHR}, \cite[Theorem I.4.10 (iv)]{MM}), and that the filtered colimit along $h$-cofibrations of a sequence of stable equivalences is a stable equivalence \cite[Proposition B.17]{HHR}. 
\end{proof}

We note that the above result holds generally in any good model category tensored over spaces. A recent treatment is offered in \cite[see Corollary 10.6.]{reedypractice}.  We have not quoted that result because there is no published proof that there is a (mixed) model structure on the category of orthogonal $G$-spectra in which the cofibrations are the $h$-cofibrations.  However, the methods of \cite{BR} (and \cite[\S6.4]{BMR}) can be applied to construct one.

\section{The simplicial and conceptual versions of the Segal machine}\label{SegSec} 

There are several variants of the Segal infinite loop space machine, as
originally developed by Segal \cite{Seg} and Woolfson \cite{Woolf}.
Later sources include Bousfield and Friedlander \cite{BF}, working simplicially,
and, much later, Mandell, May, Schwede, and Shipley \cite{MMSS}.  Equivariant versions appear in
Shimada and Shimakawa \cite{ShSh, Shim, Shim2} and, later, Blumberg \cite{Blum}.  

We here give a simplicial variant\footnote{We are referring to simplicial spaces, not simplicial sets, here.} and two equivalent conceptual variants,  one starting from finite sets and the other starting from finite $G$-sets.  We
defer consideration of our preferred homotopical variant to the next section.

The simplicial variant is the equivariant version of Segal's original definition \cite{Seg}.
As far as we know, his paper is the only source in the literature that actually proves the 
crucial group completion property, and his proof makes essential use of his original 
simplicial definition.\footnote{His proof imposes some unnecessary restrictive hypotheses that generally hold in practice.}
This version does not directly generalize to give genuine $G$-spectra, and it does not appear in the equivariant literature.  
Therefore, even at this late date, there is no published account of the equivariant Segal machine that 
proves the group completion property. Just as nonequivariantly, this property is central to the 
applications, especially to algebraic $K$-theory.

In fact, we do not know a direct proof of the group completion property starting from the conceptual or
homotopical variants treated in \cite{BF, MMSS, ShSh, Woolf} and, equivariantly, \cite{Blum, Shim}. 
Rather, we derive it for the conceptual variants from their equivalence with the simplicial variant.  
To give the group completion property equivariantly, to fill in other gaps in the pubished proofs, and to prepare for a comparison with the operadic machine, we give a fully detailed exposition of the Segal machine in all of its forms. This may also be helpful to the modern reader since, even nonequivariantly, the original sources make for hard reading and are sketchy. 

\subsection{Definitions: the input of the Segal machine}\label{InSegal} 

We begin with some standard nonequivariant definitions and observations.

\begin{defn}
Let $\sF$ be the opposite of Segal's category $\GA$.\footnote{As in \cite{MMSS} and elsewhere, we use the notation $\sF$ to avoid confusion between $\GA$ 
and $\GA^{op} = \sF$.}    It is the category of finite based
sets $\mathbf{n} = \{0,1, \dots,n\}$ with $0$ as basepoint. The morphisms are the based maps, and the unique morphism
that factors through $\mathbf{0}$ is a basepoint for $\sF(\mathbf m,\mathbf n)$. 
Let $\PI\subset \sF$ be the subcategory with the same objects and those morphisms $\ph\colon \mathbf{m}\rtarr \mathbf{n}$ 
such that $\ph^{-1}(j)$ has at most one element for $1\leq j\leq n$; these are composites of projections, 
injections, and permutations.  Let $\SI\subset \PI$ be the subgroupoid with the same objects and the elements
of the symmetric groups $\SI_n$, regarded as based isomorphisms $\mathbf{n}\rtarr \mathbf{n}$, as morphisms. 
\end{defn}

The composition in $\sF$ factors through the smash product, and we view $\sF$ as a category enriched in $\sT\subset \sU_*$, with the discrete topology on the based hom sets. We first review the nonequivariant definitions, following \cite{MT} for technical details that are needed for homotopical control.

\begin{notn}\label{Sigph} Given an injection $\phi \colon \mb{m} \rtarr \mb{n}$ in $\PI$, let $\SI_\phi$ denote the subgroup of $\SI_n$ consisting of those permutations  $\tau$ such that $\tau(\im \phi)=\im \phi$. Then $\SI_{\ph}$ acts on the set $\mathbf{m}$ and $\ph$ is a $\SI_{\ph}$-map.  
\end{notn}

\begin{defn}  A {\em $\PI$-space} is a $\sU_{\ast}$-functor $X\colon \sF\rtarr \sU_*$, written $\mathbf{n} \mapsto X_n$, such that the map $\ph_*\colon  X_m \rtarr X_n$ is a
$\SI_{\ph}$-cofibration for every injection $\ph\colon \bm\rtarr \bn$.  Taking $m=0$, it follows that $X$ takes values in $\sT$.  Similarly, an {\em $\sF$-space} is a $\sU_*$-functor whose restriction to $\PI$ is a $\PI$-space.   A map of $\PI$-spaces or of $\sF$-spaces is a $\sU_\ast$-natural transformation. We denote the categories of $\PI$-spaces and of $\sF$-spaces by $\PI[\sT]$ and $\sF[\sT]$.
\end{defn}

\begin{rem}  The definition implicitly  requires care to check that constructions on $\sF$-spaces that make obvious sense for functors $\sF\rtarr \sU_*$  do in fact take values in $\sF$-spaces.  The equivariant situation will be similar. 
\end{rem}

\begin{rem}\label{onlyordered}We note that it is enough to impose that $\phi_\ast$ is a $\SI_\ph$-cofibration only for ordered injections $\ph$, where $\phi$ is ordered if $\phi(i)<\ph(i')$ whenever $i<i'$. Indeed, if $\phi$ is an arbitrary injection, it can be written as the composite $\psi\circ \rho$, where $\psi$ is an ordered injection and $\rho \in \SI_m$. One can check that $\ph_\ast$ is a $\SI_\ph$-cofibration if and only $\psi_\ast$ is a $\SI_\psi$-cofibration by noting that $\SI_\ph=\SI_\psi$ and, while the actions on $X_m$ are not the same, $\rho$ gives a homeomorphism that intermediates between the two.
\end{rem}

\begin{defn}\label{Rdefn}   For a $\PI$-space  $X$, let $\bL X = X_1$.  For a based space $Y\in \sT$, let $\bR Y$ denote the $\PI$-space with $n$th space $Y^n$.  The $\PI$-space structure is given by basepoint inclusions, projections, and permutations. \end{defn}

\begin{defn} The {\em Segal maps}
$\de_i\colon \mathbf{n}\rtarr \mathbf{1}$ in $\PI$ send $i$ to $1$ and $j$ to $0$ for $j\neq i$.  For a $\PI$-space $X$, the {\em Segal map} $\de\colon X_n\rtarr X_1^n$ has coordinates induced by the $\de_i$.  If $n=0$, we interpret $\de$ as the terminal map $X_0\rtarr \ast$.   

The $n$th  {\em multiplication map} $\varphi_n\colon \mathbf{n}\rtarr \mathbf{1}$ in $\sF$ sends $j$ to $1$ for $1\leq j\leq n$.
It induces an ``$n$-fold multiplication'' $X_n\rtarr X_1$ on an $\sF$-space $X$. 
\end{defn}

\begin{lem}\label{Rlem} The pair  $(\bL,\bR)$ is an adjunction.  On a based space $Y\in \sT$, the counit $\epz\colon \bL \bR Y\rtarr Y$ is the identity map.  On a $\PI$-space $X$, the unit $\de\colon X \rtarr \bR\bL X$ is given levelwise by the Segal maps.
\end{lem}

\begin{rem} It is traditional, starting in \cite[Definition 1.2]{Seg}, to define a $\PI$-space to be a \em{not necessarily enriched}  functor $\PI \rtarr \sT$, requiring  $X_0$ to be contractible, and to say that $X$ is reduced if $X_0$ is a point.  That led to mistakes and confusion, as explained in \cite{Rant2}.   The essential point is that, without enrichment,   $\bL$  is not left adjoint to $\bR$.  As we observed in \autoref{duh}, our requirement that $X$ be a $\sU_{\ast}$-functor forces $X_0$ to be a point  for trivial reasons. 
\end{rem} 

Now return to the equivariant context.  

\begin{defn}\label{Fspace}  A {\em $\PI$-$G$-space}  $X$ is a $\sU_{\ast}$-functor $X\colon \PI\rtarr G\sU_\ast$ such that the map $\ph_*\colon X_m \rtarr X_n$ is a $(G\times \SI_\ph)$-cofibration for every injection $\ph\colon \bm\rtarr \bn$ in $\PI$.  It follows that $X$ takes values in $G\sT$.  Similarly, an {\em $\sF$-$G$-space} is a 
$\sU_{\ast}$-functor $\sF\rtarr G\sU_\ast$ whose restriction to $\PI$ is a $\PI$-$G$-space.   A map of $\PI$-$G$-spaces or of $\sF$-$G$-spaces is a $\sU_{\ast}$-natural transformation. We denote the categories of $\PI$-$G$-spaces and of $\sF$-$G$-spaces by $\PIdashG$ and $\FdashG$.  
\end{defn}

As in \autoref{onlyordered}, it is enough to check the condition on ordered injections.

\begin{rem} Since $G$ acts trivially on $\PI$ and $\sF$, the notations  $\PI[\sT_G]$ and $\sF[\sT_G]$ carry the same meaning: $\sU_{\ast}$-functors  
$\PI\rtarr \sT_G$ necessarily land in $G\sT$. 
\end{rem}

While the cofibration condition is always assumed when using the terminology above, it will not be used in this section.
The following equivariant generalization of \cite[Proposition 1.6]{MT} implies that assuming it does not result in any loss of generality.  The cited result shows how to  ``beard'' functors $\sF\rtarr  \sU_*$ to obtain levelwise equivalent functors in $\sF[\sT]$, and we shall see that equivariance requires only minor changes.    We defer the proof to \autoref{beard}, where we generalize the result from $\sF$ to $G$-categories of operators over $\sF$, again following \cite{MT}.

\begin{prop}\label{MTbeard}
There is a functor $W\colon \Fun(\sF, G\sU_*)\rtarr   \Fun(\sF, G\sU_*)$  together with a natural transformation $\pi\colon W\rtarr \Id$ such that, for any functor
$X\colon \sF\rtarr G\sU_*$, $WX$ is in $\FdashG$ and $\pi\colon WX\rtarr X$ is a levelwise $G$-homotopy equivalence. 
\end{prop} 

\begin{rem}\label{Requiv}
The adjunction $(\bL,\bR)$ of \autoref{Rdefn} and \autoref{Rlem} extends to the equivariant case to give an adjunction between $G\sT$ and $\PIdashG$. Indeed, if $Y$ is a nondegenerately based $G$-space, the functor $\bR Y$ that sends $n$ to $Y^n$ satisfies the cofibration condition since, for an injection 
$\ph\colon \bm\rtarr \bn$, $\ph_*\colon Y^m\rtarr Y^n$ just inserts the basepoint in the coordinates indexed by those $j$ not in the image of $\ph$.
\end{rem}

Recall from \autoref{famFn} that, for a homomorphism $\al\colon G\rtarr \SI_n$, $\LA_{\al}$ is the subgroup $\{ (g,\al(g))|g\in G \}$ of $G\times \SI_n$.  Observe that since $\SI\subset \PI\subset \sF$,  $X_n$ and $X_1^n$ are $(G\times \SI_n)$-spaces and $\de\colon X_n\rtarr X_1^n$ is a map  of $(G\times \SI_n)$-spaces. 

\begin{defn}\label{weakFn} Let $X,Y$ be $\PI$-$G$-spaces, and $f\colon X\rtarr Y$ be a map of $\PI$-$G$-spaces.
\begin{enumerate}[(i)]
\item The $\PI$-space $X$ is \gen-{\em special} if 
$\de\colon X_n\rtarr X_1^n$ is an $\bF_n$-equivalence for all $n\geq 0$.
\item  Less restrictively, $X$ is {\em special} if each $\de\colon X_n\rtarr X_1^n$ is a weak $G$-equivalence.
\item  The map $f$ is an {\em \gen-level equivalence} if each $f_n\colon X_n\rtarr Y_n$ is an $\bF_n$-equivalence. 
\item  Less restrictively,  $f$ is a {\em level $G$-equivalence} if each $f_n\colon X_n\rtarr Y_n$ is a weak 
$G$-equivalence. 
\end{enumerate}
An $\sF$-$G$-space $X$ is \gen-special or special if its underlying $\PI$-$G$-space is so.
A special $\sF$-$G$-space $X$ is \emph{grouplike} or, synonymously, \emph{very special} if $\pi_0(X_1^H)$ is a group 
(necessarily abelian) under the induced product for each $H\subset G$.
A map $f\colon X\rtarr Y$ of $\sF$-$G$-spaces is an \gen-level equivalence or level equivalence 
if it is so as a map of $\PI$-$G$-spaces.
\end{defn} 

We need several technical results about these notions, the first of which is the key to the following two.   

\begin{lem}\label{lem:lafixedpoints}
Let $X$ be a $G$-space, and let $\LA_{\al} = \{(h,\al(h)) \mid h \in H\}\subset G\times \SI_n$, where $H\subset G$ and $\al \colon H \rtarr \SI_n$ is a homomorphism. 
Then there is a natural homeomorphism
\[ (X^n) ^{\LA_{\al}} \cong \prod X^{K_i},\]
where the product is taken over the orbits of the $H$-set $\bn^{\al}$ and the $K_i\subset H$ are the stabilizers of chosen elements in the corresponding orbit.
\end{lem}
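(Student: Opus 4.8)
The plan is to unwind the definitions of the $\LA$-action on $X^n$ and match $\LA$-fixed points with a product indexed by orbits. First I would observe that a point of $X^n$ is a function $x\colon \{1,\dots,n\}\rtarr X$, and the $G\times\SI_n$-action is $(g,\sigma)\cdot x = (i\mapsto g\cdot x(\sigma^{-1}i))$. Restricting along the inclusion $\LA\hookrightarrow G\times\SI_n$, a point $x$ is $\LA$-fixed precisely when $x(\alpha(h)^{-1}i) = h^{-1}\cdot x(i)$ for all $h\in H$ and all $i$, equivalently $x(\alpha(h)i) = h\cdot x(i)$. This is exactly the condition that $x\colon (\mb n,\alpha)\rtarr X$ be a map of $H$-sets, where $H$ acts on $X$ by the given action; so $(X^n)^{\LA}$ is naturally homeomorphic to the space $\mathrm{Map}_H((\mb n,\alpha),X)$ of $H$-equivariant functions.

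Next I would decompose the finite $H$-set $(\mb n,\alpha)$ into its orbits, $(\mb n,\alpha)\cong\coprod_i H/K_i$, where for each orbit we pick a representative element and let $K_i\subset H$ be its stabilizer. An $H$-map out of a coproduct is the same as a tuple of $H$-maps out of each summand, so $\mathrm{Map}_H(\coprod_i H/K_i, X)\cong \prod_i \mathrm{Map}_H(H/K_i, X)$, and this identification is a homeomorphism since the product topology on functions out of a finite discrete set matches the obvious one. Finally, for a single orbit, the standard adjunction $\mathrm{Map}_H(H/K_i,X)\cong X^{K_i}$ sends an $H$-map to its value at the chosen representative coset $eK_i$; this is a homeomorphism with inverse sending $y\in X^{K_i}$ to the map $hK_i\mapsto h\cdot y$ (well-defined since $y$ is $K_i$-fixed). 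Composing these homeomorphisms yields $(X^n)^{\LA}\cong\prod_i X^{K_i}$, and naturality in $X$ is clear since every map in sight is defined by evaluation or by the $H$-action, both of which commute with $G$-maps $X\rtarr X'$.

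There is no real obstacle here — the only things to be careful about are bookkeeping: getting the variance of the permutation action right (so that "$\LA$-fixed" becomes "$H$-equivariant" rather than "$H$-antiequivariant", which is harmless but would confuse the statement), and noting that the homeomorphism statement, as opposed to a mere bijection, is automatic because all the spaces involved are finite products and mapping spaces out of finite discrete sets, so the topologies are the evident product topologies. I would also remark in passing that the choice of representatives affects the $K_i$ only up to conjugacy in $H$, which is all that is needed since $X^{K_i}$ and $X^{K_i'}$ are canonically homeomorphic via the conjugating element when $K_i' = hK_ih^{-1}$; but since the statement only asserts existence of such a homeomorphism for chosen representatives, this subtlety can be left implicit.
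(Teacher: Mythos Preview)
Your proof is correct and is essentially the same argument as the paper's, just packaged more conceptually. The paper works explicitly in coordinates: it reduces to a single orbit, projects to the first coordinate to get a map $(X^n)^\LA\to X^K$, and then writes down an inverse by hand using chosen coset representatives $h_j$ with $\alpha(h_j)(1)=j$, checking both that the image lands in the fixed points and that the two maps are mutually inverse. Your version recognizes at the outset that $(X^n)^\LA$ is exactly $\mathrm{Map}_H((\mb n,\alpha),X)$, and then the orbit decomposition and the identification $\mathrm{Map}_H(H/K,X)\cong X^K$ do the rest. The paper's explicit inverse $x\mapsto(h_1x,\dots,h_nx)$ is precisely your map $y\mapsto(hK\mapsto h\cdot y)$ written in coordinates, so nothing is really different under the hood; your formulation just avoids the coordinate bookkeeping and makes naturality in $X$ immediate.
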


\begin{proof}
The $\LA_{\al}$-action on $X^n$ is given by
$$(h,\al(h))(x_1, \dots, x_n) = (hx_{\al(h^{-1})(1)}, \dots, hx_{\al(h^{-1})(n)}).$$
The partition of $\bf{n}^\al$ into $H$-orbits decomposes $\bf{n}$ as the wedge of finite subsets, each 
with a transitive set of shuffled indices, so it is enough to consider each $H$-orbit separately. Thus we may as well 
assume that the $H$-action on $\bn^{\al}$ is transitive. Note that this reduction depends only on $\al$ and is natural in $X$. 

Let $K\subset H$ be the stabilizer of $1\in \mb{n}$.  We claim that projection onto the first coordinate induces the required natural homeomorphism
$\pi\colon (X^n)^{\LA_{\al}} \rtarr X^K$. 
Let $(x_1,\dots,x_n) \in X^n$ be a $\LA_{\al}$-fixed point.   Then $x_1$ is a $K$-fixed point of $X$ since 
$$kx_1 = kx_{\al(k^{-1})(1)}=x_1$$
for $k\in K$,  the second equality holding because $(x_1,\dots, x_n)$ is fixed by $\LA_{\al}$.

To construct $\pi^{-1} \colon X^K\rtarr (X^n) ^{\LA_{\al}}$, for $1\leq j\leq n$ choose $h_j\in H$ such that $\al(h_j)(1)=j$.
This choice amounts to choosing a system of coset representatives for $H/K$, and the map $j\mapsto [h_j]$ 
gives a bijection of $H$-sets between $\bn^{\al}$ and $H/K$. We claim that the map $X \rtarr X^n$ 
that sends $x$ to the $n$-tuple $(h_1x, \dots, h_nx)$ restricts to the required inverse  $\pi^{-1}$.  This map
is clearly continuous. We first
show that if $x\in X^K$, then $(h_1x,\dots,h_nx)$ is fixed by $\LA_{\al}$. Let $h\in H$ and note that
\[\al(hh_{\al(h^{-1})(j)})(1)=\al(h)(\al(h_{\al(h^{-1})(j)})(1))=\al(h)(\al(h^{-1})(j))=j.\] 
In view of our bijection between $\bn^{\al}$ and $H/K$, there exists $k\in K$ such that
$hh_{\al(h^{-1})(j)}=h_jk$. The $j$th coordinate of $(h,\al(h))\cdot (h_1x,\dots, h_nx)$ is given by
$$hh_{\al(h^{-1})(j)}x=h_jkx=h_jx,$$
the second equality holding because $x\in X^K$.  Thus $(h_1x,\dots,h_nx)$ is fixed by $\LA_{\al}$. 

Since $K$ is the stabilizer of 1, $h_1\in K$. Thus the first coordinate of $(h_1x,\dots,h_nx)$ is $x$ itself, and $\pi\com \pi^{-1} =\id$.
If $(x_1,\dots, x_n)$ is fixed by $\LA_{\al}$, then $x_j=h_jx_1$ for all $j$.  By  the definition of $h_j$, the $j$th 
coordinate of $(h_j,\al(h_j))\cdot (x_1,\dots,x_n)$ is $h_jx_{\al(h_j^{-1})(j)}=h_jx_1$.
Since$(x_1,\dots, x_n)$ is a $\LA_{\al}$-fixed point, this shows that $x_j=h_jx_1$, hence $\pi^{-1}\com \pi = \id$.
\end{proof}

\begin{lem}\label{Rgen} If $f\colon X\rtarr Y$ is a weak equivalence of based $G$-spaces, then the induced map
$\bR f\colon \bR X\rtarr \bR Y$
is an \gen-level equivalence of $\PI$-$G$-spaces.
\end{lem}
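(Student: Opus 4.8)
The plan is to compute the relevant fixed points level by level and to read off the answer from Lemma~\myref{lem:lafixedpoints}. Fix $n\geq 0$; by Definition~\myref{Rdefn} the map $(\bR f)_n$ is $f^n\colon X^n\rtarr Y^n$, viewed as a map of $(G\times\SI_n)$-spaces with $G$ acting diagonally and $\SI_n$ permuting coordinates. By Definition~\myref{weakFn}(iii), to prove that $\bR f$ is an \gen-level equivalence I must check that $f^n$ is a weak $\LA_{\al}$-equivalence for every homomorphism $\al\colon G\rtarr\SI_n$; the argument below in fact applies verbatim to every $\LA\in\bF_n$.

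Next I would fix $\LA=\LA_{\al}$ with $\al\colon H\rtarr\SI_n$ for a (closed) subgroup $H\subset G$ and apply Lemma~\myref{lem:lafixedpoints} to $X$ and to $Y$. This yields natural homeomorphisms $(X^n)^{\LA}\cong\prod_i X^{K_i}$ and $(Y^n)^{\LA}\cong\prod_i Y^{K_i}$, where the products run over the orbits of the $H$-set $(\mb{n},\al)$ and $K_i\subset H$ is the stabilizer of a chosen element of the $i$th orbit. Because the homeomorphism of Lemma~\myref{lem:lafixedpoints} is natural in the space variable — the reduction to a single orbit and the inverse map $x\mapsto(h_1x,\dots,h_nx)$ are manifestly natural in $X$ — the map $(f^n)^{\LA}$ is identified under these homeomorphisms with the product map $\prod_i f^{K_i}\colon\prod_i X^{K_i}\rtarr\prod_i Y^{K_i}$.

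Finally I invoke the hypothesis: since $f$ is a weak $G$-equivalence, $f^{K}\colon X^K\rtarr Y^K$ is a weak equivalence for every subgroup $K\subset G$, in particular for each $K_i\subset H\subset G$. A finite product of weak equivalences is a weak equivalence, so $\prod_i f^{K_i}$, hence $(f^n)^{\LA}$, is a weak equivalence. Taking $\LA=\LA_{\al}$ with $\al\colon G\rtarr\SI_n$ and letting $n$ vary gives the conclusion. There is no real obstacle here: the entire content lies in Lemma~\myref{lem:lafixedpoints}, and the only point that needs a word of care is its naturality in the space argument, which is already visible in its proof. (The case $n=0$ is trivial since $X^0=\ast=Y^0$, consistent with the convention $\SI_0=\SI_1=\{e\}$ in Definition~\myref{weakFn}.)
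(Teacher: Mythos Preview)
Your proof is correct and is precisely the argument the paper has in mind: the paper's proof is the single sentence ``This is immediate from \myref{lem:lafixedpoints},'' and you have simply unpacked that immediacy by invoking the natural identification $(X^n)^{\LA}\cong\prod_i X^{K_i}$ and observing that a finite product of weak equivalences is a weak equivalence.
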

\begin{proof} This is immediate from \autoref{lem:lafixedpoints}.
\end{proof}

\begin{lem}\label{iff}  Let $f\colon X\rtarr Y$ be an \gen-level equivalence of $\PI$-$G$-spaces.  Then $X$ is \gen-special if and only if $Y$ is \gen-special.
Similarly, if $f$ is a level $G$-equivalence, then $X$ is special if and only if $Y$ is special.
\end{lem}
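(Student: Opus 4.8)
The plan is to compare $X$ and $Y$ by means of the naturality square relating their Segal maps, reducing everything to \myref{lem:lafixedpoints}. Fix $n\geq 0$ and a subgroup $\LA=\LA_{\al}\in\bF_n$, where $\al\colon H\rtarr\SI_n$ and $H\subset G$. Since the Segal maps $\de_i\colon\mb{n}\rtarr\mb{1}$ all lie in $\PI$, naturality of $f$ with respect to the $\de_i$ yields a commutative square of $(G\times\SI_n)$-maps
\[\xymatrix{
X_n \ar[r]^-{f_n} \ar[d]_-{\de} & Y_n \ar[d]^-{\de} \\
X_1^n \ar[r]_-{(f_1)^n} & Y_1^n
}\]
whose vertical arrows are the Segal maps of $X$ and of $Y$. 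The first step is to pass to $\LA$-fixed points. There $f_n^{\LA}$ is a weak equivalence, since $f$ is an \gen-level equivalence and $\LA\in\bF_n$, so by two-out-of-three it will be enough to show that $((f_1)^n)^{\LA}$ is a weak equivalence; granting this, $\de^{\LA}\colon X_n^{\LA}\rtarr(X_1^n)^{\LA}$ is a weak equivalence if and only if $\de^{\LA}\colon Y_n^{\LA}\rtarr(Y_1^n)^{\LA}$ is, and letting $n$ and $\LA\in\bF_n$ vary gives that $X$ is \gen-special if and only if $Y$ is.

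To show that $((f_1)^n)^{\LA}$ is a weak equivalence, I would first record that $f_1\colon X_1\rtarr Y_1$ is a weak $K$-equivalence for every subgroup $K\subset G$: this is the case $n=1$ of the hypothesis that $f$ is an \gen-level equivalence, since $\bF_1$ is the family of all subgroups of $G$ (as $\SI_1$ is trivial). Then I would apply \myref{lem:lafixedpoints} to the $G$-space $X_1$ and the homomorphism $\al$, obtaining a homeomorphism $(X_1^n)^{\LA}\cong\prod_i X_1^{K_i}$, natural in the $G$-space, where the product is indexed by the orbits of the $H$-set $(\mb{n},\al)$ and $K_i\subset H$ is the stabilizer of a chosen point of the $i$th orbit. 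The same holds with $Y_1$ in place of $X_1$, and naturality identifies $((f_1)^n)^{\LA}$ with $\prod_i f_1^{K_i}$ under these homeomorphisms. Each $K_i$ is a subgroup of $G$, so each $f_1^{K_i}$ is a weak equivalence, hence so is the finite product $\prod_i f_1^{K_i}$; therefore $((f_1)^n)^{\LA}$ is a weak equivalence, which completes the plan for the \gen-special assertion.

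For the assertion about special $\PI$-$G$-spaces I would run the identical argument, but restricting attention to the subgroups $\LA=H\times\{e\}$ for $H\subset G$. For these, \myref{lem:lafixedpoints} collapses to the evident identification $(X_1^n)^H=(X_1^H)^n$, so $((f_1)^n)^H=(f_1^H)^n$. Since $f$ is a level $G$-equivalence, $f_n^H$ and $f_1^H$ are weak equivalences, hence so is $(f_1^H)^n$; two-out-of-three in the $H$-fixed-point square then shows that $\de^H\colon X_n^H\rtarr(X_1^H)^n$ is a weak equivalence if and only if $\de^H\colon Y_n^H\rtarr(Y_1^H)^n$ is, for every $H\subset G$, whence $X$ is special if and only if $Y$ is.

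The argument is formal once the naturality square is in place; the only point that needs a little care is the identification of $((f_1)^n)^{\LA}$ with $\prod_i f_1^{K_i}$, which is exactly where the naturality clause in \myref{lem:lafixedpoints} is used. I do not anticipate any real obstacle beyond that.
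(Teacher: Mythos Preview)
Your proof is correct and follows essentially the same route as the paper: set up the naturality square for the Segal maps and use two-out-of-three, with the key input being that $(f_1)^n$ is a weak $\LA_{\al}$-equivalence. The paper packages that last step as \myref{Rgen} (which is itself immediate from \myref{lem:lafixedpoints}), whereas you inline the argument by invoking \myref{lem:lafixedpoints} directly; the content is the same.

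One small slip: you begin by fixing $\LA=\LA_{\al}$ for $\al\colon H\rtarr\SI_n$ with an arbitrary $H\subset G$, and then assert that $f_n^{\LA}$ is a weak equivalence ``since $f$ is an \gen-level equivalence and $\LA\in\bF_n$''. The definition of \gen-level equivalence (\myref{weakFn}(iii)) only demands weak $\LA_{\al}$-equivalences for $\al\colon G\rtarr\SI_n$, not for proper $H$; the extension to all $\LA\in\bF_n$ is the content of \myref{speciallevel} and requires $G$ finite. This does not affect your argument, since \gen-special is likewise defined using only $\al\colon G\rtarr\SI_n$, so you should simply take $H=G$ throughout. Your use of \myref{lem:lafixedpoints} then still applies (it is stated for general $H$), and the stabilizers $K_i$ are subgroups of $G$, so the rest goes through unchanged.
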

\begin{proof}  Consider the commutative diagram
\[\xymatrix{
X_n \ar[r]^{f_n} \ar[d]_{\de} & Y_n \ar[d]^{\de}\\
X_1^n \ar[r]_{f^n_1} & Y_1^n.\\} \]
For the first statement, the horizontal arrows are $\bF_n$-equivalences by assumption and \autoref{Rgen}, so one of the vertical arrows is a 
$\bF_n$-equivalence if and only if the other one is.  The proof of the second statement is similar but simpler.
\end{proof} 

We defined \gen-special $\PI$-$G$-spaces in terms of those subgroups
$\LA$ in the family $\bF_n$ (see Definition \ref{famFn}) which are
defined by homomorphisms $\al\colon G\rtarr \SI_n$, ignoring those which
are defined by homomorphisms $\be\colon H\rtarr \SI_n$ for proper
subgroups $H$ of $G$. The following result shows that when $G$ is finite
we obtain the same notion if we instead use all of the groups in
$\bF_n$.  The result implicitly relates \gen-special $\PI$-$G$-spaces to
equivariant covering space theory and relates \gen-special
$\sF$-$G$-spaces to the operadic approach to equivariant infinite loop
space theory. It therefore explains and justifies the terms 
\gen-special and
  \gen-level equivalence.

\begin{lem}\label{speciallevel} Assume that $G$ is finite.  Then a
$\PI$-$G$-space $X$ is \gen-special if and only if the Segal maps
$\de\colon X_n\rtarr X_1^n$ are weak $\bF_n$-equivalences for all $n\geq
0$.  Similarly, a map $f\colon X\rtarr Y$ of $\PI$-$G$-spaces is an
\gen-level equivalence if and only if $f_n$ is a weak
$\bF_n$-equivalence for all $n\geq 0$.
\end{lem}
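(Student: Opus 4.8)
The plan is to dispose of the easy implication first and then prove the real one by inducing the relevant homomorphism up to all of $G$ — which is where finiteness enters — and transferring the conclusion along a retract. For the ``only if'' direction: for any $\al\colon G\rtarr\SI_n$ the graph subgroup $\LA_\al$ lies in $\bF_n$ (\myref{famFn}), so a weak $\bF_n$-equivalence is in particular a weak $\LA_\al$-equivalence; hence $\bF_n$-equivalence of the $\de_n$ (resp.\ the $f_n$) for all $n$ forces $\bF_\bullet$-specialness (resp.\ $\bF_\bullet$-level equivalence).

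For the ``if'' direction I would proceed as follows. Fix $\LA\in\bF_n$ and write $\LA=\LA_\be$ with $H\subset G$ and $\be\colon H\rtarr\SI_n$ (\myref{famFn}). Since $G$ is finite, the based $G$-set induced from the based $H$-set $(\mb n,\be)$ is again finite; after numbering it has the form $(\mb m,\tilde\al)$ for some $\tilde\al\colon G\rtarr\SI_m$ with $m=[G:H]\,n$ (\myref{finiteGset}), and it contains the ``base copy'' $(\mb n,\be)$ as an $H$-subset. Now take the two $H$-equivariant morphisms of $\PI$ given by the inclusion $i\colon\mb n\rtarr\mb m$ of the base copy, $s\mapsto[e,s]$, and the pinch $r\colon\mb m\rtarr\mb n$ that is the identity on the base copy and collapses everything else to the basepoint; then $r\com i=\id_{\mb n}$. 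Applying $X$ and the $\PI$-$G$-space $\bR X_1$ of \myref{Rdefn}, and using that the Segal map $\de\colon X\rtarr\bR X_1$ is a map of $\PI$-$G$-spaces (hence natural in $i$ and in $r$), these maps exhibit
\[\de_n\colon X_n^{\LA_\be}\rtarr (X_1^n)^{\LA_\be}\]
as a retract in the arrow category of
\[\de_m\colon X_m^{\LA_{\tilde\al}|_H}\rtarr (X_1^m)^{\LA_{\tilde\al}|_H},\]
where $\LA_{\tilde\al}|_H=\{(h,\tilde\al(h))\mid h\in H\}$. The gain from inducing \emph{up} to $G$ is that $\tilde\al$ is defined on all of $G$, so $\LA_{\tilde\al}|_H$ is a subgroup of the graph subgroup $\LA_{\tilde\al}\in\bF_m$; hence $\bF_\bullet$-specialness of $X$ already makes $\de_m$ a weak $\LA_{\tilde\al}|_H$-equivalence, and a retract of a weak equivalence is a weak equivalence, so $\de_n$ is a weak $\LA_\be$-equivalence. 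The second assertion follows by the identical argument with the given $f\colon X\rtarr Y$ in place of $\de\colon X\rtarr\bR X_1$.

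The fussiest point will be verifying that $X(i)$ carries $X_n^{\LA_\be}$ into $X_m^{\LA_{\tilde\al}|_H}$ and $X(r)$ back, which reduces to the identities $\tilde\al(h)\com i=i\com\be(h)$ and $\be(h)\com r=r\com\tilde\al(h)$ for $h\in H$, i.e.\ to the $H$-equivariance of $i$ and $r$; the latter rests in turn on the non-base-copy part of the induced $G$-set being $H$-stable, which must be checked. It is tempting to instead look for a natural equivalence $X_n^{\LA_\be}\simeq X_m^{\LA_{\tilde\al}}$, but there is none — the base-copy inclusion does not land in the $G$-fixed points — and the retract-of-arrows formulation is exactly what avoids needing any identification of these fixed-point spaces; \myref{lem:lafixedpoints} can be used to describe $(X_1^n)^{\LA_\be}$ as a product if one wants, but is not needed above. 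It is also worth noting that the result is genuinely not purely group-theoretic: $\bF_n$ is strictly larger than the family generated by the graph subgroups $\LA_\al$, $\al\colon G\rtarr\SI_n$, so the inter-level structure of $\PI$ really is being used.
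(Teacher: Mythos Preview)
Your proof is correct and follows essentially the same approach as the paper's: induce the based $H$-set $(\mb n,\be)$ up to the based $G$-set $G_+\sma_H(\mb n,\be)\cong(\mb m,\tilde\al)$, use the $H$-equivariant inclusion/retraction pair in $\PI$ to exhibit $\de_n^{\LA_\be}$ as a retract of $\de_m^{\LA_{\tilde\al|_H}}$, and then invoke $\bF_\bullet$-specialness at level $m$ together with $\LA_{\tilde\al|_H}\subset\LA_{\tilde\al}$. The paper writes the induced set as $B\wed C$ to make the $H$-stability of the complement (and hence the $H$-equivariance of your $r$) transparent, which is exactly the ``fussiest point'' you flagged; otherwise the arguments are the same.
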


\begin{proof}  Given the hypotheses, we must prove that each $\de$ and each $f_n$ is a weak $\bF_n$-equivalence.
Thus consider a subgroup $\LA_\be\subset G\times \SI_n$, $\be\colon H\rtarr \SI_n$. We will show that $\de\colon X_n\rtarr X_1^n$ 
is a weak $\LA_\be$-equivalence by displaying it as a retract of a suitable weak equivalence. The homomorphism $\be$ gives 
an $H$-set $B=\bn^{\be}$. Embed $B$ as a subset of the $G$-set $A = G_+\sma_H B$  and observe that, as an $H$-set, $A$ splits
as $B\wed C$, where $C=(A \setminus B)_+$.  Let $p = |A \setminus B|$ and $q = n+p$.  Use the given ordering of $B$ and an ordering of $C$
to identify $A$ with some $\bq^{\al}$.  Here $\mathbf{q} = \mathbf{n}\wed\mathbf{p}$ and $\al$ is a homomorphism $G\rtarr \SI_q$ which when restricted
to $H$ is of the form $\be\wed \ga$.  That is, $\bq^{\al|_H} = \bn^{\be}\wed \bp^{\ga|_H}$.  Let
\[ \io\colon \bn^{\be}\rtarr \bq^{\al|_H} \ \ \text{and} \ \  \pi\colon \bq^{\al|_H}\rtarr \bn^{\be} \]
be the inclusion that sends $i$ to $i$ for $0\leq i\leq n$ and the projection that sends $i$ to $i$ for $0\leq i\leq n$ and $i$
to $0$ for $i> n$. Then the following diagram displays a retraction. Its bottom arrows are the evident inclusion and projection.
\[ \xymatrix{
X_n \ar[d]_{\de} \ar[r]^-{\io_{*}} & X_q \ar[d]^{\de} \ar[r]^-{\pi_{*}} & X_n \ar[d]^{\de} \\
X_1^n \ar[r] & X_1^q \ar[r] & X_1^n \\ } \]
Since $X$ is \gen-special, the middle vertical arrow $\de$ is a weak $\LA_{\al}$-equivalence and thus 
a weak $\LA_{\al|H}$-equivalence. Therefore the left arrow $\de$ is a weak $\LA_{\be}$-equivalence.

Similarly, for $f_*$, we have a retract diagram
\[ \xymatrix{
X_n \ar[d]_{f_n} \ar[r]^-{\io_{*}} & X_q \ar[d]^{f_q} \ar[r]^-{\pi_{*}} & X_n \ar[d]^{f_n} \\
Y_n \ar[r]_-{\io_{*}} & Y_q \ar[r]_{\pi_{*}} & Y_n \\ } \]
in which $f_q$ is a weak $\LA_{\al}$-equivalence and therefore $f_n$ is a weak $\LA_{\be}$-equivalence.
\end{proof}

\subsection{The simplicial version of the Segal machine}\label{SEGALsimp}

Let $\DE$ be the usual simplicial category. A simplicial object is a contravariant
functor defined on $\DE$, and a cosimplicial object is a covariant functor.  Regarding
$\sF$ as a full subcategory of the category of based sets, we may regard the simplicial circle 
$S^1_s = \Delta[1]/\partial \Delta[1]$ as a contravariant functor $F\colon \DE^{op}\rtarr \sF$. 
By pullback along $F$, an $\sF$-$G$-space $X$ can be viewed as a simplicial $G$-space, and it has a 
geometric realization $|X| = |X\com F|$; we use the standard realization, taking degeneracies into account.
The evident $G$-map $X_1\times I\rtarr |X|$ factors through a natural $G$-map 
$\SI X_1\rtarr |X|$ with adjoint $\et\colon X_1\rtarr \OM |X_1|$. We shall sketch the proof of
the following result in \autoref{groupcomp}. It is implicit in many early sources; we will follow
\cite[\S15]{MayClass}.  Here we need our $\sF$-$G$-spaces to be levelwise nondegenerately based since 
the argument involves realizations of Reedy cofibrant simplicial $G$-spaces, but we do not require the stronger cofibration condition 
of \autoref{Fspace}.

\begin{prop}\label{Ggpcomp}  If $X$ is a special $\sF$-$G$-space, then the $G$-map 
$\et\colon X_1\rtarr \OM |X|$ is a group completion of Hopf $G$-spaces.
\end{prop}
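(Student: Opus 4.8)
The plan is to reduce the equivariant statement to the nonequivariant group completion theorem by passage to fixed points, exploiting the fact that geometric realization commutes with fixed points and that the specialness hypothesis gives us exactly the nonequivariant hypothesis on each fixed-point $\sF$-space. First I would note that for each subgroup $H\subset G$, applying $(-)^H$ levelwise to $X$ yields an $\sF$-space $X^H$, since $(-)^H$ is a continuous functor $G\sT\rtarr\sT$; moreover, because $X$ is special, the Segal map $\de\colon X_n\rtarr X_1^n$ is a weak $G$-equivalence, hence $\de^H\colon X_n^H\rtarr (X_1^n)^H = (X_1^H)^n$ is a weak equivalence. (Here the identification $(X_1^n)^H \iso (X_1^H)^n$ is the trivial-$\al$ case of \myref{lem:lafixedpoints}, or just the obvious fact that fixed points commute with finite products of $G$-spaces with respect to the diagonal action on the product when the permutation action is trivial — which it is for $\bR X_1$ with trivial $\al$.) So $X^H$ is a special $\sF$-space in the nonequivariant sense.

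Next I would identify the map in question on fixed points. Since the functor $F\colon \DE^{op}\rtarr\sF$ is used to regard $X$ as a simplicial $G$-space and geometric realization commutes with fixed points (as invoked in the proof of \myref{PlenzWeak}), we have $|X|^H \iso |X^H|$ naturally, and the map $\SI X_1\rtarr |X|$ passes on $H$-fixed points to the analogous nonequivariant map $\SI (X_1^H)\rtarr |X^H|$; taking adjoints, $\et^H\colon X_1^H\rtarr (\OM|X|)^H = \OM(|X|^H) = \OM|X^H|$ is precisely the nonequivariant map $\et$ for the special $\sF$-space $X^H$. One should also check the needed cofibrancy: $X$, viewed simplicially via $F$, is Reedy cofibrant (as in \myref{blanket}, since all $s_i$ are $G$-cofibrations for a $\sT$-functor out of $\sF$ into nondegenerately based $G$-spaces), so $X^H$ is Reedy cofibrant and the nonequivariant realization is well-behaved; this is the point where one uses that the input is enriched in $G\sT$ rather than merely $G\sU$.

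Now I would invoke the nonequivariant group completion theorem for special $\sF$-spaces — the statement that for a special $\sF$-space $Y$ the map $\et\colon Y_1\rtarr\OM|Y|$ is a group completion of Hopf spaces — citing \cite[\S15]{MayClass} as indicated in the text preceding the proposition, with the group completion property to be proved in \S\ref{groupcomp} via the simplicial definition. Applying this to $Y = X^H$ for every $H$ shows that each $\et^H\colon X_1^H\rtarr\OM|X^H| = (\OM|X|)^H$ is a nonequivariant group completion. Finally I would check that $\et$ is a Hopf $G$-map: the product on $X_1$ comes from $\ph_2\colon\mathbf 2\rtarr\mathbf 1$ in $\sF$, hence is a $G$-map and is the $H$-fixed-point product used nonequivariantly; the product on $\OM|X|$ is the loop-space Hopf structure, which is a $G$-map since $|X|$ is a $G$-space; and $\et$ respects these because it does so nonequivariantly on every fixed-point level. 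By the definition of group completion of Hopf $G$-spaces (\myref{Defngpcomp}), having each $\et^H$ a nonequivariant group completion with $\OM|X|$ grouplike (each $(\OM|X|)^H = \OM|X^H|$ is grouplike, being a loop space) is exactly what we need.

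The main obstacle — indeed essentially the only content beyond bookkeeping — is the input nonequivariant group completion theorem itself, which in this paper is deferred to \S\ref{groupcomp} and requires the inductive simplicial argument. Everything equivariant here is formal: the reduction to fixed points works cleanly precisely because specialness is defined fixed-pointwise and because all the constructions ($(-)^n$ with trivial permutation action, realization, looping) commute with $(-)^H$; the only subtlety worth flagging is the Reedy cofibrancy check needed to make $|X^H|$ homotopically meaningful, which is why the hypothesis that $X$ is a $G\sT$-functor (so \myref{duh} applies and basepoints are nondegenerate) is used rather than merely a $G\sU$-functor.
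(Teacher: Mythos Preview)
Your proposal is correct and follows essentially the same approach as the paper: both reduce to the nonequivariant case by passing to $H$-fixed points, using that realization and $\OM$ commute with $(-)^H$, and then invoke the nonequivariant group completion theorem (which is the real content, proven in \S\ref{groupcomp} following \cite[\S15]{MayClass}). Your write-up is somewhat more explicit about the bookkeeping (Reedy cofibrancy, the Hopf $G$-map check) than the paper's terse reduction, but the argument is the same; one small imprecision is that the product on $X_1$ is not literally induced by $\ph_2$ but by $\ph_2$ composed with a chosen homotopy inverse to the Segal map $\de\colon X_2\rtarr X_1^2$, which the paper handles by first arranging (via cofibrant approximation) that the Segal maps are $G$-homotopy equivalences.
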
 

From here, the Segal machine in its first avatar is constructed as follows \cite[\S 1]{Seg}.  
Working equivariantly, we use a slight reformulation that is given in \cite{MT}.\footnote{Perversely, \cite{MT}
takes $\DE$ to be the opposite of the category every other reference calls $\DE$.}  

\begin{rem}\label{opsonF}
We have the smash product $\sma\colon  \sF\times \sF\rtarr \sF$.  It sends $(\mathbf{m},\mathbf{n})$ to $\mathbf{mn}$ and 
is strictly associative and unital using lexicographic ordering.  The unit is $\mathbf{1}$.  We also have the wedge sum 
$\wed \colon  \sF\times \sF\rtarr \sF$ which sends $(\mathbf{m},\mathbf{n})$ to $\mathbf{m+n}$. It is also strictly associative 
and unital, with unit $\mathbf{0}$, and $\sF$ is bipermutative under this sum and product.  
\end{rem}

\begin{defn}\label{class} Let $X$ be a special $\sF$-$G$-space.  We have the functor 
$$X\com \sma\colon \sF\times \sF\rtarr G\sT.$$ 
For each $\mathbf{p}$, let $X[p]$ be the $\sF$-$G$-space that sends $\mathbf{q}$ to $X(\mathbf{p}\sma \mathbf{q})$;
thus $X[0] = \ast$ and  $X[1] = X$.  Following Segal, define the {\em classifying $\sF$-$G$-space} $\bB X$ to be the 
$\sF$-$G$-space whose $p$th $G$-space is the realization $|X[p]|$. Observe that  $\bB X$ is again special 
since factorizations of the Segal maps for $X$ give commutative diagrams
\begin{equation}\label{Bspecial}
\xymatrix{ X[p](\mathbf{q}) \ar[r]^-{\de} \ar@{=}[d]& X[p](\mathbf{1})^q \ar@{=}[r] & X(\mathbf{p})^q \ar[d]^{(\de)^q} \\
X(\mathbf{pq}) \ar[rr]_{\de}  & & X(\mathbf{1})^{pq}. } \\  
\end{equation}
Iterating, with $\bB^0X = X$, define $\bB^{n+1}X = \bB(\bB^{n}X)$ for $n\geq 0$.  The $\sF$-$G$-spaces 
$\bB^nX$ for $n\geq 1$ are special and, since $(\bB^nX)_1$ is $G$-connected, they 
are also grouplike.  
\end{defn} 

\begin{defn}\label{notone}
Let $\bS_G^C X$ denote the resulting classical $G$-prespectrum with $n$th $G$-space $(\bS_G^CX)_n = (\bB^nX)_1$ for $n\geq 0$. 
Thus its $0$th $G$-space is $X_1$ and, by \autoref{Ggpcomp}, its structure map $X_1 \rtarr \OM (\bS_{G}^C X)_1$ is a group completion
and the structure maps $(\bS_G^CX)_n \rtarr \OM (\bS_{G}^C X)_{n+1}$ for $n\geq 1$ are weak $G$-equivalences. If we redefine $(\bS_G^CX)_0$ to be $\OM(\bS_G^CX)_1$, then $\bS_G^CX$ is a classical $\OM$-$G$-spectrum with a group completion $\io\colon X_1\rtarr (\bS_G^CX)_0$.   This means that $\bS_G^C$ is a Segal machine as in \autoref{machine}, but with $G$-action.
\end{defn}

When $G$ is trivial, this machine plays a special role: any classical infinite loop space machine as in \autoref{machine} is equivalent to this Segal machine \cite{MT}.  The proof  in \cite{MT} directly generalizes equivariantly to classical $G$-spectra.   It makes essential use of the fact that the Segal machine produces $\sF\sF$-$G$-spaces, namely functors from $\sF$ to $\sF$-$G$-spaces.  However, this construction does {\em not} work to construct {\em genuine} $G$-spectra from $\sF$-$G$-spaces: there is no evident way to build in deloopings by non-trivial representations of $G$.  

\subsection{The conceptual version of the Segal machine}\label{Segalconc}

The more conceptual variants of the nonequivariant Segal machine do generalize to give 
genuine $G$-spectra. These variants do {\em not} make use of the 
functor $F\colon \DE^{op} \rtarr \sF$.  That is, underlying simplicial $G$-spaces play no role in their
construction.  We follow the nonequivariant exposition of \cite{MMSS}. 
We first introduce
notation for the categories of enriched functors that we shall be using.  Recall \autoref{ambient}.

\begin{notn}  For a $G\sU_{\ast}$-category $\sE$, let $\Fun(\sE,\UG)$ denote the category
of $G\sU_{\ast}$-functors $\sE\rtarr\UG$ and $G\sU_{\ast}$-natural transformations between them. 
When $G$ acts trivially on $\sE$, as is the case of $\sF$, a $G\sU_{\ast}$-functor defined on $\sE$
takes values on morphisms in the fixed point spaces $\UG(X,Y)^G = G\sU_\ast(X,Y)$ of based $G$-maps $X\rtarr Y$.  We often use the alternative notation $\Fun(\sE,G\sU_\ast)$ in that case. 
\end{notn} 

\begin{defn}\label{WGspace} A {\em $\sW_G$-$G$-space} $Y$ is a $G\sU_{\ast}$-functor $\sW_G \rtarr \UG$.
A map of $\sW_G$-$G$-spaces is a $G\sU_{\ast}$-natural transformation between them.
Regarding $\sF$ as a $G$-trivial $G$-category, it is both a full subcategory of $G\sW$ and a $G$-trivial
full $G$-subcategory of $\sW_G$. We have the functor categories 
\[\Fun(\sF,G\sU_\ast) = \Fun(\sF,\UG)\]
of $\sF$-$G$-spaces and $\Fun(\sW_G,\UG)$ of $\sW_G$-$G$-spaces.  The inclusion $\sF\subset\sW_G$ 
induces a forgetful functor
\[ \bU\colon \Fun(\sW_G,\UG) \rtarr \Fun(\sF,\UG). \]
\end{defn}

As a matter of elementary category theory (see e.g. \cite{MMSS}), the functor $\bU$ has a left adjoint 
prolongation functor 
\[  \bP\colon \Fun(\sF,\UG) \rtarr \Fun(\sW_G,\UG). \]

\begin{defn}\label{structuremaps} Let $Y$ be any $\sW_G$-$G$-space, such as $Y = \bP X$ for an $\sF$-$G$-space $X$.
For $G$-spaces $A, B\in G\sW$, the adjoint $B\rtarr \sW_G(A, A\sma B)$ of the identity map 
on $A\sma B$  can be composed with $Y$ to obtain a $G$-map 
\[ B\rtarr \UG(Y(A), Y(A\sma B)).\]
Its adjoint is a $G$-map
\begin{equation}\label{strucW}
  Y(A)\sma B \rtarr  Y(A\sma B).
 \end{equation}
For example, taking $B=I_+$, it follows that the functor $Y$ preserves based homotopies.
Letting $A=S^n$ and $B = S^1$ with trivial $G$-action, these maps give the structure maps  
\[ \SI Y(S^n) \rtarr Y(S^{n+1})\]
of a classical $G$-prespectrum $\bU_{G\sP}Y$. 

Going further, we define an orthogonal $G$-spectrum given at level $V$ by $Y(S^V)$. The composites
\[ \xymatrix@1{ \sI_{G}(V,V') \ar[r] & \sW_G(S^V, S^{V'}) \ar[r]^-{Y} & \UG(Y(S^V), Y(S^{V'}))\\} \]
of $Y$ and the map induced by one-point compactification of maps $V\rtarr V'$ give a $G\sU$-functor $\sI_{G}  \rtarr G\sU_\ast$ or, equivalently
and more sensibly here, a $G\sU_{\ast}$-functor ${\sI_{G}}_+ \rtarr \UG$. Just as in the nonequivariant case
\cite{MMSS}, letting $A= S^V$ and $B=S^W$ in \autoref{strucW} for representations $V$ and $W$, we obtain the structure $G$-maps 
\[ \SI^W Y(S^V) \rtarr Y(S^{V\oplus W})\]
of an orthogonal $G$-spectrum $\bU_{G\sS}Y$ such that $i^*\bU_{G\sS}Y = \bU_{G\sP}Y$.
\end{defn}

\begin{defn}\label{UPW} The conceptual Segal machine on $\sF$-$G$-spaces is the restriction of the composite
 \[ \bU_{G\sS}\com \bP\colon \Fun(\sF,G\sU) \rtarr G\sS  \]
 to $\FdashG$.
\end{defn}

The functor $\bP$ is a left Kan extension that is best viewed as a categorical tensor product of functors.
For $A\in G\sW$, we have the contravariant $\sU_{\ast}$-functor $A^{\bullet}\colon \sF\rtarr G\sU_\ast$.
Conceptually, $A^{\bullet}$  is the represented functor that sends $\mathbf{n}$ to the function
space $\sW_G(\mathbf{n},A)\iso A^n$ with its induced action by $G$.
By definition,
\begin{equation}
 (\bP X)(A) = A^{\bullet}\otimes_{\sF} X. 
 \end{equation}
Taking $A=\mathbf{n}$, the unit $\et\colon X\rtarr \bU\bP X$ of the adjunction sends 
$x\in X_n$  to  $(\id_{\mathbf{n}},x)$; by Yoneda, $\et$ is a natural isomorphism. 
For a $\sW_G$-$G$-space $Y\colon \sW_G\rtarr\UG$, the counit $\epz\colon \bP\bU Y\rtarr Y$ 
is given on $A\in G\sW$ by the composites
\[ \xymatrix@1{
\sW_G(\mathbf{n},A)\sma Y(\mathbf{n}) \ar[r]^-{Y\sma\id} 
&\UG(Y(\mathbf{n}),Y(A))\sma Y(\mathbf{n}) \ar[r]^-{eval} & Y(A).\\} \]

For an $\sF$-$G$-space $X\colon \sF\rtarr G\sT$, we write $X_n$ for $X(\mathbf{n})$ as before, but 
we follow the usual convention of abbreviating notation by writing $(\bP X)(A) = X(A)$ for general $A\in G\sW$. 
Note that we do not claim that $X(A)$ is nondegenerately based in general.
The following result is a variant of Segal's \cite[Proposition 3.2 and Lemma 3.7]{Seg}.  

\begin{prop}\label{consist} The classical $G$-prespectrum $\bU_{G\sP}\bP X$ is naturally
isomorphic to the classical $G$-prespectrum $\bS_G^C X$ (of \autoref{notone}). Therefore, 
if $X$ is special, then  $\bU_{G\sP}\bP X$ is a positive $\OM$-$G$-prespectrum whose bottom structure map
 is a group completion of $X_1$.
\end{prop}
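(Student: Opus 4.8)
The plan is to prove the claimed natural isomorphism $\bU_{G\sP}\bP X \iso \bS_G^N X$ level by level, identifying the $n$th space of the prespectrum $\bU_{G\sP}\bP X$, namely $(\bP X)(S^n)$, with the $n$th space $(\bB^n X)_1$ of $\bS_G^N X$, and then checking that the two prespectra's structure maps agree under these identifications. The second assertion of the proposition is then immediate: given the isomorphism, \myref{notone} tells us that $\bS_G^N X$ is a positive $\OM$-$G$-spectrum whose bottom structure map $X_1 \rtarr \OM(\bS_G^N X)_1$ is a group completion when $X$ is special (this in turn rests on \myref{Ggpcomp}), so the same holds for $\bU_{G\sP}\bP X$.

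The key step is therefore the space-level identification, which I would carry out inductively following Segal's argument as adapted in \cite{MT}. The base case is the Yoneda identification $(\bP X)(\mathbf{n}) = X_n$ recorded just before the statement, so that $(\bP X)(S^0) = X_1$. For the inductive step, I would show that prolonging $X$ to $\sW_G$ and evaluating at $S^{n+1} = S^n \sma S^1$ can be computed by first forming the ``classifying'' $\sF$-$G$-space $\bB X$ and then prolonging that. Concretely, $S^1$ is the realization of the simplicial set $\De[1]/\partial\De[1]$ regarded via $F\colon \De^{op}\rtarr \sF$ as a simplicial object in finite based sets; since $\bP$ is a left adjoint it commutes with the colimits defining geometric realization, and since the tensor product of functors $A^{\bullet}\otimes_{\sF}(-)$ is continuous in $A$, one gets $(\bP X)(A\sma S^1) \iso |[q]\mapsto (\bP X)(A\sma \mathbf{q})| = |[q]\mapsto (\bP(X[\,\cdot\,\sma A]))(\mathbf q)|$, and evaluating the $\sF$-$G$-space $\bP X[q]$ on $\mathbf p = \mathbf 1$ recovers $|X[q]|\circ(\text{stuff})$; threading this through the definition of $\bB$ gives $(\bP X)(S^{n+1}) \iso (\bB(\bB^n X))_1 = (\bB^{n+1}X)_1$ by the inductive hypothesis applied to the $\sF$-$G$-space $\bB^n X$. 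I would then verify naturality of all these isomorphisms in $X$, and check that the structure maps $\SI(\bP X)(S^n)\rtarr (\bP X)(S^{n+1})$ coming from \myref{structuremaps} with $A=S^n$, $B=S^1$ correspond to the structure maps $(\bB^n X)_1\rtarr \OM(\bB^{n+1}X)_1$ of \myref{notone}; both are ultimately induced by the map $X_1\times I\rtarr |X|$ and its iterates, so this is a matter of unwinding adjunctions.

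The main obstacle I expect is the bookkeeping needed to make the interchange ``$\bP$ commutes with the realization hidden inside $\bB$'' precise and natural, in particular matching the smash-product indexing $X[q](\mathbf p) = X(\mathbf p\sma\mathbf q)$ used to define $\bB$ against the two-variable prolongation $A\mapsto A^{\bullet}\otimes_\sF X$, and keeping track of basepoints and degeneracies so that the standard (fat-free) geometric realization is the one that appears. Here \myref{OK} is useful, since it lets us move freely between the smash-product (wedge) and the product (disjoint-union) forms of the tensor product of functors, which simplifies the identification of $A^{\bullet}\otimes_\sF X$ with the relevant coequalizer. Since \cite[Proposition 3.2, Lemma 3.7]{Seg} and the corresponding arguments in \cite{MMSS, MT} establish exactly this nonequivariantly, and every construction involved ($\bP$, $\otimes_\sF$, geometric realization, loops, suspension) commutes with passage to $H$-fixed points for $H\subset G$, the equivariant statement follows by applying the nonequivariant argument fixed-pointwise; the only genuinely new content is verifying that the maps in sight are $G$-maps, which is automatic from the $G\sT$-enrichment throughout.
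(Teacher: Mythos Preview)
Your proposal is correct and follows essentially the same route as the paper. The paper organizes the induction by introducing, for $A\in G\sW$, an auxiliary $\sF$-$G$-space $A\otimes X$ with $q$th space $(X[q])(A)$, then isolates two lemmas: \myref{cute}, which gives $|X|\iso X(S^1)$ and hence $\bB X\iso S^1\otimes X$, and \myref{ravel}, which gives $A\otimes(B\otimes X)\iso (A\sma B)\otimes X$; the induction $\bB^n X\iso S^n\otimes X$ then follows immediately, and evaluating at $\mathbf 1$ yields $(\bB^n X)_1\iso (\bP X)(S^n)$. This is exactly the Segal argument you cite (his Proposition~3.2 and Lemma~3.7), and your identification of the smash/realization interchange as the crux is on target; the intermediate formula you wrote for $(\bP X)(A\sma S^1)$ is a bit garbled, but the paper's $A\otimes X$ notation is precisely the device that makes that step clean.
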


Segal's nonequivariant proof is briefly sketched in \cite[\S3]{Seg}, in different language. For the reader's convenience, we give a more complete argument in \autoref{SecSegDet}. The following result is the key observation, and it is the crucial point for us.
Via \autoref{Ggpcomp}, it makes the group completion property for $\bU_{G\sP}\bP X$ transparent. 

\begin{prop}\label{cute} For $\sF$-$G$-spaces $X$, there is a natural $G$-homeomorphism 
$$ |X| \rtarr (S^1)^{\bullet}\otimes_{\sF} X = (\bP X)(S^1). $$
\end{prop}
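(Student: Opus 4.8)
The plan is to unravel both sides as explicit coequalizers and check that the identifications match. On the one hand, by definition $(\bP X)(S^1) = (S^1)^{\bullet}\otimes_{\sF} X$ is the coequalizer of the two evident maps
\[\xymatrix{ \displaystyle\bigvee_{\mathbf m,\mathbf n}\sW_G(\mathbf n,S^1)\sma \sF(\mathbf m,\mathbf n)\sma X_m \ar@<.7ex>[r] \ar@<-.7ex>[r] & \displaystyle\bigvee_n \sW_G(\mathbf n,S^1)\sma X_n,}\]
and $\sW_G(\mathbf n,S^1)\iso (S^1)^n$. On the other hand, $|X| = |X\com F|$ is the geometric realization of the simplicial $G$-space $\mathbf q\mapsto X_q$ pulled back along the functor $F\colon \DE^{op}\rtarr \sF$ representing $S^1_s = \Delta[1]/\partial\Delta[1]$, so $F(\mathbf q) = \mathbf q$ (the based set with $q$ non-basepoint elements, namely the $q$ non-degenerate simplices other than the basepoint of $\Delta[1]/\partial\Delta[1]$ in degree $q$). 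Concretely, $|X| = \bigl(\coprod_q X_q\times\Delta^q\bigr)/(\sim)$, the standard realization with degeneracies taken into account.

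First I would make the combinatorics of $F$ explicit: $(\Delta[1]/\partial\Delta[1])_q$ has exactly $q+1$ elements, one of which is the basepoint, so $F(\mathbf q)=\mathbf q$, and for a simplicial operator $\theta\colon \mathbf r\rtarr \mathbf q$ in $\DE$ the induced map $F(\theta)\colon \mathbf q\rtarr \mathbf r$ in $\sF$ is the based map of the corresponding sets of simplices. Second, I would identify $\Delta^q$ with $\sW_G(\mathbf q, S^1)$-data: the key point, already standard nonequivariantly (this is Segal's \cite[Proposition 3.2]{Seg}), is that the coend $\int^{\mathbf q\in\DE} X_q\times\Delta^q$ defining $|X\com F|$ can be rewritten, using the fact that $S^1_s$ (as a functor $\DE^{op}\rtarr\sF$) corepresents evaluation, as the coend over $\sF$ of $X_n$ against $\sW_G(\mathbf n, |S^1_s|) = \sW_G(\mathbf n, S^1)$. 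In other words, the simplicial set $S^1_s$ realizes to $S^1$, and the Yoneda-type manipulation $|X\com F|\iso \int^{\mathbf n} \sW_G(\mathbf n,S^1)\times X_n$ exhibits exactly the tensor product $(S^1)^{\bullet}\otimes_{\sF} X$. Third, I would check this is a $G$-homeomorphism and natural in $X$: the $G$-action on $\sW_G(\mathbf n,S^1)\iso (S^1)^n$ is the conjugation action, which on $S^1$ with trivial action is trivial, matching the (trivial on the simplex coordinates) $G$-action on $|X|$ coming solely from the $G$-action on each $X_n$; naturality in $X$ is immediate since every construction in sight is functorial in the $\sF$-$G$-space.

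The care needed — and the main obstacle — is Lemma~\ref{duh}-style basepoint bookkeeping: passing between $\bigvee/\sma$ (the tensor $\otimes_\sF$ is defined with smash products over the based category $\sF$) and $\coprod/\times$ (geometric realization is built from products $X_q\times\Delta^q$). The realization of the \emph{based} simplicial set $\Delta[1]/\partial\Delta[1]$ must be taken so that the basepoint simplices in each degree are collapsed, and one must confirm that collapsing the degenerate-and-basepoint part of $\coprod_q X_q\times\Delta^q$ gives precisely the wedge-and-smash coequalizer; here Lemma~\ref{OK} is exactly the tool, since it says the based and unbased tensor products agree, so the apparent discrepancy is harmless. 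I would therefore route the argument through $\bU X$ and $\bU((S^1)^{\bullet})$, compute the unbased coend, and then invoke Lemma~\ref{OK} to reinstate basepoints. The remaining verifications — that the simplicial identities encoded by $F$ match the face/degeneracy structure of the standard realization, and that the homeomorphism is $G$-equivariant — are routine given \myref{PlenzWeak}-style formal properties of realization and the explicit description of $F$. The full details, following Segal's sketch in \cite[\S3]{Seg}, are deferred to \S\ref{SecSegDet}.
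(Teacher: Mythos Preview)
Your outline does not isolate the step where the actual content lies. The claimed ``Yoneda-type manipulation'' $|X\com F|\iso \int^{\mathbf n\in\sF}(S^1)^n\times X_n$ is not a formality: the $\sF$-coend on the right imposes strictly more relations than the $\DE^{op}$-coend defining $|X|$, since permutations and the fold maps $\ph_k\colon\mathbf k\rtarr\mathbf 1$ lie in $\sF$ but not in the image of $F$, and one must verify that these extra identifications collapse nothing new. The phrase ``$S^1_s$ corepresents evaluation'' does not name a precise statement, and invoking Segal's sketch is exactly what the paper announces it is filling in. There \emph{is} a valid abstract route --- compute the left Kan extension of the cosimplicial simplex along $F\colon\DE^{op}\rtarr\sF$ and check it agrees with $(S^1)^\bullet$, which unwinds to the fact that geometric realization of simplicial sets commutes with finite products --- but you have not articulated it, and your appeal to \myref{PlenzWeak} at the end is misplaced (that result concerns weak equivalences, not homeomorphisms).

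The paper's argument is concrete and quite different. It writes down an explicit cosimplicial map $\xi\colon \DE_n\rtarr (S^1)^n$, $(t_1,\dots,t_n)\mapsto(t_1,\dots,t_n)$, and then compares filtrations stratum by stratum. For arbitrary $A$ it proves that the $p$th stratum of $X(A)=A^\bullet\otimes_\sF X$ is $\mb{Conf}(A\setminus\{\ast\},p)\times_{\SI_p}(X_p\setminus L_pX)$, using the factorization of any $\sF$-map as projection, essential surjection, and injection. Specializing to $A=S^1$, the interior of $\DE_p$ is exactly the locus of strictly increasing $p$-tuples in $(0,1)$, a fundamental domain for the free $\SI_p$-action on $\mb{Conf}((0,1),p)$; this identifies the strata and yields the homeomorphism. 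The stratum description of $X(A)$ for general $A$ is moreover reused immediately afterward to prove the product formula $A\otimes(B\otimes X)\iso(A\sma B)\otimes X$, so the paper's concrete route earns its keep in a way the abstract one would not.
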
 

\subsection{$\sF_G$ and a factorization of the conceptual Segal machine}\label{sectionFFG}   
We now consider the $G$-category $\sF_G$ of finite based $G$-sets rather than just 
the category $\sF$ of finite based sets.  Use of $\sF_G$ in tandem with $\sF$ is essential to our work.  Input arises most often as $\sF$-$G$-spaces but, by a result of Shimakawa \cite{Shim2} that we shall reprove, these are interchangeable with $\sF_G$-$G$-spaces.  

\begin{defn} Let $\sF_G$ be the $G$-category of finite based $G$-sets
and all based functions, with $G$ acting by conjugation on function sets.  
For convenience and precision, we restrict the objects of $\sF_G$ to 
be the finite $G$-sets $\bn^{\al}$, as in \autoref{finiteGset}. 
Let $\PI_G$ be the $G$-subcategory with the same objects and those
morphisms $\ph\colon \bm^{\al} \rtarr \bn^{\be}$ such that 
$\ph^{-1}(j)$ has at most one element for $1\leq j\leq n$. We obtain 
inclusions $\sF\subset \sF_G$ and $\PI\subset \PI_G$ by restricting 
to the trivial homomorphisms $\epz_n\colon G\rtarr \SI_n$.
\end{defn}

As with $\sF$, we view $\sF_G$ as a category enriched in $G\sT\subset G\sU_*$, using the discrete topology on the based hom $G$-sets  $\sF_G(\bm^{\al},\bn^{\be})$.  The basepoint is the unique map that factors through $\mb{0}$, and $G$ acts by conjugation.  A finite $G$-set is evidently a $G$-CW complex, hence we have an inclusion $\sF_G \subset \sW_G$. 

\begin{defn}\label{FGspace}
A {\em $\PI_G$-$G$-space} $Y$ is a $G\sU_{\ast}$-functor $Y\colon \PI_G\rtarr \UG$ whose underlying functor $\PI\rtarr G\sU_{\ast}$ is a {$\PI$-$G$-space}.
We shall see in \autoref{nicebpt} below that $Y$ then  takes values in nondegenerately based $G$-spaces.  Similarly, an {\em  $\sF_G$-$G$-space} $Y$ is a $G\sU_{\ast}$-functor $Y\colon \PI_G\rtarr\TG$ whose restriction to $\PI_G$ is a $\PI_G$-$G$-space.  Morphisms  in both cases are $G\sU_{\ast}$-natural transformations, and we write $\PIsubG$ and $\FsubG$ for the categories of $\PI_G$-$G$-spaces and $\sF_G$-$G$-spaces.
\end{defn}

Write  $Y(A)$ for the value of $Y$ on $A = \bn^{\al}$ and write $Y_n$ for the value of $Y$ on $\bn^{\epz_n}$.  
Recall from \autoref{prodal} that $(Y_1^n)^{\al}$ denotes $Y_1^n$ with the $G$-action 
\[  g(y_1,\dots,y_n) = (gy_{\alpha(g^{-1})(1)},\dots, gy_{\alpha(g^{-1})(n)}) \]
and can be identified with the based $G$-space $\sT_G(\mathbf{n}^\al,Y_1)=Y_1^{\mathbf{n}^\al}$ when $Y_1$ is nondegenerately based.

\begin{defn}\label{Segalmaps2} For $A = \bn^{\al}$ and a $\PI_G$-$G$-space $Y$, define a 
based $G$-map 
$$\epz\colon A\sma A\rtarr \mathbf 1 = S^0$$ 
by the Kronecker $\delta$ function:  $\de(i,j) = 1$ if $i=j$ and
$\de(i,j)=0$ if $i\neq j$.  Its adjoint is a $G$-map $A\rtarr \sF_G(A,\mathbf 1)$.  Composing with 
$$Y\colon \sF_G(A,\mathbf 1) \rtarr\TG(Y(A),Y_1)$$ 
and taking the adjoint, we obtain a $G$-map $\pa_A \colon A\sma Y(A) \rtarr Y_1$.  Thus $\pa_A(j,y) = (\de_j)_{*}(y)$ for $1\leq j\leq n$,
where $\de_j$ is induced by the $j$th projection $\bn^{\al}\rtarr \mathbf{1}^{\epz_1}$. 
The {\em Segal map} 
\[ \de_A \colon Y(A) \rtarr\TG(A,Y_1) \iso (Y_1^n)^{\al} \]
is the adjoint of $\pa_A$; we usually abbreviate $\de_A$ to $\de$. Note that $\de$ is a $G$-map, although
it components $\de_j$ are usually not.
\end{defn}

\begin{defn}\label{PIGSpec} A $\PI_G$-$G$-space $Y$ is {\em special} 
 if the $\de_A$ are weak $G$-equivalences for all $A=\bn^{\al}$.  
A map $f\colon Y\rtarr Z$ of $\PI_G$-$G$-spaces is a {\em level $G$-equivalence} if each $f\colon Y(A)\rtarr Z(A)$ 
is a weak $G$-equivalence. 
We say that an $\sF_G$-$G$-space is special if its underlying $\PI_G$-$G$-space
is so and that a map of $\sF_G$-$G$-spaces is a level $G$-equivalence if its underlying map of
$\PI_G$-$G$-spaces is so.
\end{defn}

The inclusion $\sF \hookrightarrow \sF_G$ induces a restriction functor 
$$\bU \colon \Fun(\sF_G,\UG) \rtarr \Fun(\sF,\UG)$$
which has a left adjoint prolongation functor 
$$\bP\colon \Fun(\sF, \UG) \rtarr \Fun(\sF_G,\UG).$$ 
The adjunction $(\bP,\bU)$ of \autoref{WGspace} factors as the composite of the analogous adjunctions given by the functors
\[ \xymatrix@1{ \Fun(\sF, \UG) \ar[r]^-{\bP} & \Fun(\sF_G,\UG) \ar[r]^-{\bP} & \Fun(\sW_G,\UG),\\} \]
\[ \xymatrix@1{ \Fun(\sW_G,\UG) \ar[r]^-{\bU} & \Fun(\sF_G,\UG) \ar[r]^-{\bU} & \Fun(\sF, \UG).\\} \]
The units of these adjunctions are isomorphisms since the forgetful functors $\bU$ are induced by the full
and faithful inclusions $\sF \hookrightarrow \sF_G$ and $\sF_G\hookrightarrow \sW_G$.  

\begin{notn}\label{AA} For $A\in \sW_G$,  we now write $A^{\bullet}$ ambiguously for both the restriction to 
$\sF_G\subset\UG$ and the restriction to $\sF\subset \sF_G\subset\UG$ of the represented functor $\UG(-,A)\colon\UG^{op}\rtarr \UG$.
\end{notn}

Then the factorization of $\bP$ as $\bP\bP$ takes the explicit form
\begin{equation}\label{TensorEqual}
A^{\bullet}\otimes_{\sF} X \iso A^{\bullet}\otimes_{\sF_G}\! (\sF_G\otimes_{\sF} X)  =A^{\bullet}\otimes_{\sF_G}\!  \bP X .
\end{equation} 
While our main interest is in $\sF_G$-$G$-spaces and $\sF$-$G$-spaces, we will also use the analogous 
forgetful and prolongation functors relating $\PI_G$-$G$-spaces and $\PI$-$G$-spaces.

The following result shows that the notions defined in \autoref{PIGSpec} for 
$\PI_G$-$G$-spaces correspond to the \gen-\-notions for $\PI$-$G$-spaces.  That should help motivate the latter, which may have seemed unnatural at first sight. The analogue for $\PI_G$-$G$-spaces (or for $\sF_G$-$G$-spaces)  of the dichotomies between 
\gen-special and special  and between \gen-level equivalences and level $G$-equivalences that we have for $\PI$-$G$-spaces
(and thus for $\sF$-$G$-spaces) will be discussed in a model theoretical context in \autoref{Both}.

\begin{thm}\label{compFFG}
The adjoint pairs of functors 
\[ \xymatrix@1{\Fun(\PI,\UG) \ar@<0.5ex>[r]^{\bP}  &  \Fun(\PI_G,\UG) \ar@<0.5ex>[l]^{\bU} }\\ \]
and
\[ \xymatrix@1{\Fun(\sF,\UG) \ar@<0.5ex>[r]^{\bP}  &  \Fun(\sF_G,\UG) \ar@<0.5ex>[l]^{\bU} }\\ \]
specify equivalences of categories. Remembering the cofibration condition from \autoref{Fspace}, these equivalences of categories restrict to equivalences
\[ \xymatrix{ \PIdashG   \ar@<0.5ex>[r]^{\bP}  & \PIsubG \ar@<0.5ex>[l]^{\bU} }\\ \]
and \[ \xymatrix{ \FdashG   \ar@<0.5ex>[r]^{\bP}  & \FsubG. \ar@<0.5ex>[l]^{\bU} }\\ \]
Moreover, the following statements hold.
\vspace{1mm}
\begin{enumerate}[(i)]
\item A $\PI_G$-$G$-space $Y$ is special if and only if the $\PI$-$G$-space $\bU Y$ is \gen-special.
\vspace{1mm}
\item A map $f\colon Y\rtarr Z$ of $\PI_G$-$G$-spaces is a level $G$-equivalence if and only if the map 
$\bU f\colon \bU Y\rtarr \bU Z$ of $\PI$-$G$-spaces is an \gen-level equivalence.  
\vspace{1mm}
\item  A $\PI$-$G$-space $X$ is \gen-special if and only if the $\PI_G$-$G$-space $\bP X$ is special.
\vspace{1mm}
\item  A map $f$ of $\PI$-$G$-spaces is an \gen-level equivalence if and only if the map $\bP f$ of 
$\PI_G$-$G$-spaces is a level $G$-equivalence.
\end{enumerate}
All of these statements remain true with with $\PI$ and $\PI_G$ replaced by $\sF$ and $\sF_G$.
\end{thm}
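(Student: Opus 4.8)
The plan is to establish the categorical equivalence first and then derive the four numbered statements, since (i)--(iv) all compare a condition on a $\PI_G$-$G$-space (resp. $\sF_G$-$G$-space) with the corresponding $\gen$-condition on its restriction to $\PI$ (resp. $\sF$). For the equivalence of categories, I would argue that $\bU$ is induced by the full and faithful inclusion $\PI\hookrightarrow\PI_G$ (resp. $\sF\hookrightarrow\sF_G$), so the unit $\et\colon X\rtarr \bU\bP X$ is an isomorphism by Yoneda (this is already noted in the excerpt). It then remains to show the counit $\epz\colon \bP\bU Y\rtarr Y$ is an isomorphism. The key input is that every object $A=(\mb{n},\al)$ of $\PI_G$ is, as a based $G$-set, a wedge of orbits $G/K_{i,+}$, but more to the point, one can choose a nonequivariant isomorphism $\mb n\cong\mb n$ exhibiting $A$ in the image of the essential image of $\sF$ \emph{after} forgetting the $G$-action, and the $G$-structure on $Y(A)$ is then forced by functoriality. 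Concretely, I would note that the represented $\PI_G$-$G$-space on $(\mb n,\epz_n)$ restricted to $\PI$ is the represented $\PI$-$G$-space on $\mb n$, and that every object of $\PI_G$ is isomorphic in $\PI_G$ (not $G$-equivariantly, but in the $G\sT$-category sense, i.e. via a possibly non-$G$-fixed isomorphism that is nonetheless a morphism) to some $(\mb n,\epz_n)$; hence $\bP$ is essentially surjective, and combined with the fully faithful unit this gives the equivalence. Alternatively and more cleanly, I would invoke the general principle (as in \cite{MMSS}) that restriction along a full, faithful, essentially surjective $G\sT$-functor is an equivalence, and check that $\PI\hookrightarrow\PI_G$ is essentially surjective in the relevant enriched sense because $(\mb n,\al)$ and $(\mb n,\epz_n)$ have the same underlying based set.

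Next I would identify, for a $\PI_G$-$G$-space $Y$ and $A=(\mb n,\al)$, the value $Y(A)$ with the $G$-space $\bU Y(\mb n)$ equipped with a twisted $G$-action, and dually compute $(\bU Y)_n = Y_n = Y(\mb n,\epz_n)$. The crucial computation is that the Segal map $\de_A\colon Y(A)\rtarr \sT_G(A,Y_1)\iso (Y_1^n)^\al$ of Definition~\ref{Segalmaps2}, after restricting to $\PI$ and passing to $\LA_\al$-fixed points, agrees with the Segal map $\de\colon (\bU Y)_n\rtarr (\bU Y)_1^n$ of Definition~\ref{weakFn}. Here I would use Lemma~\ref{lem:lafixedpoints}: $\de_A$ is a weak $G$-equivalence iff $\de_A^H$ is a weak equivalence for all $H\subset G$, and $\sT_G(A,Y_1)^H = G\sT(A|_H, Y_1|_H)$ decomposes as a product of fixed-point spaces $Y_1^{K_i}$ over the $H$-orbits of $A$, exactly matching $(Y_1^n)^{\LA_{\al|_H}}$. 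So $\de_A$ is a weak $G$-equivalence precisely when $\de\colon (\bU Y)_n\rtarr (\bU Y)_1^n$ is a weak $\LA_\be$-equivalence for all $\be\colon H\rtarr\SI_n$ arising as restrictions $\al|_H$ — and letting $A$ range over all $(\mb n,\al)$ with $\al\colon G\rtarr\SI_n$ shows this is equivalent to $\gen$-speciality in the sense of Definition~\ref{weakFn}, using the reduction in Lemma~\ref{speciallevel} (which handles the subgroup homomorphisms $\be\colon H\rtarr\SI_n$ not extending to $G$ — though note that result assumes $G$ finite; for general $G$ one gets all subgroups $\LA_\al$ with $\al$ defined on $G$ directly from letting $A$ vary, and $\gen$-special is defined exactly via those). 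This proves (i), and (ii) is the identical argument applied levelwise to $f$ rather than to $\de$.

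For (iii) and (iv), I would combine (i), (ii) with the equivalence of categories: given a $\PI$-$G$-space $X$, we have $\bU\bP X\cong X$, so $X$ is $\gen$-special iff $\bU\bP X$ is $\gen$-special iff (by (i)) $\bP X$ is special; similarly (iv) follows from (ii). The passage from $\PI$, $\PI_G$ to $\sF$, $\sF_G$ is then formal: speciality and level equivalence for $\sF_G$-$G$-spaces and $\sF$-$G$-spaces are \emph{defined} (Definition~\ref{PIGSpec}, Definition~\ref{weakFn}) via the underlying $\PI_G$- and $\PI$-$G$-spaces, the forgetful functors $\bU$ are compatible with passing to these underlying objects (the square of inclusions $\PI\hookrightarrow\PI_G$, $\PI\hookrightarrow\sF$, etc. commutes), and the prolongations $\bP$ are the corresponding left adjoints, so all four statements transfer verbatim. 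I expect the main obstacle to be the bookkeeping in the computation identifying $\de_A$ (for twisted $A$) with the appropriate twisted Segal map on $\bU Y$ — in particular getting the $\LA_\al$-fixed-point identification to line up on the nose with Lemma~\ref{lem:lafixedpoints}, and being careful that the inclusion $\sF\hookrightarrow\sF_G$ is essentially surjective in the enriched sense needed for the categorical equivalence, so that the counit really is an isomorphism and not merely a level equivalence.
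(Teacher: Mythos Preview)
Your proposal is correct and reaches the same conclusions, but it is more circuitous than the paper's argument in two places, and one of your detours introduces a genuine risk.

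For the equivalence of categories, you argue via enriched essential surjectivity: every $(\mathbf n,\al)$ is isomorphic to $(\mathbf n,\epz_n)$ by a non-$G$-fixed isomorphism in $\PI_G$. This is true, but as you yourself note, the relevant notion of ``essentially surjective'' is delicate in the $G\sT$-enriched setting, and invoking a general principle here requires justification. The paper sidesteps this entirely by writing down the inverse to the counit explicitly: for $A=(\mathbf n,\al)$ and $y\in Y(A)$, set $\epz^{-1}(y)=(\io^{-1},\io_* y)$, where $\io\in\PI_G(A,\mathbf n)$ is the identity on the underlying set $\mathbf n$. This is exactly the non-$G$-fixed isomorphism you had in mind, used concretely rather than abstractly.

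For (i) and (ii), you plan to identify $Y(A)$ with $\bU Y(\mathbf n)$ under a twisted action and then pass to $H$-fixed points, invoking Lemma~\ref{lem:lafixedpoints} on the target and worrying about Lemma~\ref{speciallevel}. The paper's observation makes all of this unnecessary: once you identify $Y(A)$ with $Y_n^\al$ (the space $Y_n$ with $G$-action $g\cdot_\al y=\al(g)_*(gy)$), the projection $\LA_\al\to G$ is an isomorphism under which the $G$-action on $Y_n^\al$ \emph{is} the restricted $\LA_\al$-action on $Y_n$, and likewise $(Y_1^n)^\al$ \emph{is} $Y_1^n$ with its $\LA_\al$-action. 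Hence the Segal $G$-map $\de_A\colon Y(A)\to(Y_1^n)^\al$ is literally the same map as the $\LA_\al$-map $\de\colon Y_n\to Y_1^n$, so one is a weak $G$-equivalence if and only if the other is a weak $\LA_\al$-equivalence. No fixed-point decomposition is needed, and Lemma~\ref{speciallevel} plays no role. Your handling of (iii), (iv), and the passage to $\sF,\sF_G$ matches the paper.
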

\begin{proof}  
For a $G\sU_\ast$-functor $X\colon \Pi\rtarr \UG$ and a finite $G$-set $A=\bn^{\al}$,
$$(\bP X)(A)=A^\bullet \otimes_\PI X,$$ 
where $A^\bullet\colon \PI \rtarr  \UG$ is the functor that sends $\mathbf{m}$ to 
$\PI_G(\mathbf{m}, A)$. Recall that the underlying set of $\PI_G(\mathbf{m},A)$ is 
just $\PI(\mathbf{m},\mathbf{n})$ with $G$-action induced by the action 
of $G$ on $\mathbf{n}$ given by $\al$. The action of $G$ on 
$A^\bullet \otimes_\PI X$ is induced by the diagonal action.

Since the inclusion $\Pi \rtarr \Pi_G$ is full and faithful, the unit of the adjunction $\eta\colon X \rtarr \bU\bP X$ is an isomorphism.  We must show that the counit $\epz\colon \bP\bU Y\rtarr Y$ is an isomorphism for a $G\sU_\ast$-functor $Y\colon \PI_G\rtarr \UG$. 
Again let $A=\bn^{\al}\in \PI_G$. 
Then 
$$\epz\colon (\bP\bU Y)(A) = A^\bullet \otimes_\PI (\bU Y) \longrightarrow Y(A)$$ 
is the $G$-map given by $\epz(\mu, y) = \mu_{*}y$ for $\mu\colon \mathbf{m}\rtarr A$ and $y\in Y_m$, 
where $\mu_{*}\colon Y_m \rtarr Y(A)$.  It is a $G$-homeomorphism with inverse given by $\epz^{-1}(y) = (\iota^{-1}, \iota_{*} y)$
for  $y\in Y(A)$, where $\iota \in \PI_G(A,\mathbf{n})$ is the morphism whose underlying function on $\mathbf{n}$ is the identity. 
Clearly $\epz\epz^{-1} = \id$, and $\epz^{-1}\epz = \id$ since $(\mu,y) \sim (\io^{-1},\io_{*}\mu_{*} y)$. The identification uses the morphism $\io \circ \mu$ in $\PI$.
Since $\epz^{-1}$ is inverse to a $G$-map, it is a $G$-map.
The proof with $\PI$ and $\PI_G$ replaced by $\sF$ and $\sF_G$ is the same. The restriction of the equivalence to our subcategories of interest follows from the fact that a $\PI_G$-$G$-space is precisely a functor out of $\PI_G$ whose restriction to $\Pi$ is a $\PI$-$G$-space.

To prove the rest of the theorem, we use the following explicit description of a  $G\sU_\ast$-functor $Y\colon \Pi_G \rtarr \UG$ in terms of its restriction to  $\Pi$. Recall from \autoref{Xal} that,
for a homomorphism $\al \colon G \rtarr \SI_n$,  $Y_n^{\alpha}$ denotes $Y_n$ with a twisted action $\cdot_{\al}$ of $G$ specified in terms of $\alpha$ and the original action of $G$ by 
$g\cdot_\alpha y = \alpha(g)_{*}(g\cdot y)$.
For a finite $G$-set $\bn^\al$, $\epz^{-1}$ identifies $Y(\bn^\al)$ with the $G$-space $Y_n^{\alpha}$.   

To prove (i) and (ii),  consider a subgroup $\LA_\alpha=\{(h,\al(h))\}$ of $G\times \SI_n$ such that $\LA_{\al}\cap \PI = \{e\}$. Projection onto the first coordinate gives an isomorphism $\LA_\al \rtarr H$. 
The $\LA_\al$-action on $Y_n$ 
obtained by restriction of the action of $G\times \Sigma_n$ is given by 
$$(h, \al(h))\cdot y =\al(h)_{*}(h\cdot y).$$
Thus it coincides with the restriction to $H$ of the $G$-action used to define $Y_n^\al$. This immediately implies (ii). Similarly, the $\LA_\al$-action on $Y_1^n$ obtained by restriction of the action of $G\times \SI_n$ given by the diagonal action of $G$ and the permutation action of $\SI_n$ is 
\[  h(y_1,\dots,y_n) = (hy_{\al(h^{-1})(1)},\dots, hy_{\al(h^{-1})(n)}). \] 
Thus it coincides with the restriction to $H$ of the $G$-action that we used to define $(Y_1^n)^{\al}$. Therefore $\epz^{-1}$ identifies the restriction to $H$ of the Segal $G$-map $\delta\colon Y(\bn^{\al}) \rtarr (Y_1^n)^\al$ with the $\LA_\al$-map $\delta\colon Y_n\rtarr Y_1^n$.  This immediately implies (i), and (iii) and (iv) follow formally from (i) and (ii) since $\et\colon \mathrm{id} \rtarr \bU\bP$ is an isomorphism.

Since statements (i)-(iv) for $\sF$-$G$-spaces and $\sF_G$-$G$-spaces depend only on their underlying $\PI$-$G$-spaces and $\PI_G$-$G$-spaces, they follow immediately.   
\end{proof}

\begin{rem}\label{nicebpt}
 Let $Y$ be a $\PI_G$-$G$-space. As noted in the proof of \autoref{compFFG}, for a finite $G$-set $\bn^\al$, we have a $G$-homeomorphism $Y(\bn^\al)\cong Y_n^\al$. Since $\bU Y$ is a $\Pi$-$G$-space, the inclusion of the basepoint $\ast \rtarr Y_n$ is a $(G\times \SI_n)$-cofibration, and in particular $Y_n^\al$ is in $G\sT$. Thus $Y$ takes values in nondegenerately based $G$-spaces, as claimed in \autoref{FGspace}.
\end{rem}

We record analogues for $\sF_G$-$G$-spaces of Lemmas \ref{Rgen} and \ref{iff} for $\sF$-$G$-spaces.   
While they could be proven directly,
we just observe that the first follows immmediately from \autoref{compFFG}(iii), and the second follows as in the proof of \autoref{iff}.

\begin{defn}\label{Rdefn2}   For a based $G$-space $X$, let $\bR_GX$ denote the $\PI_G$-$G$-space with $\bn^{\al}$th $G$-space $(X^n)^{\al}$.  
Conceptually, it is obtained by prolonging the $\PI$-$G$-space $\bR X$ from \autoref{Requiv}  to a $\PI_G$-$G$-space.
\end{defn}

\begin{lem}\label{Rgen2} If $f\colon X\rtarr Y$ is a weak equivalence of based $G$-spaces, then the induced map
$\bR_Gf\colon  \bR_G X\rtarr \bR_GY$ is a level $G$-equivalence of $\PI_G$-$G$-spaces.
\end{lem}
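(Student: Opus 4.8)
The plan is to reduce the statement to facts already established, with essentially no new computation. By \myref{Rdefn2}, the $\PI_G$-$G$-space $\bR_G X$ is the prolongation $\bP(\bR X)$ of the $\PI$-$G$-space $\bR X$ of \myref{Rdefn} (identified via the natural isomorphism $\et\colon \mathrm{id}\rtarr \bU\bP$ of \myref{compFFG}), and naturality of $\bP$ identifies $\bR_G f$ with $\bP(\bR f)$. First I would invoke \myref{Rgen}: since $f$ is a weak $G$-equivalence, $\bR f\colon \bR X\rtarr \bR Y$ is an \gen-level equivalence of $\PI$-$G$-spaces. Then I would apply \myref{compFFG}(iv), which says that $\bP$ carries \gen-level equivalences of $\PI$-$G$-spaces to level $G$-equivalences of $\PI_G$-$G$-spaces. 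Hence $\bR_G f$ is a level $G$-equivalence, which is the assertion.

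Should one want an argument not routed through \myref{compFFG}, it can be given directly, and I would include it as a remark. Fix $A=(\mathbf{n},\al)$; at level $A$ the map $\bR_G f$ is $(f^n)^{\al}\colon (X^n)^{\al}\rtarr (Y^n)^{\al}$, where the superscript $\al$ denotes the twisted $G$-action of \myref{F_GSpace}. For $H\subset G$, an $n$-tuple is fixed by $H$ under this twisted action exactly when it is fixed by the subgroup $\LA_{\al|_H}\subset G\times \SI_n$ acting on $X^n$ by the diagonal action of $G$ together with the permutation action of $\SI_n$; thus $((X^n)^{\al})^H=(X^n)^{\LA_{\al|_H}}$, compatibly with $f^n$ and with the corresponding statement for $Y$. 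By \myref{lem:lafixedpoints} this fixed set is naturally homeomorphic to a finite product $\prod_i X^{K_i}$ indexed on the orbits of the $H$-set $(\mathbf{n},\al|_H)$, where $K_i\subset H$ is the stabilizer of a chosen point of the $i$th orbit, and under this homeomorphism $((f^n)^{\al})^H$ becomes $\prod_i f^{K_i}$. Since $f$ is a weak $G$-equivalence, each $f^{K_i}$ is a weak equivalence, hence so is the finite product; thus $(f^n)^{\al}$ is a weak $G$-equivalence for every $A$, as needed.

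There is no real obstacle here: the content sits entirely in \myref{lem:lafixedpoints} and \myref{compFFG}, both already proved. The one place calling for (routine) care is the identification of the twisted $H$-fixed points of $(X^n)^{\al}$ with the $\LA_{\al|_H}$-fixed points of $X^n$ -- precisely the bookkeeping performed in the proof of \myref{compFFG}(i)-(ii) -- after which either route closes in a line. I would present the first, conceptual argument as the proof and relegate the direct verification to a remark.
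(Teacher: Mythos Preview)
Your proof is correct and follows essentially the same route as the paper: reduce to \myref{Rgen} and then apply \myref{compFFG} to pass from an \gen-level equivalence of $\PI$-$G$-spaces to a level $G$-equivalence of $\PI_G$-$G$-spaces. The paper cites \myref{compFFG}(iii) rather than (iv), but part (iv) is the statement actually needed here, so your citation is the more accurate one; your optional direct verification via \myref{lem:lafixedpoints} is also exactly the ``direct'' proof the paper alludes to but omits.
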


\begin{lem}\label{iff2}  If $f\colon X\rtarr Y$ is a level $G$-equivalence of $\PI_G$-$G$-spaces, then $X$ is special if and only if $Y$ is special.
\end{lem}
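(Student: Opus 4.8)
The plan is to reduce Lemma~\ref{iff2} for $\PI_G$-$G$-spaces to the already-proven Lemma~\ref{iff} for $\PI$-$G$-spaces, using the equivalence of categories established in \myref{compFFG}. First I would apply the forgetful functor $\bU\colon \Fun(\PI_G,\sT_G)\rtarr \Fun(\PI,G\sT)$ to the given level $G$-equivalence $f\colon X\rtarr Y$. By \myref{compFFG}(ii), $\bU f\colon \bU X\rtarr \bU Y$ is then an \gen-level equivalence of $\PI$-$G$-spaces. Next, by \myref{iff}, $\bU X$ is \gen-special if and only if $\bU Y$ is \gen-special. Finally, by \myref{compFFG}(i), $\bU X$ is \gen-special if and only if $X$ is special, and likewise $\bU Y$ is \gen-special if and only if $Y$ is special. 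Chaining these three biconditionals gives that $X$ is special if and only if $Y$ is special, which is exactly the claim.

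Alternatively, and perhaps even more directly, one could mimic the proof of \myref{iff} verbatim: given the commutative square relating the Segal maps $\de_A\colon X(A)\rtarr (X_1^n)^{\al}$ and $\de_A\colon Y(A)\rtarr (Y_1^n)^{\al}$ with the vertical map $f$, the top horizontal arrow $f\colon X(A)\rtarr Y(A)$ is a weak $G$-equivalence by hypothesis, and the bottom horizontal arrow $(f_1^n)^{\al}\colon (X_1^n)^{\al}\rtarr (Y_1^n)^{\al}$ is a weak $G$-equivalence by \myref{Rgen2} (applied to the weak $G$-equivalence $f_1\colon X_1\rtarr Y_1$, which is the $(\mathbf{1},\epz_1)$ component of $f$). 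Hence one vertical Segal map is a weak $G$-equivalence precisely when the other one is, for every $A=(\mathbf{n},\al)$, which is the definition of specialness. Since \myref{Rgen2} is stated just before this lemma precisely for this purpose, this is the route the authors likely intend.

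There is essentially no obstacle here; the lemma is a formal consequence of machinery already in place, which is why the authors flagged that it ``follows as in the proof of \myref{iff}.'' The only mild point of care is to invoke the naturality square for the Segal maps of $\PI_G$-$G$-spaces from \myref{Segalmaps2} and to note that the bottom arrow of that square is indeed of the form $(f_1^n)^{\al}$ so that \myref{Rgen2} applies; everything else is the same two-out-of-three argument for weak $G$-equivalences used in \myref{iff}. I would write the short direct version rather than routing through the equivalence of categories, since it keeps the argument self-contained and parallel to the earlier lemma.
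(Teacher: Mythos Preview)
Your proposal is correct, and your second approach---mimicking the proof of \myref{iff} directly using the naturality square for the Segal maps together with \myref{Rgen2}---is exactly what the paper indicates when it says the lemma ``follows as in the proof of \myref{iff}.'' Your first approach via \myref{compFFG} is also valid but more roundabout; as you correctly surmise, the paper intends the direct two-out-of-three argument.
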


\section{The homotopical version of the Segal machine}\label{SegHom}

While $\bU_{G\sS}\bP(X)$ gives the most conceptually natural equivariant version of the Segal 
machine on $\sF$-$G$-spaces $X$, it is by itself of negligible use since the 
functor $\bP$ does not generally enjoy good homotopical properties before some kind of homotopical 
approximation of $X$.  We define a classical homotopical Segal machine in \autoref{bar2} and show
its defects.   We define a genuine homotopical Segal machine
and summarize its homotopical properties in \autoref{barFG}, deferring the longer
proofs to \autoref{SEGALPf}.
 
 The homotopical version of 
 the Segal machine is defined in terms of an appropriate enriched version of the two-sided
 categorical bar construction.  We start in \autoref{bar} with a general discussion 
 of that construction. Since it is defined using geometric realization of simplicial $G$-spaces, it is only well-behaved homotopically when those simplicial $G$-spaces are Reedy cofibrant, as we will dissuss in \autoref{bartoo}.
  
\subsection{The categorical bar construction}\label{bar}  

We here define the variant of the bar construction used in the homotopical Segal machine.
We begin with some general definitions that start with a closed symmetric monoidal category $\sV$ 
and a $\sV$-category $\sE$, as in \autoref{catpre}.  We could start more generally with $\sV$ enriched over some related category rather than over itself, but we restrict to $\sV$ for notational simplicity.
Let $Y$ be a contravariant and $X$ be a covariant $\sV$-functor 
$\sE\rtarr \ul{\sV}$.   They are given by objects $Y_n$ and $X_n$ in $\sV$ and maps 
$$ Y\colon \sE(m,n) \rtarr \ul{\sV}(Y_n, Y_m) \ \ \text{and} \ \  X\colon \sE(m,n) \rtarr \ul{\sV}(X_m, X_n) $$
 in $\sV$ that are compatible with composition and identity. These have adjoint evaluation maps 
$$ E_Y\colon \sE(m,n) \otimes Y_n \rtarr Y_m \ \ \text{and}\ \ E_X\colon \sE(m,n)\otimes X_m\rtarr X_n.$$

Associated to the triple $(Y,\sE,X)$ we have a categorical two-sided bar construction $B_{*}(Y, \sE,X)$.  It is a simplicial object in $\sV$.   Its 
$q$-simplex object in $\sV$ is  the coproduct 
 \begin{equation}\label{BBQ}
 B_q(Y,\sE,X) = \coprod_{(n_0,\dots,n_q)} Y_{n_q} \otimes \sE(n_{q-1},n_q) \otimes \cdots \otimes \sE(n_0, n_1)\otimes X_{n_0}, 
 \end{equation}
 where $(n_0, \dots, n_q)$ runs over the $(q+1)$-tuples of objects of $\sE$.
 Its faces $d_i$ for $0\leq i \leq q$ are induced by the 
 evaluation maps of $Y$ and $X$ and by composition in $\sE$, and its degeneracies $s_i$ 
 for $0\leq i\leq q$  are induced by the unit maps of $\sE$.  
In more detail, the $d_i$ are induced by 
$$ \xymatrix@1{\sE(n_{0},n_{1})\otimes X_{n_{0}} \ar[r]^-{E_X} & X_{n_{1}} \ \ \text{if $i= 0$,} \\} $$
$$ \xymatrix@1{\sE(n_{i},n_{i+1})\otimes \sE(n_{i-1},n_{i}) \ar[r]^-{C} & \sE(n_{i-1},n_{i+1})  \ \ \text{if $ 0< i < q$,}\\} $$
$$\xymatrix@1{ Y_{n_q}\otimes \sE(n_{q-1},n_q) \ar[r]^-{E_Y} &  Y_{n_{q-1}} \ \ \text{if $i=q$}.\\} $$
The $s_i$ are induced by
$$\xymatrix@1{ U \ar[r]^-{I} & \sE(n_{i},n_{i})  \ \ \text{if $0\leq i\leq q$.}\\}$$
We shall use the following observation in \autoref{WEDGE}.

\begin{rem}\label{Groth} When $\sV$ is cartesian closed, so that $\otimes=\times$, $B_{*}(Y,\sE,X)$ is the nerve of an internal ``Grothendieck category of elements'' $\sC(Y,\sE,X)$. The objects and morphisms of this category are both objects of $\sV$.  The object of objects is the coproduct 
$$ \sC_0=\coprod_{n}   Y_n\times X_n,$$ 
where $n$ runs over the objects of $\sE$.   The object of morphisms is the coproduct
$$ \sC_1=\coprod_{m,n} Y_n\times \sE(m,n)\times X_m,$$
where $(m,n)$ runs over the pairs of objects of $\sE$.  We have source, target, and identity
maps $S$, $T$, and $I$ given as follows:  $S$ and $T$ are given on components by the evaluation maps of $Y$ and $X$.
$$   S=E_Y\times  \id \colon Y_n\times \sE(m,n)\times X_m\rtarr Y_m\times X_m$$
$$   T = \id\times E_X \colon Y_n\times \sE(m,n)\times X_m \rtarr Y_n\times X_n;$$
$I$ is induced  by the identity maps $U\rtarr \sE(n,n)$ of $\sE$. 
The composition
$$ C\colon \sC_1 \times _{\sC_0} \sC_1 \rtarr \sC_1$$
is induced by the composition in $\sE$.  The point is that when $\otimes = \times$ we have the identification
\[\sC_1 \times _{\sC_0} \sC_1 \iso \coprod_{(m,n,p)}Y_p \times \sE(n,p) \times \sE(m,n) \times X_m. \] 
where $(m,n,p)$ runs over the triples of objects of $\sE$.
\end{rem}

\begin{rem}  The two-sided bar construction goes back to \cite[\S12]{MayClass}. It is described in the categorical form just
given in Shulman \cite[Definition 12.1]{Shul}, where more details can be found.  The construction in this generality and 
in this form is central to the study of weighted colimits in enriched category theory.
\end{rem}

When $\sV$ has a suitable covariant simplex functor $\DE_{\ast}\colon \DE\rtarr \sV$ so that we have a 
``realization'' functor from simplicial objects in $\sV$ to $\sV$, we define $B(Y,\sE,X)$ to be 
the realization of  $B_{*}(Y,\sE,X)$.  Formally, it is the categorical tensor product
$$  B(Y,\sE,X) =  B_{\ast}(Y, \sE, X)\otimes_{\DE} \DE_{\ast}. $$

\begin{rem} The target $\sV$ could be varied here, as it was in the original homological use of the bar construction (in the 1940's).  
There, ``realization'' went from simplicial $R$-modules to chain complexes of $R$-modules.
\end{rem} 

\begin{exmp}\label{approximation}  One example of $Y$ is the represented contravariant $\sV$-functor\linebreak  
$\sE_n =\sE(-,n)$ for an object $n$ of $\sE$.  
For an object $K$ of $\sV$, we have the constant simplicial object $K_{\ast}$ with $q$-simplices $K$ for each $q$.   
Using composition in $\sE$ and the action of $\sE$ on $X$, we obtain a map
\[ \epz_n \colon  B_{\ast} (\sE_n,\sE,X)\rtarr  {(X_n)}_{\ast}  \]   
of simplicial objects in $\sV$ for each $n$.  At each simplicial level, these form a $\sV$-natural transformation of $\sV$-functors out of $\sE$.  Thus, upon realization, these give a $\sV$-natural transformation 
\begin{equation}\label{epz} 
  \epz\colon  B(\sE,\sE, X) \rtarr X. 
 \end{equation}
Using identity morphisms in $\sE$, we also obtain maps
\[  \et_n \colon {(X_n)}_{\ast} \rtarr B_{\ast} (\sE_n,\sE,X) \]
such that  $\epz_n \com \et_n = \id$.   These maps are not $\sV$-natural in $\sE$, as can be readily checked. The standard extra degeneracy argument gives a simplicial homotopy  $h\colon \id \htp \et_n \com \epz_n$.  Passing to realization, which in our applications takes simplicial homotopies to homotopies, these induce homotopies $\id\htp \et_n\epz_n$, giving homotopy equivalences   
\[   B(\sE_n,\sE,X) \htp X_n \]
We view $B(\sE,\sE,X)$ as a canonical approximation to $X$, just as in the original use in homological algebra.  
\end{exmp}

Via the following observation, the general construction gives a natural ``derived'' approximation to $Y\otimes_{\sE} X$. 

\begin{lem}\label{halfsmash}  There is a natural isomorphism
\[  Y\otimes_{\sE} B(\sE,\sE,X) \iso B(Y,\sE,X) \]
of objects in $\sV$ and a natural map $B(Y,\sE,X)\rtarr Y\otimes_{\sE} X$ in $\sV$. 
\end{lem}
\begin{proof}
One proof uses  a comparison of definitions on the level of $q$-simplices for each $q$, but 
the result is also an application of the categorical Fubini theorem, which implies that we can commute realization and $\otimes_{\sE}$.
Details are given in  \cite[Lemma 19.7]{Shul}.  The natural map is obtained by applying $Y \otimes _\sE -$ to the natural map $\epz$ of \autoref{epz}.
\end{proof}

\begin{rem} More generally, if $\Psi\colon \sD \rtarr \sE$ is a $\sV$-functor, $Y$ is a contravariant $\sV$-functor $\sE\rtarr \sV$, 
and $X$ is a covariant $\sV$-functor $\sD\rtarr \sV$, there is an isomorphism
\[
B(\Psi^*Y, \sD, X)\iso Y\otimes_{\sE} B(\sE,\sD,X),
\]
where $\Psi^*Y=Y\circ \Psi$ and each $\sE_n$ is viewed as a contravariant $\sV$-functor $\sD \rtarr \sV$ by precomposition with $\Psi$.
\end{rem}

\subsection{Specializations to spaces and $G$-spaces}\label{bartoo}

Now let us return to our space level context.  The discussion just given applies with $\sV = \sU$ or $\sV=G\sU$ for any $G$,
where we take $\otimes = \times$.  With no basepoint assumptions, this gives a bar construction  $B^{\times}(Y,\sE,X)$. The discussion
also applies with $\sV = \sU_{\ast}$ or $\sV=G\sU_{\ast}$, where we take $\otimes = \sma$.   This is our preferred choice and we write
\begin{equation}\label{Bsmash}
B(Y,\sE,X) = B^{\sma}(Y,\sE,X).
\end{equation}
Focusing on $G\sU_{\ast}$, we make some standing assumptions that will be satisfied in our examples.

\begin{ass}\label{Assbar} As in \autoref{catpre}, we assume that $\sE$ has a zero object $0$, so that each $\sE(0,n)$ and each $\sE(n,0)$ is a point. Then each 
$\sE(m,n)$ has the basepoint $m\to 0 \to n$.  We assume that the functors $Y$ and $X$ are $G\sU_\ast$-enriched so that they are  given by action $G$-maps
$$ Y_n \sma \sE(m,n) \rtarr Y_m \ \ \text{and} \ \  \sE(m,n)\sma X_m \rtarr X_n.$$ By \autoref{duh}, this is equivalent to requiring $G\sU$-enriched functors such that $Y(0)$ and $X(0)$ are a point.
Finally, we assume that the $G$-spaces  $Y_n$,  $\sE(m,n)$, and $X_m$ are all nondegenerately based and that the inclusion of the identity map
\[\ast \rtarr \sE(n,n)\]
is a $G$-cofibration. This will ensure that our bar constructions are given by
the geometric realizations of Reedy cofibrant simplicial $G$-spaces in $G\sT$.
\end{ass} 

\begin{rem}\label{between} 
When $\sV=G\sU_\ast$, we can forget basepoints and consider $\sE$ as  enriched in $G\sU$, and $Y$ and $X$ as landing in $\ul{G\sU}$. The variant $B^\times$ suffers from the defect that $B^\times(\sE_0,\sE,X)$ is not a point, and hence by \autoref{duh}, the functor $B^\times(\sE,\sE,X)$ is enriched over $\sU$ but not over $\sU_\ast$. Previous work, starting with \cite{Shim} and going through earlier versions of this paper, focused on an intermediate variant, namely 
\[  B^{\times}(Y,\sE,X)/B^{\times}(\ast,\sE,X).\]
There are quotient maps relating the three variants
\begin{equation}\label{bvariants}
B^\times (Y,\sE,X) \rtarr B^\times (Y,\sE,X)/B^\times (*,\sE,X) \rtarr B (Y,\sE,X).
\end{equation}
Under our assumption that $\sE$ has a zero object, the map $\epz$ of \autoref{epz} shows that $B^\times (*,\sE,X)$ is contractible, and the nondegeneracy of the basepoints implies that the inclusion
\[B^{\times}(\ast,\sE,X) \rtarr B^{\times}(Y,\sE,X)\] 
is a $G$-cofibration, showing that the first map is an equivalence. The second map is not in general an equivalence, but by \cite[Theorem 3.19]{GMMO}, it gives an equivalence of the associated Segal machines when $\sE=\sF_G$ and $X$ is special. The choice of  \autoref{Bsmash} has distinct technical advantages.   Detailed comparisons are given in \cite{GMMO}.\footnote{Although published earlier, \cite{GMMO} began as a sequel to this paper.}
\end{rem}

The following specialization of \autoref{approximation} makes precise the idea that $B(\sE,\sE,X)$ is an approximation of $X$.  We write it equivariantly for definiteness, under \autoref{Assbar}.

\begin{prop}\label{GOODbar}  For every object $n$ in $\sE$ there are maps
\[\epz_n\colon B(\sE_n,\sE,X) \rtarr X_n  \ \ \text{and}  \ \ \et_n \colon X_n \rtarr B(\sE_n,\sE,X).\]
 such that $\epz_n\et_n = \id$ and $\et_n\epz_n$ is levelwise $G$-homotopic to the identity. The maps $\epz_n$ assemble to give a $G\sU_\ast$-natural transformation between $G\sU_*$-functors $\sE \rtarr \TG$.
\end{prop}

\begin{rem}\label{UhOh}
The homotopical Segal machine is obtained by using examples of two-sided bar constructions to construct
$\sW_G$-$G$-spaces.  They give $G$-spectra by restricting the variable $Y$ to $A^{\ast}$ as in \autoref{AA} for spheres $A=S^V$.  The structure maps of these $G$-spectra come 
from comparison maps of the form
 \begin{equation}\label{barcommuteswithsmash} 
 B(Y,\sE, X) \sma C \rtarr B(Y\sma C,\sE, X). 
 \end{equation}
 These are immediate with our smash product definition of the bar construction, but we would not have them if we tried to use $B^{\times}$.
The point is that \autoref{strucW} in \autoref{structuremaps} does not apply if we use $B^{\times}$ due to the difference 
 between unbased and based enrichment.  To make this more precise, consider the relationship between smash products and products.
 For based spaces $A$, $B$, and $C$,  there is no natural map
 $$ (A\times B)\sma C \rtarr (A\sma C) \times B. $$
 Under the natural isomorphism $(A\times B)\times C \iso (A\times C) \times B$, we collapse out different subspaces to construct the source and target,
 and neither is contained in the other.  Explicitly, writing $a,b,c$ for the basepoints of $A,B,C$ and $x,y,z$ for general points of $A,B,C$, we identify 
 all points $(x,y,c)$ and $(a,b,z)$  with $(a,b,c)$ in the source, but we identify  all points $(a,z,y)$ and $(x,c,y)$ with the point $(a,c,y)$ in the target.
 \end{rem}
  
Our interest is in the case when $\sE$ is $\sF$ or $\sF_G$ or the more 
general categories of operators $\sD$ and $\sD_G$ to be introduced later. 
Note that although $\sF$ is topologically discrete and  
$G$-trivial, we still view it as a category enriched in $\sT_G$.  

 \subsection{The classical homotopical Segal machine}\label{bar2}
 
Specializing from the previous section, let
$Y\colon \sF \rtarr G\sT$ be a contravariant $G\sU_{\ast}$-functor  and  $X\colon \sF\rtarr G\sT$ be a covariant $G\sU_{\ast}$-functor. 
We then have the bar construction $B(Y,\sF,X)$.    
The action of $G$ on it is induced diagonally by the actions on the $Y_n$ and $X_n$.  It is important to note that given our assumptions, $B(Y,\sF,X)$ is a nondegenerately based $G$-space.

For $G\sU_{\ast}$-functors $Y\colon \sF_G^{op} \rtarr \sT_G$ and  $X\colon \sF_G \rtarr \sT_G$, we have the analogous two-sided bar construction $B(Y,\sF_G,X)$. Just as above, it is a nondegenerately based $G$-space.
For its construction, we must remember the action of $G$ on the finite $G$-sets $\sF_G(\bm^{\al},\bn^{\be})$.
While we are interested in general $X$, in both cases we are only interested in particular $Y$, namely those of the form 
$Y = A^{\bullet}$, as in \autoref{AA}. 

Nonequivariantly, Woolfson \cite{Woolf} constructed a homotopical Segal machine
by restricting $B(A^{\bullet}, \sF, X)$ to spheres $A= S^n$.\footnote{This is revisionist. He was writing before the two-sided bar construction was   
formally defined; technically, he worked with $B^\times$.} Equivariantly, we can apply the same construction,
taking  $X$ to be an $\sF$-$G$-space, and $A$ to be in $G\sW$.  For reasons
we now explain, this construction {\em fails} to lead to genuine $\OM$-$G$-spectra, even 
when $X$ is \gen-special. 

When $A = \mathbf{n}$,  $A^{\bullet}$ is the represented functor $\sF_n=\sF(-,\mathbf{n})$, and as $n$ varies we obtain the
$\sF$-$G$-space $B(\sF,\sF,X)$ whose $n$th $G$-space is $B(\sF_n,\sF,X)$.
We have an implicit and important action of $\SI_n$ on source and target; $\SI_n\subset \sF(\mathbf n, \mathbf n)$  
acts from the left on $\sF_n$ by 
postcomposition in $\sF$, and that induces the action on $B(\sF_n,\sF,X)$.  Observe that $B_{*}(\sF_n,\sF,X)$ 
is a simplicial $(G\times \SI_n)$-space and $\epz_n$ is the realization of a map  $B_{*}(\sF_n,\sF,X)\rtarr (X_n)_{*}$ of 
simplicial $(G\times \SI_n)$-spaces.  

\begin{lem}\label{weeny}  Let $X$ be an $\sF$-$G$-space.  Then  $B(\sF,\sF,X)$ is an $\sF$-$G$-space.
\end{lem}
\begin{proof}   For an injection $\ph\colon \bm\rtarr \bn$ in $\PI$,  the induced function
$$   \ph_*\colon  \sF(\mathbf q, \bm) \rtarr \sF(\mathbf q, \bn) $$
is a $(G\times \SI_{\ph})$-cofibration. Therefore the cofibration condition of \autoref{Fspace} follows from \autoref{PlenzCof}, 
applied with $G$ replaced by $G\times\SI_\phi$.
\end{proof}  

Moreover, \autoref{GOODbar} specializes to give the following result.

\begin{prop}\label{crux} Let $X$ be an $\sF$-$G$-space.  Then
the map
$$ \epz\colon B(\sF,\sF,X)  \rtarr X  $$
of $\sF$-$G$-spaces is a level $G$-equivalence, hence $X$ is special if and only if $B(\sF,\sF,X)$ is special.
\end{prop}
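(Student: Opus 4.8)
The plan is to deduce Proposition~\myref{crux} directly from Proposition~\myref{GOODbar} by specializing the general two-sided bar construction machinery to the case $\sE = \sF$ and reading off the conclusion levelwise. First I would recall that, by Proposition~\myref{GOODbar} applied with $\sV = G\sT$ and $\sE = \sF$, the natural map $\epz\colon B(\sF,\sF,X)\rtarr X$ is a \emph{levelwise $G$-homotopy equivalence} of $G\sT$-functors $\sF \rtarr \sT_G$, with explicit level inverses $\et_n\colon X_n \rtarr B(\sF_n,\sF,X)$ sending $x\in X_n$ to the class of the $0$-simplex $(\id_n, x)$. In particular, for each $n\geq 0$ the map $\epz_n\colon B(\sF_n,\sF,X)\rtarr X_n$ is a $G$-homotopy equivalence, hence a fortiori a weak $G$-equivalence. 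By definition (\myref{weakFn}), a map of $\sF$-$G$-spaces is a level $G$-equivalence precisely when each $f_n$ is a weak $G$-equivalence, so $\epz$ is a level $G$-equivalence. That establishes the first assertion.

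For the second assertion, I would invoke Lemma~\myref{iff}: since $\epz\colon B(\sF,\sF,X)\rtarr X$ is a level $G$-equivalence of $\sF$-$G$-spaces, it follows that $B(\sF,\sF,X)$ is special if and only if $X$ is special. (Here I would note the small subtlety that Lemma~\myref{iff} is stated for \gen-level equivalences and \gen-special $\PI$-$G$-spaces, whereas we want the ``plain'' special/level $G$-equivalence dichotomy; but Lemma~\myref{iff} explicitly also records the second statement for level $G$-equivalences and special $\PI$-$G$-spaces, and an $\sF$-$G$-space is special iff its underlying $\PI$-$G$-space is, by \myref{weakFn}. So one applies the $\PI$-version of \myref{iff} to the underlying $\PI$-$G$-spaces.) Concretely: the relevant commuting square is
\[ \xymatrix{
B(\sF_n,\sF,X) \ar[r]^-{\epz_n} \ar[d]_{\de} & X_n \ar[d]^{\de}\\
B(\sF_1,\sF,X)^n \ar[r]_-{\epz_1^n} & X_1^n,\\} \]
whose horizontal arrows are weak $G$-equivalences, so the left $\de$ is a weak $G$-equivalence iff the right $\de$ is. Since ``special'' for an $\sF$-$G$-space means all the $\de$'s are weak $G$-equivalences, the two conditions are equivalent.

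I do not expect any serious obstacle here: this proposition is essentially a bookkeeping specialization of the already-proven Proposition~\myref{GOODbar} together with Lemma~\myref{iff}. The only point requiring a line of care is the naturality/functoriality claim implicit in ``$\epz$ is a \emph{map of $\sF$-$G$-spaces}'' — i.e. that $B(\sF,\sF,X)$ really is a $G\sT$-functor $\sF\rtarr \sT_G$ and $\epz$ is $G\sT$-natural — but this is exactly the content of Proposition~\myref{GOODbar} (and the surrounding discussion of why the quotient bar construction $B = B^\times/B^\times(\ast,-,-)$, rather than $B^\times$ itself, is $G\sT$-enriched). The remark that $B_\sbt(\sF_n,\sF,X)$ is a simplicial $(G\times\SI_n)$-space and $\epz_n$ is $(G\times\SI_n)$-equivariant is not needed for the statement of \myref{crux} as written, though it is worth keeping in mind for later use with the families $\bF_n$.
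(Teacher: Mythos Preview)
Your proposal is correct and matches the paper's approach exactly: the paper presents \myref{crux} as an immediate specialization of \myref{GOODbar} (with the ``hence'' clause implicitly justified by \myref{iff}), and you have spelled out precisely those two ingredients.
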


\begin{warn}\label{warning} Recall from \autoref{approximation} that while the maps $\epz_n$ are natural with respect to maps in $\sF$, the maps $\eta_n$ are \emph{not}.   In particular, while the map $\epz_n$ is a map of $(G\times \SI_n)$-spaces, the map
$$\et_n\colon X_n\rtarr B(\sF_n, \sF, X)$$ 
is {\em not} $\SI_n$-equivariant.  Explicitly, the 
action of $\SI_n$ occurs on $\sF(-,\bf n)$ in the target and on $X_n$ in the source, so that 
$\et(\si x) = (\id_n,\si x)$ while $\si \et(x) = (\si,x)$.  Thus there is no reason to expect $\epz$ to be a level \gen-equivalence
and no reason to expect $B(\sF,\sF,X)$ to be \gen-special even if $X$ is so.  

We now have two very different ways to get around this problem.  The solution presented here is to exploit the equivalence between $\sF$-$G$-spaces and
$\sF_G$-$G$-spaces.  The problem does not appear when using $\sF_G$, and it is simple to translate along the adjoint equivalence $(\bP,\bU)$.  With Guillou, 
we later \cite{GMMO}\footnote{We repeat that, although published earlier, that paper began as a sequel to this one.} found an alternative solution that does 
not require  use of $\sF_G$  and which has the advantage of making the Segal machine multiplicative.  However,
that solution is less convenient for the comparison with the operadic machine that we develop here.
\end{warn} 

By Propositions \ref{consist} and \ref{crux}, $B(\sF,\sF,X)$ and $X$ can be used interchangeably when
passing to classical $G$-prespectra.   When passing to genuine $G$-spectra, we must consider the two-sided bar construction obtained by replacing $\sF$ with $\sF_G$.  Here, for an $\sF_G$-$G$-space $Y$,  $B(\sF_G,\sF_G,Y)$  is defined at level $\bn^{\al}$ by replacing the left variable $\sF_G$  by the functor $\sF_G(-,\bn^{\al}) \colon \sF_G^{op}\rtarr \sT_G$ represented by $\bn^{\al}$.   \autoref{PlenzCof} implies the analog  of \autoref{weeny}.

\begin{lem}\label{weeny2}  Let $Y$ be an $\sF_G$-$G$-space.  Then  $B(\sF_G,\sF_G,Y)$ is an $\sF_G$-$G$-space.
\end{lem}

Moreover, using \autoref{iff2}, we have  the following more powerful analog of \autoref{crux}.

\begin{prop}\label{crux2}  Let $Y$ be an $\sF_G$-$G$-space. Then the map
$$\epz\colon B(\sF_G,\sF_G, Y)\rtarr Y$$
of $\sF_G$-$G$-spaces is a level $G$-equivalence, so $Y$ is special if and only if $B(\sF_G,\sF_G, Y)$ is special. 
\end{prop} 

We view $\bU B(\sF_G,\sF_G, \bP X)$ as a genuine homotopical approximation to $X$ in view of the following corollary, 
which is immediate from \autoref{compFFG}.  Note that we can identify $X$ with $\bU\bP X$ via the unit of the adjunction.

\begin{cor}\label{crux3} For any $\sF$-$G$-space $X$,  the map $\bU\epz\colon \bU B(\sF_G,\sF_G, \bP X)\rtarr X$ is an \gen-level  equivalence, hence $X$ is \gen-special if and only if $\bU B(\sF_G,\sF_G, \bP X)$ is \gen-special. 
\end{cor}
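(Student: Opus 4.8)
The plan is to transport Proposition~\myref{crux2} across the equivalence of categories furnished by Theorem~\myref{compFFG}, so that the corollary becomes a short deduction. First I would apply Proposition~\myref{crux2} to the particular $\sF_G$-$G$-space $Y = \bP X$: it asserts that the canonical map $\epz\colon B(\sF_G,\sF_G,\bP X)\rtarr \bP X$ is a level $G$-equivalence of $\sF_G$-$G$-spaces.

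Next I would apply the forgetful functor $\bU$ and invoke the version of Theorem~\myref{compFFG}(ii) for $\sF$ and $\sF_G$, which says precisely that $\bU$ carries level $G$-equivalences of $\sF_G$-$G$-spaces to \gen-level equivalences of $\sF$-$G$-spaces. Hence $\bU\epz\colon \bU B(\sF_G,\sF_G,\bP X)\rtarr \bU\bP X$ is an \gen-level equivalence. Finally I would identify $\bU\bP X$ with $X$ along the unit $\et$ of the adjunction, which is a natural isomorphism by Theorem~\myref{compFFG}; this gives the first assertion of the corollary.

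For the ``hence'' clause I would quote Lemma~\myref{iff}: an \gen-level equivalence between $\sF$-$G$-spaces has \gen-special source if and only if it has \gen-special target, so $X$ is \gen-special if and only if $\bU B(\sF_G,\sF_G,\bP X)$ is.

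I do not expect a genuine obstacle here; the substance has already been packaged into Proposition~\myref{crux2} and Theorem~\myref{compFFG}. The only points needing care are bookkeeping ones: keeping the domains and codomains of $\bP$, $\bU$, and the two-sided bar construction straight, and making sure one uses the $\sF$/$\sF_G$ incarnations of Theorem~\myref{compFFG}(ii) and Lemma~\myref{iff} rather than the $\PI$/$\PI_G$ ones.
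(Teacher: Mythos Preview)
Your proposal is correct and matches the paper's approach: the paper states that the corollary is immediate from \myref{compFFG} (together with the unit identification $X\iso\bU\bP X$), which is exactly what you have unpacked by combining \myref{crux2} applied to $\bP X$ with \myref{compFFG}(ii) and then invoking \myref{iff} for the ``hence'' clause.
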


\begin{rem}\label{pedant}  We compare bar constructions along the adjoint equivalence $(\bP,\bU)$ between
$\sF$-$G$-spaces and $\sF_G$-$G$-spaces.  For $\sF$-$G$-spaces $X$, we have
\[ \bP B(\sF,\sF,X) = \sF_G\otimes_{\sF} B(\sF,\sF,X) \iso B(\sF_G,\sF,X). \]
The inclusion $\io\colon \sF\rtarr \sF_G$ induces a natural map of $\sF_G$-$G$-spaces
\[\io_{*}\colon \bP B(\sF,\sF,X) \rtarr B(\sF_G,\sF_G, \bP X) \]
such that the following diagrams commute; the second is obtained from the first by applying $\bU$.
\[  \xymatrix{ 
\bP B(\sF,\sF,X) \ar[d]_{\io} \ar[r]^-{\bP\epz} & \bP X\\
B(\sF_G,\sF_G,\bP X) \ar[ur]_{\epz} }  
\xymatrix{ 
B(\sF,\sF,X) \ar[d]_{\bU \io} \ar[r]^-{\epz} & X\\
\bU B(\sF_G,\sF_G,\bP X) \ar[ur]_{\bU \epz} \\}   \]
In the first, the diagonal arrow $\epz$ is a level $G$-equivalence, but we cannot expect $\io$ and $\bP\epz$ 
to be level $G$-equivalences since that would imply that all three arrows in the second diagram are \gen-level
equivalences, contradicting \autoref{warning}.   In the second diagram, $\bU\epz$ is an \gen-level equivalence
and the other two arrows are level $G$-equivalences.  
\end{rem}  

Using \autoref{consist}, we see that this comparison implies the following comparison of classical $G$-prespectra.

\begin{prop}\label{itsago}   Let $X$ be a special $\sF$-$G$-space.  Then the positive classical $\OM$-$G$-prespectra obtained by prolonging 
$X$ or $B(\sF,\sF,X)$ or $\bU B(\sF_G,\sF_G, \bP X)$  to $\sW_G$-$G$-spaces and then restricting to spheres $S^n$ are 
level $G$-equivalent.  Their bottom structural maps are compatible group completions of $G$-spaces equivalent to $X_1$.
\end{prop}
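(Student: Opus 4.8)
The plan is to deduce the statement formally from the comparison maps already in hand together with the identification $\bU_{G\sP}\bP Z \iso \bS_G^N Z$ of \myref{consist}; the one place that needs a genuine argument is that the Segal machine carries level $G$-equivalences of $\sF$-$G$-spaces to level $G$-equivalences of naive $G$-prespectra.

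First I would collect the maps. By \myref{crux} the map $\epz\colon B(\sF,\sF,X)\rtarr X$ is a level $G$-equivalence of $\sF$-$G$-spaces, and by \myref{crux3} the map $\bU\epz\colon \bU B(\sF_G,\sF_G,\bP X)\rtarr X$ is an \gen-level equivalence, hence in particular a level $G$-equivalence. Since $X$ is special, the second statement of \myref{iff} shows that $B(\sF,\sF,X)$ and $\bU B(\sF_G,\sF_G,\bP X)$ are special as well. After identifying $\bU_{G\sP}\bP(-)$ with $\bS_G^N(-)$ by \myref{consist}, each of the three naive $G$-prespectra in the statement is therefore a positive $\OM$-$G$-prespectrum.

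Next I would show that $\bS_G^N$ preserves level $G$-equivalences. Recall that $(\bS_G^N Z)_n = (\bB^n Z)_1$, where $(\bB Z)_q = |Z[q]|$ and $(Z[q])_p = Z_{pq}$. A level $G$-equivalence $f\colon Z\rtarr Z'$ of $\sF$-$G$-spaces induces a level $G$-equivalence $Z[q]\rtarr Z'[q]$ for each $q$, which viewed as a map of the underlying simplicial $G$-spaces is a degreewise weak $G$-equivalence between Reedy cofibrant simplicial $G$-spaces (\myref{PlenzReedy}, \myref{blanket}); by \myref{PlenzWeak} the map $|Z[q]|\rtarr|Z'[q]|$ is a weak $G$-equivalence, so $\bB f$ is again a level $G$-equivalence. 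Iterating and passing to first spaces shows $(\bB^n f)_1\colon(\bB^n Z)_1\rtarr(\bB^n Z')_1$ is a weak $G$-equivalence for every $n$, i.e. $\bS_G^N f$ is a level $G$-equivalence of $G$-prespectra. Applying this to $\epz$ and to $\bU\epz$ and transporting along the isomorphism of \myref{consist} gives the asserted level $G$-equivalences among the three $G$-prespectra, all of which are positive $\OM$-$G$-prespectra by the previous paragraph.

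Finally I would handle the bottom structural maps. For a special $\sF$-$G$-space $Z$, \myref{notone} identifies the bottom structural map of $\bS_G^N Z$ with $\et\colon Z_1\rtarr \OM(\bB Z)_1 = \OM|Z|$, which is a group completion of Hopf $G$-spaces by \myref{Ggpcomp}. Since the defining map $Z_1\times I\rtarr|Z|$, and hence $\et$, is natural in $Z$, the level $G$-equivalences $\epz$ and $\bU\epz$ fit into commuting squares of Hopf $G$-maps exhibiting the three bottom structural maps as compatible group completions; their sources $B(\sF,\sF,X)_1$, $X_1$, and $\bU B(\sF_G,\sF_G,\bP X)_1$ are linked by the weak $G$-equivalences $\epz_1$ and $(\bU\epz)_1$. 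The one genuinely technical step is the middle one, that $\bB$ preserves level $G$-equivalences, and it is immediate from Reedy cofibrancy and \myref{PlenzWeak}; everything else is bookkeeping with the results cited above.
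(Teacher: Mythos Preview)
Your argument is correct and complete, but it takes a different route from the paper.  The paper does not prove that $\bS_G^N$ (equivalently, the iterates $\bB^n$) preserves level $G$-equivalences.  Instead, it prolongs the commutative triangle of \myref{pedant} to $\sW_G$, obtaining for each $A\in G\sW$ a commutative diagram
\[ \xymatrix{
B(A^{\bullet},\sF,X) \ar[d] \ar[r] & X(A) \\
B(A^{\bullet},\sF_G,\bP X) \ar[ur] & \\} \]
and then applies \myref{consist} separately to each of the three special $\sF$-$G$-spaces to see that each prolongation, restricted to spheres, is a positive $\OM$-$G$-prespectrum.  Taking $A=S^0$ gives compatible equivalences with $X_1$; taking $A=S^1$ and invoking \myref{Ggpcomp} gives compatible group completions, which forces the comparison maps to be weak $G$-equivalences at level~$1$.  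Because all three are positive $\OM$-$G$-prespectra, a weak $G$-equivalence at level~$1$ propagates to all higher levels.

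Your route is more direct: you prove once that $\bB$ carries level $G$-equivalences to level $G$-equivalences (via Reedy cofibrancy and \myref{PlenzWeak}) and then iterate.  This has the advantage of yielding the level equivalences at every $n$ simultaneously, without the bootstrapping step through the $\OM$-prespectrum structure; it also makes explicit a useful invariance property of the simplicial Segal machine.  The paper's route avoids having to track Reedy cofibrancy through the iteration and instead leverages the already-established positive $\OM$-prespectrum property.  Both are short, and either would be acceptable here.
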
  
\begin{proof} 
For any $A$, functoriality of prolongation applied to the second diagram of \autoref{pedant} gives a commutative diagram
\[ \xymatrix{ 
B(A^{\bullet},\sF,X) \ar[d]  \ar[r]  & X(A). \\
B(A^{\bullet},\sF_G,\bP X) \ar[ur]. \\} \]
Here we have used the factorization of prolongation from $\sF$-$G$-spaces to $\sW_G$-$G$-spaces through $\sF_G$-$G$-spaces
and the isomorphism $\bP\bU\iso \Id$, where $\bP$ is prolongation from $\sF$-$G$-spaces to $\sF_G$-$G$-spaces.
By \autoref{iff} and \autoref{pedant}, all three of our $\sF$-$G$-spaces are special.  Restricting to spheres $S^n$, we can apply 
\autoref{consist} to each of them.  Taking $A= S^0$, we see compatible weak $G$-equivalences with $X_1$, and taking $A = S^1$, we see
that the bottom structure maps are compatible group completions.   That implies that we have weak $G$-equivalences
at level $1$. In turn, since we are comparing $\OM$-$G$-prespectra, that implies that we have weak $G$-equivalences at all levels $n$.
\end{proof}

\subsection{The genuine homotopical Segal machine}\label{barFG}

The following definition gives a modernized version of Shimakawa's equivariant Segal 
machine \cite{Shim}.\footnote{Orthogonal $G$-spectra had not been developed when \cite{Shim} was
written; he worked with Lewis-May $G$-spectra.}  Recall \autoref{AA}. 

\begin{defn}\label{idstar}  Write $\bI Y = B(\sF_G,\sF_G, Y)$ for an $\sF_G$-$G$-space $Y$;  thus $\bI $ is a functor $\FsubG\rtarr \FsubG$.
The Segal machine $\bS_G^{\sF_G}$ on $\sF_G$-$G$-spaces is the composite
\[  \xymatrix@1{  \FsubG \ar[r]^-{\bI }  & \FsubG \ar[r]^-{\bP} &  
\Fun(\sW_G,\UG) \ar[r]^-{\bU_{G\sS}} & G\sS.\\} \]
More explicitly, taking $A= S^V$, 
\[  \bS_G(Y)(V) = B(\rbullv , \sF_G, Y) =  \rbullv \otimes_{\sF_G} \bI  Y.\]
The Segal machine $\bS_G^{\sF}$ on $\sF$-$G$-spaces $X$ is defined by
\[  \bS_G^{\sF}X = \bS_G^{\sF_G} \bP X.\]
\end{defn} 

The definition makes sense for any $X$.  When $X$ is special, \autoref{itsago} shows that the 
underlying classical $G$-prespectrum of $\bS_G^{\sF} X$ is equivalent to $\bS_G^C X$ of \autoref{notone}, hence is a
positive $\OM$-$G$-prespectrum with bottom structural map a group completion of $X_1$.

\begin{rem} \label{Shimgp} The group completion property is not easy to see directly from the definition of $\bS_G^{\sF_G}$.  
Shimakawa's strategy \cite[p 357]{Shim1}, not carried out in detail, was to show that for $H\subset G$, Woolfson's 
version of the  nonequivariant Segal machine $\bS (Y^H)$ is equivalent to  $(\bS_G^{\sF_G} Y)^H$, where 
$Y^H$ is the composite of restriction to $\sF$ and the $H$-fixed point functor, so that $(Y^H)_n = Y(\mb{n})^H$, and then to quote the equivalence
of Woolfson's version with Segal's original version.  In contrast, with our proof, this equivalence on fixed points follows 
formally from the group completion property, as is shown quite generally in \cite[Theorem 2.20]{GM3}.
\end{rem}

We are primarily interested in understanding $\bS_G^{\sF}X$ when $X$ is \gen-special.
Recall  that $G\sW$ and $\sW_G$ are the categories of based $G$-CW 
complexes whose respective morphisms are based $G$-maps and all based maps, with $G$ acting by conjugation.
It is standard to say that a functor is linear if it converts homotopy cocartesian squares to homotopy cartesian squares.
We will use a  variant form of the definition that is convenient for displaying a helpful distinction that is needed in our work.

\begin{defn}\label{lineardefn}  A $\sW_G$-$G$-space $Z$ is {\em linear} if  for any $G$-map $f\colon A\rtarr B$ in $G\sW$ with cofiber $i\colon B\rtarr Cf$, 
\[ \xymatrix@1{ Z(A)\ar[r]^-{f_{*}}  & Z(B) \ar[r]^-{i_{*}}   & Z(Cf)\\} \]
is a fibration sequence of based $G$-spaces.
That is, the induced map from $Z(A)$ to the homotopy fiber of $i_{*}$ is a weak $G$-equivalence. 
We say that $Z$ is {\em positive linear}  if this condition holds when $A$ is $G$-connected, but not necessarily in general.
\end{defn} 

We shall need the following companion definitions. 

\begin{defn}\label{conn1}  A $\sW_G$-$G$-space $Z$ {\em preserves connectivity} if $Z(A)$ is $G$-connected 
when $A$ is $G$-connected.
\end{defn}

\begin{defn}\label{ZA}  For a $G$-space $A$ in $\sW_G$, define a  $\sW_G$-$G$-space  $Z[A]$ by  
$$Z[A](B) = Z(A\sma B).$$
\end{defn}

In \autoref{cof}, we adapt and extend nonequivariant arguments of Segal and Woolfson
\cite{Seg, Woolf} to prove the following result.  It is perhaps surprising that we only need 
$X$ to be special, not \gen-special, for the first statement and that we do not know how to
derive either statement from the other.  However, we will only make use of the second statement in this paper.

\begin{thm}\label{Omnibus}   If $X$ is a special
$\sF$-$G$-space, then the $\sW_G$-$G$-space that sends $A$ to $B(A^{\bullet},\sF,X)$
is positive linear and preserves connectivity.  If $Y$ is a special $\sF_G$-$G$-space, such as $\bP X$ for an \gen-special
$\sF$-$G$-space $X$, then $B(A^{\bullet},\sF_G,Y)$ is positive linear and preserves connectivity.
\end{thm}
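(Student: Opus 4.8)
The plan is to reduce both statements, by passage to $H$-fixed points for all (closed) $H\subset G$, to a single nonequivariant assertion about bar constructions over $\sF$, and then to prove that assertion by a cellular quasifibration argument of the kind used by Segal and Woolfson \cite{Seg, Woolf}. Since a map of $G$-spaces is a weak $G$-equivalence exactly when it is a weak $H$-equivalence for every $H$, it suffices to verify the defining condition of \myref{lineardefn} after applying $(-)^H$. Geometric realization, finite products, quotients, and the pullbacks and cones used to form homotopy fibers and cofibers all commute with $(-)^H$, and the morphism sets of $\sF$ are discrete and $G$-trivial; since $A^{\bullet}_n = A^n$, these facts give $B(A^{\bullet},\sF,X)^H\iso B((A^H)^{\bullet},\sF,X^H)$, where $X^H$ is the $\sF$-space $\mb{n}\mapsto X_n^H$, and $X$ special forces $X^H$ special because then $\de^H\colon X_n^H\rtarr (X_1^H)^n$ is a weak equivalence. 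For the $\sF_G$ version the same manipulations apply at the level of simplices, but one must now carry the conjugation $G$-action on the hom $G$-sets $\sF_G(C,C')$ through every degree and split the resulting fixed-point spaces over orbits by \myref{lem:lafixedpoints}; combined with a cofinality comparison in the style of Shimakawa, this identifies $B(A^{\bullet},\sF_G,Y)^H$ up to natural equivalence with $B((A^H)^{\bullet},\sF,Y^H)$, where $Y^H$ is the $\sF$-space $\mb{n}\mapsto Y_n^H$, which is special since $\de\colon Y_n\rtarr Y_1^n$ is already a weak $G$-equivalence on taking $A=(\mb{n},\epz_n)$ trivial in \myref{PIGSpec}. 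Finally $A$ $G$-connected makes every $A^H$ connected and $(Cf)^H = C(f^H)$, so everything comes down to the following.

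\emph{Nonequivariant claim.} If $W$ is a special $\sF$-space and $f\colon A\rtarr B$ is a based map of based CW complexes with $A$ connected, then $B(A^{\bullet},\sF,W)\rtarr B(B^{\bullet},\sF,W)\rtarr B((Cf)^{\bullet},\sF,W)$ is a fibration sequence.

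To prove the claim I would first replace $f$ by its inclusion into the mapping cylinder, using homotopy invariance of $B(-,\sF,W)$ on CW complexes, so that $f$ becomes a based cofibration and $Cf$ is $B/A$; there is no harm, if it helps with cofibrancy, in simultaneously replacing $W$ by the level-equivalent special $\sF$-space $B(\sF,\sF,W)$ of \myref{GOODbar}. It then suffices to show that $p\colon B(B^{\bullet},\sF,W)\rtarr B((B/A)^{\bullet},\sF,W)$ is a quasifibration whose fiber over the basepoint is $B(A^{\bullet},\sF,W)$ — the fiber being so computed because, level by level, the based maps $\mb{n}\rtarr B$ that factor through $A$ are exactly $A^n$, and the bar construction commutes with this pullback of cofibrations. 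For the quasifibration I would filter $B$ by $A = B_{-1}\subset B_0\subset B_1\subset\cdots$ attaching cells. Because the bar construction commutes with the relevant pushouts and filtered colimits and (using Reedy cofibrancy, \myref{blanket}, and \myref{PlenzCof}) preserves the relevant cofibrations, the Dold--Thom gluing lemma for quasifibrations reduces the problem to a single cell attachment $B_{k-1}\cup_{\ph}D^k$; the inductive base case is a repackaging of specialness, namely that $B(-,\sF,W)$ sends finite wedges of $S^0$ to products, which holds since $B(\sF_n,\sF,W)\simeq W_n$ by \myref{GOODbar} and $\de\colon W_n\rtarr W_1^n$ is a weak equivalence. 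Connectedness of $A$ enters to make the squares arising in the induction homotopy cartesian and to control $\pi_0$, precisely as in \cite{Seg, Woolf}.

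The main obstacle will be this quasifibration step: verifying the hypotheses of the Dold--Thom gluing lemma at each cell attachment and propagating the conclusion along the skeletal colimit, while keeping track of the fact that one must work with the reduced bar construction $B = B^{\times}/B^{\times}(\ast,\sF,W)$ of (\ref{Goodbar}) rather than with $B^{\times}$ or the smash variant, since only this version both computes the fibers correctly and carries the structure maps (\ref{barcommuteswithsmash}) needed downstream. The secondary obstacle, specific to the second statement, is the fixed-point bookkeeping for $\sF_G$: tracking the conjugation action on the hom $G$-sets through the bar construction and invoking \myref{lem:lafixedpoints} to split the fixed-point spaces over orbits, so as to make the cofinality identification of $B(A^{\bullet},\sF_G,Y)^H$ with a genuine $\sF$-bar construction precise.
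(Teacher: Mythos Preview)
Your strategy differs from the paper's, and your $\sF_G$ reduction contains a real gap.

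The paper never passes to fixed points. It isolates four properties of a $\sW_G$-$G$-space $Z$---the wedge axiom (the natural map $Z(A\vee B)\to Z(A)\times Z(B)$ is a weak $G$-equivalence), commutation with geometric realization, preservation of Reedy cofibrancy, and preservation of $G$-connectivity---and proves that these together imply positive linearity. The device is an explicit simplicial $G$-space $W_\sbt(B,A)$ with $W_q(B,A)=B\vee\,^q\!A$ and $|W_\sbt(B,A)|\cong Cf$; applying $Z$ levelwise and invoking the wedge axiom compares $Z(W_\sbt(B,A))$ with the simplicial space $q\mapsto Z(B)\times Z(A)^q$, and a theorem that realizations of levelwise Hurewicz $G$-fibrations over levelwise $G$-connected Reedy cofibrant bases are $G$-quasifibrations then yields the fiber sequence directly. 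The four properties are checked separately for the two bar constructions, with the wedge axiom in the $\sF_G$ case established by an explicit zig-zag of internal functors and natural transformations between the relevant categories of elements. Your cellular Dold--Thom induction for the $\sF$ case is the original Segal--Woolfson line and can be completed, but the paper's simplicial model of $Cf$ replaces that entire induction by a single use of the wedge axiom.

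The gap is the asserted identification $B(A^{\bullet},\sF_G,Y)^H\simeq B((A^H)^{\bullet},\sF,Y^H)$. You invoke ``a cofinality comparison in the style of Shimakawa'' but provide none, and you yourself flag it as an obstacle. The $H$-fixed $q$-simplices of the $\sF_G$ bar construction are indexed by sequences of objects $(\mb{n_i},\al_i)$, and every factor depends on the $\al_i$: the space $(A^{(\mb{n_q},\al_q)})^H$ splits over the $H$-orbits of $(\mb{n_q},\al_q|_H)$, the $H$-fixed hom-sets are the $H$-equivariant maps of the restricted $H$-sets, and $Y(\mb{n_0},\al_0)^H$ is controlled by the subgroup $\LA_{\al_0|_H}$ rather than by $H$ alone. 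Collapsing all of this to a bar construction over $\sF$ indexed only on trivial $G$-sets, using only the functor $\mb{n}\mapsto Y_n^H$, is not bookkeeping; it is the substance of the comparison, and the paper explicitly remarks that Shimakawa's argument along these lines was not fully detailed. This is precisely why the paper verifies the wedge axiom for the $\sF_G$ bar construction directly and equivariantly rather than attempting any such reduction.
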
 

We need the following definition to go from this result to an understanding of the $G$-spectrum given by $Y$.

\begin{defn}\label{specialWG}  Let $Z$ be a $\sW_G$-$G$-space.   We say that $Z$ is special if it preserves connectivity and
 its restriction to  $\sF_G$ is a special $\sF_G$-$G$-space.
 \end{defn}
 
We defer the proof of the following key result to \autoref{heredity}.

\begin{lem}\label{part2}  Let $Z$ be a positively linear and special $\sW_G$-$G$-space.  If $A$ is $G$-connected, then $Z[A]$ is a linear and special $\sW_G$-$G$-space.
\end{lem}

The lemma gives input to the following result, which is the technical heart of the Segal machine.  For the sake of completeness, we present a proof in \autoref{heredity}. It is a result originally due to  Segal \cite{Seg2} and Shimakawa \cite{Shim}.  The proof we present follows the blueprint presented in \cite{Seg2,Shim}, while providing more details.

\begin{thm}\label{keystone}  If $Z$ is a linear and special $\sW_G$-$G$-space, then the structure map
$$\tilde{\si}\colon Z(S^0) \rtarr  \OM^VZ(S^V)$$ 
is a weak $G$-equivalence for all representations $V$ of $G$.
\end{thm}

The following consequence is originally due to Segal \cite{Seg2}, Shimakawa \cite{Shim}, and Blumberg \cite{Blum} and builds on nonequivariant results in \cite{BF, MMSS, Puppe, Seg, Woolf}.

\begin{thm}\label{Blumberg}  Let $Z$ be a positively linear and special  $\sW_G$-$G$-space.  Then $\bU_{G\sS}Z$ is a  positive  
$\OM$-$G$-spectrum.
\end{thm}
\begin{proof}  
Let $V$ and $W$ be representations of $G$ and assume that $V^G\neq 0$.   We must prove that the adjoint structure map
 \[ \tilde{\si} \colon Z(S^V) \rtarr \OM^W Z(S^{V\oplus W}) = \OM^W Z(S^V\sma S^W) \]
 is a weak $G$-equivalence.  Since $S^V$ is $G$-connected, $Z[S^V]$ is linear and special by \autoref{part2}.  The conclusion follows by applying \autoref{keystone} with $Z$ there replaced by $Z[S^V]$  here and $V$ there replaced by $W$ here.
 \end{proof}
 
Here now is the fundamental theorem about the Segal machine.

\begin{thm}\label{bigSegal}  Let $X$ be an \gen-special $\sF$-$G$-space. 
Then $\bS_G^{\sF} X$ is a positive  $\OM$-$G$-spectrum. Moreover,  if $S^V\in \sW_G$ and $V^G\neq 0$, then the composite 
$$ X_1 \rtarr B(\sF_G,\sF_G,\bP X)_1 = (\bS_G^{\sF} X)(S^0) \rtarr \OM^V(\bS_G^{\sF}X)(S^V)$$ 
of $\et_1$ and the structure $G$-map is a group completion. 
\end{thm}
\begin{proof}  Let $Z$ be the $\sW_G$-$G$-space $B((-)^\bullet,\sF_G,\bP X)$.  Then $Z$ is positive 
linear and preserves connectivity by \autoref{Omnibus} and its restriction to $\sF$ is an \gen-special $\sF$-$G$-space by 
\autoref{crux3}.   Therefore \autoref{Blumberg} implies the first statement, and the second  follows  from \autoref{itsago} and \autoref{Vgpcomp}. 
\end{proof}

\begin{rem} Clearly \autoref{Blumberg} applies to $Z = \bP X$ when $X$ is \gen-special. Despite \autoref{Omnibus}, we have no such conclusion when $X$ is only special.  In \autoref{Segcom}, we will come back to here to give a partial generalization to compact Lie groups of all results in this section.
\end{rem}

\section{The generalized Segal machine}\label{SegGen}

The input of the Segal infinite loop space machine looks nothing like the input of the operadic machine.
To compare them, we must generalize the natural input of both to obtain common input to which generalizations
of both machines apply.  We explain the generalized Segal machine in this section, postponing consideration
of operads to the next.  We define two equivariant versions of the categories of operators introduced in \cite{MT}.   
One version has finite sets as objects, the other finite $G$-sets, generalizing $\sF$ and $\sF_G$ respectively. 
As in \autoref{sectionFFG}, we show how to construct examples of the second kind from examples of the first kind.  

After defining what it means for a $G$-category of operators to be an $E_{\infty}$ $G$-category of operators, we generalize 
the homotopical version of the Segal machine by generalizing its input from $\sF$-$G$-spaces to $\sD$-$G$-spaces, where 
$\sD$ is any $E_{\infty}$ $G$-category of operators over $\sF$.  We compare the $\sD$ and $\sD_G$ machines to the 
$\sF$ and $\sF_G$ machines by proving that they have equivalent inputs and that they produce equivalent output when fed equivalent input.  Thus the increased generality is more apparent than real. The point of the generalization is that operadic data feed naturally into the  $\sD$-$G$-space rather than the $\sF$-$G$-space machine. We reiterate that the categorical input data of the sequels 
\cite{GMMO3, MayMOM} is intrinsically operadic.

\subsection{$G$-categories of operators $\sD$ over $\sF$ and $\sD_G$ over $\sF_G$}\label{GCF}

\begin{defn}\label{GCO/F} A $G$-category of operators $\sD$ over $\sF$, abbreviated $G$-$CO$ over $\sF$, is a $G\sU_*$-category with objects the based sets $\mathbf{n}$ for $n\geq 0$, together with $G\sU_\ast$-functors
\[\xymatrix@1{ \Pi \ar[r]^-{\io} & \sD \ar[r]^-{\xi} & \sF\\} \] 
such that $\io$ and $\xi$ are the identity on objects and $\xi\com \io$ is the inclusion. Here $G$ acts trivially on $\PI$ and $\sF$. 
We say that $\sD$ is reduced if $\sD(\mathbf{m},\mathbf{n})$ is a point if either $m=0$ or $n=0$, and we restrict attention to 
reduced $G$-$COs$ over $\sF$ henceforward.   A morphism $\nu\colon \sD\rtarr \sE$ of $G$-$CO$s over $\sF$
is a $G\sU_{\ast}$-functor over $\sF$ and under $\PI$. In particular, $\xi\colon \sD\rtarr \sF$ is a map of $G$-$COs$ over $\sF$ for any 
$G$-$CO$ $\sD$ over $\sF$.
\end{defn}

Note that we have maps
\[\PI(\mathbf{m},\mathbf{n})\rtarr \sD(\mathbf{m},\mathbf{n})\rtarr \sF(\mathbf{m},\mathbf{n}) \] 
whose composite is the inclusion. We have the following parallel analogue.

\begin{defn}\label{GCO/FG} A $G$-category of operators $\sD_G$ over $\sF_G$, abbreviated $G$-$CO$ over $\sF_G$, 
is a $G\sU_*$-category, with objects the based $G$-sets $\bn^{\al}$ for $n\geq 0$ and 
$\al\colon G\rtarr \SI_n$, together with $G\sU_\ast$-functors
\[\xymatrix@1{ \Pi_G \ar[r]^-{\io_G} & \sD_G \ar[r]^-{\xi_G} & \sF_G\\} \] 
such that $\io_G$ and $\xi_G$ are the identity on objects and $\xi_G\com \io_G$ is the inclusion.  
We say that $\sD_G$ is reduced if $\sD_G(\bm^{\al} , \bn^{\be})$ is a point if either 
$m=0$ or $n=0$, and we restrict attention to reduced $G$-$COs$ over $\sF_G$ henceforward.
A morphism $\nu_G\colon \sD_G\rtarr \sE_G$ of $G$-$CO$s 
over $\sF_G$ is a $G\sU_{\ast}$-functor over $\sF_G$ and under $\PI_G$.  In particular, $\xi_G\colon \sD_G\rtarr \sF_G$ is a map of $G$-$COs$ over $\sF_G$ for any  $G$-$CO$ $\sD_G$ over $\sF_G$.
\end{defn}

We similarly have $G$-maps 
\[ \Pi_G(\bm^{\al},\bn^{\be}) \rtarr \sD_G(\bm^{\al},\bn^{\be})\rtarr \sF_G(\bm^{\al},\bn^{\be}) \]
whose composite is the inclusion.

Following \cite[Addendum 1.7]{MT}, cofibration conditions will be added to the previous  two definitions in \autoref{adden}.

  The full subcategory $\sD\subset \sD_G$ with objects $\bn$ is a $G$-category of operators over $\sF$.  Conversely, just as we constructed $\PI_G$ and $\sF_G$ from $\PI$ and $\sF$, we can construct a $G$-$CO$  
$\sD_G$ over $\sF_G$ from any $G$-$CO$ $\sD$ over $\sF$.  As noted in \cite[Proposition 6.9]{GMMO3}, up to isomorphism
all $\sD_G$ can be constructed in this fashion.

\begin{con}\label{DtoDG}  Let $\sD$ be a $G$-category of operators over $\sF$.  We define a $G$-category of operators
$\sD_G$ over $\sF_G$ whose full subcategory of objects $\mathbf{n}$ is $\sD$.  The morphism $G$-space
$\sD_G(\bm^{\al},\bn^{\be})$ is the space $\sD(\mathbf{m},\mathbf{n})$, with $G$-action induced by conjugation
and the original $G$-action on $\sD(\mathbf{m}, \mathbf{n})$.  Explicitly, for $f\in \sD_G(\bm^{\al}, \bn^{\be})$, 
\[ g\cdot f= \beta(g) \circ (gf) \circ \al(g^{-1});\]
We check that $g\cdot (h\cdot f) = (gh)\cdot f$ using that $G$ acts trivially on permutations since they 
are in the image of $\PI$.  Composition and identity maps are inherited from $\sD$ and are appropriately equivariant.
\end{con}

A routine verification shows the following.

\begin{lem}\label{littlediagram}
The inclusion $\sD \hookrightarrow \sD_G$ makes the following diagram of $G\sU_\ast$-categories commute.
\begin{equation}\label{littlediag}
\xymatrix{
\Pi \ar[r] \ar@{^{(}->}[d] & \sD \ar[r] \ar@{^{(}->}[d] & \sF \ar@{^{(}->}[d]\\
\Pi_G \ar[r] & \sD_G \ar[r] & \sF_G.\\}
\end{equation}
Moreover, a map $\nu\colon \sD \rtarr \sE$ of $G$-COs over $\sF$ induces a map
\[\nu_G \colon \sD_G \rtarr \sE_G\]
of $G$-COs over $\sF_G$ that is compatible with the inclusions.
\end{lem} 

The following cofibration conditions will hold automatically for categories of operators constructed from operads. 

\begin{adden}\label{adden} We add the following cofibration conditions to the definition of a  $G$-CO $\sD_G$ over $\sF_G$. 
\begin{enumerate}[(i)]
\item  The morphism $G$-spaces $\sD_G(\bm^{\al},\bn^{\be})$ are nondegenerately based.
\item  The inclusions $*\rtarr \sD_G(\bn^{\al},\bn^{\al})$ of identity maps are $G$-cofibrations. 
\item  The map $\sD_G(\bq^{\al},\bm) \rtarr \sD_G(\bq^{\al},\bn)$ induced by an injection $\ph\colon \bm \rtarr \bn$ in $\PI\subset \PI_G$ is a $(G\times \SI_{\ph})$-cofibration, where $\SI_{\ph}$ is as defined in \autoref{Sigph}.  
\end{enumerate}
We add to the definition of a $G$-CO $\sD$ over $\sF$ the requirement that its prolonged category of operators $\sD_G$ satisfies the conditions just specified.   Note that these conditions imply in particular that $\sD$ satisfies the following.
\begin{enumerate}[(i)]
\item  The morphism $G$-spaces $\sD(\bm,\bn)$ are nondegenerately based.
\item  The inclusions $*\rtarr \sD(\bn,\bn)$ of identity maps are $G$-cofibrations. 
\item  The map $\sD(\mathbf{q},\mathbf{m}) \rtarr \sD(\mathbf{q},\mathbf{n})$ induced by an injection $\ph\colon \bm \rtarr \bn$ is a $(G\times \SI_{\ph})$-cofibration.  
\end{enumerate}
\end{adden}

We have the following generalizations of  Definitions \ref{Fspace},  \ref{weakFn}, \ref{FGspace} and \ref{PIGSpec} from $\sF$ and $\sF_G$ to $G$-categories of operators $\sD$ and $\sD_G$.

\begin{defn}\label{Dspace}
A $\sD$-$G$-space $X$ is a $G\sU_{\ast}$-functor $X\colon\sD\rtarr  \UG$ whose restriction to $\PI$ is a $\PI$-$G$-space.  
A map of $\sD$-$G$-spaces is a
$G\sU_{\ast}$-natural transformation. 
We say that $X$ is \gen-special if its underlying $\PI$-$G$-space is \gen-special.  We say that a map of $\sD$-$G$-spaces is an \gen-level equivalence if its underlying map of $\PI$-$G$-spaces is an \gen-level equivalence.  Let $\DdashG$ denote the
category of $\sD$-$G$-spaces. 
\end{defn}

\begin{defn}\label{DGspace}
A $\sD_G$-$G$-space $Y$ is a $G\sU_{\ast}$-functor $Y\colon\sD_G\rtarr \ul{G\sU_*}$ whose restriction to $\PI_G$ is a $\PI_G$-$G$-space. A map of $\sD_G$-$G$-spaces is a $G\sU_{\ast}$-natural transformation. 
We say that $Y$ is special if its underlying $\PI_G$-$G$-space is special.  We say that a map of $\sD_G$-$G$-spaces
is a level $G$-equivalence if its underlying map of $\PI_G$-$G$-spaces is a level $G$-equivalence.
Let $\DsubG$ denote the category of $\sD_G$-$G$-spaces. 
\end{defn}

Generalizing \autoref{MTbeard}, we show in \autoref{beard} that the implied cofibration assumptions result in no loss of generality.   Condition (iii) in \autoref{adden} ensures that the following analogues of \autoref{weeny} hold. In the context of maps  $\nu\colon \sD \rtarr \sF$ or $\nu_G\colon \sD_G\rtarr \sE_G$, they have evident generalizations to bar constructions $B(\sE,\sD,X)$ and $B(\sE_G,\sD_G, Y)$. 

\begin{lem}\label{weeny3}  Let $X$ be a $\sD$-$G$-space.  Then  $B(\sD,\sD,X)$ is a $\sD$-$G$-space.
\end{lem}

\begin{lem}\label{weeny4}  Let $Y$ be a $\sD_G$-$G$-space.  Then  $B(\sD_G,\sD_G,Y)$ is a $\sD_G$-$G$-space.
\end{lem}

\subsection{The equivalence between $\DdashG$ and $\DsubG$}\label{DDGSec} 
We can now generalize \autoref{sectionFFG} to a comparison between  $\sD$-$G$-spaces and $\sD_G$-$G$-spaces.
The forgetful functor 
\[ \bU\colon \mathrm{Fun}(\sD_G, \UG) \rtarr \mathrm{Fun}(\sD, \UG) \]
has a left adjoint prolongation functor 
\[\bP\colon \mathrm{Fun}(\sD, \UG) \rtarr \mathrm{Fun}(\sD_G, \UG).\]
Explicitly, 
\[ (\mathbb{P}X)(\bn^{\al})= \sD_G(-, \bn^{\al})\otimes_\sD X = \bigvee_{m}  \sD_G(\mathbf{m},\bn^{\al}) \sma X_m /\sim, \] 
where $(f, \phi_{*}x)\sim (f\phi, x)$ for a map $\phi\colon \mathbf{k} \rtarr \mathbf{m}$ in $\sD$, an element $x\in X_k$, and a map 
$f\colon \mathbf{m} \rtarr \bn^{\al}$ in $\sD_G(\mathbf{m}, \bn^{\al})$.  (We have written out
this coequalizer of $G$-spaces explicitly to facilitate checks of details).
The following result generalizes \autoref{compFFG} from $\sF$ to an arbitrary $G$-$CO$ over $\sF$.

\begin{thm}\label{compGGG}
The adjoint pair of functors 
\[ \xymatrix@1{\Fun(\sD,\UG) \ar@<0.5ex>[r]^{\bP}  &  \Fun(\sD_G,\UG) \ar@<0.5ex>[l]^{\bU} }\\ \]
specifies an equivalence of categories. Remembering the cofibration condition from Definitions \ref{Dspace} and \ref{DGspace},  this equivalence restricts to an equivalence
\[ \xymatrix@1{\DdashG \ar@<0.5ex>[r]^{\bP}  &  \DsubG. \ar@<0.5ex>[l]^{\bU} }\\ \]
\end{thm}

The proof is very similar to that of the special case $\sD=\sF$ dealt with in \autoref{compFFG}.
Only points of equivariance require comment, and the following lemma is the key to understanding
the relevant $G$-actions.  It identifies the $G$-space $(\bP X)(\bn^{\al})$ with the $G$-space $X_n^{\al}$ of \autoref{Xal}.  

\begin{lem}\label{actact}  For a $G\sU_\ast$-functor $X\colon \sD \rtarr \UG$, the $G$-space $(\bP X)(\bn^{\al})$ is $G$-homeomorphic to the 
$G$-space $X_n^{\al}$, namely $X_n$ with the $G$-action $\cdot_{\al}$ specified by  $g\cdot_\al x= \al(g)_{*}(gx)$. Via this 
homeomorphism, the evaluation maps 
\[\sD_G(\bn^{\al},\mb{p}^{\be})\sma (\bP X)(\bn^{\al}) \rtarr (\bP X)(\mb{p}^{\be})\]
are given on the underlying spaces by the corresponding maps for $X$,
\[\sD(\mb{n},\mb{p})\sma X_n \rtarr X_p.\]
\end{lem}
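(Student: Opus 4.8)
The plan is to run the underlying-space argument from the proof of \myref{compFFG} and then track the $G$-action produced by \myref{DtoDG}. First I would observe that the underlying space of $\sD_G(\mb{m},(\mb{n},\al))$ is $\sD(\mb{m},\mb{n})$ regardless of $\al$, so the underlying space of the coequalizer defining $(\bP X)(\mb{n},\al)$ does not depend on $\al$. Let $j\colon X_n\rtarr (\bP X)(\mb{n},\al)$ send $x$ to the class of $(\id_n,x)$, where $\id_n$ denotes the morphism of $\sD_G(\mb{n},(\mb{n},\al))$ underlain by the identity of $\sD(\mb{n},\mb{n})$. Exactly as for $\et$ in \myref{compFFG}, $j$ is a continuous bijection whose inverse is induced by $(f,x)\mapsto f_*x=X(f)(x)$; well-definedness and continuity of this inverse are formal consequences of the (enriched) functoriality of $X$, so $j$ is a homeomorphism.

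Next I would transport the $G$-action across $j$. On $(\bP X)(\mb{n},\al)$ the action is diagonal, so $g\cdot j(x)$ is the class of $(g\cdot\id_n,\, gx)$. Since the source object $\mb{n}=(\mb{n},\epz_n)$ is $G$-trivial, the conjugation formula of \myref{DtoDG} collapses to $g\cdot\id_n=\al(g)\circ(g\cdot\id_n)\circ\epz_n(g^{-1})=\al(g)$, where $\al(g)\in\SI_n\subset\PI(\mb{n},\mb{n})$ is viewed as a morphism of $\sD$; here we use that morphisms in the image of $\PI$, in particular $\id_n$ and permutations, are $G$-fixed. Applying the coequalizer relation along $\phi=\al(g)\colon\mb{n}\rtarr\mb{n}$ then gives
\[ g\cdot j(x)=[(\al(g),\,gx)]=[(\id_n\circ\al(g),\,gx)]=[(\id_n,\,\al(g)_*(gx))]=j\bigl(\al(g)_*(gx)\bigr)=j(g\cdot_\al x). \]
Hence $j$ is a $G$-homeomorphism $X_n^{\al}\rtarr(\bP X)(\mb{n},\al)$, which is the first assertion.

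For the evaluation maps I would take $h\in\sD_G((\mb{n},\al),(\mb{p},\be))$ with underlying morphism $h\in\sD(\mb{n},\mb{p})$ and apply the structure map of the $\sD_G$-$G$-space $\bP X$ to $(h,j(x))$: postcomposition by $h$ carries the class of $(\id_n,x)$ to the class of $(h\circ\id_n,x)=(h,x)$ in $(\bP X)(\mb{p},\be)$, and applying the inverse homeomorphism $j^{-1}$ at $(\mb{p},\be)$ returns $h_*x=X(h)(x)\in X_p$. So under the homeomorphisms $j$ the evaluation map becomes $(h,x)\mapsto X(h)(x)$, i.e.\ the structure map $\sD(\mb{n},\mb{p})\sma X_n\rtarr X_p$ of $X$, as claimed.

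The one step needing real care --- and the only place where the hypothesis that $\sD_G$ arises from $\sD$ via \myref{DtoDG} is used --- is the second displayed computation: one must remember that the left-hand objects in the coequalizer defining $\bP X$ are the $G$-trivial sets $\mb{m}$, so that the conjugation of \myref{DtoDG} degenerates to postcomposition by $\al(g)$, and that $\al(g)$ is itself $G$-fixed so that $g\cdot\id_n=\al(g)$ holds on the nose. Everything else is the bookkeeping of the case $\sD=\sF$ already carried out in \myref{compFFG}.
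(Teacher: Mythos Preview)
Your proof is correct and follows essentially the same route as the paper's: both define the map $X_n\to(\bP X)(\mb{n},\al)$ by $x\mapsto[(\id_n,x)]$, exhibit the inverse $(f,x)\mapsto f_*x$ via Yoneda, and verify $G$-equivariance through the identity $g\cdot\id_n=\al(g)$ in $\sD_G(\mb{n},(\mb{n},\al))$. The paper runs the equivariance check in the other direction (showing $F(g\cdot_\al x)=g\cdot F(x)$) and simply declares the evaluation-map compatibility ``clear'', whereas you spell it out; but the substance is identical.
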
 

\begin{proof}  Modulo equivariance, this is an application of the Yoneda lemma. Write $\id_{\al}\colon \mathbf{n}\rtarr \bn^{\al}$ for 
$\id\colon \mathbf{n}\rtarr \mathbf{n}$ regarded as an element of $\sD_G(\mathbf{n}, \bn^{\al})$.  Define
\[
F\colon X_n^\al \rtarr \sD_G(-,\bn^{\al})\otimes_\sD X 
\]
by sending $x\in X_n$ to the equivalence class of $(\id_{\al}, x)$. Then $F$ is a $G$-map since 
$$ F(g\cdot_\al x) = (\id_{\al},g\cdot_\al x) = (\id_{\al}, \al(g)_{*}(gx))\sim (\id_{\al} \circ \al(g),gx)=(g\id_{\al},gx).$$
Define an inverse map 
$$F^{-1}\colon \sD_G(-,\bn^{\al})\otimes_\sD X \rtarr X_n^\al$$ 
by sending the equivalence class of $(f,x)\in  \sD_G(\mathbf{m}, \bn^{\al})\sma X_m$ to $f_{*}(x)$, where we think of
$f$ as a map $\mathbf{m}\rtarr \mathbf{n}$ in $\sD$ and interpret $f_{*}(x)$ to mean $X(f)(x)\in X_n$. Note that $F^{-1}$ is well defined.
We have 
$$F^{-1}F(x)=F^{-1}(\id_{\al},x)=\mathrm{id}_{*}x=x$$ 
and 
$$FF^{-1}(f,x)=F(f_{*}x)=(\id_{\al}, f_{*}x)\sim (f,x),$$ 
hence $F$ and $F^{-1}$ are inverse homeomorphisms.  Note that $F^{-1}$ is automatically a $G$-map since it is inverse
to the $G$-map $F$. The compatibility with the action of $\sD_G$ is clear.
\end{proof}  
Using this, we mimic the proof of \autoref{compFFG} to prove the equivalence of the categories of $\sD$-$G$-spaces and $\sD_G$-$G$-spaces.
  
\begin{proof}[Proof of \autoref{compGGG}]
Since the inclusion $\sD \rtarr \sD_G$ is full and faithful, the unit $X\rtarr \bU\bP X$ of the adjunction is an isomorphism for any $G\sU_\ast$-functor $X\colon \sD\rtarr \UG$.
Let $Y$ be a $G\sU_\ast$-functor $\sD_G\rtarr \UG$.  We must show that the counit $\bP\bU Y\rtarr  Y$ of the adjunction is an isomorphism.
A check of definitions shows that the counit $G$-map $(\bP\bU Y)(\bn^{\al})\rtarr Y(\bn^{\al})$ agrees under the isomorphism 
of \autoref{actact} with the map, necessarily a $G$-map, 
$${\id_{\al}}_{*}\colon Y_n^\al \rtarr Y(\bn^{\al})$$ 
induced by the morphism $\id_{\al} \in \sD_G(\mathbf{n}, \bn^{\al})$. 
Writing $_{\al}\!\id\colon \bn^{\al}\rtarr \mathbf{n}$ for $\id\colon \mathbf{n}\rtarr \mathbf{n}$ regarded as an element of 
$\sD_G(\bn^{\al},\mathbf{n})$, we see that 
$_{\al}\!\id$ induces the inverse homeomorphism
$${_{\al}\!\id}_{*}\colon Y(\bn^{\al})\rtarr Y_n^{\al}$$ 
to ${\id_{\al}}_{*}$.  Again, ${_{\al}\!\id}_{*}$ is automatically a $G$-map since it is inverse to a $G$-map.

The restriction of the equivalence to the subcategories of $\sD$-$G$-spaces and $\sD_G$-$G$-spaces follows directly from the definitions of our cofibration conditions, since these depend only on the underlying $\PI$-$G$-spaces.
\end{proof}

Just as for $\sF$-$G$-spaces in \autoref{sectionFFG}, a $\sD$-$G$-space $X$ has two 
$\PI_G$-$G$-spaces associated to it.  We can either apply $\bP$ to its underlying $\PI$-$G$-space or we can apply 
$\bP$ to $X$ and take its underlying $\PI_G$-$G$-space.  \autoref{actact} implies that these two $\PI_G$-$G$-spaces 
coincide.  Therefore the four statements about $\PI$ and $\PI_G$ that are listed in \autoref{compFFG} also hold for 
$\sD$ and $\sD_G$.  We record them in the following two corollaries.

\begin{cor}\label{DFstarspec} A $\sD$-$G$-space $X$ is \gen-special if and only if the $\sD_G$-$G$-space $\mathbb{P}X$ is special.  
A $\sD_G$-$G$-space $Y$ is special if and only if the $\sD$-$G$-space $\mathbb{U}Y$ is \gen-special.  
\end{cor}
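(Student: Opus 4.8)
The plan is to reduce Corollary \ref{DFstarspec@} to the already-established equivalence of categories in \myref{compGGG} together with the identification of $G$-actions in \myref{actact}. The two statements in the corollary are formally equivalent: since $(\bP,\bU)$ is an adjoint equivalence, $\bU\bP X \iso X$ and $\bP\bU Y \iso Y$, so it suffices to prove one of them, say that $X$ is \gen-special if and only if $\bP X$ is special. Both conditions are defined purely in terms of the underlying $\PI$-$G$-space (resp. $\PI_G$-$G$-space), so the corollary follows at once from the corresponding statement for $\PI$ and $\PI_G$, i.e. from \myref{compFFG}(i),(iii) --- \emph{provided} one knows that the underlying $\PI_G$-$G$-space of $\bP X$ agrees with the prolongation $\bP$ of the underlying $\PI$-$G$-space of $X$.

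So the key step I would carry out is exactly the ``two $\PI_G$-$G$-spaces coincide'' observation flagged in the paragraph preceding the corollary. Concretely: let $X$ be a $\sD$-$G$-space with underlying $\PI$-$G$-space $\io^*X$. On the one hand we may form $\bP(\io^*X)$, the prolongation from $\PI$-$G$-spaces to $\PI_G$-$G$-spaces of \S\ref{sectionFFG}; on the other hand we may form $\bP X$ (prolongation of $\sD$-$G$-spaces along $\sD \hookrightarrow \sD_G$) and then restrict along $\io_G\colon \PI_G \rtarr \sD_G$ to get $\io_G^*\bP X$. I would check these are isomorphic as $\PI_G$-$G$-spaces by comparing them objectwise using \myref{actact}: both send $(\mathbf{n},\al)$ to $X_n^{\al}$ (i.e. $X_n$ with the twisted $G$-action $g\cdot_\al x = \al(g)_*(gx)$), and under these identifications the action of a morphism $\mu \in \PI_G((\mathbf{m},\al),(\mathbf{n},\be))$ is in both cases given by $X(\mu)\colon X_m \rtarr X_n$, where $\mu$ is regarded as a morphism $\mathbf{m}\rtarr\mathbf{n}$ in $\PI \subset \sD$. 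This is a routine chase through the definitions of the two coequalizers, exactly parallel to the argument already given in the proof of \myref{compGGG}; the commuting square \eqref{littlediag} of \myref{littlediagram} guarantees the identifications are natural in the appropriate sense.

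Given that identification, the proof finishes in one line: $X$ is \gen-special $\iff$ $\io^*X$ is \gen-special $\iff$ (by \myref{compFFG}(iii)) $\bP(\io^*X) = \io_G^*\bP X$ is special $\iff$ $\bP X$ is special. The second statement follows by applying the first to $Y = \bP\bU Y$ and using $\bU Y = \bU\bP(\bU Y)$, or symmetrically by running the same argument with \myref{compFFG}(i) in place of (iii).

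The main obstacle, such as it is, is bookkeeping rather than mathematics: one must be careful that the ``underlying $\PI_G$-$G$-space'' is taken in a way compatible with all four corners of diagram \eqref{littlediag}, and that the twisted action $\cdot_\al$ appearing in \myref{actact} is literally the same as the one used in \S\ref{sectionFFG} to reconstruct a $\PI_G$-$G$-space from its underlying $\PI$-$G$-space (it is --- both are $g\cdot_\al x = \al(g)_*(gx)$). Since \myref{actact} already packages the nontrivial equivariance computation, no genuinely new estimate or construction is needed, and I would keep the written proof to a short paragraph invoking \myref{compGGG}, \myref{actact}, and \myref{compFFG}.
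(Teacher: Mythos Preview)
Your proposal is correct and follows essentially the same route as the paper: the paper's argument (given in the paragraph immediately preceding the corollary) is precisely to observe that the two $\PI_G$-$G$-spaces associated to $X$ coincide, noting that this follows from the proof of \myref{compGGG} (which is where \myref{actact} is used), and then to invoke \myref{compFFG}. Your write-up spells out the verification in slightly more detail than the paper does, but the strategy is identical.
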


\begin{cor}\label{Dleveleq}  A map $f$ of $\sD$-$G$-spaces is an \gen-level equivalence if and only $\bP f$ is a level 
$G$-equivalence of $\sD_G$-$G$-spaces.  A map $f$ of $\sD_G$-$G$-spaces is a level $G$-equivalence if and 
only if $\bU f$ is an \gen-level equivalence of $\sD$-$G$-spaces.
\end{cor}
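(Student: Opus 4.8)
The plan is to deduce both equivalences from \myref{compFFG}, exactly as for \myref{DFstarspec}. Recall that, by definition, a map of $\sD$-$G$-spaces is an \gen-level equivalence precisely when its underlying map of $\PI$-$G$-spaces, obtained by restriction along $\io\colon\PI\rtarr\sD$, is one; likewise a map of $\sD_G$-$G$-spaces is a level $G$-equivalence precisely when its underlying map of $\PI_G$-$G$-spaces, obtained by restriction along $\io_G\colon\PI_G\rtarr\sD_G$, is one. So the statement will follow once these underlying structures are matched up with the prolongation--restriction adjunction between $\PI$-$G$-spaces and $\PI_G$-$G$-spaces and then \myref{compFFG}(ii) and (iv) are invoked.

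First I would record two identifications of underlying structures, both natural in the relevant functor, so that they also apply to morphisms. For a $\sD_G$-$G$-space $Y$, commutativity of the diagram~\eqref{littlediag} identifies the restriction of $Y$ along $\PI\rtarr\sD\hookrightarrow\sD_G$ with its restriction along $\PI\hookrightarrow\PI_G\rtarr\sD_G$; that is, the underlying $\PI$-$G$-space of $\bU Y$ equals the underlying $\PI$-$G$-space of the underlying $\PI_G$-$G$-space of $Y$. For a $\sD$-$G$-space $X$, I would check that the underlying $\PI_G$-$G$-space of $\bP X$ agrees with the prolongation to a $\PI_G$-$G$-space of the underlying $\PI$-$G$-space of $X$. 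By \myref{actact}, the value at $(\mathbf{n},\al)$ of each is $X_n$ with the twisted action $g\cdot_\al x=\al(g)_*(gx)$, and the evaluation maps over $\PI_G$ reduce to those of the underlying $\PI$-$G$-space; on the hom $G$-spaces $\PI_G((\mathbf{m},\al),(\mathbf{n},\be))=\PI(\mathbf{m},\mathbf{n})$ with conjugation action the two functors likewise agree, since $G$ acts trivially on the image of $\PI$ in $\sD$. This is exactly the assertion, made in the discussion preceding these corollaries, that the two $\PI_G$-$G$-spaces one can build from $X$ coincide.

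With these identifications in hand, the rest is a diagram chase. Let $f$ be a map of $\sD$-$G$-spaces. Then $f$ is an \gen-level equivalence iff its underlying map of $\PI$-$G$-spaces is; by \myref{compFFG}(iv) this holds iff the $\PI\rtarr\PI_G$ prolongation of that map is a level $G$-equivalence of $\PI_G$-$G$-spaces; by the second identification above, this prolonged map is the underlying map of $\PI_G$-$G$-spaces of $\bP f$; and $\bP f$ is a level $G$-equivalence of $\sD_G$-$G$-spaces iff that underlying map is. For a map $f$ of $\sD_G$-$G$-spaces the argument is the mirror image, using \myref{compFFG}(ii) and the first identification above. I expect no genuine obstacle: everything of substance already lives in \myref{compFFG} and \myref{compGGG}, and the only point needing real care is that the two $\PI_G$-$G$-space constructions in the second paragraph are literally \emph{equal}, not merely isomorphic --- the definitions of \gen-level equivalence and level $G$-equivalence refer to the actual underlying structure --- but this is precisely the bookkeeping already carried out in the proof of \myref{compFFG}.
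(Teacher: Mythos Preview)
Your proposal is correct and follows essentially the same approach as the paper: the paper observes that the two $\PI_G$-$G$-spaces one can associate to a $\sD$-$G$-space coincide (by the proof of \myref{compGGG}, via \myref{actact}), whence the four statements (i)--(iv) in \myref{compFFG} about $\PI$ and $\PI_G$ transfer verbatim to $\sD$ and $\sD_G$. Your write-up is simply a more explicit version of this same reduction.
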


\subsection{Comparisons of $\sD$-$G$-spaces and $\sE$-$G$-spaces for $\nu\colon \sD\rtarr \sE$}
\label{compareinSeg} 

Precomposition with $\xi\colon \sD \rtarr \sF$ induces a functor $\xi^\ast \colon \Fun(\sF,\UG) \rtarr \Fun(\sD,\UG)$, and the parallel conditions in Definitions \ref{Fspace} and \ref{Dspace} imply that $\xi^\ast$ restricts to give a functor $\FdashG \rtarr \DdashG$. Moreover, an $\sF$-$G$-space $X$ and the $\sD$-$G$-space $\xi^\ast X = X \com \xi$ have the same underlying $\PI$-$G$-space, hence one is \gen-special or special if and only if the other is so. 
The functor $\xi^\ast$ has a left adjoint given by the left Kan extension
along $\xi$, but that is not well behaved homotopically.  Instead, following \cite[Theorem 1.8]{MT} nonequivariantly, we expect the bar construction to give a variant that is homotopically well-behaved. 
With $\sF$ replaced by $\sD$, the analogue of \autoref{crux} holds and
admits the same proof.

To implement this strategy, we start with an  \gen-special  $\sD$-$G$-space $Y$ and construct from it
an \gen-special  $\sF$-$G$-space  $\xi_{*}Y$ together with a zigzag of  \gen-equivalences between $Y$
and $\xi^*\xi_{*} Y$.  We shall use this to construct a Segal machine whose input is an
\gen-special $\sD$-$G$-space $Y$ and whose output is equivalent to $\bS_G^{\sF} \xi_{*} Y$. 

As in \cite{MT}, we work more generally here, starting from a map $\nu\colon \sD\rtarr \sE$ 
of $G$-COs over $\sF$ and comparing $\sD$-$G$-spaces and $\sE$-$G$-spaces.    
We are mainly interested in the case $\nu=\xi$.  We write
$\nu_G\colon \sD_G \rtarr \sE_G$ for the induced map of $G$-COs over $\sF_G$. 
Focus on $\nu_G$ rather than $\nu$ allows us to focus on $G$-equivalence rather
than \gen -equivalence.
For clarity, we sometimes write $\bU_{\sD}$ and $\bP_{\sD}$ instead of  
$\bU$ and $\bP$ for the adjunction between $\Fun(\sD,\UG)$ and $\Fun(\sD_G,\UG)$, 
and similarly for $\sE$.  

\begin{defn}\label{veestar} For $Z\in \Fun(\sD_G,\UG)$, 
define $\nu^G_{*} Z\in \Fun(\sE_G,\UG)$ 
by
\[\nu^G_{*} Z = B(\sE_G,\sD_G,Z).\]
Here the target is defined levelwise by replacing $\sE_G$ by the composite
$$\sE_G(-,\bn^{\al})\com \nu_G \colon \sD^{op}\rtarr \sT_G,$$
of $\nu_G$ and the $G\sU_{\ast}$-functor $\sE_G^{op}\rtarr \sT_G$ represented by $\bn^{\al}$. 
If $Z$ is moreover a $\sD_G$-$G$-space, so that each $Z(\bn^\al)$ is nondegenerately based, then all of the conditions in \autoref{Assbar} are satisfied, and the bar construction is the geometric realization of a Reedy cofibrant simplicial $G$-space. Moreover, as in \autoref{weeny}, condition (iii) of \autoref{adden} implies that $\nu^G_\ast Z$ is indeed a $\sE_G$-$G$-space. We thus obtain a functor
\[\nu_\ast^G \colon \DsubG \rtarr \EsubG.\]
For  $Y\in \Fun(\sD,\UG)$, define ${\nu}_{*} Y\in \Fun(\sE,\UG)$ by 
$$ \nu_{*}Y = \bU_{\sE} \nu^G_{*} \bP_{\sD} Y =  \bU_{\sE}  B(\sE_G,\sD_G,\bP_{\sD} Y).$$
If $Y$ is moreover a $\sD$-$G$-space, we are again in the situation of \autoref{Assbar}. We thus obtain a functor
\[\nu_*\colon \DdashG \rtarr \EdashG.\] 
\end{defn}

\begin{defn}\label{nuequiv}
A map $\nu_G\colon \sD_G\rtarr \sE_G$ of $G$-$CO$s over $\sF_G$ is a $G$-equivalence if each map
$$\nu_G\colon \sD_G(\bm^{\al},\bn^{\be}) \rtarr \sE_G(\bm^{\al},\bn^{\be})$$
is a weak $G$-equivalence.  A map $\nu\colon \sD\rtarr \sE$ of $G$-$CO$s over $\sF$ is 
an \cof -equivalence if the associated map $\nu_G\colon \sD_G\rtarr \sE_G$ of $G$-$CO$s over $\sF_G$
is a $G$-equivalence.  
\end{defn}

Recall the notion of an \gen-level equivalence of $\PI$-$G$-spaces from 
\autoref{weakFn}.
Recall too that a map of $\PI$-$G$-spaces
is an \gen-level equivalence if and only if its associated map of $\PI_G$-$G$-spaces is a level $G$-equivalence; see 
\autoref{PIGSpec} and \autoref{compFFG}(iv).  These definitions and results are inherited by 
$\sD$ and $\sD_G$ (as in \autoref{Dleveleq}).  The following definition recalls notation from \autoref{bar}.

\begin{defn}  For a $G$-CO $\sD$ over $\sF$ and a fixed $n$, let $\sD_n$ be the corepresented 
$\sD$-$G$-space specified by $\sD_n(\mathbf p) = \sD(\mathbf n,\mathbf p)$, with the action of $\sD$ 
given by composition.  Analogously, we have corepresented $\sD_G$-$G$-spaces ${(\sD_G)}_{{\bn^{\al}}}$.
\end{defn}

\begin{lem}\label{easily} If $\nu\colon \sD\rtarr \sE$ is an \cof-equivalence of $G$-COs over $\sF$, 
then for each $n$, $\nu$ restricts to an \gen-level equivalence of $\sD$-$G$-spaces 
$\sD_n \rtarr \nu^*\sE_n$. 
\end{lem}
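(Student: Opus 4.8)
The plan is to reduce the statement, for each fixed $n$, directly to the definition of an \cof-equivalence by matching up the relevant group actions. First I would unwind what the underlying $\PI$-$G$-space of the corepresented $\sD$-$G$-space $\sD_n$ is: it sends $\mathbf{p}$ to $\sD(\mathbf{n},\mathbf{p})$, with $\si\in\SI_p$ acting by postcomposition (these permutations come from $\PI\subset\sD$) and $g\in G$ acting through the $G\sT$-enrichment. Since permutations lie in the image of $\PI$ and are therefore $G$-fixed, while composition in $\sD$ is a $G$-map, these two actions commute, so $\sD(\mathbf{n},\mathbf{p})$ is a $(G\times\SI_p)$-space; moreover $\nu$ is the identity on the image of $\PI$, so the map it induces $\sD(\mathbf{n},\mathbf{p})\rtarr\sE(\mathbf{n},\mathbf{p})$ is a map of $(G\times\SI_p)$-spaces. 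By the definition of an \gen-level equivalence (\myref{weakFn}), it then suffices to prove that for every $p\geq 0$ and every $\al\colon G\rtarr\SI_p$, this map is a weak $\LA_{\al}$-equivalence, where $\LA_{\al}=\{(g,\al(g))\}$ acts by restriction, that is, by $(g,\al(g))\cdot f=\al(g)\com(gf)$.

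The key step is to recognize this restricted action. By \myref{DtoDG}, the conjugation formula for the $G$-action on the morphism space $\sD_G((\mathbf{n},\epz_n),(\mathbf{p},\al))=\sD(\mathbf{n},\mathbf{p})$ of the $G$-$CO$ $\sD_G$ over $\sF_G$ associated to $\sD$ reduces, because $\epz_n$ is trivial, to exactly $g\cdot f=\al(g)\com(gf)$. Thus, under the projection isomorphism $\LA_{\al}\iso G$, the $\LA_{\al}$-space $\sD(\mathbf{n},\mathbf{p})$ above is the $G$-space $\sD_G((\mathbf{n},\epz_n),(\mathbf{p},\al))$; the same identification holds for $\sE$, and it carries the map induced by $\nu$ to $\nu_G\colon\sD_G((\mathbf{n},\epz_n),(\mathbf{p},\al))\rtarr\sE_G((\mathbf{n},\epz_n),(\mathbf{p},\al))$. (One could instead route this through \myref{actact}, identifying $\bP\sD_n$ with the corepresented $\sD_G$-$G$-space and then citing \myref{Dleveleq}, but the direct comparison above seems cleanest and most self-contained.)

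To finish, I would invoke the hypothesis. Since $\nu$ is an \cof-equivalence, the associated map $\nu_G$ of $G$-$COs$ over $\sF_G$ is a $G$-equivalence in the sense of \myref{nuequiv}, so every structure map $\sD_G((\mathbf{m},\al),(\mathbf{n},\be))\rtarr\sE_G((\mathbf{m},\al),(\mathbf{n},\be))$ is a weak $G$-equivalence. Specializing the source to $(\mathbf{n},\epz_n)$ and letting the target $(\mathbf{p},\al)$ range over all objects of $\sF_G$, the map $\sD_G((\mathbf{n},\epz_n),(\mathbf{p},\al))\rtarr\sE_G((\mathbf{n},\epz_n),(\mathbf{p},\al))$ is a weak $G$-equivalence for every $p$ and every $\al$; by the identification of the previous paragraph this is precisely the assertion that $\nu\colon\sD(\mathbf{n},\mathbf{p})\rtarr\sE(\mathbf{n},\mathbf{p})$ is a weak $\LA_{\al}$-equivalence for all $p$ and all $\al\colon G\rtarr\SI_p$, which is what we needed. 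The only place where care is required --- and hence the main obstacle --- is the bookkeeping of the two commuting actions and the verification that restricting the $(G\times\SI_p)$-action along $\LA_{\al}$ reproduces the conjugation action built into $\sD_G$; once that identification is made, everything else is a formal consequence of the definitions.
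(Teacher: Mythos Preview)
Your proposal is correct and is essentially the paper's argument unwound by hand. The paper's proof is terser: it observes that $\bP\sD_n$ can be identified with the corepresented $\sD_G$-$G$-space ${\sD_G}_{\mathbf n}$ and then invokes \myref{Dleveleq} together with the definition of an \cof-equivalence---precisely the alternative route you mention parenthetically---whereas your main line carries out that identification explicitly at the level of $\LA_{\al}$-actions.
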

\begin{proof} One can check that the cited restriction is a map of $\sD$-$G$-spaces.  Moreover,
an easy comparison of definitions shows that  $\bP\sD_n$ can be identified
with ${(\sD_G)}_{\mathbf{n}}$.  Therefore the conclusion follows from \autoref{Dleveleq} and 
our definition of an \cof -equivalence of $G$-COs over $\sF$, which of course was chosen in order to make
this and cognate results true.
\end{proof}

\begin{thm}\label{nuequiv2} Let $\nu_G\colon \sD_G\rtarr \sE_G$ be a $G$-equivalence of $G$-COs over $\sF_G$.  
\begin{enumerate}[(i)]
\item Let $X$ be an $\sE_G$-$G$-space and $Y$ a $\sD_G$-$G$-space. Then there are natural zigzags of
level $G$-equivalences between $\nu^G_{*}\nu_G^* X$ and $X$ and between $\nu_G^{\ast}\nu^G_{*} Y$ and $Y$.
\item $Y$ is a special $\sD_G$-$G$-space if and only if $\nu^G_{*} Y$ is a special $\sE_G$-$G$-space.
\item A map $f$ of $\sD_G$-$G$-spaces is a level $G$-equivalence if and only if $\nu^G_{*} f$ is a level $G$-equivalence
of $\sE_G$-$G$-spaces.
\end{enumerate}
\end{thm}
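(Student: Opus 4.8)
The plan is to reduce all three statements to two natural families of level $G$-equivalences of bar constructions, following the pattern of the nonequivariant comparison in \cite{MT}. Throughout one uses that every bar construction occurring here is the realization of a Reedy cofibrant simplicial $G$-space (\myref{nondeg1}, \myref{blanket}), so that by \myref{PlenzWeak} a map of such simplicial $G$-spaces that is a levelwise weak $G$-equivalence realizes to a weak $G$-equivalence, and that (\ref{timesbarvsgoodbar}) compares the quotient bar construction $B$ of (\ref{Goodbar}) with $B^{\times}$ up to natural $G$-homotopy equivalence.

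First, for an $\sE_G$-$G$-space $X$, the functor $\nu_G$ induces a map of $\sE_G$-$G$-spaces ${\nu_G}_*\nu_G^* X = B(\sE_G,\sD_G,\nu_G^* X)\rtarr B(\sE_G,\sE_G,X)$ by applying $\nu_G$ to each $\sD_G$-morphism coordinate of each $B^{\times}_q$; here it matters that $\nu_G$ is the identity on objects, so that the covariant and contravariant coordinates of source and target agree on objects. Since $\nu_G$ is a $G$-equivalence, each $B^{\times}_q$-map is a finite product of weak $G$-equivalences and identities, hence this comparison map is a level $G$-equivalence; composing with the levelwise $G$-homotopy equivalence $\epz\colon B(\sE_G,\sE_G,X)\rtarr X$ of \myref{GOODbar} gives the first half of (i). Dually, for a $\sD_G$-$G$-space $Y$ the natural transformation $\sD_G(-,A)\rtarr \sE_G(-,A)\com\nu_G$ induced by $\nu_G$ gives a map of $\sD_G$-$G$-spaces $B(\sD_G,\sD_G,Y)\rtarr \nu_G^*{\nu_G}_* Y$, a level $G$-equivalence by the same argument (now $\nu_G$ acts only in the contravariant coordinate), and together with $\epz\colon B(\sD_G,\sD_G,Y)\rtarr Y$ of \myref{GOODbar} this is the asserted natural zigzag of level $G$-equivalences between $Y$ and $\nu_G^*{\nu_G}_* Y$. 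This proves (i).

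For (ii), specialness of a $\sD_G$-$G$-space (and likewise of an $\sE_G$-$G$-space) is a condition on its underlying $\PI_G$-$G$-space, and by \myref{iff2} a level $G$-equivalence of $\PI_G$-$G$-spaces preserves and reflects specialness; applied to the zigzag just built, $Y$ is special iff $\nu_G^*{\nu_G}_* Y$ is. Because $\nu_G$ is a morphism of $G$-$CO$s over $\sF_G$, it is a functor under $\PI_G$, so restriction along $\nu_G$ leaves the underlying $\PI_G$-$G$-space unchanged; hence $\nu_G^*{\nu_G}_* Y$ is special iff ${\nu_G}_* Y$ is, which is (ii). For (iii), the forward direction is immediate: $B(\sE_G,\sD_G,f)$ is, on each $B^{\times}_q$, the identity except on the covariant coordinate, where it is $f$ evaluated at an object, so if $f$ is a level $G$-equivalence of $\sD_G$-$G$-spaces then each $B^{\times}_q$-map is a weak $G$-equivalence and ${\nu_G}_* f$ is a level $G$-equivalence by \myref{PlenzWeak}. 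For the converse, apply the natural zigzag from (i) to $f$: the rightmost map $\nu_G^*{\nu_G}_* f$ is a level $G$-equivalence (since $\nu_G$ is the identity on objects, $\nu_G^*$ preserves level $G$-equivalences), the horizontal maps of the resulting ladder are level $G$-equivalences, and two applications of the two-out-of-three property at each object force $f$ to be a level $G$-equivalence.

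The genuinely delicate point, rather than the only nontrivial one, is the equivariant bookkeeping: checking that the comparison maps above really are $G\sT$-natural transformations of $G\sT$-functors (equivariance of each $B^{\times}_q$-map, compatibility with faces and degeneracies, and commutation of bar constructions with the $\sE_G$- and $\sD_G$-actions), and keeping track of the cofibration hypotheses of \myref{nondeg1} so that the levelwise statements survive geometric realization. None of this uses finiteness of $G$.
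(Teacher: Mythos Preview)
Your proof is correct and follows essentially the same route as the paper: the same two zigzags via $B(\id,\nu_G,\id)$ and $B(\nu_G,\id,\id)$ together with the augmentations $\epz$ of \myref{GOODbar}, justified levelwise by Reedy cofibrancy and \myref{PlenzWeak}, then (ii) via \myref{iff2} plus the observation that $\nu_G^*$ preserves the underlying $\PI_G$-$G$-space, and (iii) from the naturality of the zigzag. You spell out slightly more detail than the paper does (the explicit description of the $B^{\times}_q$-maps, the separate forward direction of (iii), and the cofibration bookkeeping), but the argument is the same.
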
 
\begin{proof} Abbreviate notation here by writing $\al$ for a finite $G$-set $\bn^{\al}$ and $X_{\al}$ for $X(\bn^{\al})$ when $X$ is an
$\sE_G$-space. Starting with $X$, we have the natural maps 
\begin{equation}
\xymatrix@1{ (\nu^G_{*}\nu_G^{\ast}X)_{\al} = B(\sE_G,\sD_G,\nu_G^{\ast}X)_{\al} \ar[rr]^-{B(\id,\nu_G,\id)}
& & B(\sE_G,\sE_G,X)_{\al} \ar[r]^-{\epz} & X_{\al}. \\}
\end{equation}
Starting with $Y$ we have the natural maps
\begin{equation}\label{Ycomp}
\xymatrix@1{ (\nu_G^{\ast}\nu^G_{*} Y)_{\al} =  \nu_G^{\ast}B(\sE_G,\sD_G,Y)_{\al} & &
 \ar[r]^-{\epz} B(\sD_G,\sD_G, Y)_{\al} \ar[ll]_-{B(\nu_G,\id,\id)} & Y_{\al} .\\}
\end{equation}
The maps $\epz$ with targets $X_{\al}$ and $Y_{\al}$ are $G$-equivalences, with the usual inverse equivalences $\et$,
as in the proof of \autoref{crux}.  At each level $\al$, the other two maps are realizations of levelwise simplicial $G$-equivalences, 
and the Reedy cofibrancy of the simplicial bar construction ensures that these realizations are themselves $G$-equivalences, 
by \autoref{PlenzWeak}.
Note that we do not need $X$ or $Y$ to be special to prove (i).  

By \autoref{iff2}, (i) implies (ii). 
Indeed, $Y$ is special if and only if $\nu_G^{\ast}\nu^G_{*}Y$ is special and, since $\nu_G^{\ast}\nu^G_{*}Y$ and $\nu^G_{*}Y$ 
have the same underlying $\PI$-$G$-space, one is special if and only if the other is so.  Part (iii) follows from the naturality of
the $G$-equivalences in \autoref{Ycomp}.
\end{proof}

As in \autoref{littlediagram}, the following diagram of $G\sU_\ast$-categories commutes.
\[ \label{littlediag2}
\xymatrix{
 \PI  \ar@{^{(}->}[d] \ar[r] & \sD \ar[r]^-{\nu} \ar@{^{(}->}[d] & \sE \ar@{^{(}->}[d] \ar[r]^{\xi} & \sF \ar@{^{(}->}[d]\\
\PI_G \ar[r]  &  \sD_G \ar[r]_{\nu_G} & \sE_G \ar[r]_{\xi_G} & \sF_G.\\}\]
Therefore  $\nu^*\bU_{\sE} = \bU_{\sD}\nu_G^*$, as we shall use in the proof of the following
analogue of \autoref{nuequiv2} for $G$-COs over $\sF$.

\begin{thm} \label{nuequiv4}  Let $\nu\colon \sD\rtarr \sE$ be an \gen-equivalence of $G$-COs over $\sF$. 
\begin{enumerate}[(i)]
\item Let $X$ be an $\sE$-$G$-space and $Y$ be a $\sD$-$G$-space. Then there are natural zigzags of
\gen-equivalences between ${\nu}_{*}\nu^* X$ and $X$ and between $\nu^*\nu_{*} Y$ and $Y$.
\item $Y$ is an \gen-special $\sD$-$G$-space if and only if $\nu_{*} Y$ is an \gen-special $\sE$-$G$-space.
\item A map $f$ of $\sD$-$G$-spaces is an \gen-level equivalence if and only if $\nu_{*} f$ is an \gen-level equivalence
of $\sE$-$G$-spaces.
\end{enumerate}
\end{thm}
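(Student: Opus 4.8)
The plan is to deduce \ref{nuequiv4} from \ref{nuequiv2} by transporting the latter across the equivalences of categories $(\bP,\bU)$ of \ref{compGGG}. The notion of an \gen-equivalence of $G$-COs over $\sF$ was set up in \ref{nuequiv} precisely so that the associated map $\nu_G\colon \sD_G\rtarr \sE_G$ is a $G$-equivalence of $G$-COs over $\sF_G$, so \ref{nuequiv2} applies to $\nu_G$ verbatim; it remains only to pull its conclusions back down to the $\sF$-level.

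The first step is to observe that, under the equivalences $\bU$ of \ref{compGGG}, the functors $\nu^*$ and $\nu_*$ between $\sD$- and $\sE$-$G$-spaces correspond to $\nu_G^*$ and ${\nu_G}_*$ between $\sD_G$- and $\sE_G$-$G$-spaces. Concretely, using the relation $\nu^*\bU_{\sE} = \bU_{\sD}\nu_G^*$ recorded just before the statement, together with the unit and counit isomorphisms of the $(\bP,\bU)$ adjunctions (which are isomorphisms because these adjunctions are equivalences), one obtains natural isomorphisms
\[ \bP_{\sD}\,\nu^* \iso \nu_G^*\,\bP_{\sE} \qquad\text{and}\qquad \bP_{\sE}\,\nu_* \iso {\nu_G}_*\,\bP_{\sD}, \]
the second being immediate from the definition $\nu_* = \bU_{\sE}\,{\nu_G}_*\,\bP_{\sD}$ and $\bP_\sE\bU_\sE\iso\Id$. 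Together with the fact that $\bP$ and $\bU$ match up the notions of \gen-special and special and the notions of \gen-level equivalence and level $G$-equivalence (\ref{DFstarspec} and \ref{Dleveleq}), this is all that is needed; one should also check that, under these identifications, the comparison maps $\bU\,B(\nu_G,\id,\id)$, $\bU\,B(\id,\nu_G,\id)$ and $\bU\epz$ appearing in the zigzags of \ref{nuequiv2}(i) agree with the maps assembled directly from $\nu$, which is immediate from the definitions of $\nu_*$ and $\nu^*$.

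Granting this, the three parts follow mechanically. For (i), apply \ref{nuequiv2}(i) to the $\sE_G$-$G$-space $\bP_\sE X$ to get a natural zigzag of level $G$-equivalences between ${\nu_G}_*\nu_G^*\bP_\sE X$ and $\bP_\sE X$; applying $\bU_\sE$ (which sends level $G$-equivalences to \gen-level equivalences by \ref{Dleveleq}, read for $\sE,\sE_G$) and using the compatibility isomorphisms and the unit isomorphism $\Id\iso\bU\bP$ identifies this with a natural zigzag of \gen-equivalences between $\nu_*\nu^* X$ and $X$; the zigzag between $\nu^*\nu_* Y$ and $Y$ is obtained identically from the second zigzag of \ref{nuequiv2}(i) applied to $\bP_\sD Y$ and then applying $\bU_\sD$. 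For (ii), chain equivalences: $Y$ is an \gen-special $\sD$-$G$-space iff $\bP_\sD Y$ is special (\ref{DFstarspec}) iff ${\nu_G}_*\bP_\sD Y$ is special (\ref{nuequiv2}(ii)) iff $\bP_\sE\nu_* Y$ is special (compatibility) iff $\nu_* Y$ is \gen-special (\ref{DFstarspec}). Part (iii) is the same chain with \ref{DFstarspec} replaced by \ref{Dleveleq} and \ref{nuequiv2}(ii) by \ref{nuequiv2}(iii). The only real content is the verification of the two compatibility isomorphisms and keeping careful track of naturality through the chain of identifications; since every adjunction in sight is an equivalence and the square of restriction functors commutes strictly, I expect this to be routine bookkeeping rather than a genuine obstacle, the only mildly delicate point being the transport of the specific zigzag maps of \ref{nuequiv2}(i), which however is forced by the definitions.
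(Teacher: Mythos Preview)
Your proposal is correct and follows essentially the same route as the paper: both deduce the result from \ref{nuequiv2} by transporting across the equivalences $(\bP,\bU)$ of \ref{compGGG}, using the compatibility isomorphism $\bP_{\sD}\nu^* \cong \nu_G^*\bP_{\sE}$ and the definition $\nu_* = \bU_{\sE}{\nu_G}_*\bP_{\sD}$ together with \ref{Dleveleq}. Your chain of equivalences for (ii) and (iii) is slightly more explicit than the paper's ``follow as in the proof of \ref{nuequiv2}'', but the content is the same; your worry about matching up the zigzag maps with ``maps assembled directly from $\nu$'' is unnecessary, since $\nu_*$ is \emph{defined} via $\nu_G$, so the zigzag of \ref{nuequiv2}(i) transported by $\bU$ \emph{is} the zigzag for $\nu_*$ by definition.
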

\begin{proof} Recall from \autoref{compGGG} that $(\bP_{\sD},\bU_{\sD})$
is an adjoint equivalence of categories, and similarly for $\sE$.  
Note that we have the following sequence of natural isomorphisms of $\sD_G$-$G$-spaces
\[
\bP_{\sD} \nu^* X \cong \bP_{\sD} \nu^* \bU_{\sE}\bP_{\sE} X =\bP_{\sD}  \bU_{\sD}\nu_G^*\bP_{\sE} X\cong \nu_G^* \bP_{\sE} X.
\]

To prove (i), write ${\htp}$ to indicate a zigzag of \gen-level equivalences.  Recall from \autoref{Dleveleq} 
that $\bP$ takes \gen-level equivalences to level $G$-equivalences and $\bU$ takes level $G$-equivalences 
to \gen-level equivalences, while $\nu^G_{*}$ preserves level $G$-equivalences by \autoref{nuequiv2}(iii).
Therefore, by \autoref{nuequiv2}(i),  we have the zigzags
\[  \nu_{*}\nu^* X = \bU_{\sE} \nu^G_{*}\bP_{\sD} \nu^* X  \iso
 \bU_{\sE} \nu^G_{*}\nu_G^* \bP_{\sE} X \htp \bU_{\sE} \bP_{\sE} X \iso X  \] 
and 
\[   \nu^*\nu_{*} Y = \nu^*\bU_{\sE} \nu^G_{*}\bP_{\sD}Y  = \bU_{\sD}\nu_G^* \nu^G_{*}\bP_{\sD}Y \htp 
\bU_{\sD} \bP_{\sD}Y \iso Y \]
of level $G$-equivalences.  Using \autoref{Dleveleq}, (ii) and (iii) follow as in the proof of \autoref{nuequiv2}.
\end{proof} 

\subsection{Comparisons of inputs and outputs of the generalized Segal machine}\label{inandout}

\begin{defn}\label{Einf}  We say that a $G$-$CO$ $\sD_G$ over $\sF_G$ is an $E_{\infty}$ $G$-$CO$ 
if the map $\xi_G\colon \sD_G\rtarr \sF_G$ is a $G$-equivalence.
We say that a $G$-$CO$ $\sD$ over $\sF$ is an $E_{\infty}$ $G$-$CO$ if its associated $\sD_G$ is an $E_{\infty}$ $G$-$CO$ over $\sF_G$. 
\end{defn}

The term ``$E_{\infty}$'' is convenient, but it is a little misleading, as will become clear when we turn to operads. 

We assume throughout this section that $\sD$ is an $E_{\infty}$ $G$-$CO$ over $\sF$, and we specialize the results 
of the previous section to $\xi\colon \sD\rtarr \sF$ and $\xi_G\colon \sD_G\rtarr \sF_G$.  
The following results are just specializations of Theorems \ref{nuequiv2} and \ref{nuequiv4}.

\begin{thm}\label{Segalin2}  The following conclusions hold.
\begin{enumerate}[(i)]
\item Let $X$ be an $\sF_G$-$G$-space and $Y$ a $\sD_G$-$G$-space. Then there are natural zigzags of level
$G$-equivalences between $\xi^G_{*}\xi_G^* X$ and $X$ and between $\xi^*_G\xi^G_{*} Y$ and $Y$.
\item $Y$ is a special $\sD_G$-$G$-space if and only if $\xi^G_{*} Y$ is a special $\sF_G$-$G$-space.
\item A map $f$ of $\sD_G$-$G$-spaces is a level $G$-equivalence if and only if $\xi^G_{*} f$ is a level $G$-equivalence 
of $\sF_G$-$G$-spaces.
\end{enumerate}
\end{thm}

\begin{thm}\label{Segalin4} The following conclusions hold. 
\begin{enumerate}[(i)]
\item Let $X$ be an $\sF$-$G$-space and $Y$  a $\sD$-$G$-space. Then there are natural zigzags of
\gen-equivalences between ${\xi}_{*}\xi^* X$ and $X$ and between $\xi^*\xi_{*} Y$ and $Y$.
\item $Y$ is an \gen-special $\sD$-$G$-space if and only if $\xi_{*} Y$ is an \gen-special $\sF$-$G$-space.
\item A map $f$ of $\sD$-$G$-spaces is an \gen-level equivalence if and only if $\xi_{*} f$ is an \gen-level equivalence
of $\sF$-$G$-spaces.
\end{enumerate}
\end{thm}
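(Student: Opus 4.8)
The plan is to obtain this as the special case $\nu=\xi$, $\sE=\sF$ of Theorem \myref{nuequiv4}, so the proof is really a matter of unpacking definitions. First I would note that $\sF$ is a $G$-CO over itself (via $\PI\hookrightarrow\sF$ and $\id\colon\sF\rtarr\sF$), that $\xi\colon\sD\rtarr\sF$ is a map of $G$-COs over $\sF$, and — this is the point — that the standing assumption of this section, that $\sD$ is an $E_\infty$ $G$-CO over $\sF$, unwinds via Definition \myref{Einf} to the assertion that $\xi_G\colon\sD_G\rtarr\sF_G$ is a $G$-equivalence of $G$-COs over $\sF_G$, which via Definition \myref{nuequiv} is precisely the assertion that $\xi\colon\sD\rtarr\sF$ is an \gen-equivalence of $G$-COs over $\sF$. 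Hence the hypothesis of Theorem \myref{nuequiv4} is met.

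Then I would simply apply Theorem \myref{nuequiv4} with $\nu=\xi$ and read off the three conclusions. Part (i) there gives, for an $\sF$-$G$-space $X$ and a $\sD$-$G$-space $Y$, natural zigzags of \gen-level equivalences between $\xi_*\xi^*X$ and $X$ and between $\xi^*\xi_*Y$ and $Y$, which is part (i) here; part (ii) there gives that $Y$ is \gen-special if and only if $\xi_*Y$ is, which is part (ii) here; and part (iii) there gives that a map $f$ of $\sD$-$G$-spaces is an \gen-level equivalence if and only if $\xi_*f$ is, which is part (iii) here. Recall from Definition \myref{veestar} that $\xi_*Y=\bU_{\sF}\,{\xi_G}_*\,\bP_{\sD}Y=\bU_{\sF}\,B(\sF_G,\sD_G,\bP_{\sD}Y)$, so the output really is an $\sF$-$G$-space; the cofibration condition of Remark \myref{nondeg1} guarantees that the bar constructions in sight are realizations of Reedy cofibrant simplicial $G$-spaces, as is needed for the appeals to Theorem \myref{PlenzWeak} inside the proof of Theorem \myref{nuequiv4}.

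I do not expect any genuine obstacle. The substantive content was established upstream: that a $G$-equivalence of small $G\sT$-categories induces levelwise simplicial $G$-equivalences of two-sided bar constructions whose realizations are again weak $G$-equivalences (the heart of Theorem \myref{nuequiv2}), together with the transport of specialness and of level equivalences across the equivalence of categories $(\bP_{\sD},\bU_{\sD})$ of Theorem \myref{compGGG}. The only thing demanding care in the write-up is bookkeeping — keeping straight which adjoint pair $(\bP,\bU)$ is in play at each stage, as in the identification $\bP_{\sD}\xi^*X\iso\xi_G^*\bP_{\sF}X$ used in the proof of Theorem \myref{nuequiv4}.
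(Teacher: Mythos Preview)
Your proposal is correct and follows exactly the paper's approach: the paper simply states that this theorem (together with \myref{Segalin2}) is a specialization of Theorems \ref{nuequiv2} and \ref{nuequiv4}, with the hypothesis on $\xi$ supplied by the standing assumption that $\sD$ is an $E_\infty$ $G$-CO over $\sF$. Your write-up is in fact more detailed than what the paper provides, since the paper gives no separate proof at all.
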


Therefore the pair of functors 
\[ \xi_G^*\colon \FsubG \rtarr \DsubG \ \ \text{and}\ \  
\xi^G_{*}\colon \DsubG\rtarr \FsubG\] 
induce inverse equivalences between the homotopy categories 
obtained by inverting the respective level $G$-equivalences, and this remains
true if we restrict to special objects.  Similarly, the pair of functors
\[ \xi^*\colon \FdashG \rtarr \DdashG \ \ \text{and}\ \  
\xi_{*}\colon \DdashG\rtarr \FdashG\] 
induce inverse equivalences between the homotopy categories 
obtained by inverting the respective \gen-level equivalences, and this 
remains true if we restrict to \gen-special objects.  
We conclude that the four
input categories for Segal machines displayed in the square of the following 
diagram are essentially equivalent.  Formally, as we shall see in \autoref{inmodel}, we can reinterpret this comparison in terms of 
Quillen equivalences of model categories.

\begin{equation}\label{InputDiag}
\xymatrix{
\FdashG \ar[d]_{\xi^*}& \ar[l]_-{\bU}  \FsubG \ar[d]^{\xi^*_G} \\
\DdashG & \DsubG \ar[l]^-{\bU}  \\} 
\end{equation}
Here $\xi^*\bU =\bU\xi_G^*$.  We regard the four categories in the square as possible 
domain categories for generalized Segal infinite loop space machines.  

 By Theorems \ref{compFFG} and \ref{compGGG},
the inclusions of $\sF$ in $\sF_G$ and $\sD$ in $\sD_G$ induce equivalences of categories, denoted $\bU$ in the diagram.
Their left adjoints $\bP$ give inverses, and the functors $\bU$ and $\bP$ preserve the relevant special objects and levelwise equivalences. Theorems \ref{Segalin2} and \ref{Segalin4} show that the vertical arrows $\xi^*$ and $\xi_G^*$ become equivalences of homotopy categories with inverses $\xi_{*}$ and $\xi^{G}_{*}$ (not the left adjoints) after inverting the relevant equivalences.  Consider the following diagram of functors.
\begin{equation}\label{InputDiag2}
\xymatrix{
\FdashG  \ar[r]^-{\bP} & \FsubG \ar[r]^-{\bP}  & 
\Fun(\sW_G,\UG)   \ar[r]^-{\bU_{G\sS}} & G\sS \\
\DdashG \ar[u]^{{\xi}_{*}} \ar[r]_-{\bP} & \DsubG \ar[u]_{\xi^G_{*}} 
 }
\end{equation}

The isomorphism $\bP\bU\iso \id$ on $\sF_G$-$G$-spaces and the definitions of $\xi_{*}$ and $\xi^G_{*}$ 
imply that the square commutes up to natural isomorphism.  The composite in the top row is our original conceptual Segal 
machine. We can specialize by letting $\sD=\sF$,
$\xi=\id\colon \sF\rtarr \sF$ and $\xi_G=\id\colon \sF_G\rtarr \sF_G$.  According to \autoref{idstar}, writing $\bI$ for $\id_\ast$, the composite
$\bU_{G\sS}\bP\bI $  is the Segal machine $\bS_G^{\sF_G}$ on 
$\sF_G$-$G$-spaces $Y$ and the composite  $\bU_{G\sS}\bP\bI \bP$ is the Segal machine $\bS_G^{\sF}$ on $\sF$-$G$-spaces $X$. That is
\begin{equation}\label{Fmach}
\bS_G^{\sF_G} Y = \bU_{G\sS}\bP\bI  Y \quad \text{and} \quad \bS_G^{\sF} X =\bU_{G\sS}\bP \bI \bP X \iso \bU_{G\sS}\bP\bP \bI X.
\end{equation}

We define the composites obtained by replacing the functors $\bI=\id_{\ast}$ with  $\xi_{*}$ and $\xi^G_{*}$ as generalized Segal machines $\bS_G^{\sD}$ defined on 
$\sD$-$G$-spaces  and $\bS_G^{\sD_G}$ defined on $\sD_G$-$G$-spaces.  

\begin{defn}\label{Dmach}
Let $X$ be a $\sD$-$G$-space and $Y$ a $\sD_G$-$G$-space. The generalized Segal machine is defined as
\[
\bS_G^{\sD_G} Y = \bU_{G\sS}\bP\xi^G_{*} Y \quad \text{and} \quad \bS_G^{\sD} X =\bU_{G\sS}\bP\xi^G_{*}\bP X \iso \bU_{G\sS}\bP\bP \xi_{*}X.
\]
\end{defn}

Clearly the machines starting with $\sF$-$G$-spaces or $\sF_G$-$G$-spaces are equivalent
and similarly for $\sD$ and $\sD_G$.  \autoref{Segalouttoo} below will show that the machines starting with  $\sF$-$G$-spaces or $\sD$-$G$-spaces
and the machines starting with  $\sF_G$-$G$-spaces or $\sD_G$-$G$-spaces are equivalent.   Thus the four machines in sight
are equivalent under our equivalences of input data.  That is, we obtain equivalent output by starting at any of the
four vertices of the square, converting input data from the other three vertices to that one, and taking
the relevant machine $\bS_G$.  We use the following invariance principle in the proof of \autoref{Segalouttoo}. 

\begin{prop}\label{Reedyprop1} The following conclusions about homotopy invariance hold.
\begin{enumerate}[(i)]
\item  If $f\colon X\rtarr Y$ is a level $G$-equivalence of $\sD$-$G$-spaces, 
then 
$$f\colon B(A^{\bullet},\sD, X) \rtarr B(A^{\bullet},\sD, Y)$$
is a weak $G$-equivalence for all $A\in G\sW$. 
\item If $f\colon X\rtarr Y$ is a level $G$-equivalence of 
$\sD_G$-$G$-spaces, then the induced map
$$f\colon B(A^{\bullet},\sD_G, X) \rtarr B(A^{\bullet},\sD_G, Y)$$
is a weak $G$-equivalence for all $A\in G\sW$.
\item If $f\colon X\rtarr Y$ is an \gen-level equivalence of 
$\sD$-$G$-spaces, then the induced map
$$f\colon B(A^{\bullet},\sD_G,\bP X) \rtarr B(A^{\bullet},\sD_G,\bP Y)$$
is a weak $G$-equivalence for all $A\in G\sW$.
\end{enumerate}
\end{prop}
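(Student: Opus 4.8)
The plan is to prove all three parts by the routine device used throughout the paper: realize each bar construction as the geometric realization of a Reedy cofibrant simplicial $G$-space, check that the map induced by $f$ is a weak $G$-equivalence on each $G$-space of $q$-simplices, and then invoke \myref{PlenzWeak}. Since by (\ref{timesbarvsgoodbar}) the quotient map $B^{\times}(A^{\bullet},\sD,X)\rtarr B(A^{\bullet},\sD,X)$ is a natural $G$-homotopy equivalence, and likewise with $\sD_G$ in place of $\sD$, it suffices throughout to argue with the unbased bar construction $B^{\times}$, whose $G$-spaces of $q$-simplices are genuine finite products and coproducts of $G$-spaces rather than smash products; this sidesteps any half-smash bookkeeping and makes the $H$-fixed-point computations transparent.

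First I would treat part (i). By (\ref{BBQ}), the $G$-space of $q$-simplices of $B_{\sbt}^{\times}(A^{\bullet},\sD,X)$ is the coproduct over $(q+1)$-tuples $(n_0,\dots,n_q)$ of objects of $\sD$ of the $G$-spaces $A^{n_q}\times \sD(n_{q-1},n_q)\times\cdots\times \sD(n_0,n_1)\times X_{n_0}$, and the map induced by $f$ is the coproduct of the maps that are the identity on every factor except the last, on which they are $f_{n_0}\colon X_{n_0}\rtarr Y_{n_0}$. Since $f$ is a level $G$-equivalence, each $f_{n_0}$ is a weak $G$-equivalence; as $(-)^H$ commutes with finite products and with coproducts, and the product of a $G$-space with a weak $G$-equivalence is a weak $G$-equivalence, each of these summand maps, and hence their coproduct, is a weak $G$-equivalence. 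Thus $B_q^{\times}(A^{\bullet},\sD,f)$ is a weak $G$-equivalence for every $q$. By \myref{blanket}, via the cofibration condition of \myref{nondeg1} and \myref{PlenzReedy}, both simplicial $G$-spaces $B_{\sbt}^{\times}(A^{\bullet},\sD,X)$ and $B_{\sbt}^{\times}(A^{\bullet},\sD,Y)$ are Reedy cofibrant, so \myref{PlenzWeak} shows that $|B_{\sbt}^{\times}(A^{\bullet},\sD,f)|$, and therefore $B(A^{\bullet},\sD,f)$, is a weak $G$-equivalence.

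Part (ii) is proved in exactly the same way with $\sD_G$ in place of $\sD$. By (\ref{BBQ}), the $G$-space of $q$-simplices of $B_{\sbt}^{\times}(A^{\bullet},\sD_G,X)$ is a coproduct, indexed by $(q+1)$-tuples of finite based $G$-sets, of products of $A^{n_q}$, of $q$ morphism $G$-spaces of $\sD_G$, and of a value $X(\mathbf n_0,\al_0)$; here the $\sD_G$ morphism $G$-spaces carry their conjugation $G$-actions and $A^{n_q}$ carries the twisted $G$-action of $\sT_G((\mathbf n_q,\al_q),A)$. As before, $f$ induces the identity on every factor except the last value $X(\mathbf n_0,\al_0)\rtarr Y(\mathbf n_0,\al_0)$, which is a weak $G$-equivalence since $f$ is a level $G$-equivalence of $\sD_G$-$G$-spaces; and the $H$-fixed-point argument is indifferent to which $G$-actions the remaining factors carry. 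Reedy cofibrancy and \myref{PlenzWeak} then finish the argument precisely as in part (i). Part (iii) reduces to part (ii): by \myref{Dleveleq}, if $f$ is an \gen-level equivalence of $\sD$-$G$-spaces then $\bP f$ is a level $G$-equivalence of $\sD_G$-$G$-spaces, and applying (ii) to $\bP f$ yields exactly the assertion of (iii).

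I do not expect a genuine obstacle here. The entire content lies in \myref{PlenzWeak} together with the Reedy cofibrancy already recorded in \myref{blanket}; the only points requiring a moment's care are the reduction from $B$ to $B^{\times}$, so that the $q$-simplices are literal products and coproducts, and, in part (ii), the confirmation that the nontrivial $G$-actions attached to $\sD_G$ play no role in the levelwise weak-equivalence check, which they do not because $f$ acts as the identity on all tensor factors except the final one.
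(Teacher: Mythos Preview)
Your proposal is correct and follows essentially the same approach as the paper: Reedy cofibrancy of the simplicial bar constructions (from \myref{blanket} and \myref{nondeg1}) together with levelwise weak $G$-equivalence feeds into \myref{PlenzWeak}, and part (iii) reduces to (ii) via \myref{Dleveleq}. Your explicit reduction to $B^{\times}$ via (\ref{timesbarvsgoodbar}), so that the $q$-simplices are literal coproducts of products and the levelwise check becomes transparent, is a detail the paper leaves implicit but is a perfectly clean way to make the argument precise.
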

\begin{proof}  Our bar constructions are all geometric realizations of Reedy cofibrant 
simplicial $G$-spaces, hence \autoref{PlenzWeak} gives the first conclusion.  The second statement follows similarly. The third follows 
from the second using that $\bP f$ is a level $G$-equivalence of $\sD_G$-$G$-spaces by \autoref{Dleveleq}.
\end{proof}

The limitations of the first part and need for the second are clear from the fact that $B(\sF,\sD,X)$ 
is only level $G$-equivalent, not \gen-level equivalent, to $X$.
 
\begin{thm}\label{Segalouttoo}  The following four equivalences of outputs hold.
\begin{enumerate}[(i)]
\item If $X$ is a $\sD$-$G$-space, then the $G$-spectra $\bS_G^{\sD} X$ and $\bS_G^{\sF} \xi_{*}X$ are equivalent.  
\item If $Y$ is a $\sD_G$-$G$-space, then the $G$-spectra $\bS_G^{\sD_G}Y$ and $\bS_G^{\sF_G} \xi^G_{*}Y$ are equivalent.  
\item If $X$ is an $\sF$-$G$-space, then the $G$-spectra $\bS_G^{\sF}X$ and $\bS_G^{\sD} \xi^*X$ are equivalent.  
\item If $Y$ is an $\sF_G$-$G$-space, then the $G$-spectra $\bS_G^{\sF_G} Y$ and $\bS_G^{\sD_G} {\xi_G}^*Y$ are equivalent.  
\end{enumerate}
\end{thm}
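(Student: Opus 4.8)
The plan is to reduce all four equivalences to a single comparison and then invoke the machinery already built. First I would observe that statements (iii) and (iv) follow immediately from (i) and (ii) respectively, together with the definitions in \eqref{Fmach} and \eqref{Dmach}: for an $\sF$-$G$-space $X$ we have $\bS_G X = \bS_G \bP X$ by the isomorphism $\bP\bU\iso\id$, and $\xi^* X$ prolongs to $\xi_G^* \bP X$ (since $\bP_\sD\xi^* \iso \xi_G^* \bP_\sF$ by the argument in the proof of \myref{nuequiv4}), so (iii) is the case of (i) applied to $Y = \bP X$; similarly (iv) is a special case of (ii). Likewise (i) reduces to (ii): applying $\bP$ to a $\sD$-$G$-space $X$ and using $\bP_\sF \xi_* \iso {\xi_G}_* \bP_\sD$ (up to natural isomorphism, as in \eqref{Dmach}), the $G$-spectrum $\bS_G X$ is by definition $\bU_{G\sS}\bP\bP\xi_* X$, which agrees with $\bS_G {\xi_G}_* \bP_\sD X$, and this is the instance of (ii) with $Y = \bP_\sD X$ after identifying $\bS_G\xi_* X$ with $\bS_G \xi_* \bU \bP X = \bS_G {\xi_G}_*\bP X$ up to the natural isomorphisms in sight. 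So the whole theorem comes down to proving (ii).

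For (ii), the key point is that by \myref{Segalin2}(i) there is a natural zigzag of level $G$-equivalences of $\sF_G$-$G$-spaces connecting ${\xi_G}_*\xi_G^* Y'$ and $Y'$ for $\sF_G$-$G$-spaces $Y'$, and connecting $\xi_G^*{\xi_G}_* Y$ and $Y$ for $\sD_G$-$G$-spaces $Y$. I would apply ${\xi_G}_*$ to the second zigzag: since ${\xi_G}_*$ preserves level $G$-equivalences (\myref{Segalin2}(iii), or \myref{nuequiv2}(iii)), we get a zigzag of level $G$-equivalences of $\sF_G$-$G$-spaces between ${\xi_G}_*\xi_G^*{\xi_G}_* Y$ and ${\xi_G}_* Y$; composing with the first zigzag (with $Y' = {\xi_G}_* Y$) gives a zigzag of level $G$-equivalences between ${\xi_G}_* Y$ and... actually the cleanest route is: the Segal machine $\bS_G$ on $\sF_G$-$G$-spaces factors as $\bU_{G\sS}\bP\,\id_*$, and $\id_* = B(\sF_G,\sF_G,-)$, while $\bS_G$ on $\sD_G$-$G$-spaces is $\bU_{G\sS}\bP\,{\xi_G}_* = \bU_{G\sS}\bP\,B(\sF_G,\sD_G,-)$. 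I would compare these by noting that applying the Segal machine on $\sF_G$-$G$-spaces to ${\xi_G}_* Y$ gives $\bU_{G\sS}\bP\, B(\sF_G,\sF_G, B(\sF_G,\sD_G,Y))$, and the canonical map $\epz\colon B(\sF_G,\sF_G,Z)\rtarr Z$ is a level $G$-equivalence by \myref{crux2} (equivalently \myref{GOODbar}), natural in $Z$, hence applying it with $Z = B(\sF_G,\sD_G,Y) = {\xi_G}_* Y$ gives a level $G$-equivalence $B(\sF_G,\sF_G,{\xi_G}_* Y)\rtarr {\xi_G}_* Y$ of $\sF_G$-$G$-spaces.

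The final step is to feed this level $G$-equivalence through the rest of the machine $\bU_{G\sS}\bP$ and conclude it induces a stable equivalence of $G$-spectra. For this I would invoke \myref{Reedyprop1}(ii): a level $G$-equivalence $f\colon X'\rtarr Y'$ of $\sF_G$-$G$-spaces induces, for every $A\in G\sW$ (in particular $A = S^V$ for all representations $V$), a weak $G$-equivalence $B(A^\bullet,\sF_G,X')\rtarr B(A^\bullet,\sF_G,Y')$, and by \myref{idstar} these bar constructions are exactly the values $\bS_G(X')(V)$ and $\bS_G(Y')(V)$ (after the intervening $\id_*$, i.e.\ taking $X' = \id_* X''$). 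Thus the induced map $\bS_G X'\rtarr \bS_G Y'$ is a level weak $G$-equivalence of orthogonal $G$-spectra, hence a stable equivalence. Applying this to the level $G$-equivalence $\epz\colon B(\sF_G,\sF_G,{\xi_G}_* Y)\rtarr {\xi_G}_* Y$ — more precisely to the chain of level $G$-equivalences relating $\id_*{\xi_G}_* Y$, ${\xi_G}_* Y$, and $\id_* Y$ via the $\epz$'s and the naturality of $\id_*$ — yields the desired equivalence $\bS_G {\xi_G}_* Y \simeq \bS_G Y$. The main obstacle I anticipate is bookkeeping: making sure the two superposed bar constructions $B(\sF_G,\sF_G,B(\sF_G,\sD_G,Y))$ are collapsed in a way that is genuinely natural and compatible with the $\sW_G$-prolongation and the structure maps, so that a levelwise weak $G$-equivalence of the resulting $\sW_G$-$G$-spaces (not merely of their restrictions to $\sF_G$) is obtained; this is handled by \myref{Reedyprop1}(ii) applied at each $A = S^V$, but one must check that the comparison map is induced by a map of $\sW_G$-$G$-spaces, which follows from the naturality of $\epz$ in \myref{GOODbar} together with \eqref{barcommuteswithsmash} for compatibility with structure maps.
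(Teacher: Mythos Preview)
Your reductions in the first paragraph are mostly right in spirit but not quite as you state them: (iii) and (iv) are not literally special cases of (i) and (ii), since they involve $\xi^*$ rather than $\xi_*$. The paper in fact proves (iv) by a separate (easier) argument and derives (iii) from (iv); one can also deduce (iii) from (i) applied to the $\sD$-$G$-space $\xi^* X$, but that requires the extra input that $\xi_*\xi^* X \simeq X$ from \myref{Segalin4}(i) together with invariance of $\bS_G$ under such equivalences. This is minor bookkeeping.

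The real gap is in your argument for (ii). After the identifications via \myref{halfsmash}, what you must compare is
\[
\bS_G Y(V) \;=\; B\big((S^V)^\bullet,\sD_G,Y\big)
\qquad\text{versus}\qquad
\bS_G {\xi_G}_* Y(V) \;=\; B\big((S^V)^\bullet,\sF_G,{\xi_G}_* Y\big).
\]
Your proposed route is to use $\epz\colon B(\sF_G,\sF_G,{\xi_G}_* Y)\to {\xi_G}_* Y$ and feed it through the machine. But feeding $\epz$ through $B(A^\bullet,\sF_G,-)$ via \myref{Reedyprop1}(ii) only gives an equivalence $B(A^\bullet,\sF_G,\id_*{\xi_G}_* Y)\simeq B(A^\bullet,\sF_G,{\xi_G}_* Y)$, which compares $\bS_G(\id_*{\xi_G}_* Y)$ with $\bS_G({\xi_G}_* Y)$ --- not with $\bS_G Y$. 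Your phrase ``the chain of level $G$-equivalences relating $\id_*{\xi_G}_* Y$, ${\xi_G}_* Y$, and $\id_* Y$'' reveals the confusion: $\id_*$ is an endofunctor on $\sF_G$-$G$-spaces, so $\id_* Y$ is not defined for the $\sD_G$-$G$-space $Y$. Alternatively, if you mean to apply the naive prolongation $\bP = A^\bullet\otimes_{\sF_G}(-)$ to $\epz$, that coend does not in general preserve level $G$-equivalences --- this non-invariance is exactly why the homotopical machine $\id_*$ is introduced in the first place.

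The step you are missing is the change of the \emph{middle} variable of the bar construction. The paper first uses the zigzag $Y \simeq \xi_G^*{\xi_G}_* Y$ from \myref{Segalin2}(i) and \myref{Reedyprop1}(ii) to obtain $B(A^\bullet,\sD_G,Y)\simeq B(A^\bullet,\sD_G,\xi_G^*{\xi_G}_* Y)$, and then uses that $\xi_G\colon\sD_G\to\sF_G$ is a $G$-equivalence of $G$-$CO$s to conclude that
\[
B(\id,\xi_G,\id)\colon B\big(A^\bullet,\sD_G,\xi_G^*{\xi_G}_* Y\big)\longrightarrow B\big(A^\bullet,\sF_G,{\xi_G}_* Y\big)
\]
is a weak $G$-equivalence (level-by-level on simplices, then realization via Reedy cofibrancy). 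Without this change-of-middle-variable step, your argument does not connect the two spectra you are trying to compare.
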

\begin{proof}  We first prove (ii), which is the hardest part,  and then show the rest.  Thus let $Y$ be a $\sD_G$-$G$-space.
We claim that the $\sW_G$-$G$-spaces $\bP \xi^G_{*}Y$ and $\bP \bI  \xi^G_{*} Y$ are level equivalent.  Applying
$\bU_{G\sS}$, this will give (ii).  Thus let $A\in \sW_G$.  Then
\[ (\bP \xi^G_{*} Y)(A) = A^{\bullet}\otimes_{\sF_G}B(\sF_G,\sD_G,Y) \iso B(A^{\bullet},\sD_G,Y) \]
and 
\[  (\bP \bI  \xi^G_{*} Y)(A) = A^{\bullet}\otimes_{\sF_G}B(\sF_G,\sF_G, \xi^G_{*} Y)  \iso B(A^{\bullet},\sF_G, \xi^G_{*} Y). \]
In both cases, the isomorphism comes from \autoref{halfsmash}. By \autoref{Segalin2}(i)  there is a natural zigzag of level
$G$-equivalences between  $\xi^*_G\xi^G_{*} Y$ and $Y$. By \autoref{Reedyprop1}(ii), this gives a  zigzag of level $G$-equivalences of $\sD_G$-$G$-spaces 
\[B(A^\bullet, \sD_G, Y) \simeq B(A^\bullet,\sD_G, \xi_G^*\xi^G_{*}Y).\]  Since $\xi_G\colon \sD_G\rtarr \sF_G$ is a $G$-equivalence of $G$-categories of operators, $B(\id,\xi_G,\id)$ induces an equivalence at the level of $q$-simplices of the bar constructions
\[B(A^\bullet,\sD_G,\xi_G^*\xi^G_{*}Y)\rtarr B(A^\bullet,\sF_G,\xi^G_{*}Y).\] Again, since these bar constructions are geometric realizations of Reedy cofibrant 
simplicial $G$-spaces, we get a weak $G$-equivalence on geometric realizations by \autoref{PlenzWeak}. This proves the claim and thus proves (ii).   

To prove (i), let $X$ be a $\sD$-$G$-space. Applying (ii) to  $Y = \bP X$ and using \autoref{Fmach} and \autoref{Dmach}, we see that
\[ \bS_G^{\sF} X = \bS_G^{\sF_G} \bP X \htp \bS_G^{\sD_G} \xi^G_{*} \bP X \iso \bS_G^{\sD_G} \bP \xi_{*} X = \bS_G^{\sD} \xi_{*} X. \]
To prove (iv), let $Y$ be an $\sF_G$-$G$-space.  We claim that the $\sW_G$-$G$-spaces 
$\bP \bI  Y$ and $\bP \xi^G_{*}\xi_G^* Y$ are level equivalent.  Here
\[(\bP \bI  Y)(A)=A^\bullet \otimes_{\sF_G} B(\sF_G,\sF_G,Y)\cong B(A^\bullet, \sF_G,Y)\]
and 
\[(\bP \xi^G_{*}\xi_G^* Y)(A)=A^\bullet \otimes_ {\sF_G} B(\sF_G,\sD_G,\xi_G^*Y) \cong B(A^\bullet, \sD_G, \xi_G^*Y).\]
The isomorphisms again come from \autoref{halfsmash}.  Just as before, $B(\id,\xi_G,\id)$ induces a weak $G$-equivalence 
$$B(A^\bullet, \sD_G, \xi_G^*Y) \rtarr B(A^\bullet, \sF_G,Y).$$
Finally, (iii) follows by application of (iv) to $Y= \bP X$. 
\end{proof}

Use of the generalized homotopical Segal machine will be convenient when we compare the Segal and operadic machines, 
but it is logically unnecessary.  We could just as well replace $\sD$-$G$-spaces $Y$  by the $\sF$-$G$-spaces $\xi_{*}Y$ and 
apply the Segal machine $\bS_G^{\sF_G}$ on them.  We have just shown that we obtain equivalent outputs from these two homotopical 
variants of the Segal machine.   We conclude that all homotopical Segal machines in sight produce equivalent 
output when fed equivalent input.  The resulting $G$-spectra are equivalent via compatible natural zigzags.  
We conclude that all of our machines are essentially equivalent, and they are all equivalent to our preferred machine $\bS_G^{\sF}$
on \gen-special $\sF$-$G$-spaces. 

\section{The generalized operadic machine}\label{OperadGen}

Having redeveloped the Segal infinite loop space machine equivariantly, we
now review and generalize the operadic equivariant infinite loop space machine.  
We assume that the reader is familiar with operads, as originally defined in \cite{MayGeo}.
More recent expositions can be found in \cite{MayOp1, MayOp2}. Operads make sense in any symmetric monoidal category. 
 Ours will be in the cartesian monoidal category  $G\sU$, and we use the notation $\sC_G$ for an operad in $G\sU$, or $G$-operad for short.
\begin{ass}\label{opass} We  assume once and for all that our operads $\sC_G$ are reduced, meaning that $\sC_G(0)$ is a point, and that the inclusion 
 $*\rtarr \sC_G(1)$ of the identity element is a $G$-cofibration.
 \end{ass}
 This holds in all of our examples.

We recall the Steiner operads in \autoref{Steiner} and the classical operadic machine in \autoref{operadicmachine}.
Since the prequel \cite{GM3} of Guillou and the first author develops this machine in detail and the equivariant generalizations 
of the basic definitions of \cite{MayGeo} are straightforward, we shall be brief.  

Our focus is on material that is needed here and is not treated in \cite{GM3}.   We show how the operadic machine, 
like Segal machine, can be generalized to $G$-categories of operators.  This is again based on the nonequivariant 
theory developed in \cite{MT}, but considerations of equivariance require some work.  

In \autoref{sectionOpCats}, we construct a $G$-category of operators $\sD$ over $\sF$ and a $G$-category of 
operators $\sD_G$ over $\sF_G$ from a $G$-operad $\sC_G$.\footnote{To avoid a clash of notation, we choose not to use the 
original notation $\hat{\sC}$ from \cite{MT}.}  We show there that the construction takes $E_{\infty}$ 
$G$-operads to $E_{\infty}$ $G$-categories of operators,  which is  not obvious equivariantly. We construct monads
associated to $G$-categories of operators in \autoref{sectionmonad}.
Finally, in \autoref{inoutop}, we compare the inputs and outputs of the classical and generalized machines and show
 that they are equivalent.   When $\sC_G$ is an $E_{\infty}$  $G$-operad, the generalized input of $\sD$ and $\sD_G$-algebras 
 is the same as the generalized input to the Segal machine. 

\subsection{The Steiner operads}\label{Steiner}  The advantages of the Steiner operads over the little cubes
or little discs operads are explained in detail in \cite[\S3]{Rant1}.  The little cubes operads $\sC_n$ work well 
nonequivariantly, but are too square for multiplicative and equivariant purposes.  The little discs 
operads are too round to allow maps of operads $\sD_n\rtarr\sD_{n+1}$ that are 
compatible with the natural map $\OM^n X\rtarr \OM^{n+1}\SI X$.  The Steiner operads are more
complicated to define, but they enjoy all of the good properties of both the 
little cubes and little discs operads.  Some such family of operads must play a central role in any version 
of the operadic machine, but the special features of the Steiner operads play an entirely new and 
unexpected role in our comparison of the operadic and Segal machines.

We review the definition and salient features of the equivariant Steiner operads from \cite[\S1.1]{GM3} 
and \cite{St}, referring to those sources for more detailed treatments. 
Let $V$ be a finite dimensional real inner product space and let $G$ act on $V$ through linear isometries.
Define a {\em Steiner path} to be a continuous map $h$ from $I$ to the space of distance-reducing embeddings $V\rtarr V$
such that $h(1)$ is the identity map; distance reducing means that
$|h(t)(v)-h(t)(w)|\leq |v-w|$ for all $v,w\in V$ and $t\in I$.  Define 
$\pi(h)\colon V\rtarr V$ by $\pi(h) = h(0)$ and define the ``center point'' of $h$  to be the value of $0\in V$ under 
the embedding $h(0)$, that is $c(h) = \pi(h)(0)\in V$. Crossing embeddings 
$V\rtarr V$ with $\id_{W}$ sends Steiner paths in $V$ to Steiner paths in $V\oplus W$.

For $j\geq 0$, define $\sK_V(j)$ to be the $G$-space of $j$-tuples $(h_1,\dots, h_j)$ of Steiner paths such that the
embeddings $\pi(h_r) = h_r(0)$ for $1\leq r\leq j$ have disjoint images; $G$ acts by conjugation on embeddings and
thus on Steiner paths and on $j$-tuples thereof.  Pictorially, one can think of a point in the 
Steiner operad as a continuous deformation of $V$ into a point in the little disks operad. The symmetric group $\SI_j$ permutes $j$-tuples.   
We take $\sK_V(0) = \ast$ and let $\id\in \sK_V(1)$ be the constant path at the identity $V\rtarr V$.
Compose Steiner paths pointwise, $(\ell\com h)(t) = \ell(t)\com h(t)\colon V\rtarr V$.  Define the structure maps
\[ \ga\colon \sK_V(k)\times \sK_V(j_1)\times \cdots \times \sK_V(j_k) \rtarr \sK_V(j_1 +\cdots + j_k) \]
by sending
\[(\langle \ell_1,\dots,\ell_k\rangle; \langle h_{1,1},\dots,h_{1,j_1}\rangle,\dots, \langle h_{k,1},\dots,h_{k,j_k}\rangle) \]
to 
\[\langle \ell_1\com h_{1,1},\dots, \ell_1\com h_{1,j_1},\dots, \ell_k\com h_{k,1},\dots, \ell_k\com h_{k,j_k}\rangle. \]
Note that $\sK_0$ is the trivial operad, $\sK_0(0) = \ast$, $\sK_0(1) = \{\id\}$ and $\sK_0(j)= \emptyset$ for $j\geq 2$. 
Via $\pi$, the operad $\sK_V$ acts on $\OM^V X$ for any $G$-space $X$ in the same way that the little cubes operad acts on $n$-fold
loop spaces or the little discs operad acts on $V$-fold loop spaces.

Define $\ze\colon \sK_V(j) \rtarr \mb{Conf}(V,j)$, where $\mb{Conf}(V,j)$ is the configuration $G$-space of ordered $j$-tuples of distinct points of $V$, by sending $\langle h_1,\dots, h_j \rangle$ to $(c(h_1),\dots , c(h_j))$.  The original argument of Steiner \cite{St} 
generalizes without change equivariantly to prove that $\ze$ is a $(G\times \SI_j)$-deformation retraction.  We 
may take the colimit over $V$ in a complete $G$-universe $U$ to obtain an $E_{\infty}$ $G$-operad $\sK_U$. 

\subsection{The operadic machine}\label{operadicmachine}\label{opermach}
\begin{defn}\label{Einfop}  We say that a $G$-operad $\sC_G$ is an $E_{\infty}$ 
$G$-operad if  $\sC_G(n)$ is a universal principal $(G,\SI_n)$-bundle for each $n$.  This means  that $\sC_G(n)$ is a
$\SI_n$-free $(G\times\SI_n)$-space such that 
$$\sC_G(n)^\LA   \simeq \ast \ \  \text{ if $\LA\subset G\times \SI_n$ and $\LA\cap \SI_n= e$  (that is, if $\LA\in \bF_n$)}.  $$
Since $\sC_G(n)$ is $\SI_n$-free, $\sC_G(n)^\LA =\emptyset$ if $\LA\notin \bF_n$.
\end{defn}

A $\sC_G$-algebra $(X,\tha)$ is a $G$-space $X$ together with $(G\times \SI_j)$-maps 
$$\tha_j\colon \sC_G(j)\times X^j\rtarr X$$
for $j\geq 0$ such that the diagrams specified in \cite[\S1]{MayGeo} or \cite{MayOp1, MayOp2} commute.   We
call an algebra over $\sC_G$ a $\sC_G$-space.  Since $\sC_G(0)=\{\ast\}$, the action determines a 
basepoint in $X$, and we assume that it is nondegenerate.  Several examples of $E_{\infty}$
operads $\sC_G$ and $\sC_G$-spaces appear in \cite{GM3, GMM}.  

A $G$-space with an action by some $E_{\infty}$ operad is called an $E_{\infty}$ $G$-space.  The operadic infinite
loop space machine sends an $E_{\infty}$ $G$-space $X$ to a genuine positive $\OM$-$G$-spectrum $\bE_GX$ with a group completion $X\rtarr \OM (\bE_G X)_1$.   As explained in \cite[\S8]{Rant1}, the operadic machine is a 
homotopical adaptation of Beck's categorical monadicity theorem.   We summarize the construction of $\bE_G$, following \cite{GM3}.
Let $\sC_G$ be a fixed chosen $E_{\infty}$ $G$-operad throughout this section. 

The operad $\sC_G$ determines a monad $\bC_G$ on based $G$-spaces such that the category $\sC_G[G\sT]$ of $\sC_G$-algebras
is isomorphic to the category $\bC_G[G\sT]$ of $\bC_G$-algebras. Nonequivariantly, this motivated the definition of operads \cite{MayGeo, MayOp1, MayOp2}, and it is proven equivariantly in \cite{GM3}.   Intuitively, $\bC_GX$ is constructed as the quotient of  $\coprod_{j\geq 0}\sC_G(j)\times _{\SI_j} X^j$ by basepoint identifications.   The proof of \cite[Proposition 2.6]{MayGeo} directly generalizes equivariantly to describe a cofibered filtration of $\bC_G$ with easily understood subquotients and, in particular, to show that $\bC_GX$ is nondegenerately based since $X$ is so.

Formally, $\bC_GX$ is the categorical tensor product of functors $\sC_G\otimes_{\inj} X^{\bullet}$, where $\inj$ is the category of based finite sets $\bj=\{0, 1,\dots,j\}$ with basepoint $0$ and based injections.  To be explicit, for $1\leq i\leq j$, let $\si_i\colon \mathbf{j-1}\rtarr  \bj$ be the ordered based  injection that skips $i$ in its image.  Note that the morphisms in $\inj$ are generated by the maps $\si_i$ and the permutations. Then
$\sC_G$ is regarded as a functor $\inj^{op} \rtarr G\sU$ via the right action of $\SI_j$ and the ``degeneracy maps"
$\si_i\colon \sC_G(j) \rtarr \sC_G(j-1)$ specified in terms of the structure map $\ga$ of $\sC_G$ by
\begin{equation}\label{degensig}
  \si_i(c)= \ga(c;  \id^{i-1}\times{0}\times \id^{j-i}) 
 \end{equation}
where $0\in\sC_G(0)$ and $\id\in \sC_G(1)$; $X^{\bullet}$ is the covariant functor $\inj\rtarr G\sU$ that sends $\bj$ to $X^j$ 
and uses the left action of $\SI_j$ and the injections $\si_i\colon X^{j-1}\rtarr X^{j}$ given by insertion of the basepoint in the $i$th position.

\begin{rem} \autoref{opass}  implies that the monad unit $\et\colon X\rtarr \bC_G X$ is a $G$-cofibration for all  $X\in G\sT$.
This ensures that the bar constructions below are realizations of Reedy cofibrant simplicial $G$-spaces.
\end{rem}

There are several choices that can be made in the construction of the machine $\bE_G$, as discussed in \cite{GM3}.  We use
the construction landing in orthogonal $G$-spectra.  It is more natural topologically to land in Lewis-May or EKMM
$G$-spectra \cite{EKMM, LMS, MM} since all such $G$-spectra are fibrant and the relationship of $\bE_G$ to the
equivariant Barratt-Priddy-Quillen theorem is best explained using them. That variant of the operadic machine 
is discussed and  applied in \cite{GM3}, and a reinterpretation with better formal properties will be given in \cite{MayTwist}.

Regardless of such choices, the construction of $\bE_G$ is based on the two-sided monadic bar construction defined
in \cite[\S9]{MayGeo}.  In our context, that specializes to give a based $G$-space $B(F,\bC_G,X)$ for a monad $\bC_G$ in $G\sT$,
a $\bC_G$-algebra $X$, and a $\bC_G$-functor $F = (F,\la)$.  Here $F\colon G\sT\rtarr G\sT$ is a functor and $\la\colon F\bC_G\rtarr F$ 
is a natural transformation such that $\la \com F\et = \Id\colon F\rtarr F$ and 
$$\la\com F\mu = \la\com \la\bC_G \colon F\bC_G \bC_G \rtarr F.$$
There results a simplicial $G$-space $B_{*}(F,\bC_G,X)$ with $q$-simplices $F \bC_G^q X$. Its geometric realization is $B(F,\bC_G,X)$.
We emphasize that while the bar construction can be specified in
sufficiently all-embracing generality that both the categorical version used in the Segal machine and the 
monadic version used in the operadic machine are
special cases \cite{Meyer1, Meyer2, Meyer3, Shul}, these constructions look very different.  

To incorporate the relationship to loop spaces encoded in the Steiner operads,  we define $\sC_V = \sC_G \times \sK_V$.  We view $\sC_G$-spaces as $\sC_V$-spaces via the projection to $\sC_G$, and we view $G$-spaces $\OM^V X$ as $\sC_V$-spaces via the projection to $\sK_V$.   We write 
$\bC_V$ for the monad on $G\sT$ associated to $\sC_V$.

\begin{thm}\label{approx}  The composite of the map $\bC_V X\rtarr \bC_V \OM^V\SI^V X$ induced by the unit
of the adjunction $(\SI^V,\OM^V)$ and the action map  $\bC_V \OM^V\SI^V X \rtarr \OM^V\SI^V X$ specifies a 
natural map $\al\colon \bC_V X\rtarr \OM^V\SI^VX$ which is a weak group completion if $V$ contains $\bR$.  
These maps specify a map of monads $\bC_V\rtarr \OM^V\SI^V$.   
\end{thm}

The second statement is proven by the same formal argument as in \cite[Theorem 5.2]{MayGeo}. 
The first statement is part of \cite[Theorem 1.14]{GM3}.  Weak group completions of $G$-spaces are defined in \cite[Definitions 1.9 and 1.10]{GM3}, but all we need to know is that they are group completions if their sources and targets are homotopy commutative \cite[Theorem 1.8]{GM3}, as  holds in our applications. 
The adjoint $\tilde{\al}\colon \SI^V \bC_V \rtarr \SI^V$ of $\al$ is an
action of the monad $\bC_V$ on the functor $\SI^V$ and we have the monadic 
bar construction
\[  (\bE_GX)(V) = B(\SI^V,\bC_V,X) \]
for $\bC_G$-spaces $X$.  In later comparisons, we write  $\bE_GX = \bE_G^{\sC}X$.

An isometric isomorphism $V\rtarr V'$ in $\sI_G$ induces natural transformations $\SI^V \rtarr \SI^{V'}$ and $\bC_V \rtarr \bC_{V'}$, 
which in turn induce a map  
\[ B(\SI^V,\bC_V,X) \rtarr  B(\SI^{V'},\bC_{V'},X). \] These maps assemble to make $\bE_GX$ into an $\sI_G$-$G$-space.  
Smashing with $G$-spaces commutes with realization of based simplicial $G$-spaces, 
by the same proof as in the nonequivariant case \cite[Proposition 12.1]{MayGeo}, and inclusions 
$V\rtarr W$ induce maps of monads $\bC_V\rtarr \bC_W$.  This gives the structural maps
\[ \si\colon \SI^{W-V} \bE_GX(V)\iso B(\SI^{W},\bC_V,X) \rtarr B(\SI^{W},\bC_{W},X) = \bE_GY(W) \]
that make $\bE_GX$ into an orthogonal $G$-spectrum.
The structure maps of $\bE_G X$ and their adjoints are closed inclusions.
As explained in \cite{GM3}, and as goes back to \cite{MayGeo} nonequivariantly and to Costenoble and
Waner equivariantly \cite{CW}, we have the following theorem, which gives the basic homotopical property 
of the infinite loop space machine $\bE_G$. 

\begin{thm}\label{classic} There  are natural maps
\[\xymatrix@1{ X & \ar[l]_-{\epz} \ar[rr]^-{B(\al,\id,\id)} 
B(\bC _V,\bC _V,X) & &
B(\OM^V\SI^V,\bC _V,X) \ar[r]^-{\ze} & 
\OM^{V} B(\SI^{V},\bC _V,X)\\}\]
of $\sC_V$-spaces.  The map $\epz$ is a $G$-homotopy equivalence with a natural $G$-homotopy inverse $\nu$ 
(which is not a $\sC_V$-map), the map $B(\al,\id,\id)$ is a group completion when $V$ contains $\bR$, and the
map $\ze$ is a weak $G$-equivalence.
\end{thm}
\begin{proof}
The first statement is a standard property of the bar construction that works just as well equivariantly
as nonequivariantly \cite[Proposition 9.8]{MayGeo} or \cite[Lemma 9.9]{Shul}.   The second statement is deduced from 
\autoref{approx} by passage to fixed point spaces and use of the same argument as in the nonequivariant
case \cite[Theorem 2.3(ii)]{MayPerm}.  The last statement is an equivariant generalization of \cite[Theorem 12.7]{MayGeo} that is 
proven carefully in \cite[Lemmas 5.4, 5.5]{CW}.  See \cite{GM3, Rant1} for further discussion and variants of the construction.
\end{proof}

Define 
$$\xi = \ze \com B(\al,\id,\id)\com \nu\colon X\rtarr \OM^{V}\bE_GX(V).$$
Then $\xi$ is a natuaral group completion when $V\supset \bR$ and is thus a weak $G$-equivalence
when $X$ is grouplike. The following diagram commutes, where $\tilde{\si}$ is adjoint to $\si$. 
\[ \xymatrix{
& X \ar[dl]_{\xi}  \ar[dr]^{\xi} & \\
\OM^{V}\bE_GX(V)\ar[rr]_-{\OM^V\tilde{\si}}  & & \OM^{V\oplus W}\bE_GX(V \oplus W).\\} \] 
 Therefore $\OM^V\tilde{\si}$ is a weak equivalence if $V$ contains $\bR$.  

\subsection{$G$-categories of operators associated to a $G$-operad $\sC_G$}\label{sectionOpCats}
\begin{defn}\label{defGops} Let $\sC_G$ be an operad of $G$-spaces.  We construct a $G$-CO over $\sF$,
which we denote by $\sD(\sC_G)$, abbreviated $\sD$ when there is no risk of confusion.  Similarly, we write
$\sD_G = \sD_G(\sC_G)$ for the associated $G$-CO over $\sF_G$.  The morphism $G$-spaces of $\sD$ are 
\[ {\sD}(\mathbf{m}, \mathbf{n}) = \coprod_{\ph\in\sF(\mathbf{m},\mathbf{n})} \prod_{1\leq j\leq n} \sC_G(\ph_j)  \]
with $G$-action induced by the $G$-actions on the $\sC_G(n)$.   Here $\ph_j = |\ph^{-1}(j)|$.
Write elements in the form $(\ph,c)$, where $c = (c_1,\dots,c_n)$.  
For $(\ph,c)\colon \mathbf{m}\rtarr \mathbf{n}$ and $(\ps,d)\colon \mathbf{k}\rtarr \mathbf{m}$, define
\[ (\ph,c)\com (\ps,d) = (\ph\com \ps, 
\prod_{1\leq j\leq n}\ga(c_j;\prod_{\ph(i) = j} d_i)\si_j). \]
Here $\ga$ denotes the structural maps of the operad.  The $d_i$ with $\ph(i) =j$ 
are ordered by the natural order on their indices $i$
and $\si_j$ is that permutation of $(\phi\com\ps)_j$ letters which converts 
the natural ordering of $(\phi\com\ps)^{-1}(j)$ as a subset of $\{1,\dots,k\}$ to
its ordering obtained by regarding it as $\coprod_{\ph(i)=j}\ps^{-1}(i)$, so ordered
that elements of $\ps^{-1}(i)$ precede elements of $\ps^{-1}(i')$ if $i< i'$ and 
each $\ps^{-1}(i)$ has its natural ordering as a subset of $\{1,\dots,k\}$. 

The identity element in $\sD(\mathbf{n},\mathbf{n})$ is $(id, \id^n)$, where $\id$ on
the right is the unit element in $\sC_G(1)$. The map $\xi\colon \sD \rtarr \sF$
sends $(\ph,c)$ to $\ph$.  The inclusion $\io\colon \PI\rtarr \sD$ sends 
$\ph\colon \mathbf{m}\rtarr \mathbf{n}$ to $(\ph, c)$, where $c_i = \id \in \sC_G(1)$ if $\ph(i) = 1$ and
$c_i = \ast\in \sC_G(0)$ if $\ph(i) = 0$.  This makes sense since $\PI$ is the
subcategory of $\sF$ with morphisms $\ph$ such that $\ph_j\leq 1$ for 
$1\leq j\leq n$. 
\end{defn}

Observe that $\sD$ is reduced as a $G$-CO over $\sF$ since $\sC_G$ is reduced as an operad.
\autoref{opass} implies the following result, which says that $\sD_G$ satisfies \autoref{GCO/FG}(ii). It ensures that the bar constructions used in the Segal machine are Reedy cofibrant, and it will also ensure that the bar constructions used in the operadic machine are Reedy cofibrant.

\begin{lem}\label{unitlem} The map $\ast \rtarr \sD_G(\bn^{\al}, \bn^{\al})$ given by the inclusion of the identity is a $G$-cofibration for all finite $G$-sets $\bn^{\al}$.
\end{lem}

\begin{obs}\label{obsGops}  Since we will need it later and it illustrates the definition, we describe explicitly
how composition behaves when the point $c$ or $d$ in one of the maps is of the form 
$(\id,\dots,\id)\in \sC(1)\times \cdots\times \sC(1)$. 

For $\phi\colon \mathbf{m} \rtarr \mathbf{n}$ 
and a permutation $\tau\colon \mathbf{m}\rtarr \mathbf{m}$, 
\begin{eqnarray*}
(\phi, c_1, \dots, c_n)\circ (\tau, \id, \dots \id) &=& (\phi\circ \tau, \prod_j(\ga(c_j, \id,\dots, \id)\si_j)\\
&=& (\phi \circ \tau,  c_1 \si_1, \dots , c_n\si_n),
\end{eqnarray*}
where $c_j\in \sC(|\phi^{-1}(j)|)$, and $\si_j\in \Sigma_{(|(\phi\com\tau)^{-1}(j)|}= \Sigma_{|\phi^{-1}(j)|}$. 
Note that $\si_j$ depends only on $\phi$ and $\tau$ and not on $c$.

For $\ps\colon \mathbf{m}\rtarr \mathbf{n}$ and a permutation $\tau\colon \mathbf{n} \rtarr \mathbf{n}$,
\begin{eqnarray*}
(\tau, \id, \dots, \id)\circ (\psi, d_1, \dots, d_n) &=& (\tau \circ\psi, \prod_j (\ga(\id, d_{\tau^{-1}(j)})\si_j))\\
&=& (\tau \circ\psi,  d_{\tau^{-1}(1)}, \dots, d_{\tau^{-1}(n)})
\end{eqnarray*}
since each $\si_j$ is the identity (because there is only one $i$ such that $\ta(i) = j$).
\end{obs}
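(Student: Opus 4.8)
\textbf{Proof plan for \myref{obsGops}.}

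The statement is purely a matter of unwinding \myref{defGops}, so the plan is to specialize the composition formula to the two cases in which one of the operadic points is the tuple of identity elements of $\sC(1)$. First I would treat the case $(\phi,c)\circ(\tau,\id,\dots,\id)$ where $\tau$ is a permutation of $\mathbf{m}$. Here $\psi = \tau$, each $\psi^{-1}(i)$ is a single element $\{\tau^{-1}(i)\}$, and each $d_i = \id \in \sC(1)$. In the general formula the $j$th operadic coordinate of the composite is $\gamma\bigl(c_j;\prod_{\phi(i)=j} d_i\bigr)\sigma_j = \gamma(c_j;\id,\dots,\id)\sigma_j$; by the unit axiom for the operad structure maps $\gamma$, we have $\gamma(c_j;\id,\dots,\id)=c_j$, so the coordinate collapses to $c_j\sigma_j$. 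I would then read off from the definition of $\sigma_j$ in \myref{defGops} that, since $|\psi^{-1}(i)|=1$ for all $i$, the permutation $\sigma_j$ lies in $\Sigma_{|\phi^{-1}(j)|}$ and depends only on $\phi$ and $\tau$ (it records how the natural ordering of $(\phi\circ\tau)^{-1}(j)$ differs from the ordering it inherits from $\bigl\{\tau^{-1}(i) : \phi(i)=j\bigr\}$), not on $c$. This gives the first displayed equality.

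For the second case, $(\tau,\id,\dots,\id)\circ(\psi,d)$ with $\tau$ a permutation of $\mathbf{n}$, I would again substitute into the composition formula, now with the left map having $c_j = \id\in\sC(1)$ for every $j$ (since $|\tau^{-1}(j)|=1$). The $j$th coordinate of the composite is $\gamma\bigl(\id;\prod_{\tau(i)=j} d_i\bigr)\sigma_j$; there is a unique $i$ with $\tau(i)=j$, namely $i=\tau^{-1}(j)$, so this is $\gamma(\id; d_{\tau^{-1}(j)})\sigma_j$. The other unit axiom gives $\gamma(\id;d_{\tau^{-1}(j)}) = d_{\tau^{-1}(j)}$. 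Finally $\sigma_j$ is forced to be the identity: since there is only one index $i$ contributing to the $j$th block, the set $(\tau\circ\psi)^{-1}(j) = \psi^{-1}(\tau^{-1}(j))$ is being reordered ``as'' itself, so no shuffle occurs. This yields the second display.

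There is no real obstacle here; the only things to be careful about are (i) invoking the correct unit axioms for $\gamma$ in each of the two cases (unit on the left versus unit on the right), and (ii) checking the bookkeeping in the definition of $\sigma_j$ to justify the two italicized remarks — that $\sigma_j$ depends only on $\phi,\tau$ in the first case and is the identity in the second. Both are immediate from the explicit description of $\sigma_j$ given in \myref{defGops}, so the proof is a short verification.
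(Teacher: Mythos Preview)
Your proposal is correct and matches the paper's approach exactly: the observation is a direct unwinding of the composition formula in \myref{defGops}, using the two unit axioms for $\gamma$ and the explicit description of the shuffle permutations $\sigma_j$. The paper does not supply a separate proof beyond the inline justifications you have reproduced.
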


\begin{exmp} The commutativity operad $\sN$ has $n$th space a point for all $n$.
We think of it as a $G$-trivial $G$-operad. Then $\sF = \sD(\sN)$, again regarded 
as $G$-trivial. 
\end{exmp}

For any operad $\sC_G$, an $\sF$-$G$-space $Y$ can be viewed as the $\sD(\sC_G)$-$G$-space 
$\xi^*Y$.  Thus $\sD(\sC_G)$-$G$-spaces give a generalized choice of input to the Segal 
machine. As we shall discuss in \autoref{inoutop}, they also give generalized input to the 
operadic machine.  Indeed, an action of the operad $\sC_G$ on a $G$-space $X$
gives rise to an action of $\sD(\sC_G)$ on the $\PI$-$G$-space $\bR X$ with $(\bR X)_n = X^n$.

Now recall the definitions of $E_{\infty}$ operads and $E_{\infty}$ $G$-categories of operators from
Definitions \ref{Einfop} and \ref{Einf}.  We prove the following expected theorem.

\begin{thm}\label{EinfEinf} If $\sC_G$ is an $E_{\infty}$ $G$-operad, then $\sD = \sD(\sC_G)$ is an $E_{\infty}$
$G$-CO over $\sF$ or, equivalently, $\sD_G$ is an $E_{\infty}$ $G$-CO over $\sF_G$.
\end{thm}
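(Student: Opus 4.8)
By \myref{Einf} the two assertions are tautologically equivalent, so it suffices to show that $\xi_G\colon \sD_G\rtarr \sF_G$ is a $G$-equivalence; that is, for all $H\subset G$ and all objects $A=(\mathbf{m},\al)$, $B=(\mathbf{n},\be)$ of $\sF_G$, the map
\[ \xi_G^H\colon \sD_G(A,B)^H \rtarr \sF_G(A,B)^H \]
is a weak equivalence. The plan is to decompose the source, componentwise, into a product of fixed point spaces $\sC_G(k)^{\LA}$ with $\LA\in\bF_k$, and then invoke the $E_{\infty}$ hypothesis on $\sC_G$.

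First I would unwind both sides. Since $\sF_G$ is discrete, $\sF_G(A,B)^H$ is the discrete set of $H$-equivariant based maps $\mathbf{m}\rtarr \mathbf{n}$. By \myref{defGops}, $\sD_G(A,B) = \coprod_{\ph}\prod_{1\leq j\leq n}\sC_G(|\ph^{-1}(j)|)$, where $G$ acts by conjugation; on the indexing set this is the action $g\cdot \ph = \be(g)\circ\ph\circ\al(g^{-1})$, and since $\xi_G$ is projection onto $\ph$, a point of the source can be $H$-fixed only when $\ph$ is $H$-fixed, i.e.\ $H$-equivariant. Hence $\sD_G(A,B)^H=\coprod_{\ph}\bigl(\prod_j\sC_G(|\ph^{-1}(j)|)\bigr)^H$, the coproduct over the $H$-equivariant maps $\ph$, and $\xi_G^H$ sends the $\ph$-summand to the point $\ph$. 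As the target is discrete, $\xi_G^H$ is a weak equivalence precisely when $\bigl(\prod_j\sC_G(|\ph^{-1}(j)|)\bigr)^H$ is contractible for every $H$-equivariant $\ph$.

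Fix such a $\ph$. Since $\al(h)$ carries $\ph^{-1}(j)$ bijectively onto $\ph^{-1}(\be(h)(j))$, the integer $|\ph^{-1}(j)|$ is constant on each orbit of the $\be$-action of $H$ on $\{1,\dots,n\}$; list these orbits $O_1,\dots,O_r$, pick $j_s\in O_s$ with stabilizer $K_s\subset H$, and set $k_s=|\ph^{-1}(j_s)|$. Then $\prod_j\sC_G(|\ph^{-1}(j)|)\cong\prod_{s=1}^r\prod_{j\in O_s}\sC_G(k_s)$ with $H$ acting blockwise. Working out the conjugation action with the help of \myref{obsGops}, the action of $g$ sends the $j$th coordinate $c_j$ of $(\ph,c)$ to $(g\cdot c_{\be(g)^{-1}(j)})\cdot\si_j$, where $\si_j\in\SI_{|\ph^{-1}(j)|}$ depends on $\ph,\al,\be,g$ but not on $c$. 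Restricting to $g=\kappa\in K_s$ and projecting each block onto its $j_s$th coordinate --- exactly the argument of \myref{lem:lafixedpoints} --- identifies $\bigl(\prod_{j\in O_s}\sC_G(k_s)\bigr)^H$ with $\sC_G(k_s)^{\LA_s}$, where $\LA_s=\{(\kappa,\ga_s(\kappa))\mid\kappa\in K_s\}$ and $\ga_s\colon K_s\rtarr\SI_{k_s}$ records the residual action of $K_s$ on $\ph^{-1}(j_s)$. Being a homomorphism, $\ga_s$ forces $\LA_s\cap\SI_{k_s}=\{e\}$, so $\LA_s\in\bF_{k_s}$ (\myref{famFn}), whence $\sC_G(k_s)^{\LA_s}\simeq\ast$ by the $E_{\infty}$ condition. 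Thus $\bigl(\prod_j\sC_G(|\ph^{-1}(j)|)\bigr)^H\cong\prod_{s=1}^r\sC_G(k_s)^{\LA_s}\simeq\ast$, as required, and $\xi_G$ is a $G$-equivalence.

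The main obstacle is the computation in the previous paragraph: one must verify against the composition formula of \myref{defGops}, via \myref{obsGops}, that conjugation by a permutation pair twists each coordinate of $(\ph,c)$ only by a permutation independent of $c$, and that for $\kappa\in K_s$ these permutations combine with the operadic $G$-action on $\sC_G(k_s)$ into an honest action of the graph subgroup $\LA_s$. The rest is orbit bookkeeping of the kind carried out in \myref{lem:lafixedpoints}. Note that only contractibility of $\sC_G(n)^{\LA}$ for $\LA\in\bF_n$ is used; $\SI_n$-freeness of $\sC_G(n)$ is not needed for this implication.
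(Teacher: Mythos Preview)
Your proposal is correct and follows essentially the same route as the paper. The paper first states a more general functoriality result (\myref{lemmaspecial}): any map $\nu\colon \sC_G\rtarr \sC'_G$ of $G$-operads inducing weak equivalences on $\LA$-fixed points for all $\LA\in\bF_n$ induces a $G$-equivalence $\nu_G\colon \sD_G\rtarr \sD'_G$; the theorem is then the special case $\sC'_G=\sN$, $\sD'_G=\sF_G$. The proof of \myref{lemmaspecial} establishes exactly your decomposition
\[
\big[\sD_G((\mb{m},\al),(\mb{n},\be))\big]^H \cong \coprod_{\phi}\ \prod_{i}\ \sC_G(q_i)^{\LA_i},
\]
with $\phi$ ranging over $H$-equivariant maps and $i$ over $H$-orbits of $(\mb{n},\be)$, by the same orbit-by-orbit reduction and projection-to-chosen-coordinate argument you sketch. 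The only delicate step the paper makes fully explicit, which you correctly flag as the ``main obstacle,'' is checking the cocycle identity $\si_j(hk)=\si_j(h)\si_{\be(h^{-1})(j)}(k)$ for the permutations arising from \myref{obsGops}, so that the restriction to a stabilizer $K_s$ really is a homomorphism $K_s\rtarr\SI_{k_s}$ and hence $\LA_s\in\bF_{k_s}$.
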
 

Consider the trivial map of $G$-operads $\xi\colon \sC_G\rtarr \sN$ that sends each $\sC_G(n)$ to the point $\sN(n)$.
Of course, $\sN$ is not an $E_{\infty}$ $G$-operad, but it is clear from the definitions that $\sF = \sD(\sN)$
is an $E_{\infty}$ $G$-CO over $\sF$.  The map $\xi\colon \sD\rtarr \sF$ can be thought of as  $\sD(\xi)$.  The following result 
with $\sC'=\sN$ has  \autoref{EinfEinf} as an immediate corollary.  The proof requires pedantic details of equivariance and is 
therefore deferred to  \autoref{levelwise}, but this is the crux of the comparison of inputs of the Segal and operadic machines.

\begin{thm}\label{lemmaspecial}
Let $\nu\colon \sC_G \rtarr \sC'_G$ be a map of $G$-operads such that the fixed point map 
$\nu^\LA\colon\sC_G(n)^\LA\rtarr \sC'_G(n)^\LA$ is 
a weak equivalence for all $n$ and all $\LA\in \bF_n$. Then the induced map 
$\nu_G\colon \sD_G\rtarr \sD'_G$ of $G$-COs over $\sF_G$ is a $G$-equivalence.
\end{thm}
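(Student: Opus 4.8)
The plan is to unwind the definitions and reduce the claim about $\sD_G$-morphism spaces to a statement about fixed points of products of operad spaces, then invoke Lemma~\ref{lem:lafixedpoints}. First I would recall that by Construction~\ref{DtoDG} the $G$-space $\sD_G((\mb m,\al),(\mb n,\be))$ has underlying space $\sD(\mb m,\mb n) = \coprod_{\ph\in\sF(\mb m,\mb n)} \prod_{1\le j\le n}\sC_G(|\ph^{-1}(j)|)$, and the $G$-action is the conjugation action $g\cdot f = \be(g)\circ(gf)\circ\al(g^{-1})$. Since $\xi_G$ is the identity on objects and, on a component indexed by $\ph$, is simply the projection $\prod_j\sC_G(|\ph^{-1}(j)|)\rtarr\ast$ onto the corresponding component $\{\ph\}$ of $\sF_G((\mb m,\al),(\mb n,\be))$, it suffices to fix a subgroup $H\subset G$ and show that $\xi_G$ induces a weak equivalence on $H$-fixed points. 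The conjugation action permutes the components indexed by $\ph\in\sF(\mb m,\mb n)$ according to the $H$-action on $\sF_G((\mb m,\al),(\mb n,\be))$, so an $H$-fixed point of $\sD_G$ lives over an $H$-fixed $\ph$, i.e.\ an $H$-equivariant based map $\ph\colon(\mb m,\al|_H)\rtarr(\mb n,\be|_H)$.

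The key computation is then: for such an $H$-fixed $\ph$, identify the $H$-fixed points of the fiber $\prod_{1\le j\le n}\sC_G(|\ph^{-1}(j)|)$. The stabilizer $H_\ph\subset H$ of $\ph$ (in the permutation-of-components sense) equals all of $H$ here, but what matters is how $H$ acts on the fiber: $h$ sends the $j$-th factor $\sC_G(|\ph^{-1}(j)|)$ to the $(\be(h)(j))$-th factor, acting through $h$ on the space together with the induced permutation of the set $\ph^{-1}(j)$ (a permutation by $\al(h)$ of a subset of $\mb m$, which because $\ph$ is $H$-equivariant carries $\ph^{-1}(j)$ bijectively to $\ph^{-1}(\be(h)(j))$). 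Grouping the indices $j\in\{1,\dots,n\}$ into $H$-orbits (via $\be$) and, within each orbit, choosing a representative $j_0$ with stabilizer $L\subset H$, the standard argument (as in the proof of Lemma~\ref{lem:lafixedpoints}) shows
\[
\Big(\prod_{1\le j\le n}\sC_G(|\ph^{-1}(j)|)\Big)^{H} \cong \prod_{\text{orbits}} \sC_G(|\ph^{-1}(j_0)|)^{\LA},
\]
where $\LA = \LA_{\mu}\subset G\times\SI_{|\ph^{-1}(j_0)|}$ is the subgroup $\{(\ell,\mu(\ell))\mid \ell\in L\}$ determined by the homomorphism $\mu\colon L\rtarr\SI_{|\ph^{-1}(j_0)|}$ giving the $L$-action on the set $\ph^{-1}(j_0)$. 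In particular $\LA\cap\SI_{|\ph^{-1}(j_0)|} = e$, so $\LA\in\bF_{|\ph^{-1}(j_0)|}$. Since $\sC_G\rtarr\sC'_G$ is assumed to be a weak equivalence on $\LA$-fixed points for all $\LA$ in the families $\bF_k$, the induced map is a weak equivalence on each such product, hence on $(\prod_j\sC_G(|\ph^{-1}(j)|))^{H}$. Taking the disjoint union over the $H$-fixed $\ph$'s and over components gives that $\nu_G^H$ is a weak equivalence on the component of $\sD_G^H$ lying over each $H$-fixed $\ph$; since $\xi_G$ collapses each such fiber to a point and $(\sD'_G)^H$ over $\ph$ is likewise a product of contractible spaces when applied with $\sC'_G=\sN$, this simultaneously yields Theorem~\ref{EinfEinf}.

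Putting this together, for each $H\subset G$ the map $\nu_G^H\colon \sD_G((\mb m,\al),(\mb n,\be))^H \rtarr \sD'_G((\mb m,\al),(\mb n,\be))^H$ is, componentwise over the $H$-fixed maps $\ph$, a product over $H$-orbits of the maps $\nu^{\LA}\colon\sC_G(|\ph^{-1}(j_0)|)^{\LA}\rtarr\sC'_G(|\ph^{-1}(j_0)|)^{\LA}$, each a weak equivalence by hypothesis; a finite product of weak equivalences is a weak equivalence, and a disjoint union of weak equivalences is a weak equivalence, so $\nu_G^H$ is a weak equivalence. As $H$ was arbitrary, $\nu_G$ is a weak $G$-equivalence of morphism $G$-spaces, i.e.\ a $G$-equivalence of $G$-COs over $\sF_G$.

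I expect the main obstacle to be the bookkeeping in the middle step: correctly tracking how the conjugation $G$-action on $\sD_G((\mb m,\al),(\mb n,\be))$ simultaneously permutes the $n$ tensor factors (through $\be$), acts on each operad space $\sC_G(|\ph^{-1}(j)|)$ (through the given $G$-action), and permutes the finite set $\ph^{-1}(j)$ of inputs (through $\al$, which because $\ph$ is equivariant is compatible with the $\be$-permutation of factors), and then correctly reading off which subgroup $\LA\in\bF_k$ governs each orbit-representative factor. This is exactly the kind of ``pedantic equivariance'' the authors flag; the clean way to organize it is to first reduce to a single $H$-orbit of factors as in Lemma~\ref{lem:lafixedpoints}, where everything becomes a single $\LA$-fixed-point computation.
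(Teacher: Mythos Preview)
Your proposal is correct and follows essentially the same route as the paper: decompose the $H$-fixed points of $\sD_G((\mb m,\al),(\mb n,\be))$ over $H$-equivariant maps $\ph$, break the product $\prod_j\sC_G(|\ph^{-1}(j)|)$ into $H$-orbits via $\be$, and identify each orbit's contribution with a single $\sC_G(q)^{\LA}$ for some $\LA\in\bF_q$, invoking the argument of \myref{lem:lafixedpoints}. The only difference is that the paper computes the relevant permutations explicitly using \myref{obsGops} (obtaining the maps $\si_j$ and checking via a cocycle identity that $\si_{j_0}|_L$ is a homomorphism), whereas you describe the homomorphism $\mu$ directly as the $L$-action on $\ph^{-1}(j_0)$ via $\al$; these agree up to the ordering conventions baked into the definition of $\sD$, so your identification of $\LA$ is correct.
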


\subsection{The monads $\bD$ and $\bD_G$ associated to the $G$-categories $\sD$ and $\sD_G$}\label{sectionmonad} 

Recall that the category $\sC_G[G\sT]$ of algebras over an operad $\sC_G$ is isomorphic to the category $\bC_G[G\sT]$ of 
algebras over the associated monad $\bC_G.$  Let $\sD = \sD(\sC_G)$ be the $G$-CO over $\sF$ associated to $\sC_G$ and 
let $\sD_G$ be the associated $G$-CO over $\sF_G$. 
As worked out nonequivariantly in \cite[\S5]{MT}, we define monads $\bD$ on the category of $\PI$-$G$-spaces 
and $\bD _G$ on the category of $\PI_G$-$G$-spaces. The commutative diagram of inclusions of categories  
\[ \xymatrix{
\PI \ar[r] \ar[d]_{i} & \PI_G \ar[d]^{i_G} \\
\sD  \ar[r] &  \sD_G}
\]
gives rise to a commutative diagram of forgetful functors 

\begin{equation}\label{Ui}
 \xymatrix{
\Fun(\PI, \UG) & \Fun(\PI_G,\UG) \ar[l]_-{\bU} \\
\Fun(\sD, \UG)  \ar[u]^{i^*}&  \Fun(\sD_G, \UG). \ar[u]_{i_G^*} \ar[l]^-{\bU}}
\end{equation}

Categorical tensor products then give left adjoints making the following diagram 
commute up to natural isomorphism.

\begin{equation}\label{PD}
 \xymatrix{
\Fun(\PI, \UG) \ar[r]^-{\bP} \ar[d]_{\bD} & \Fun(\PI_G,\UG) \ar[d]_{\bD_G} \\
\Fun(\sD, \UG)  \ar[r]_-{\bP} &  \Fun(\sD_G, \UG) \\}
\end{equation}

Here $\bD$ and $\bD_G$ are the left adjoints of $i^\ast$ and $i_G^\ast$, respectively. By a standard abuse of notation, we write $\bD$ and $\bD_G$ for the resulting endofunctors
$i^*\bD$ on  $\Fun(\PI_G,\UG)$ and  $i_G^* \bD_G$ on $\Fun(\PI_G,\UG)$. Explicitly, the monads $\bD$ and $\bD_G$ are defined as 
\begin{equation}\label{DX}
 (\bD X)(\mathbf{n}) =  \sD(-,\mathbf{n}) \otimes_{\PI} X 
 \end{equation}
for a $\PI$-$G$-space $X$, where $\sD(-,\mathbf{n})$ is the represented functor induced by $i$, and
\begin{equation}\label{DGY}
 (\bD_G Y)(\bn^{\al}) = \sD_G(-,\bn^{\al}) \otimes_{\PI_G} Y 
 \end{equation}
for a $\PI_G$-$G$-space $Y$, where $\sD_G(-,\bn^{\al})$ is the
represented functor induced by $i_G$.  

The units $\et$ of the adjunctions $(\bD,i^*)$ and $(\bD_G,i_G^*)$ give the unit maps of the 
monads, and the action maps of  the $\sD$-$G$-spaces $\bD X$ and $\sD_G$-$G$-spaces $\bD_GY$ give the
products $\mu$.  More concretely,  $\mu\colon \bD\bD\rtarr \bD$ and $\mu\colon \bD_G \bD_G \rtarr \bD_G$ 
are derived from the compositions in $\sD$ and $\sD_G$, respectively, and thus from the structure maps $\ga$ 
of $\sC_G$. The unit maps $\et$ are derived from the identity morphisms in these categories and thus from the unit element 
$\id\in \sC_G(1)$. 

Moreover, we have the following standard categorical result.

\begin{prop}\label{generalDDG}
 The category of $\bD$-algebras in $\Fun(\PI,\UG)$ is isomorphic to the category $\Fun(\sD,\UG)$. Similarly, the category of $\bD_G$-algebras in $\Fun(\PI_G,\UG)$ is isomorphic to the category $\Fun(\sD_G,\UG)$.
\end{prop}

The above discussion used the existence of left adjoints that are constructed formally. Our cofibrancy conditions in the definition of $\PI$-$G$-spaces ensure that these constructions are well behaved homotopically.

\begin{prop}\label{Dgood}
The functors $\bD$ and $\bD_G$ restrict to give functors
\[\bD\colon \PIdashG \rtarr \DdashG \ \ \text{and} \ \ \bD_G\colon \PIsubG \rtarr \DsubG.\]
Restricting appropriately, we obtain analogues of the diagrams \autoref{Ui} and \autoref{PD}.
\[ \xymatrix{
\PIdashG & \PIsubG \ar[l]_-{\bU} \\
\DdashG  \ar[u]^{i^*}&  \DsubG \ar[u]_{i_G^*} \ar[l]^-{\bU}}
\qquad
 \xymatrix{
\PIdashG \ar[r]^-{\bP} \ar[d]_{\bD} & \PIsubG \ar[d]_{\bD_G} \\
\DdashG  \ar[r]_-{\bP} &  \DsubG. \\} \]
The monads $\bD$ and $\bD_G$ restrict to give monads on the categories $\PIdashG$ and $\PIsubG$, respectively.
\end{prop}

\begin{proof}
By definition, the functors $\bU$ and $\bP$ restrict appropriately to  $\DdashG$ and $\DsubG$ (and their $\PI$ analogues), and they give equivalences of categories by  Theorems \ref{compFFG} and \ref{compGGG}. Thus, it is enough to prove that $i^\ast$ and $\bD$ preserve the cofibrancy conditions required of $\PI$ and $\sD$-$G$-spaces. This is certainly true for $i^\ast$, since the condition is specified precisely on the underlying $\PI$-$G$-space. The verification for $\bD$ requires detailed combinatorial analysis and is relegated to \autoref{DXReedy}. 
 \end{proof}
 
Denote by $\Dalg$ the category of algebras over the monad $\bD$ in $\PIdashG$, and similarly, denote by $\DGalg$ the category of algebras over the monad $\bD_G$ in $\PIsubG$.  The previous result implies the following refinement of \autoref{generalDDG}.

\begin{prop}\label{DDG} The categories $\DdashG$ and $\Dalg$ are isomorphic and
the categories $\DsubG$ and $\DGalg$ are isomorphic.
Therefore the categories 
\linebreak
$\Dalg$ and  $\DGalg$ are equivalent.
\end{prop}

Therefore  the monads $\bD$  and $\bD_G$ can be used interchangeably.   This contrasts markedly with the Segal machine,
where considerations of specialness led us to focus on $\sD_G$ rather than $\sD$.
We shall give a conceptual explanation of the difference in \autoref{inmodel}.

We need some homotopical and some formal properties of the monads $\bD$ and $\bD_G$, following \cite{MT}. 
We first establish the formal properties, whose proofs are identical to those in \cite{MT}.
We write the following results in terms of $\sD$  and $\bD$ for simplicity.
With attention to enrichment, the parallel results for $\sD_G$ work in the same way.  They can also be derived 
from the results for $\bD$ using $\bP\bD \iso \bD_G \bP$ and \autoref{DDG}.

Recall from \autoref{InSegal} that the evident functors $\bL \colon \PIdashG \rtarr  G\sT$ and $\bR \colon G\sT \rtarr \PIdashG$ give an adjunction such that  $\bL \bR  = \Id$ and the unit of the adjunction is given by the Segal maps. It induces an operadic variant.  Let $\sC_G[G\sT]$ denote the  category of $\sC_G$-spaces.

\begin{prop}\label{RLC}  A $\sD$-$G$-space with underlying $\PI$-$G$-space 
$\bR X$ determines and is determined by a $\sC_G$-space structure on $\bL \bR X = X$.
\end{prop}
\begin{proof} The nonequivariant proof of \cite[Lemma 4.2]{MT} applies verbatim.
\end{proof}

We require an analyis of the behavior of the monad $\bD$ with respect to the adjunction $(\bL ,\bR )$.  The following two results are equivariant generalizations of results in \cite[Section 6]{MT} and the equivariance adds no complications.   The proofs are inspections of definitions and straightforward diagram chases.\footnote{These results are specializations of a more general one with other applications \cite{MayTwist}.}

\begin{prop}\label{formalMT}  Let $X$ be a $G$-space and $Y$ be a $\PI$-$G$-space.  
\begin{enumerate}[(i)]  
\item The $G$-space $\bL \bD \bR X = (\bD \bR X)_1$ is naturally $G$-homeomorphic to $\bC_G X$.
\item  The $\PI$-$G$-space $\bD \bR  X$ is naturally isomorphic to the $\PI$-$G$-space $\bR \bC_G X$.
\item The following diagram is commutative for each n. 
\[ \xymatrix{ 
(\bD Y)_n \ar[rr]^-{(\bD \de)_n} \ar[d]_{\de} & & (\bD \bR \bL Y)_n\iso (\bC_G \bL Y)^n \ar[d]^{\iso}_{ \de}\\
(\bD Y)_1^n \ar[rr]_-{(\bD \de)_1^n} & & (\bD \bR \bL  Y)_1^n \iso (\bC_G\bL Y)^n\\}  \]
\item The functor $\bR \bC_G \bL $ on $\PI$-$G$-spaces is a monad with product and unit induced from those
of $\bC_G$ via the composites
\[ \xymatrix@1{ \bR \bC_G \bL \bR \bC_G \bL  = \bR \bC_G \bC_G \bL  \ar[r]^-{\bR \mu \bL } & \bR \bC_G \bL \\}  
\ \ \text{and} \ \ \xymatrix@1{ \Id \ar[r]^{\de} & \bR \bL  \ar[r]^-{\bR \et \bL } & \bR \bC_G \bL.  \\} \]
\item  The natural transformation 
$\bD \de \colon \bD  \rtarr \bD  \bR \bL  \iso \bR \bC_G \bL $
is a morphism of monads in the category $\PIdashG$. 
\item If $(F,\la)$ is a $\bC_G$-functor in $\sV$, then $F\bL \colon \Fun(\PI,G\sT) \rtarr \sV$ is an
$\bR \bC_G \bL $-functor in $\sV$ with action $\la \bL \colon F\bL \bR \bC_G \bL  = F\bC_G \bL  \rtarr F\bL $. Therefore, by
pullback, $F\bL $ is a $\bD$-functor in $\sV$ with action the composite
\[ \xymatrix@1{ F\bL \bD\ar[r]^-{F\bL \bD\de} 
&  F\bL \bD  \bR \bL  \iso F\bL \bR  \bC_G \bL  = F\bC_G \bL  \ar[r]^-{\la \bL } & F\bL .\\} \]
\end{enumerate}
\end{prop}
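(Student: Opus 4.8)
The plan is to follow the nonequivariant arguments of May--Thomason \cite[\S5]{MT} essentially verbatim, inserting remarks about equivariance only where the $G$-action must be tracked. Since every construction in sight is natural and $G$ acts either diagonally on powers or through the operad $\sC_G$, each homeomorphism we write down is automatically a $G$-map once the underlying nonequivariant statement is in place; so the real content is the same bookkeeping as in \cite{MT}.

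First I would prove (i) and (ii) together by computing the coequalizer $(\bD\bR X)(\mathbf n)=\sD(-,\mathbf n)\otimes_{\PI}\bR X$ directly from the explicit description of $\sD=\sD(\sC_G)$ in \myref{defGops}. Using that $\sD(\mathbf m,\mathbf n)=\coprod_{\ph\in\sF(\mathbf m,\mathbf n)}\prod_{1\le j\le n}\sC_G(|\ph^{-1}(j)|)$ and that the morphisms of $\PI$ are generated by basepoint insertions, projections, and permutations, one checks that the relations defining $\otimes_{\PI}$ collapse the coproduct over $\mathbf m$ exactly along the $\inj$-structure used to form $\bC_G X=\sC_G\otimes_{\inj}X^{\bullet}$, separately in each of the $n$ coordinates $j$. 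This yields a natural $G$-homeomorphism $(\bD\bR X)(\mathbf n)\iso(\bC_G X)^n=(\bR\bC_G X)(\mathbf n)$, and inspection shows that the projections, insertions, and permutations constituting the $\PI$-$G$-space structure of $\bD\bR X$ match those of $\bR\bC_G X$; this is (ii), and (i) is the case $n=1$. Part (iii) then follows by naturality: the $i$th component of the Segal map $\de$ on $\bD Y$ is induced by postcomposition with $\de_i\colon\mathbf n\rtarr\mathbf 1$ acting on the left variable $\sD(-,\mathbf n)$, and under the identification of (ii) applied to $\bR\bL Y$ this becomes the $i$th coordinate projection $(\bC_G\bL Y)^n\rtarr\bC_G\bL Y$, so the square commutes.

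For (iv) I would verify the monad axioms for $\bR\bC_G\bL$ by hand: associativity reduces to that of $\mu$ for $\bC_G$ after cancelling the inner $\bL\bR=\Id$, and the unit axioms use the triangle identities $\bL\de=\id_{\bL}$ and $\de\bR=\id_{\bR}$ of the adjunction $(\bL,\bR)$ of \myref{RLC} together with the unit axioms for $\bC_G$. Part (v) is the heart of the matter: compatibility of $\bD\de\colon\bD\rtarr\bR\bC_G\bL$ with units follows from (iii) and the fact that the units of $\bD$ and of $\bC_G$ are both induced by $\id\in\sC_G(1)$, while compatibility with products is exactly where (iii) is used, allowing one to rewrite $\bD\de\circ\mu_{\bD}$ in terms of the $\bC_G$-product by sliding $\de$ past the outer $\bD$. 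Finally (vi) is formal: one checks from the $\bC_G$-functor axioms $\la\circ F\et=\Id$ and $\la\circ F\mu=\la\circ\la\bC_G$ (again using $\bL\bR=\Id$ and the triangle identities) that $(F\bL,\la\bL)$ is an $\bR\bC_G\bL$-functor, and then pulls back along the monad morphism $\bD\de$ of (v), pullback of a functor-with-action along a morphism of monads being automatic.

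The main obstacle I anticipate is the explicit coequalizer identification in (ii): one must check that the operadic composition formula of \myref{defGops}, with its shuffle permutations $\si_j$ (see \myref{obsGops}), genuinely reassembles into $\bC_G$ coordinatewise, and that the $\SI_n$-action --- which is what upgrades the target from a $\PI$-space to a $\PI$-$G$-space --- is matched on the nose. Everything downstream, in particular the monad-morphism check (v), is then bookkeeping of the flavor already present in \cite{MT}, with equivariance contributing only the remark that every map constructed preserves the relevant $G$-actions.
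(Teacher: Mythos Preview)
Your proposal is correct and follows essentially the same approach as the paper: the paper's proof simply records that nonequivariantly these results are given in \cite[\S6]{MT} and that the equivariance adds no complications, the proofs being inspections of definitions and straightforward diagram chases. Your outline is a faithful (and more detailed) expansion of precisely that strategy; the only minor discrepancy is that the paper cites \S6 of \cite{MT} rather than \S5.
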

\begin{proof}
Nonequivariantly, these results are given in \cite[\S 6]{MT} and the equivariance adds no complications. 
The proofs are inspections of definitions and straightforward diagram chases.
\end{proof}

Using the adjunction $(\bP,\bU)$ and the isomorphism $\bD_G\bP \iso \bP\bD$, we derive the analogue for $\bD_G$.
Following \autoref{Rdefn2}, we write  $\bR_G = \bP \bR \colon G\sT \rtarr \PI_G[\sT_G]$ and we write $\bL_G =\bL\bU$ for its
left adjoint.   While  $\bR_G$ restricts to a functor  $\sC_G[G\sT]\rtarr \DsubG$, the analog for $\bL_G$ is false.

With $\bD$, $\bR$, and $\bL$ replaced by $\bD_G$, $\bR_G$, and $\bL_G$,
we then have the following analogue of \autoref{formalMT}. 

\begin{prop}\label{formalMT2}  Let $X$ be a $G$-space and $Y$ be a $\PI_G$-$G$-space.  
\begin{enumerate}[(i)]  
\item The $G$-space $\bL_G \bD_G\bR_G X$ is naturally $G$-homeomorphic to $\bC_G X$.
\item  The $\PI_G$-$G$-space $\bD_G \bR_G  X$ is naturally isomorphic to  $\bR_G \bC_G X$.
\item The following diagram is commutative for each $\bn^{\al}$. 
\[ \xymatrix{ 
(\bD_G Y)\bn^{\al} \ar[rr]^-{\bD_G \de} \ar[d]_{\de} & & (\bD_G\bR_G \bL_G Y)\bn^{\al}\iso (\bC_G \bL_{G} Y)^{\bn^{\al}}  \ar[d]^{\iso}_{ \de} \\
(\bD_G Y)_1^{\bn^{\al}} \ar[rr]_-{(\bD_G \de)_1^{\bn^{\al}}}& & (\bD_G \bR_G \bL_G  Y)_1^{\bn^{\al}} \iso (\bC_G\bL_G Y)^{\bn^{\al}}\\}  \]
\item The functor $\bR_G \bC_G \bL_G $ on $\PI_G$-$G$-spaces is a monad with product and unit induced from those
of $\bC_G$ via the composites
\[  \bR_G \bC_G \bL_G \bR_G \bC_G \bL_G  = \bR_G \bC_G \bC_G \bL_G  \xrightarrow{\bR _G\mu \bL_G }  \bR_G \bC_G \bL_G   \]
and
\[  \Id \xrightarrow{\quad \de \quad }  \bR_G \bL_G  \xrightarrow{\bR_G \et \bL_G }  \bR_G \bC_G \bL_G.   \]
\item  The natural transformation 
$\bD_G \de \colon \bD_G  \rtarr \bD_G  \bR_G \bL_G  \iso \bR_G \bC_G \bL_G $
is a morphism of monads in the category $\PIsubG$. 
\item If $(F,\la)$ is a $\bC_G$-functor in $\sV$, then $F\bL_G \colon \Fun(\PI_G,\sT_G) \rtarr \sV$ is an
$\bR_G \bC_G \bL _G$-functor in $\sV$ with action $\la \bL_G \colon F\bL_G \bR_G \bC_G \bL _G = F\bC_G \bL_G  \rtarr F\bL_G $. Therefore, by
pullback, $F\bL_G $ is a $\bD_G$-functor in $\sV$ with action the composite
\[ \xymatrix@1{ F\bL_G \bD_G\ar[r]^-{F\bL_G \bD_G\de} 
&  F\bL _G\bD_G  \bR_G \bL_G  \iso F\bL_G \bR_G  \bC_G \bL_G  = F\bC_G \bL_G  \ar[r]^-{\la \bL_G } & F\bL_G .\\} \]
\end{enumerate}
\end{prop}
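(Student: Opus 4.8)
The plan is to deduce \myref{formalMT2} from \myref{formalMT} by transport along the adjoint equivalence of categories
\[ \xymatrix@1{ \Fun(\PI,G\sT) \ar@<0.5ex>[r]^{\bP} & \Fun(\PI_G,\sT_G) \ar@<0.5ex>[l]^{\bU} } \]
supplied by \myref{compFFG}, together with the natural isomorphism $\bD_G\bP \iso \bP\bD$ from \S\ref{sectionmonad}, the defining identities $\bR_G = \bP\bR$ and $\bL_G = \bL\bU$, and the unit and counit isomorphisms $\bU\bP\iso\Id$, $\bP\bU\iso\Id$. One could instead copy the nonequivariant diagram chases of \cite[\S6]{MT} verbatim, keeping track of the conjugation $G$-actions on morphism spaces, but the transport argument is shorter and isolates the one genuinely equivariant point.

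Parts (i) and (ii) are then short chains of natural isomorphisms. For (i),
\[ \bL_G\bD_G\bR_G X = \bL\bU\,(\bD_G\bP)\,\bR X \iso \bL\bU\bP\,\bD\bR X \iso \bL\bD\bR X \iso \bC_G X \]
by \myref{formalMT}(i), and for (ii),
\[ \bD_G\bR_G X = (\bD_G\bP)\,\bR X \iso \bP\,\bD\bR X \iso \bP\,\bR\bC_G X = \bR_G\bC_G X \]
by \myref{formalMT}(ii). For (iii) I would apply the functor $\bP$ to the commutative square of \myref{formalMT}(iii) for the $\PI$-$G$-space $\bU Y$ and then identify $\bP\bU Y$ with $Y$; the only thing to check is that $\bP$ carries the Segal maps $\de$ of $\bU Y$ to the Segal maps $\de$ of $Y$, which is precisely the identification of Segal maps under the homeomorphism $(\bP Z)(\mathbf n,\al)\iso Z_n^{\al}$ recorded in the proof of \myref{compFFG}(i) (compare \myref{actact}).

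Parts (iv), (v), and (vi) are formal, since conjugation by an equivalence of categories preserves monads, morphisms of monads, and (monad\nobreakdash-)functors. Explicitly, one has $\bR_G\bC_G\bL_G \iso \bP(\bR\bC_G\bL)\bU$ and $\bD_G \iso \bP\bD\bU$ as endofunctors of $\Fun(\PI_G,\sT_G)$; under these identifications the product and unit of \myref{formalMT}(iv) are carried to the composites asserted in (iv), the monad morphism $\bD\de$ of \myref{formalMT}(v) is carried to $\bD_G\de$, and for a $\bC_G$-functor $(F,\la)$ in $\sV$ the $\bR\bC_G\bL$-functor structure on $F\bL$ from \myref{formalMT}(vi) is carried to an $\bR_G\bC_G\bL_G$-functor structure on $F\bL_G = F\bL\bU$ with action $\la\bL\bU = \la\bL_G$; pulling back along $\bD_G\de$ then produces the $\bD_G$-functor structure given by the displayed composite.

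I expect the main obstacle to be bookkeeping rather than mathematics. Before the transport in (iv)--(vi) is legitimate one must pin down that the isomorphism $\bD_G\bP\iso\bP\bD$ is an isomorphism of monads and is compatible with $\bP(\bR\bC_G\bL)\bU\iso\bR_G\bC_G\bL_G$, which amounts to unwinding the construction of $\bD_G$ in \S\ref{sectionmonad} and is cognate with \myref{DDG}; and in (iii) one must confirm that $\bP$ really does intertwine the two families of Segal maps. Equivariance enters only through the conjugation $G$-actions on the morphism $G$-spaces, and it does so harmlessly, exactly as in \myref{compFFG} and in the nonequivariant arguments of \cite{MT}.
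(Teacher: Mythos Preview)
Your proposal is correct and is exactly the approach the paper takes: immediately before stating \myref{formalMT2} the paper says ``Using the adjunction $(\bP,\bU)$ and the isomorphism $\bD_G\bP \iso \bP\bD$, we derive the analogue for $\bD_G$,'' and gives no further proof. Your write-up is in fact more detailed than what the paper records.
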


We now turn to the homotopical properties of the monads $\bD$ and $\bD_G$ and their algebras. 
The proofs of the homotopical properties are similar to those in \cite{MT}, but considerably more difficult, 
so some will be deferred to \autoref{HARD}.    In contrast with the Segal 
machine, we start with $\bD$ rather than $\bD_G$.  Our interest is in $E_{\infty}$ operads, but we allow more 
general operads until otherwise indicated.

Following \cite{MayGeo} nonequivariantly, we say that a $G$-operad $\sC_G$ is $\SI$-free if the action
of $\SI_j$ on $\sC_G(j)$ is free for each $j$.   Surprisingly, we only need that much structure to prove the following
result. It is the equivariant generalization of \cite[Lemma 5.6]{MT}.

\begin{thm}\label{homotopMT}  Let $\sC_G$ be a $\SI$-free $G$-operad.
\begin{enumerate}[(i)]
\item If $f\colon X\rtarr Y$ is an \gen-level equivalence of $\PI$-$G$-spaces,  then 
$\bD  f\colon \bD  X \rtarr \bD Y$ is an \gen-level equivalence.
\item If $X$ is an \gen-special $\PI$-$G$-space, then $\bD X$ is \gen-special.
\end{enumerate}
\end{thm}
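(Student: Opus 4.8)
The plan is to follow the nonequivariant arguments of \cite[\S\S5--6]{MT}, dragging the $(G\times\SI_n)$-actions along throughout; the point of the hypothesis that $\sC_G$ is $\SI$-free is precisely to make this equivariant bookkeeping go through. I would first dispose of (ii), granting (i). If $X$ is an \gen-special Reedy cofibrant $\PI$-$G$-space, then the Segal map $\de\colon X\rtarr \bR\bL X=\bR X_1$ is, by the very definition of \gen-specialness, an \gen-level equivalence of $\PI$-$G$-spaces, and both its source and target are Reedy cofibrant (the target because basepoints are nondegenerate, as observed just before the theorem). Hence by (i) the map $\bD\de\colon \bD X\rtarr \bD\bR X_1$ is an \gen-level equivalence. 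By \myref{formalMT}(ii) we may identify $\bD\bR X_1$ with $\bR\bC_G X_1$, whose Segal maps $(\bR\bC_G X_1)_n=(\bC_G X_1)^n\rtarr(\bC_G X_1)^n$ are identities, so that $\bR\bC_G X_1$ is \gen-special; \myref{iff} then forces $\bD X$ to be \gen-special. (Alternatively one can run this through the more precise commuting square of \myref{formalMT}(iii).)

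The heart of the matter is a natural filtration of $\bD X$ as a $\PI$-$G$-space, which I would construct in order to treat (iii) and (i) at once. Starting from the explicit description $(\bD X)(\mathbf n)=\sD(-,\mathbf n)\otimes_\PI X$, the factorization of a morphism of $\sF$ through a surjection, and the formula $\sD(\mathbf m,\mathbf n)=\coprod_\ph\prod_j\sC_G(|\ph^{-1}(j)|)$, one obtains an increasing filtration
$$\ast=F_0\bD X\subseteq F_1\bD X\subseteq F_2\bD X\subseteq\cdots,\qquad \bD X=\bigcup_q F_q\bD X,$$
by sub-$\PI$-$G$-spaces, where $F_q$ records those data using at most $q$ operad inputs per coordinate. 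This generalizes the familiar length filtration of $\bC_G X=\coprod_j\sC_G(j)\times_{\SI_j}X^j$; the new feature, absent when $X=\bR X_1$, is that one must also bookkeep the latching (degeneracy) filtration of $X$ as a $\PI$-$G$-space. The successive subquotient $F_q\bD X/F_{q-1}\bD X$ at level $\mathbf n$ is then obtained by a $(G\times\SI_n)$-pushout along a map built from $\sC_G(q)$, the $\SI_q$-equivariant pushout-products of the latching maps $L_\bullet X\rtarr X_\bullet$, and the combinatorics of surjections onto $\mathbf n$; schematically it has the form $\sC_G(q)\times_{\SI_q}(\text{a relative }\SI_q\text{-cell object built from }X)$, suitably glued over the indexing data. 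Since $\sC_G$ is $\SI$-free, the $\SI_q$-action on $\sC_G(q)$, and hence on these pieces, is free, so the equivariant NDR/union machinery of \cite[Theorem A.2.7]{BVbook} (already invoked in \myref{PlenzReedy} and \myref{mtcofibration}) shows that each $F_{q-1}\bD X\rtarr F_q\bD X$ is a levelwise $(G\times\SI_n)$-cofibration.

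Granting this filtration, (iii) is immediate: the $n$th latching map $L_n(\bD X)\rtarr(\bD X)_n$ is the union of the $(G\times\SI_n)$-cofibrations contributed by the successive filtration stages, hence a $(G\times\SI_n)$-cofibration by the equivariant union theorem, so $\bD X$ is Reedy cofibrant. And (i) follows by induction up the filtration: an \gen-level equivalence $f\colon X\rtarr Y$ of Reedy cofibrant $\PI$-$G$-spaces induces a map of filtrations whose effect on $F_q\bD X/F_{q-1}\bD X$ is $\id\times_{\SI_q}$ of the pushout-product of the latching maps of $f$; since $\sC_G$ is $\SI$-free, the functor $\sC_G(q)\times_{\SI_q}(-)$ sends $(G\times\SI_q)$-maps that are weak $\LA$-equivalences for all $\LA\in\bF_q$ to weak $G$-equivalences, and Reedy cofibrancy together with \myref{lem:lafixedpoints} shows that the relevant pushout-products of $f$'s latching maps are of this kind. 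Hence each subquotient map is an \gen-level equivalence, and passing up the tower of cofibrations $F_{q-1}\bD X\rtarr F_q\bD X$ and then to the colimit shows $\bD f$ is an \gen-level equivalence.

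The main obstacle is exactly the construction and analysis of this filtration — the step that, in the paper's own words, is ``considerably more difficult'' than in \cite{MT} and is deferred to \S\ref{HARD}. The difficulty is that one must simultaneously untangle the surjection/injection combinatorics of $\sF$, the latching filtration of $X$ as a $\PI$-$G$-space, and the operadic structure of $\sC_G$, all while keeping the $(G\times\SI_n)$-actions under enough control that $\SI$-freeness of $\sC_G$ genuinely suffices to govern both the fixed-point behaviour (for (i), and hence (ii)) and the cofibrancy (for (iii)).
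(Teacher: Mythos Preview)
Your overall strategy matches the paper's: derive (ii) from (i) via the Segal map $\de\colon X\rtarr\bR\bL X$ together with \myref{formalMT}(ii) and \myref{iff} (exactly as the paper does), and attack (i) and (iii) by building a filtration of $\bD X$ and climbing it by induction. You are also right to flag the filtration analysis as the main obstacle.

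However, your description of the filtration stratum is wrong in a way that hides the real work. The stratum at level $\mathbf n$ is not of the form $\sC_G(q)\times_{\SI_q}(\cdots)$; it is a disjoint union over ordered effective maps $\epz\colon\mathbf{p}\rtarr\mathbf{n}$ of pieces $\big(\prod_{j=1}^n\sC_G(j_\epz)\big)\times_{\SI(\epz)}X_p$ with $\SI(\epz)=\prod_j\SI_{j_\epz}$, and the filtration index $p$ is the \emph{total} number of inputs, not the per-coordinate number. The $(G\times\SI_n)$-action permutes the components indexed by $\epz$ and the factors of the product. Consequently the $\LA$-fixed-point analysis for $\LA\in\bF_n$ is much subtler than the principle ``$\sC_G(q)\times_{\SI_q}(-)$ takes $\bF_q$-equivalences to $G$-equivalences''. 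The paper's key step (\S\ref{HARD}) is an explicit identification of these $\LA$-fixed points in terms of \emph{admissible functions} $\ga\colon H\rtarr\SI(\epz)$; $\SI$-freeness is used there to force uniqueness of $\ga$ and freeness of the residual $\SI(\epz)$-action on the resulting description.

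A second missing ingredient for (i) is that an \gen-level equivalence $f$ of Reedy cofibrant $\PI$-$G$-spaces must induce $\bF_n$-equivalences $L_nX\rtarr L_nY$ on latching spaces, which is needed for the upper-left corner of the pushout. This is not a consequence of \myref{lem:lafixedpoints} (that lemma treats only powers $X^n$); the paper proves it separately by decomposing $(L_nX)^\LA$ as a union over the $H$-orbits of $(\mathbf n,\al)$ and running an inductive gluing argument.

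For (iii), your sentence ``the $n$th latching map is the union of the $(G\times\SI_n)$-cofibrations contributed by the successive filtration stages'' does not parse into an argument: the filtration is of $(\bD X)_n$, not of the latching map. The paper instead uses \myref{mtcofibration} to reduce to showing that each ordered injection $\si_T\colon(\bD X)_{n-|T|}\rtarr(\bD X)_n$ is a $(G\times\SI_T)$-cofibration, and proves this by mapping the pushout square for $(\bD X)_{n-|T|}$ to that for $(\bD X)_n$, inducting on $p$ via \myref{glueingcof} and passing to the colimit via \myref{sequentialcolim}, checking the required intersection conditions at each step.
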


\begin{proof}The proof of (i) requires quite lengthy combinatorics about the structure of $\bD X$ and its fixed point subspaces,
hence we defer it to \autoref{HARD}.  By (i) applied to the Segal map $\de\colon X\rtarr \bR\bL X$, 
the map $\bD \de$ is an \gen-level equivalence. Since its target $\bD\bR\bL X \cong \bR\bC_G\bL X$ is \gen-special, 
\autoref{iff} implies that $\bD X$ is also \gen-special, giving (ii).
\end{proof}

By \autoref{compGGG} and Corollaries  \ref{DFstarspec} and \ref{Dleveleq}, the isomorphism  $\bP\bD\iso \bD_G\bP$
and \autoref{homotopMT} imply the following analogue of that result.

\begin{prop}\label{homotopMT2} Assume that each $\sC_G(j)$ is $\SI_j$-free. 
\begin{enumerate}[(i)] 
\item If $f\colon X\rtarr Y$ is an \gen-level equivalence of $\PI$-$G$-spaces,  then 
$\bD_G  \bP f\colon \bD_G  \bP X \rtarr \bD_G \bP Y$, is a level $G$-equivalence.
\item If $X$ is an \gen-special $\PI$-$G$-space, then $\bD_G \bP X$ is a special $\PI_G$-$G$-space.
\end{enumerate}
\end{prop}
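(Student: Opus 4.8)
The plan is to deduce \myref{homotopMT2} formally from \myref{homotopMT} together with the structural results already assembled for the pair $(\bP,\bU)$. Recall the isomorphism of monads $\bP\bD \iso \bD_G\bP$ (from the commuting square of left adjoints in \S\ref{sectionmonad}), the fact from \myref{compGGG} that $(\bP,\bU)$ is an adjoint equivalence between $\Fun(\PI,G\sT)$ and $\Fun(\PI_G,\sT_G)$, and the translation dictionaries: \myref{Dleveleq} says $\bP$ carries \gen-level equivalences to level $G$-equivalences (and $\bU$ the reverse), while \myref{DFstarspec} (or directly \myref{compFFG}(iii)) says $X$ is \gen-special if and only if $\bP X$ is special. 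So both statements are ``apply $\bP$ to the conclusion of \myref{homotopMT}.''

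For (i): given an \gen-level equivalence $f\colon X\rtarr Y$ of Reedy cofibrant $\PI$-$G$-spaces, \myref{homotopMT}(i) gives that $\bD f\colon \bD X\rtarr \bD Y$ is an \gen-level equivalence of $\PI$-$G$-spaces. Applying $\bP$, which preserves \gen-level equivalences in the sense of \myref{Dleveleq} (the $\PI_G$ version of that statement), and using the natural isomorphism $\bP\bD\iso \bD_G\bP$, we identify $\bP(\bD f)$ with $\bD_G(\bP f)\colon \bD_G\bP X\rtarr \bD_G\bP Y$, which is therefore a level $G$-equivalence of $\PI_G$-$G$-spaces. For (ii): given an \gen-special Reedy cofibrant $\PI$-$G$-space $X$, \myref{homotopMT}(ii) gives that $\bD X$ is \gen-special; then $\bP(\bD X)\iso \bD_G\bP X$ is special by \myref{DFstarspec} (the \gen-special/special correspondence of \myref{compFFG}(iii), inherited by $\PI$ and $\PI_G$ as noted after that theorem). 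One should state the one-line caveat that $\bD f$ and $\bD X$ take values in $\PI$-$G$-spaces (not yet $\sD$-$G$-spaces), so the relevant comparison results are the $\PI/\PI_G$ versions of \myref{compFFG} and \myref{Dleveleq}, which hold by the ``remains true with $\PI$ and $\PI_G$ replaced'' clauses; once that is noted the argument is immediate.

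There is essentially no main obstacle here: the content all lives in \myref{homotopMT}, whose hard part (the combinatorial analysis of $\bD X$ and its fixed points, proving (i)) is explicitly deferred to \S\ref{HARD}. The only thing requiring a moment's care is bookkeeping of which enriched category ($\PI$ versus $\PI_G$, $G\sT$ versus $\sT_G$) each functor and each notion lives over, and checking that the monad isomorphism $\bP\bD\iso\bD_G\bP$ is natural enough that $\bP$ applied to the map $\bD f$ really is $\bD_G$ applied to $\bP f$ up to that isomorphism — which is exactly what the commuting-square-of-left-adjoints discussion in \S\ref{sectionmonad} provides. So the proof is a short formal chase, written as: ``This is immediate from \myref{homotopMT} upon applying the prolongation functor $\bP$, using the isomorphism $\bP\bD\iso\bD_G\bP$ and the $\PI$-versions of \myref{compFFG} and \myref{Dleveleq}.''
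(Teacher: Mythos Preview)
Your proposal is correct and follows essentially the same approach as the paper: the paper's proof is the single sentence preceding the statement, which says exactly that the result follows from \myref{homotopMT} via the isomorphism $\bP\bD\iso\bD_G\bP$ together with \myref{compGGG} and Corollaries \ref{DFstarspec} and \ref{Dleveleq}. Your write-up simply unpacks this in more detail, including the (correct) bookkeeping observation about which category level the translation results are being applied at.
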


\subsection{Comparisons of inputs and outputs of the operadic machine}\label{inoutop}

We give three ways to construct $G$-spectra from $\sD_G$-$G$-spaces.  We can convert 
$\sD_G$-$G$-spaces to $\sD$-$G$-spaces via \autoref{DDG}, and we can convert those to $\sC_G$-spaces by \autoref{Mayin} below.
We can then apply the original operadic machine, or we can generalize the machine to both $\sD$-$G$-spaces and $\sD_G$-$G$-spaces.  
All three make use of the two-sided monadic bar construction of \cite{MayGeo}, starting from the formalities of Propositions \ref{formalMT} 
and \ref{formalMT2}. We show that all three are equivalent. In fact, we really only have two machines in view of the following  result.  We emphasize how different 
this is from the Segal machine, where the $\sD$ and $\sD_G$ bar constructions are not even equivalent, let alone isomorphic.

\begin{prop}\label{OperDDG}  Let $\sC_G$ be a $G$-operad with category of operators $\sD$. For any $\sD$-space $X$ and 
any $\bC_G$-functor $F\colon G\sT \rtarr G\sT$,  there is a natural isomorphism of $G$-spaces
\[  B(F\bL ,\bD,X) \iso B(F\bL_G ,\bD_G,\bP X). \]
\end{prop}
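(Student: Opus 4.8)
The plan is to produce the isomorphism already at the level of simplicial $G$-spaces,
\[ B_\sbt(F\bL,\bD,X)\iso B_\sbt(F\bL_G,\bD_G,\bP X), \]
and then apply geometric realization, which is a functor and hence preserves isomorphisms. Recall from \cite[\S9]{MayGeo} that the two-sided monadic bar construction has $q$-simplices $B_q(F\bL,\bD,X)=F\bL\bD^q X$, with faces induced by the product $\mu\colon\bD\bD\rtarr\bD$ for $0<i<q$, by the $\bD$-functor structure map $F\bL\bD\rtarr F\bL$ of \myref{formalMT}(vi) for $i=0$, and by the $\bD$-algebra structure map $\bD X\rtarr X$ for $i=q$ (here $X$ is the $\bD$-algebra corresponding to the $\sD$-$G$-space $X$ under \myref{DDG}), with degeneracies induced by the unit $\et\colon\Id\rtarr\bD$. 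Likewise $B_q(F\bL_G,\bD_G,\bP X)=F\bL_G\bD_G^q\bP X=F\bL\,\bU\bD_G^q\bP X$, with the analogous faces and degeneracies built from $\mu_G$, the $\bD_G$-functor structure of $F\bL_G$ from \myref{formalMT2}(vi), the $\bD_G$-algebra structure of $\bP X$, and $\et_G$.

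First I would compare the $q$-simplices. From the commutative square of \S\ref{sectionmonad} relating $\bD$, $\bD_G$, and $\bP$ we have a natural isomorphism $\chi\colon\bP\bD\iso\bD_G\bP$; iterating it and using naturality gives natural isomorphisms $\bP\bD^q\iso\bD_G^q\bP$ for every $q\geq 0$. Combining this with the unit isomorphism $\Id\iso\bU\bP$ of the adjoint equivalence $(\bP,\bU)$ of \myref{compGGG} (an isomorphism since $\PI\hookrightarrow\PI_G$ is full and faithful) and applying $F\bL\bU$, we obtain natural isomorphisms
\[ F\bL\bD^q X\iso F\bL\,\bU\bP\,\bD^q X\iso F\bL\,\bU\,\bD_G^q\,\bP X=F\bL_G\,\bD_G^q\,\bP X, \]
natural in $X$.

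Next I would check that these isomorphisms commute with all face and degeneracy operators, so that they assemble into an isomorphism of simplicial $G$-spaces. This reduces to three compatibilities: (a) $\chi$ intertwines $\bP\mu$ with $\mu_G\bP$ and $\bP\et$ with $\et_G\bP$; (b) under $\chi$ and $\Id\iso\bU\bP$ the $\bD$-algebra structure of $X$ corresponds to the $\bD_G$-algebra structure of $\bP X$; and (c) the $\bD_G$-functor structure of $F\bL_G=F\bL\bU$ corresponds to the $\bD$-functor structure of $F\bL$. Compatibility (a) is the statement that $\bP$ intertwines the two monads; it follows from the commutative square of \S\ref{sectionmonad} together with the compatibility of $\bP$ with the adjunctions $(\bD,i^*)$ and $(\bD_G,i_G^*)$ defining the monads, and can also be read off the coequalizer formulas \eqref{DX} and \eqref{DGY}. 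Compatibility (b) holds by construction, since the $\bD_G$-algebra $\bP X$ is the image of the $\bD$-algebra $X$ under the equivalence $\Fun(\sD,\sT_G)\simeq\Fun(\sD_G,\sT_G)$ of \myref{compGGG}, which lies over $\bP$ on underlying objects (compare \myref{DDG}). Compatibility (c) also holds by construction: comparing \myref{formalMT}(vi) with \myref{formalMT2}(vi), the $\bD_G$-functor structure of $F\bL_G=F\bL\bU$ is defined by pulling back that of $F\bL$ along $\bU$ and $\chi$. Granting (a)--(c), the isomorphisms above form an isomorphism of simplicial $G$-spaces, and realizing it proves the proposition.

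I expect the only real obstacle to be the bookkeeping behind (a)--(c), in particular keeping the two ``outer'' face maps $d_0$ and $d_q$ straight: $d_0$ sees the $\bD$-functor structure on $F\bL$ and $d_q$ sees the $\bD$-algebra structure on $X$, and one must check that both are transported correctly by $\bP$ and $\chi$. Conceptually nothing can fail, because $(\bP,\bU)$ is an equivalence between the categories of $\bD$- and $\bD_G$-algebras lying over the forgetful functors to $\PI$- and $\PI_G$-$G$-spaces (\myref{DDG} and \myref{compGGG}); carrying this through the bar construction is a routine but slightly lengthy inspection of definitions, exactly as in the nonequivariant argument of \cite[\S5]{MT}.
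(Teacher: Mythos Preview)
Your proposal is correct and follows essentially the same approach as the paper: both use the isomorphisms $\Id\iso\bU\bP$, $\bP\bD\iso\bD_G\bP$, and $\bL_G=\bL\bU$ to produce natural isomorphisms $F\bL\bD^q X\iso F\bL_G\bD_G^q\bP X$ of $q$-simplices, verify that these commute with faces and degeneracies, and pass to geometric realization. The paper simply says ``formal checks show'' for the simplicial compatibility, whereas you have spelled out the three compatibilities (a)--(c) and indicated why each holds; your account is a faithful expansion of the paper's argument.
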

\begin{proof}
Since $\Id\iso \bU\bP$, $\bP \bD\iso \bD_G\bP$ and $\bL_G=\bL\bU$, we have isomorphisms of 
$q$-simplices 
\[  F\bL  \bD^q X \iso F\bL_G \bD_G^q\bP X. \]
Formal checks show that these isomorphisms commute with the face and degeneracy operators.
The conclusion follows on passage to geometric realization.
\end{proof}

For variety, and because that is what we shall use in the next section, we focus on $\sD_G$ rather than $\sD$ in this section. 
The following result  compares the inputs to machines given by $\sD_G$-$G$-spaces and $\sC_G$-spaces. 

\begin{defn}\label{CDGMachine}  For a $\sD_G$-$G$-space $Y$, define a $\sC_G$-space $\bX(Y)$ by
\[ \bX(Y) = B(\bC_G\bL_G ,\bD_G,Y).  \]
Here $\bC_G$ is regarded as a functor $G\sT \rtarr \sC_G[G\sT]$, and the construction makes sense
since the realization of a simplicial $\sC_G$-space is a $\sC_G$-space, exactly as 
nonequivariantly \cite[Theorem 12.2]{MayGeo}. 
\end{defn}

\begin{prop}\label{Mayin}  For special $\sD_G$-$G$-spaces $Y$,
there is a zigzag of natural level 
$G$-equivalences of $\sD_G$-$G$-spaces between $Y$ and $\bR_G \bX(Y)$. For $\sC_G$-spaces $X$, there 
is a natural $G$-equivalence of  $\sC_G$-spaces from  $\bX(\bR_G X)$ to $X$.
\end{prop}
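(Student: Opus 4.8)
The plan is to deduce both statements from the formal properties of the monads $\bD_G$, $\bR_G$, $\bL_G$ assembled in \myref{formalMT2}, together with the homotopical input of \myref{homotopMT}, the isomorphism $\bD_G\bP\iso\bP\bD$, and the Reedy cofibrancy machinery of \myref{PlenzWeak} and \myref{blanket}. It may in fact be cleanest to first prove the evident $\sD$-$G$-space analogue (with $\bD,\bR,\bL$ and $B(\bC_G\bL,\bD,-)$ in place of $\bD_G,\bR_G,\bL_G,\bX$), following \cite[\S4]{MT} verbatim but citing \myref{formalMT} and \myref{homotopMT} for the equivariant facts, and then to transport along the adjoint equivalence $(\bP,\bU)$ of \myref{compGGG}; below I describe the argument in its $\sD_G$ form directly.

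I would treat the second statement first, since it is easier. Let $X$ be a $\sC_G$-space, so (by the $\sD_G$-version of \myref{RLC}) the $\sD_G$-$G$-space $\bR_G X$ is defined and $\bL_G\bR_G=\Id$. Iterating the natural isomorphism $\bD_G\bR_G\iso\bR_G\bC_G$ of \myref{formalMT2}(ii) identifies the $q$-simplices $\bC_G\bL_G\bD_G^q\bR_G X$ of $\bX(\bR_G X)=B(\bC_G\bL_G,\bD_G,\bR_G X)$ with $\bC_G^{q+1}X$, and an inspection of the face and degeneracy operators — using that $\bD_G\de\colon\bD_G\rtarr\bR_G\bC_G\bL_G$ is a morphism of monads and the compatibilities of \myref{formalMT2}(iv)--(vi) — identifies the whole simplicial $\sC_G$-space with the monadic bar resolution $B_\sbt(\bC_G,\bC_G,X)$. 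Passing to realizations gives $\bX(\bR_G X)\iso B(\bC_G,\bC_G,X)$, and the augmentation $\epz\colon B(\bC_G,\bC_G,X)\rtarr X$ is a $G$-homotopy equivalence of $\sC_G$-spaces with the usual homotopy inverse $\et$, exactly as nonequivariantly \cite[Proposition 9.8]{MayGeo}. This uses no hypotheses on $X$.

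For the first statement, let $Y$ be a special $\sD_G$-$G$-space whose underlying $\PI$-$G$-space is Reedy cofibrant; then the underlying $\sD$-$G$-space $X=\bU Y$ is \gen-special by \myref{DFstarspec}. Since $\bD_G\de$ is a morphism of monads there is a map of simplicial $\sD_G$-$G$-spaces $B_\sbt(\bD_G\de,\id,\id)\colon B_\sbt(\bD_G,\bD_G,Y)\rtarr B_\sbt(\bR_G\bC_G\bL_G,\bD_G,Y)$, and since $\bR_G=\bP\bR$ commutes with geometric realization (finite products commute with realization in $G\sU$, and $\bP$ is a left adjoint) the realization of the target is $\bR_G B(\bC_G\bL_G,\bD_G,Y)=\bR_G\bX(Y)$. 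Combined with the $G$-homotopy equivalence $\epz\colon B(\bD_G,\bD_G,Y)\rtarr Y$ of $\sD_G$-$G$-spaces, this yields the zigzag
\[ Y \xleftarrow{\ \epz\ } B(\bD_G,\bD_G,Y) \xrightarrow{\ B(\bD_G\de,\id,\id)\ } \bR_G\bX(Y), \]
and it remains to show the right-hand arrow is a level $G$-equivalence. By \myref{blanket} (see also \myref{add1}) and the Reedy cofibrancy of $X$, all three bar constructions are realizations of simplicial $\PI_G$-$G$-spaces that are Reedy cofibrant at each object of $\PI_G$, so by \myref{PlenzWeak} it is enough to show that $\bD_G\de$ is a level $G$-equivalence on each $q$-simplex $\bD_G^q Y$. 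Using $\bD_G\bP\iso\bP\bD$, we identify $\bD_G^q Y$ with $\bP(\bD^q X)$; by \myref{homotopMT}(ii),(iii) each $\bD^q X$ is a \gen-special Reedy cofibrant $\PI$-$G$-space, so its Segal map $\de\colon\bD^q X\rtarr\bR\bL\bD^q X$ is an \gen-level equivalence, hence $\bD(\de)$ is an \gen-level equivalence by \myref{homotopMT}(i), and applying $\bP$ together with \myref{Dleveleq} shows that $\bD_G\de$ is a level $G$-equivalence on $\bD_G^q Y$, as needed.

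The main obstacle is this last point, and it rests entirely on \myref{homotopMT}(i) — the assertion that $\bD$ preserves \gen-level equivalences between Reedy cofibrant $\PI$-$G$-spaces — whose proof is the genuinely hard combinatorial analysis of the fixed-point spaces of $\bD X$ and is deferred to \S\ref{HARD}. Granting that, the remaining work here is bookkeeping: tracking the isomorphisms $\bD_G\bP\iso\bP\bD$, $\bD_G\bR_G\iso\bR_G\bC_G$, and $\bL_G\bR_G=\Id$; checking that the comparison maps really are maps of $\sD_G$-$G$-spaces (resp.\ $\sC_G$-spaces), which follows from $\bD_G\de$ being a morphism of monads; and confirming that the Reedy cofibrancy hypothesis on $Y$ propagates through the bar constructions so that \myref{PlenzWeak} applies levelwise.
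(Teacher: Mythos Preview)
Your argument is correct and follows essentially the same route as the paper: the same zigzag
\[
Y \xleftarrow{\ \epz\ } B(\bD_G,\bD_G,Y) \xrightarrow{\ B(\bD_G\de,\id,\id)\ } \bR_G\bX(Y)
\]
for the first statement, and the same identification $\bX(\bR_G X)\iso B(\bC_G,\bC_G,X)$ (the paper phrases it as applying $\bL_G$ to $B(\bR_G\bC_G\bL_G,\bR_G\bC_G\bL_G,\bR_G X)\xrightarrow{\epz}\bR_G X$, which is the same thing) for the second. Your treatment is in fact slightly more careful than the paper's: you spell out the induction showing that $\bD^q X$ remains \gen-special and Reedy cofibrant via \myref{homotopMT}(ii),(iii), so that $\bD_G\de$ is a level $G$-equivalence at every simplicial level $\bD_G^q Y$, not just at $Y$ itself --- a point the paper leaves implicit when it invokes \myref{PlenzWeak}.
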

\begin{proof} \autoref{homotopMT2} implies that $\bD_G \de\colon \bD_G  Y \rtarr \bD_G  \bR_G \bL_G Y \iso \bR_G \bC_G \bL_G  Y$ is  a level 
$G$-equivalence of $\sD_G$-$G$-spaces.  The
realization of simplicial $\PI_G$-$G$-spaces is defined levelwise, and since realization commutes
with products of $G$-spaces, we have the indicated isomorphism in the diagram

{\small{
\[ \xymatrix@1{ Y & \ar[l]_-{\epz} B(\bD_G,\bD_G, Y) \ar[rr]^-{B(\bD_G \de,\id,\id)} 
& & B(\bR_G \bC_G \bL_G ,\bD_G, Y) \iso \bR_G B(\bC_G\bL_G ,\bD_G, Y) = \bR_G \bX(Y).\\} \]
}}

By standard properties of the bar construction, as in \cite{MayGeo} nonequivariantly, 
$\epz$ is a level $G$-equivalence
of $\sD_G$-$G$-spaces. Since the bar constructions are geometric realizations of Reedy cofibrant simplicial $G$-spaces (see \autoref{blanket}), 
it follows from \autoref{PlenzWeak}  that $B(\bD_G \de,\id,\id)$ is a level $G$-equivalence of $\sD_G$-$G$-spaces. For the second statement, we apply
$\bL_G $ to the level $G$-equivalence of $\sD_G$-$G$-spaces
\[ \xymatrix@1{   
\bR_G \bX(\bR_G X) \iso B(\bR_G \bC_G \bL_G , \bD_G, \bR_G X) \iso B(\bR_G \bC_G \bL_G , \bR_G \bC_G \bL_G , \bR_G X) \ar[r]^-{\epz} & \bR_G X,}  \] 
where the second isomorphism follows from \autoref{formalMT2} and inspection.
\end{proof}

Therefore,  after inverting the respective equivalences, the functors $\bR_G $ and $\bX$ induce an equivalence of categories between 
$\sC_G$-spaces and special $\sD_G$-$G$-spaces whose underlying $\PI_G$-$G$-spaces
are Reedy cofibrant.  We conclude that, for an
$E_{\infty}$-operad $\sC_G$, the input categories for operadic machines given by $\sC_G$-spaces and
by $\sD_G$-$G$-spaces are essentially equivalent. 

To generalize the machine from $\sC_G$-spaces to $\sD_G$-$G$-spaces, we again use the product
operads $\sC_V = \sC_G \times \sK_V$, where $\sK_V$ is the $V$th Steiner operad.
We write $\bD_{G,V}$ for the monad associated to the resulting category of operators $\sD_{G,V}=\sD_G(\sC_V)$ over $\sF_G$.
Then a $\sD_G$-space is a $\sD_{G,V}$-space
for any representation $V$ by pullback along the projection $\sD_{G,V}\rtarr \sD_G$. 

\begin{defn}\label{DGmachine} For a $\sD_G$-$G$-space $Y$, define the $V$th space of the orthogonal 
$G$-spectrum $\bE^{\sD_G}_GY$  to be the monadic two-sided bar construction
\begin{equation}
 \bE^{\sD_G}_G(Y)(V) = B(\SI^V\bL_G , \bD_{G,V}, Y). 
 \end{equation}
The right action of $\bD_{G,V}$ on $\SI^V\bL_G$ is obtained from the projection $\bD_{G,V}\rtarr \bK_{V}$ and the
action of $\bK_{V}$ on $\SI^V$, via \autoref{formalMT}(vi).
 The $\sI_G$-$G$-space structure is given as follows. For an isometric isomorphism $V\rtarr V'$ in $\sI_G$, the map 
 \[ B(\SI^V\bL_G , \bD_{G,V}, Y)\rtarr B(\SI^{V'}\bL_G , \bD_{G,{V'}}, Y)\]
  is the geometric realization of maps induced at all simplicial levels  by  
  $S^V \rtarr S^{V'}$ and $\sK_V\rtarr \sK_{V'}$. Similarly, since smashing commutes with geometric realization, the structure maps 
   \[ B(\SI^V\bL_G , \bD_{G,V}, Y)\sma S^W \rtarr B(\SI^{V\oplus W}\bL_G , \bD_{G,{V\oplus W}}, Y)\] are induced from the maps of monads 
   $\bD_{G,V}\rtarr \bD_{G,{V\oplus W}}$.
\end{defn}

The machine $\bE^{\sD_G}_G$ on $\sD_G$-$G$-spaces $Y$ generalizes the machine $\bE^{\sC}_G$ on
$\sC_G$-spaces $X$. To see that, observe that \autoref{formalMT2}(ii)  implies that we have a natural isomorphism 
$\bD_{G,V}\bR_G X \cong \bR_G \bC_V X$.  Since  $\bL_G \bR_G = \Id$,  that gives us a natural  isomorphism 
\begin{equation}\label{Mayout}
B(\SI^V\bL_G ,\bD_{G,V}, \bR_G X) \iso B(\SI^V, \bC_V, X),
\end{equation}
where we regard $\bR_G X$ as a $\sD_G$-$G$-space via \autoref{RLC}.\footnote{As in \cite[Remark 2.9]{GM3}, checks of definitions when $V=0$ give $\SI^0 = \Id$, $\bK_0 = \Id$, $\bC_0 X \iso \sC_G(1)_+\sma X$ for a $G$-space $X$, and 
$\bD_{G,0} Y \iso \bR_G\bC_0 \bL_G  Y$ for a $\PI_G$-$G$-space $Y$.}
for     Together with \autoref{Mayin},
this gives the following comparison of outputs of our machines.

\begin{cor}\label{Mayouttoo} For $\sC_G$-spaces $X$, $\bE^{\sC}_G X$ is naturally isomorphic to $\bE^{\sD_G}_G \bR_G X$.  For special
$\sD_G$-$G$-spaces $Y$ whose underlying $\PI_G$-$G$-space is Reedy cofibrant, there is a zigzag of natural equivalences connecting 
$\bE^{\sD_G}_GY$ to $\bE^{\sD_G}_G \bR_G \bX(Y) \iso \bE^{\sC}_G \bX(Y)$. 
\end{cor}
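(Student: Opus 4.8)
The plan is to prove the two assertions in turn, deducing the second from \myref{Mayin} and from the first assertion applied to the $\sC_G$-space $\bX(Y)$. The first assertion is just the isomorphism $B(\SI^V\bL_G,\bD_{G,V},\bR_G X)\iso B(\SI^V,\bC_V,X)$ already noted above, promoted from a levelwise statement to one about orthogonal $G$-spectra. Recall that $\bE_G(\bR_G X)(V)$ is the realization of the simplicial $G$-space $[q]\mapsto \SI^V\bL_G\bD_{G,V}^q\bR_G X$. Iterating the natural isomorphism $\bD_{G,V}\bR_G X\iso\bR_G\bC_V X$ of \myref{formalMT2}(ii) applied to the product operad $\sC_V=\sC_G\times\sK_V$, and using $\bL_G\bR_G=\Id$, identifies this with $\SI^V\bC_V^q X$, the $q$-simplices of $\bE_G(X)(V)=B(\SI^V,\bC_V,X)$. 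First I would check, by inspection of definitions and parts (iv)--(vi) of \myref{formalMT2}, that under these identifications the faces, degeneracies, and the $\bD_{G,V}$-functor structure on $\SI^V\bL_G$ correspond to the faces, degeneracies, and the $\bC_V$-functor structure on $\SI^V$ used in the monadic bar construction. Passing to realizations gives a natural isomorphism of $G$-spaces at each $V$, and since everything is natural for isometric isomorphisms $V\to V'$ and compatible with the maps of monads $\bD_{G,V}\to\bD_{G,W}$ induced by inclusions $V\subset W$, these assemble into an isomorphism $\bE_G\bR_G X\iso \bE_G X$ of orthogonal $G$-spectra.

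For the second assertion, \myref{Mayin} supplies a zigzag of natural level $G$-equivalences of $\sD_G$-$G$-spaces
\[ Y \xleftarrow{\;\epz\;} B(\bD_G,\bD_G,Y) \xrightarrow{\;B(\bD_G\de,\id,\id)\;} B(\bR_G\bC_G\bL_G,\bD_G,Y) \iso \bR_G\bX(Y), \]
and I would apply $\bE_G$ termwise. By \myref{blanket} the intermediate $\sD_G$-$G$-spaces still have Reedy cofibrant underlying $\PI_G$-$G$-spaces (the right-hand one is $\bR_G$ of a $G$-space), so it suffices to show that $\bE_G$ sends any level $G$-equivalence $f\colon Y'\rtarr Z'$ between $\sD_G$-$G$-spaces with Reedy cofibrant underlying $\PI_G$-$G$-spaces to a weak $G$-equivalence of $G$-spectra. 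At a representation $V$, $\bE_G(f)(V)$ is the realization of the map of simplicial $G$-spaces with $q$th term $\SI^V\bL_G\bD_{G,V}^q f$. Since $\sC_V$ is $\SI$-free, the translation of \myref{homotopMT} to $\PI_G$-$G$-spaces (via \myref{compFFG} and the isomorphism $\bP\bD\iso\bD_G\bP$, as in \myref{homotopMT2}) shows by induction on $q$ that $\bD_{G,V}^q f$ is a level $G$-equivalence of Reedy cofibrant $\PI_G$-$G$-spaces, whence $\SI^V\bL_G\bD_{G,V}^q f$ is a weak $G$-equivalence of nondegenerately based $G$-spaces. These simplicial $G$-spaces are Reedy cofibrant by \myref{blanket} and \myref{add1}, so \myref{PlenzWeak} gives that $\bE_G(f)(V)$ is a weak $G$-equivalence. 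Thus $\bE_G Y$ and $\bE_G\bR_G\bX(Y)$ are connected by a zigzag of natural equivalences, and the first assertion applied to $X=\bX(Y)$ furnishes the displayed isomorphism $\bE_G\bR_G\bX(Y)\iso\bE_G\bX(Y)$.

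The main obstacle is the homotopical bookkeeping in the second assertion: one needs that iterating the monads $\bD_{G,V}$ preserves both Reedy cofibrancy and level $G$-equivalences of $\PI_G$-$G$-spaces, so that \myref{PlenzWeak} may be applied to the bar constructions defining $\bE_G$. The substantive input behind this is \myref{homotopMT}(i), whose proof is deferred to \S\ref{HARD}; granting it, the remaining work is a sequence of routine verifications that the relevant simplicial $G$-spaces are Reedy cofibrant and that the natural identifications of the first assertion respect all the structure maps in sight.
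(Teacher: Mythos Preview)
Your proposal is correct and follows essentially the same approach as the paper. The paper treats this corollary as an immediate consequence of the isomorphism displayed just before it together with \myref{Mayin}, without spelling out the homotopical bookkeeping; you have simply filled in the details the paper leaves implicit, namely that $\bE_G$ carries the zigzag of level $G$-equivalences from \myref{Mayin} to a zigzag of equivalences of $G$-spectra via iterated application of \myref{homotopMT}/\myref{homotopMT2} and \myref{PlenzWeak}.
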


Thus the machines $\bE_G$ on $\sC_G$-spaces and on special $\sD_G$-$G$-spaces are essentially equivalent.
Properties of the machine on special $\sD_G$-$G$-spaces are essentially the same as properties of the
machine on $\sC_G$-spaces, as can either be proven directly or read off from the equivalence of machines.

\section{The equivalence between the Segal and operadic machines}\label{Equivalence}

We give an explicit comparison between the generalized Segal and generalized operadic infinite 
loop space machines. The comparison is needed for consistency and because each 
machine has significant advantages over the other. That was already clear nonequivariantly, 
and it seems even more true equivariantly.  As in \cite{GM3, Rant1}, in the previous section we used the Steiner 
operads rather than the little cubes operads that were used in \cite{MayGeo, MT}. That change made equivariant 
generalization easy, and \cite{Rant1} gave other good reasons for 
the change. However, nothing like the present comparison was envisioned in earlier work.  
As we have recalled, the Steiner operad is built from paths of embeddings.  We shall see that these
paths give rise to a homotopy that at one end relates to the generalized Segal machine and at the 
other end relates to the generalized operadic machine. That truly seems uncanny.

\subsection{The statement of the comparison theorem}

To set the stage, we recapitulate some of what we have done.  We fix an $E_{\infty}$ operad $\sC_G$
of $G$-spaces.  We then have an $E_{\infty}$ $G$-CO $\sD = \sD(\sC_G)$ over $\sF$ and an $E_{\infty}$ 
$G$-CO $\sD_G$ over $\sF_G$.  Our primary interest here 
is in infinite loop space machines defined either on special $\sF_G$-$G$-spaces or on $\sC_G$-spaces.  The Segal machine is defined on the former and the operadic machine is defined on the latter.  We have generalized both machines 
so that they accept special $\sD_G$-$G$-spaces as input.  Moreover, we have compared inputs and shown that both 
special $\sF_G$-$G$-spaces and $\sC_G$-spaces are equivalent to special $\sD_G$-$G$-spaces and therefore to each other.
Further, we have compared outputs.  We have shown that application of the generalized Segal machine to $\sD_G$-$G$-spaces is 
equivalent to application of the original homotopical Segal machine to $\sF_G$-$G$-spaces, and that application of the 
generalized operadic machine to $\sD_G$-$G$-spaces is equivalent to application of the original operadic machine to $\sC_G$-spaces.  

In more detail, \gen-special $\sF$-$G$-spaces, \gen-special $\sD$-$G$-spaces, special  $\sF_G$-$G$-spaces, 
and special $\sD_G$-$G$-spaces are all equivalent by Theorems \ref{Segalin2} and 
\ref{Segalin4}, and the Segal machines on all four equivalent inputs give equivalent output by \autoref{Segalouttoo}.
We may therefore focus on the Segal machine $\bS_G=\bS_G^{\sD_G}$ defined 
on special $\sD_G$-$G$-spaces $Y$.  
Similarly, $\sC_G$-spaces, \gen-special $\sD$-$G$-spaces, and special $\sD_G$-$G$-spaces are equivalent by Propositions \ref{RLC} and \ref{Mayin}, 
and the operadic machine on $\sC_G$-spaces is a special case of the operadic machine on $\sD_G$-$G$-spaces by 
\autoref{Mayouttoo}.  Thus we may again
focus on the operadic machine $\bE_G=\bE^{\sD_G}_G$ defined on special $\sD_G$-$G$-spaces $Y$.  
Thus, fixing an $E_\infty$ operad  $\sC_G$ with associated category of operators $\sD_G$ over $\sF_G$, 
we consider special $\sD_G$-$G$-spaces $Y$.  Our goal is to give a constructive proof of the following comparison theorem.

\begin{thm}\label{WOW}  For special $\sD_G$-$G$-spaces $Y$, there is a natural zigzag of equivalences of orthogonal $G$-spectra between $\bS_GY$ 
and $\bE_GY$.
\end{thm}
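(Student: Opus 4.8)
The plan is to build an explicit intermediate object from which both $\bS_G Y$ and $\bE_G Y$ receive equivalences, using the Steiner operad as the bridge. The key observation is that the Steiner operad $\sK_V$ simultaneously knows about $\OM^V$ (via the embeddings $\pi(h) = h(0)$, which is how $\sK_V$ acts on $\OM^V X$) and about configuration spaces in $V$ (via the center-point map $\ze\colon \sK_V(j)\rtarr \mathbf{Conf}(V,j)$), and $\mathbf{Conf}(V,j)$ maps to $(S^V)^{(j)}$ by one-point-compactifying coordinates. Concretely, I would first construct, for each $j$ (more precisely each $(\mathbf{n},\al)$) and each $V$, a chain of $(G\times\SI_j)$-maps relating the "Segal-side weight" $(S^V)^{\bullet}$ and the "operadic-side weight" $\SI^V\bL_G$ on the category/monad $\sD_{G,V}$. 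The Steiner-path structure gives a canonical homotopy: at one end $t=1$ the path is the identity, which produces the Segal-type comparison $(S^V)^\bullet \otimes_{\sF_G} (-)$, and at the other end $t=0$ we recover the operadic-type action of $\sK_V$ on $\SI^V$. This is exactly the "paths of embeddings give a homotopy, one end Segal, one end operadic" phenomenon advertised in the section introduction.

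Second, I would realize this as a zigzag of bar constructions. Introduce the intermediate $G$-spectrum
\[ \bM_G Y = B\big((S^{\star})^\bullet \circ \xi \,,\, \sD_{G,\star}\,,\, Y\big), \]
where $\sD_{G,\star} = \sD_G(\sC_G\times\sK_\star)$, viewing $Y$ as a $\sD_{G,V}$-$G$-space by pullback along the projection $\sD_{G,V}\rtarr \sD_G$, and viewing $(S^V)^\bullet$ as a contravariant functor on $\sD_{G,V}$ via $\xi$ to $\sF_G$. On one hand, the projection $\sC_G\times\sK_V\rtarr\sC_G$ induces $\sD_{G,V}\rtarr\sD_G$ which, because $\sK_V$ is $\SI$-free with contractible fixed points in the relevant families (Theorem \ref{EinfEinf} applied to $\sC_G\times\sK_V$, or directly the Steiner deformation retraction $\ze$), is an $\cof$-equivalence of $G$-COs over $\sF_G$; by \myref{nuequiv2}(i) and the Reedy cofibrancy of all bar constructions in sight (\myref{blanket}, \myref{PlenzWeak}), the induced map $B((S^{\star})^\bullet\circ\xi,\sD_{G,\star},Y)\rtarr B((S^{\star})^\bullet,\sD_G,Y) = \bS_G Y$ is a level $G$-equivalence. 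On the other hand, I must produce a level equivalence $\bM_G Y \simeq \bE_G Y = B(\SI^\star\bL_G,\bD_{G,\star},Y)$. This is where the actual content lies: one exhibits, using the Steiner paths, a map (or zigzag) of simplicial $G$-spaces comparing the categorical bar construction $B((S^V)^\bullet,\sD_{G,V},Y)$ with the monadic bar construction $B(\SI^V\bL_G,\bD_{G,V},Y)$, levelwise a $(G\times\SI_j)$-equivariant comparison of $\prod_j \sK_V(j)\times_{\ldots}$ terms on the operadic side with $\mathbf{Conf}(V,j)$-parametrized smash powers of $S^V$ on the Segal side, then checking compatibility with faces and degeneracies. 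Since $\ze$ is a $(G\times\SI_j)$-deformation retraction and one-point compactification of configurations in $V$ lands in $(S^V)^{(j)}$ compatibly with the structure of $\sD_{G,V}$, each simplicial level is a weak $(G\times\SI_j)$-equivalence, and Reedy cofibrancy plus \myref{PlenzWeak} upgrade this to an equivalence after realization; naturality in $V$ and the structure maps must be tracked to get an equivalence of orthogonal $G$-spectra, not merely of individual levels.

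Third, I would assemble the zigzag:
\[ \bS_G Y \xleftarrow{\ \simeq\ } \bM_G Y \xrightarrow{\ \simeq\ } \bE_G Y, \]
noting that $\bM_G Y$ is defined on all $\sD_G$-$G$-spaces $Y$ and that specialness of $Y$ is not needed for the two comparison equivalences themselves (it is needed only to know the outputs are $\OM$-$G$-spectra, which is not part of the claim). Naturality in $Y$ is immediate since every map in the zigzag is induced by a map of operads/categories of operators or by the fixed natural transformation $\ze$, none of which involves $Y$. The main obstacle is the middle equivalence $\bM_G Y \simeq \bE_G Y$: one has to write down the precise $(G\times\SI_j)$-equivariant comparison between the categorical-weight term $\prod_{j}\big(\text{component of }\sD_{G,V}\big)$ paired against $(S^V)^\bullet$ and the monadic term $\SI^V\bD_{G,V}^q\,\bL_G Y$, verify that the Steiner center-point map followed by compactification is simplicial and equivariant, and confirm that the resulting levelwise equivalences are compatible with the orthogonal $G$-spectrum structure maps $\si\colon \SI^W(-)(V)\rtarr (-)(V\oplus W)$. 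This is exactly the intricate point the authors flag as deferred to \S\ref{SegOpPf}, and it is where crossing Steiner paths with $\id_W$ (so that $\sK_V\rtarr\sK_{V\oplus W}$ is compatible with $\OM^V\rtarr\OM^{V\oplus W}\SI^W$) does the essential work of reconciling the two bar constructions.
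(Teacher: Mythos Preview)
Your outline has the right skeleton but contains two genuine gaps.

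\textbf{The map $\pi$ is not a level equivalence.} You claim that $\sD_{G,V}\rtarr \sD_G$ is an $\bF_\bullet$-equivalence of $G$-COs over $\sF_G$ because ``$\sK_V$ is $\SI$-free with contractible fixed points in the relevant families.'' That is false for a fixed finite-dimensional $V$: the space $\sK_V(j)$ is $(G\times\SI_j)$-equivalent to $\mathbf{Conf}(V,j)$, which is not contractible. Only the colimit $\sK_U$ is an $E_\infty$ $G$-operad. Consequently \myref{lemmaspecial} does not apply and \myref{nuequiv2} cannot be invoked. The paper handles this exactly as you would expect once the error is seen: $\pi$ is only a \emph{stable} equivalence, proven by observing that $\sK_V(j)\rtarr \sK_U(j)$ becomes an isomorphism on homotopy groups in increasing ranges, and comparing colimits over $V$.

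\textbf{The comparison $\bM_G Y \simeq \bE_G Y$ is not what you describe, and specialness is essential.} Your proposed mechanism---``the Steiner center-point map $\ze$ followed by one-point compactification of configurations lands in $(S^V)^{(j)}$''---does not produce a simplicial map between the categorical bar construction (a wedge over tuples $(n_0,\dots,n_q)$ of terms $(S^V)^{n_q}\sma\prod\sD_{G,V}(\ldots)\times Y(n_0)_+$) and the monadic bar construction (a single $\SI^V\bL_G\bD_{G,V}^q Y$). These have incompatible shapes, and $\ze$ goes the wrong way to mediate. The paper instead uses the Steiner paths to build an explicit homotopy $H\colon I_+\sma (S^V)^n\sma\sD(\sK_V)(\mathbf m,\mathbf n)\rtarr (S^V)^m$ via Pontryagin--Thom collapse along the embeddings $h_r(t)$: at $t=1$ one recovers the diagonal (the Segal action on $(S^V)^\bullet$), while at $t=0$ the embeddings $h_r(0)$ have disjoint images, so the action factors through the wedge $^\bullet(S^V)\subset (S^V)^\bullet$. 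This yields a four-step zigzag
\[
B((S^\star)^\bullet,\sD_{G,\star},Y)\xrightarrow{i_1} B(I_+\sma(S^\star)^\bullet,\sD_{G,\star},Y)\xleftarrow{i_0} B((S^\star)^\bullet_0,\sD_{G,\star},Y)\xleftarrow{\io} B({}^\bullet(S^\star),\sD_{G,\star},Y)\xrightarrow{\om} \bE_G Y,
\]
where $i_0,i_1$ are level equivalences, $\io$ (wedges into products) is a stable equivalence, and $\om$ is an explicit simplicial map built from fold maps and Segal maps. Crucially, $\om$ is \emph{not} shown to be an equivalence directly: the paper deduces it from the fact that both $Y_1\rtarr\OM^V\bS_GY(V)$ and $Y_1\rtarr\OM^V\bE_GY(V)$ are group completions, which requires $Y$ to be special. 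Your claim that ``specialness of $Y$ is not needed for the two comparison equivalences'' is therefore incorrect for this argument.
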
 

\subsection{The proof of the comparison theorem}\label{SECWOW}

Here we display the zigzag and then fill in the required constructions and proofs in subsequent sections.  We first introduce generic notations that may enhance readability.

\begin{notn}\label{starstar}  We shall use $\star$ as a place holder for representations $V$ and we shall use $\bullet$ as a placeholder for finite $G$-sets $\bn^{\al}$.  For fixed $V$, $\rbullv$ denotes the contravariant functor on $\sF_G$ that sends
the object $\bn^{\al}$ to  $(S^V)^{\bn^{\al}} = \sT_G(\bn^{\al},S^V)$.  Precomposing with $\xi_G$, we view $\rbullv$ as a contravariant functor $\sD_G\rtarr \sT_G$.  We write $\rbull$ for the functor obtained by letting $V$ vary.
\end{notn}

Since wedges taken over $G$-sets ${\bn^{\al}}$ play a significant role in our 
argument, we introduce the following convenient notation.

\begin{notn}\label{nottwo}  For a based space $A$, let ${^n}\!A$ denote the wedge sum of $n$ copies of $A$. Similarly, for a $G$-set $\bn^{\al}$ and a based $G$-space $A$, let $^{\bf n^{\al}}\!\!A$ denote the wedge sum of $n$ copies of $A$ with $G$-acting on $A$, but also interchanging the wedge summands. We write $(j, a)\in \,^{\bf n^{\al}}\!\!A$ for the element $a$ in the $j$th summand.  The $G$-action is given explicitly by  $g\cdot(j, a)=(\al(g)(j), g\cdot a)$.
\end{notn} 

Recall from \autoref{Dmach} that the $V$th $G$-space of $\bS_G Y$ is
\[ (\bS_G Y)(V)= B(\rbullv, \sD_G, Y). \]
We thus adopt the notation
\[ \bS_G Y = B(\rbull, \sD_G, Y).  \]
Let $\sC_V$ be the product operad $\sC_G \times \sK_V$ and let $\sD_{G,V} = \sD_G(\sC_V)$ with associated monad $\bD_{G,V}$ 
on the category of $\PI_G$-$G$-spaces. As in \autoref{DGmachine}, the $V$th $G$-space of $\bE_G Y$ is
\[  (\bE_GY)(V) = B(\SI^V\bL_G , \bD_{G,V}, Y). \]
We thus adopt the notation
\[ \bE_G Y = B(\SI^{\star}\bL_G,\bD_{G,\star}, Y). \]

Note the different uses of the $\star$ notation.  In both machines, it is a placeholder
for representations $V$.  However, in the Segal machine, we are using cartesian powers of $G$-spheres $S^V$
to obtain functors $\rbullv \colon \sF_G^{op}\rtarr \sT_G$, whereas in the operadic machine we are using the suspension 
functor $\SI^V$ associated to $S^V$ together with the Steiner operad $\sK_V$.   While a two-sided bar construction is used
in both machines, the similarity of notation hides how different these bar constructions really are: the use of categories
and contravariant and covariant functors in one is quite different from the use of monads, (right) actions on functors,
and (left) actions on objects in the other.  

The zigzag of stable equivalences relating the  machines has the following shape.  All of the constructions are two sided categorical bar constructions except the operadic machine at the last step, which is a monadic bar construction.

\begin{equation}\label{WOWWOW}
\xymatrix{
\bS_G Y  \ar@{=}[r] & B(\rbull,\sD_G,Y) & \\
& \ar[u]_{\pi} B(\rbull,\sD_{G,\star},Y) \ar[d]^{i_1}& \\
& B(I_+\sma \rbull,\sD_{G,\star},Y)& \\
& \ar[u]_{i_0} B(\rbullo,\sD_{G,\star},Y)& \\
& \ar[u]_{\io} B(\lbullo, \sD_{G,\star},Y) \ar[d]^{\om}& \\
& B(\SI^{\star}\bL_G,\bD_{G,\star},Y) & \bE_G Y. \ar@{=}[l]\\}
\end{equation} 

\begin{rem} The heart of the construction is to use the Steiner operads to define a  certain homotopy $H$ that specifies the required contravariant action of $\sD_{G,\star}$ on $I_+\sma \rbull$; see \autoref{homotopy}.   At $t=1$, this action restricts to $\rbulll = \rbull$.  At $t=0$, it restricts to a new contravariant action of $\sD_{G,\star}$ on $\rbull$, denoted $\rbullo$.   Crucially, this action restricts to an action, denoted $\lbullo$, on $\lbull$.
\end{rem}

We shall shortly construct the intermediate orthogonal $G$-spectra and maps in this zigzag and prove that all of the maps except $\om$ are stable equivalences.  As we now explain, this will imply that $\om$ is also a stable equivalence.      
Recall that the homotopy groups of a pointed $G$-space $X$ are $\pi_q^H(X) = \pi_q(X^H)$ 
and the homotopy groups of an orthogonal $G$-spectrum $T$ are 
\begin{equation}\label{pistar}
\pi_q^H(T) = \colim_V \pi_q^H(\OM^VT(V)),
\end{equation}
where the colimits are formed using the adjoint structure maps of $T$; our $G$-spectra are all connective, so that their negative 
homotopy groups are zero.  A map $T\rtarr T'$ is a stable equivalence if its induced maps of homotopy groups are isomorphisms. That depends only on large $V$.  Thus we may focus on those $V$ that contain $\bR$, so that the group completions of Theorems \ref{bigSegal} and \ref{approx} are available. 

Applying $\OM^V$ to the $V$th spaces implicit in the diagram \autoref{WOWWOW}, we obtain a diagram of $G$-spaces 
under $Y_1$.  By completely different proofs, both maps 
$$Y_1\rtarr \OM^V \bS_GY(V) \ \ \text{and} \ \  Y_1\rtarr \OM^V \bE_G Y(V)$$
are group completions. Therefore, once we prove that the arrows other than $\om$ are stable equivalences, it will follow that $\om$ is also a
stable equivalence.  Indeed, arranging as we may that our outputs are $\OM$-$G$-spectra and using that they are connective, $\om$ is a stable equivalence if and only if the map $\om_0$ it induces on $0$th $G$-spaces is a weak $G$-equivalence. The displayed group completions imply that 
$\om_0$ induces a homology isomorphism on fixed point spaces. Since these spaces are Hopf spaces, hence simple, it follows  that $\om_0$ 
induces an isomorphism on homotopy groups, so that $\om_0$ is a weak $G$-equivalence.

\begin{rem}\label{damn}
In constructing the diagram, we shall encounter an annoying but minor
clash of conventions.  There is a dichotomy in how one chooses to define the faces and degeneracies of the categorical
bar construction.  We made one choice in \autoref{bar}, but to mesh with the monadic bar construction as defined in \cite[Construction 9.6]{MayGeo},
we must now make the other.  Therefore, on $q$-simplices, we agree to replace the previous $d_i$ and $s_i$ by $d_{q-i}$ and $s_{q-i}$, respectively. 
With the new convention, $d_0$ is given by the evaluation map of the left (contravariant) variable in the categorical 
two-sided bar construction, rather than the right variable. 
\end{rem}

\subsection{Construction and analysis of the map $\pi$}\label{constpi}

Turning to the diagram \autoref{WOWWOW}, we first define the top map $\pi$.
We start by defining its source orthogonal $G$-spectrum $B(\rbull,\sD_{G,\star},Y)$. The $V$th space, as the notation indicates, is defined by plugging in $V$ for $\star$;
 it is  the bar construction $B(\rbullv,\sD_{G,V},Y)$, as defined in \autoref{bar}, namely it is the geometric realization of the simplicial space with $q$-simplices given by
the wedge over all sequences $(\bn_q^{\al_q},\dots, \bn_0^{\al_0})$ of the 
$G$-spaces
$$ (S^V)^{\bn_q^{\al_q}} \sma  \sD_{G,V}(\bn_{q-1}^{\al_{q-1}}, \bn_q^{\al_q})\sma \dots \sma \sD_{G,V}(\bn_0^{\al_0},\bn_1^{\al_1}) \sma Y(\bn_0^{\al_0}).$$
We have implicitly 
composed $Y$ with the evident projections $\sD_{G,V}\rtarr \sD_G$  to regard $Y$ as a $G\sU_{\ast}$-functor defined on each $\sD_{G,V}$, and we
have composed the $\rbullv$ with the composite $\sD_{G,V}\rtarr \sD_G \rtarr \sF_G$ to regard the $\rbullv$
as functors defined on $\sD_{G,V}$. Note that  $B(\rbull,\sD_{G,\star},Y)$ is not the restriction of a $\sW_G$-$G$-space, but it is an $\sI_G$-$G$-space. For an isometric isomorphism $V\rtarr V'$ in $\sI_G$, the map 
\[ B(\rbullv,\sD_{G,V},Y) \rtarr B(\rbullpv,\sD_{G,V'},Y)\] 
is the geometric realization of the map induced at each simplicial level by the maps $\sK_V\rtarr \sK_{V'}$
and $S^V\rtarr S^{V'}$. 

Geometric realization commutes with $\sma$, and the structure maps of the orthogonal $G$-spectrum $B(\rbull,\sD_{G,\star},Y)$ are geometric realizations of levelwise simplicial maps given by the maps  $j \colon \sD_{G,V} \rtarr \sD_{G,V\oplus W}$ induced by the inclusions $\sK_V\rtarr \sK_{V\oplus W}$ and the maps 
\begin{equation}\label{vandw}
i \colon (S^V)^{\bn^{\al}} \sma S^W \rtarr (S^{V\oplus W})^{\bn^{\al}}
\end{equation}
defined by
\[ i\big{(}(v_1,\dots,v_n) \sma w\big{)} = (v_1\sma w, \dots, v_n \sma w). \]
An alternative characterization of this construction is to use \autoref{structuremaps} together with \autoref{barcommuteswithsmash} in \autoref{UhOh} to obtain $G$-maps 
$$B(\rbullv, \sD_{G,V\oplus W}, Y) \sma S^W\rtarr B(\rbullvw, \sD_{G,V\oplus W}, Y)$$ 
and to precompose with the $G$-map 
$$ B(\rbullv, \sD_{G,V}, Y)  \rtarr B(\rbullv, \sD_{G,V\oplus W}, Y) $$ 
induced by $j\colon \sD_{G,V} \rtarr \sD_{G, V\oplus W}$.
One can easily check that these maps do indeed give maps of bar constructions that specify the structure maps for an orthogonal $G$-spectrum. 
The projections $\sD_{G,V}\rtarr \sD_G$ induce the top map $\pi$ of orthogonal $G$-spectra in \autoref{WOWWOW}.

Recall that $\colim_V  \sK_V(j)=\sK_U(j)$, so that  $\colim_V(\sC_G\times \sK_V)$ is the product
$\sC_G\times\sK_U$, which is an $E_{\infty}$ $G$-operad since it is the product of two such operads. Therefore the projection $(\sC_G\times\sK_U)(j)\rtarr \sC_G(j)$ is a $\LA$-equivalence for all $\LA\in \bF_j$ and, by \autoref{lemmaspecial}, the map $\sD_G(\sC_G\times\sK_U) \rtarr \sD_G(\sC_G)$ is  a $G$-equivalence of $G$-COs over $\sF_G$. The projection map 
$$\pi\colon B(\rbull,\sD_{G,\star},Y)\rtarr  B(\rbull,\sD_G,Y) $$ 
is not a level $G$-equivalence, but a direct comparison of colimits shows that $\pi$ is a stable equivalence.
In more detail, in computing $\pi$ on homotopy groups, we start from the commutative diagrams 
\[ \xymatrix{
 \Omega^V B(\rbullv, \sD_{G,V}, Y) \ar[r] \ar[d]_{\pi} &  \Omega^W B(\rbullw, \sD_{G,W}, Y) \ar[d]^{\pi} \\
\Omega^V B(\rbullv, \sD_G, Y) \ar[r] & \Omega^W B(\rbullw, \sD_G, Y), }\]
where $V\subset W$.  We then take $H$-fixed points and pass to homotopy groups. 
Since the inclusions $\sD_{G,V}\rtarr \sD_{G,W}$ 
become isomorphisms on homotopy groups in increasing ranges  of dimensions, by inspection of the homotopy types of the $G$-spaces comprising 
the Steiner operads in \cite[\S1.1]{GM3}, we see that $\pi$ is a stable equivalence.

\subsection{The contravariant functors $I_+\sma \rbullv$  on $\sD_{G,V}$}

In the notation $I_+\sma \rbull$ in \autoref{WOWWOW}, $\star$ is again a place holder for $V$, and the notation stands for $G\sU_{\ast}$-functors 
$$I_+\sma \rbullv\colon (\sD_{G,V})^{op}\rtarr \sT_G$$  that are given on objects by sending 
$\bn^{\al}$ to $I_+\sma (S^V)^{\bn^{\al}},$ where $I$ is the unit interval; we have adjoined a disjoint basepoint and taken
the smash product in order to have domains for based homotopies. The crux of our comparison is to specify the functors on $I_+\sma \rbullv$ on morphisms in terms of homotopies that are deduced from the paths that comprise the Steiner operads. 

Recall from \autoref{prodal} that $(S^V)^{\bn^{\al}}$  is just $\Svn$ with the $G$-action $\cdot_{\al}$ specified by  $$g\cdot_\al (x_1, \dots, x_n) = (gx_{\al(g)^{-1}(1)}, \dots, gx_{\al(g)^{-1}(n)}) = \al(g)_{*}(gx_1,\dots, gx_n).$$  Therefore, by \autoref{compGGG} and  \autoref{actact}, it is enough to instead define $G\sU_{\ast}$-functors
$$I_+\sma \rbullv\colon (\sD_{V})^{op}\rtarr \sT_G$$ given on objects by sending $\mathbf{n}$ to $I_+\sma \Svn$ 
and then apply the functor $\bP$ defined in \autoref{DDGSec} to obtain the desired functors on $\sD_{G,V}$. We choose to do this in order to make the definitions a little less cumbersome. 

We construct the required maps on hom objects as composites
\[ \xymatrix@1{ \sD_V(\mathbf m,\mathbf n) \ar[r] & \sD(\sK_V)(\mathbf m,\mathbf n) \ar[r]^-{\tilde H} & \sT_G(I_+\sma \Svn,I_+\sma \Svm).\\} \]
The first map is the evident projection, and we shall use the same letter for maps and their composites with that projection.  To define $\tilde H$, we shall construct a homotopy 
\begin{equation}\label{homotopy}
H\colon  I_+\sma \Svn \sma \sD(\sK_V)(\mathbf m,\mathbf n) \rtarr \Svm  
\end{equation} 
and then set 
\begin{equation}\label{homtoo}
\tilde{H}(f)(t,v) = (t, H(t,v,f)),
\end{equation}
where $t\in I$, $v\in \Svn$, and $f\in \sD(\sK_V)(\mathbf m,\mathbf n)$.  We have written variables in the order appropriate to thinking of the homotopies $H$ as the core of the evaluation maps of the contravariant functor $I_+\sma \rbullv\colon \sD_V \rtarr \sT_G$.   Note that such evaluation maps, after prolongation to $\sD_{G,V}$, give the zeroth face operation $d_0$ in the simplicial $G$-spaces whose realizations give the central bar constructions in \autoref{WOWWOW}.

Writing $H_t$ for $H$ at time $t$, $H_1$ will relate to the evaluation maps of the represented functor 
$\rbullv$ used in the left variable of the Segal machine
and $H_0$ will relate to the maps that define the action of the monad $\bK_V$ on the functor $\SI^V$ 
that is used in the left variable of the operadic machine. 

The following construction is the heart of the matter.  Recall that 
a Steiner path in $V$ is a map $h\colon I\rtarr R_V$ such that $h(1) = \id$,
where $R_V$ is the space of distance reducing embeddings $V\rtarr V$.  The space $\sK_V(s)$
of the Steiner operad is the space of $s$-tuples of Steiner paths $h_r$ such that the
$h_r(0)$ have disjoint images.  We define a homotopy 
\begin{equation*}\label{gamma}  \ga \colon I \times S^V \times \sK_V(s) \rtarr \Svs
\end{equation*}
with coordinates $\ga_r$, $1\leq r\leq s$, by letting 

\begin{equation*}\label{gamma2} 
\ga_r(t, v, \langle h_1,\dots,h_s\rangle) = \left\{ \begin{array}{ll}
w & \mbox{ if $h_r(t)(w) = v$}\\
\ast & \mbox{ if $v\notin \im(h_r(t)).$}
\end{array} \right. 
\end{equation*}

If $t=1$, this is just the diagonal map  $S^V\rtarr \Svs$, which is
relevant to the Segal machine.  If $t=0$, this map lands in the 
$s$-fold wedge $\sSv$ of copies of $S^V$ since the conditions $v\in \im(h_r(0))$ 
as $r$ varies are mutually exclusive; that is, there is at most one $r$ such that 
$v\in \im(h_r(0))$.  This is relevant to the operadic machine since the action map
\[ \tilde{\al}\colon \SI^V K_V X = K_V X\sma S^V \rtarr X\sma S^V= \SI^V X \]
is given by
\begin{equation*}  \tilde{\al}((\langle h_1,\dots,h_s\rangle, x_1,\dots,x_s),v) = 
\left\{ \begin{array}{ll}
(x_r,w_r) & \mbox{ if $h_r(0)(w_r) = v$}\\
\ast & \mbox{ if $v\notin \im(h_r(0))$ for $1\leq r\leq s.$}
\end{array} \right. 
\end{equation*}
Remember that we understand $\SI^V A$ to be $A\sma S^V$ for a based $G$-space $A$, but we write
the $V$ coordinate on the left when looking at the evaluation maps of the functor $I_+\sma \rbullv$.

We now define the homotopy $H$ of \autoref{homotopy}. Recall from \autoref{defGops} that 
$$\sD(\sK_V)(\mathbf m,\mathbf n)= \coprod_{\phi\colon \mathbf m\rtarr \mathbf n} \prod_{1\leq j\leq n} \sK_V(\ph_j),$$ 
where $\ph_j = |\phi^{-1}(j)|$.  Let $f = (\ph;k_1,\dots,k_n)\in \sD(\sK_V)(\mathbf m,\mathbf n)$, where $\ph\in \sF(\mathbf m,\mathbf n)$ 
and $k_j\in \sK_V(\ph_j)$. For $1\leq i\leq m$, define the $i$th coordinate of $H$ as follows.  
If $\ph(i) = j$, $1\leq j\leq n$, and
$i$ is the $r$th element of $\ph^{-1}(j)$ with its natural ordering as a subset of $\mathbf m$, then
\[ H(t, v_1,\dots,v_n, f)_i =\ga_r(t, v_j, k_j), \]
where $\ga_r$ is the $r$th coordinate of 
\[ \ga\colon I\times S^V \times \sK_V(\ph_j) \rtarr (S^V)^{\ph_j}. \]
If $\ph(i) = 0$, then the $i$th coordinate of $H$ is the trivial map.

It requires some combinatorial inspection to check that these maps do indeed specify a $G\sU_{\ast}$-functor 
$I_+\sma \rbullv\colon \sD_V^{op}\rtarr \sT_G$,
but we leave that to the reader. 
Prolonging  these functors using \autoref{actact}, we obtain the $G\sU_{\ast}$-functors 
$$I_+\sma \rbullv \colon \sD_{G,V}^{op}\rtarr \sT_G, \ \ \ \ \bn^{\al}\mapsto I_+\sma(S^V)^{\bn^{\al}},$$ 
needed to define the two-sided bar constructions 
$B(I_+\sma \rbullv ,\sD_{G,V},Y)$. 
Just as in \autoref{constpi}, the assignment $V\mapsto  B(I_+\sma \rbullv ,\sD_{G,V},Y)$ gives an $\sI_G$-$G$-space, and we can construct structure maps that 
make it into an orthogonal $G$-spectrum.   

We denote by $\rbulllv$ the restrictions of the functors $I_+\sma \rbullv$ to $t=1$. The $\rbulllv$ 
are just the functors $\rbullv $
used to define $B(\rbullv,\sD_{G,V},Y)$.  Similarly, we denote by $\rbullov$ the restrictions of the functors $I_+\sma \rbullv $ to $t=0$. 
For any  based $G$-space $A$, let $i_0$ and $i_1$ denote the inclusions of the top and bottom copy of $A$ into the cylinder $I_+\sma A$, where $G$ acts trivially on the interval $I$. Note that $i_0$ and $i_1$ are $G$-homotopy equivalences. 

The functors $\rbullov$ and $ \rbulllv$ from $\sD_{G,V}^{op}$ to $\sT_G$ are restrictions
of $I_+\sma \rbullv$, hence they commute with the face $d_0$, which is given by the evaluation maps of these functors. 
It is clear that the maps $i_0$ and $i_1$ commute with all other faces and degeneracies.  Since they are levelwise $G$-equivalences
of Reedy cofibrant simplicial $G$-spaces, their realizations 
\begin{equation}\label{i0i1} 
B(\rbullov, \sD_{G,V}, Y) \xrightarrow{i_0} B( I_+\sma \rbullv , \sD_{G,V}, Y)  \xleftarrow{i_1} B(  \rbulllv, \sD_{G,V}, Y), 
\end{equation}
are weak $G$-equivalences. Therefore 
the maps $i_0$ and $i_1$ in  \autoref{WOWWOW} are level equivalences of orthogonal $G$-spectra.

\subsection{Construction and analysis of the map $\io$}\label{iota}

To define the map of orthogonal $G$-spectra labeled $\io$ in \autoref{WOWWOW}, we must look more closely at $\rbullov$. 
Note that $H_0$ sends an element indexed on $\phi\colon \mathbf m\rtarr \mathbf n$ to an element of the product over 
$1\leq j\leq n$ of the wedge sums $\phSv$, where $|\phi^{-1}(j)|=\ph_j$. If we restrict the domain of $H_0$ to  
$\nSv \sma \sD(\sK_V)(\mb{m},\mb{n})\subset \Svn\sma \sD(\sK_V)(\mb{m},\mb{n}) $, we land in $\mSv$ since for 
any element $(v_1,\dots,v_n, f)$ in the domain of $H_0$ such that all but one of the factors 
$v_j$ is $\ast$, only those $i$ such that $\ph(i) = j$ can contribute a non-basepoint image. 
Therefore $\tilde{H}_0$ restricts to a $G\sU_{\ast}$-functor   
$\lbullov \colon (\sD_V)^{op}\rtarr \sT_G$ that on objects sends $\mathbf n$ to $\nSv $ and on morphism 
spaces is the adjoint of the restriction of $H_0$ from products to wedges.  On wedges, we have the composite
$$
\nSv \sma \sD_V(\mb{m},\mb{n}) \rtarr  \   \nSv \sma \sD(\sK_V)(\mb{m},\mb{n}) \rtarr \   \mSv
$$
of projection and the map obtained by unravelling the definition of $H$.   Upon applying prolongation $\bP$, we obtain a $G\sU_{\ast}$-functor 
$$
\lbullov \colon (\sD_{G,V})^{op}\rtarr \sT_G.
$$
It is defined  on objects by sending $\bn^{\al}$ to $^{\bn^{\al}}\! (S^V)$, and it is a subfunctor of the functor
$\rbullov \colon (\sD_{G,V})^{op}\rtarr \sT_G $.  The map $i \colon \Svn \sma S^W \rtarr (S^{V\oplus W})^n$ of \autoref{vandw} restricts to the canonical identification of
 $\nSv \sma S^W$ with $\nSvw$, and this works just as well when the twisted action of $\al$ is taken into account. Just as in \autoref{constpi}, by \autoref{barcommuteswithsmash} in \autoref{UhOh} these maps give rise to the structure maps of the $G$-spectrum $B(\lbullo,\sD_{G,\star},Y)$. The inclusions of wedges into products give the  inclusions of bar 
constructions that together specify the map of $G$-spectra labeled $\io$ in \autoref{WOWWOW}.

It is worth pausing to say what is going on philosophically before showing that $\io$ is a stable equivalence of orthogonal $G$-spectra.  The contravariant functor 
$\rbullv $ from $\sD_{G,V}$ to  $\sT_G$ is purely categorical since it factors through $\sF_G$ and applies just as well to give a  functor $A^{\bullet}$ for any $A$.
The action of $\sF_G$ on $\rbullv $ does {\em not} restrict to an action on the system of subspaces $\lbullv$.  Use of the Steiner operad in effect gives a new and more geometric functor  $\rbullov$.  It is again defined on products, but it depends on the geometry encoded in the Steiner operads and it {\em does} restrict  to a functor defined on $\lbullov$.  That is, we have commutative diagrams
 \[ \xymatrix{
 ^{\bn^{\be}}\! (S^V) \sma \sD_{G,V}(\bm^{\al},\bn^{\be}) \ar[d]_{\io\sma \id} \ar[r] & ^{\bm^{\al}}\! (S^V) \ar[d]^{\io}\\
 (S^V)^{\bn^{\be}} \sma  \sD_{G,V}(\bm^{\al},\bn^{\be}) \ar[r] & (S^V)^{\bm^{\al}} \\} \]
 where the horizontal arrows are evaluation maps of the functors  $\lbullov$, and $\rbullov$, respectively, and the vertical arrows are given by inclusions of wedges in products.
 The diagram displays the essential part of the map $d_0$ in the simplicial bar constructions that $\io$ compares; $\io$ 
 is the identity on all factors in $\sD_{G,V}$ or $Y$ of the $G$-spaces of $q$-simplices in these bar constructions.  From here, it is routine to check that these maps of bar constructions specify a map of orthogonal $G$-spectra.
 
We claim that $\io$ is a stable equivalence. Note that both the source and target of $\io$ are orthogonal $G$-spectra which at level $V$ are geometric realizations of simplicial $G$-spaces. Before passage to geometric realization, we have functors $\DE^{op}\times \sI_G\rtarr \sT_G$. It is not hard to see that the simplicial structure commutes with the spectrum structure maps, so that we can view these as simplicial orthogonal $G$-spectra, 
as discussed in \autoref{specpre}. Therefore, we can view the source and target of $\io$ as geometric realizations of simplicial orthogonal $G$-spectra and we can view $\io$ as the geometric realization of a map $\io_*$ of simplicial orthogonal $G$-spectra.   By the following two lemmas, \autoref{reedyforspectra} applies to prove the claim.

\begin{lem}  The map $\io_q$ of $q$-simplices is a stable equivalence of orthogonal $G$-spectra for each $q$. 
\end{lem}
\begin{proof} This means that $\io_q$ induces isomorphisms
\[\colim_V \pi_{*}^H\big(\OM^V B_q(\lbullov,\sD_{G,V},Y)\big)  \rtarr \colim_V \pi_{*}^H(\OM^V B_q( \rbullov,\sD_{G,V},Y)).\]  
To prove this, recall that it is standard that finite wedges are finite products in the stable category \cite[Proposition III.3.11]{Adams};
the same proof works equivariantly. The maps $j\colon \sD_{G,V} \rtarr \sD_{G,V\oplus W}$ of Steiner operads also induce isomorphisms
 on homotopy groups in increasing dimensions. The result follows by inspection of colimits. 
 \end{proof}
 
 \begin{lem}  The  source and target simplicial orthogonal $G$-spectra of $\io_*$ are Reedy $h$-cofibrant. 
\end{lem}
\begin{proof} Since for each $V$, the simplicial $G$-spaces are bar constructions,
they  are  Reedy cofibrant, so it remains to show that the homotopy extensions can be made compatibly with the orthogonal $G$-spectrum structure. To see this, note that the latching maps are constructed from the $G$-cofibration $\ast \rtarr \sC_V(1)=\sC_G(1)\times \sK_V(1)$ given by the inclusion of the identity element in $\sC_V(1)$. That map is a $G$-cofibration since $\ast \rtarr \sC_G(1)$ is a $G$-cofibration by \autoref{opass} and  $\{\id\} \hookrightarrow \sK_V(1)$ is the inclusion of a $G$-deformation retract. The explicit retraction from $\sK_V(1)$ onto $\{\id\}$ is easily seen to be compatible with the $\sI_G$-$G$-space structure and with the inclusions $j\colon \sK_V(1) \rtarr \sK_{V\oplus W}(1)$, so it is compatible with the structure maps.  This in turn implies the compatibility of the retracts for the latching maps of the bar constructions as $V$ varies.
\end{proof}
 
\subsection{Construction of the map $\om$} 

To construct the map $\om$ in \autoref{WOWWOW} and thus to complete the proof of \autoref{WOW}, we must define maps
$$B(\lbullov, \sD_{G,V}, Y)\rtarr B(\SI^V \bL_G,\bD_{G,V},Y).$$
Remember that $\bL_G Y = Y_1$.
Both source and target are realizations of simplicial (based) $G$-spaces, and we define $\om$ as the realization of a
map of simplicial $G$-spaces.   On the spaces of $0$-simplices we define 
$$\om_0\colon B_0(\lbullov, \sD_{G,V}, Y)= {\bigvee _{\bn^{\al}}}\,  ^{\bn^{\al}}\!\!S^V\sma Y(\bn^{\al}) \ \rtarr\  \SI^V Y_1= B_0(\SI^V \bL_G,\bD_{G,V},Y)$$  
to be given on each wedge summand by the composite
\[ \xymatrix@1{^{\bn^{\al}}\!\!S^V\sma Y(\bn^{\al}) \ar[r]^-{\id\sma \de} & ^{\bn^{\al}}\!\!S^V\sma Y_1^{\bn^{\al}} 
\ar[r]^-{\nu} & S^V \sma Y_1  \ar[r]^-{\ta} & Y_1\sma S^V=\SI^V Y_1.\\} \]
\noindent 
Here, for based spaces $A$ and $B$, define $\nu\colon {^n}\!A\sma B^n  \rtarr A\sma B$ by 
\[  \nu((i,a),(b_1,\dots,b_n)) = (a ,b_i)  \]
where $(i,a)$ denotes the element $a\in A$ of the $i$th wedge summand of ${^n}\!A$ and $b_j\in B$, $1\leq j\leq n$.  We check explicitly that $\nu$ is $G$-equivariant:
$$g\cdot((i, a), (b_1, \dots, b_n))=\big((\al(g)(i), g\cdot a) , (g\cdot b_{\al(g)^{-1}(1)},\dots ,g\cdot b_{\al(g)^{-1}(n)})\big)\mapsto (g\cdot a, g\cdot b_i)$$ since the $\al(g)(i)$ position is $\al(g)^{-1}\al(g)(i)=i$.  The map $\ta\colon A\sma B\rtarr B\sma A$ is the usual twist.  Then all of the maps in the definition of 
$\om_0$ are equivariant.

\begin{notn}
For $q>0$, we may write the space of 
$q$-simplices of $B(\lbullov, \sD_{G,V}, Y)$ as the wedge over pairs $(\bm^{\al},\bn^{\be})$ of
the spaces 
\[ ^{\bn^{\be}}\!S^V \sma \sD_{G,V}(\bm^{\al},\bn^{\be}) \sma Z(\bm^{\al}), \]
where $Z(\bm^{\al})$ denotes the wedge over sequences $(\bm_0^{\al_0},\dots, \bm_{q-2}^{\al_{q-2}}))$ of the spaces 
\[  \sD_{G,V}(\bm_{q-2}^{\al_{q-2}},\bm^{\al})\sma \cdots  \sma \sD_{G,V}(\bm_0^{\al_0},\bm_1^{\al_1})\sma  Y(\bm_0^{\al_0}). 
\]
\end{notn}

Recall the definition of $\bD_{G,V} Y$ from \autoref{DGY}. The $G$-space $(\bD_{G,V} Y)(\bn^{\beta})$ is a quotient of the wedge over all $\bm^{\al}$ of the $G$-spaces
$$\sD_{G,V}(\bm^{\al},\bn^{\be})\sma Y(\bm^{\al}).$$
Therefore the space $\SI^V\bL_G \bD_{G,V}^q Y$ 
of $q$-simplices of $B(\SI^V\bL_G,\bD_{G,V},Y)$ is a quotient of the wedge over all $\bm^{\al}$ of the spaces
\[  \sD_{G,V}(\bm^{\al}, \mathbf{1}) \sma Z(\bm^{\al})\sma S^V. \]
Define $\om_q$ by passage to wedges over $\bm^{\al}$ from the composites 
\[ \xymatrix{ 
^{\bn^{\be}}\!S^V \sma \sD_{G,V}(\bm^{\al},\bn^{\be}) \sma Z(\bm^{\al})
\ar[d]^{\id\sma\de\sma\id}\\
^{\bn^{\be}}\!S^V \sma \sD_{G,V}(\bm^{\al}, {\bf 1})^{\bn^{\be}} \sma Z(\bm^{\al}) \ar[d]^{\nu\sma \id}\\
S^V \sma \sD_{G,V}(\bm^{\al}, \mathbf{1}) \sma Z(\bm^{\al})  \ar[d]^{\ta}\\
 \sD_{G,V}(\bm^{\al}, \mathbf{1}) \sma Z(\bm^{\al}) \sma S^V.}
\]
Since we know that these maps are $G$-equivariant, to check commutative diagrams we may drop the $\al$'s and $\be$'s
 from the notation and only consider the underlying nonequivariant spaces. On underlying spaces, the composite above is 
\[ \xymatrix{ 
{^n}\!S^V \sma \sD_V(\mathbf m,\mathbf n) \sma Z_m 
\ar[d]^{\id\sma \de\sma\id}\\
{^n}\!S^V \sma \sD_V(\mathbf m,\mathbf 1)^n \sma Z_m \ar[d]^{\nu\sma \id}\\
S^V \sma \sD_V(\mathbf{m},\mathbf{1})\sma Z_m \ar[d]^{\ta}\\
\sD_V(\mathbf m,\mathbf 1) \sma Z_m\sma S^V.\\}
\]
We must show that these maps $\om_q$ specify a map of simplicial spaces.  Again recall \autoref{damn}. 
In both the categorical and monadic bar constructions, 
the face maps $d_i$ for $i>1$ are induced by composition 
in $\sD_V$ and the action of $\sD_V$ on $Y$.  Commutation of $\om$ with these face
maps is evident.  Similarly, commutation with the degeneracy maps $s_i$ for $i>0$ is evident.
We must show commutation with $s_0$, $d_0$, and $d_1$.  An essential point is that the components of the
Segal maps are in $\PI_G$, and we are taking the categorical tensor product over $\PI_G$ in the target.    
First consider $s_0$ on zero simplices.  For $(i, v)$ in $^n\!S^V$, that is, $v$ in the $i$th summand, 
and $y\in Y_n$,
\begin{eqnarray*}
\om s_0((i,v),y) & = & \om((i,v),\id_n,y) \\
               & = & (\de_i,y)\sma v\\
               & = & (\id_1,\de_i(y))\sma v \\
               & = & s_0 \om((i,v),y).\\
\end{eqnarray*}
Here $\id_n \in \sD_V(\mathbf n,\mathbf n)$, the third equation uses $\de_i=\id_1\com \de_i$ 
and the equivalence relation defining $\bD_V Y$, and the last equation uses that $\om((i,v),y) = \de_i(y)\sma v$.
The commutation of $\om$ and $s_0$ on $q$-simplices for $q>0$ is similar.   The following diagrams prove the
commutation of $\om$ with $d_0$ and $d_1$ on $1$-simplices, and the argument for $q$-simplices for $q>1$ is 
similar.   Again, we only write the maps of underlying spaces, dropping from the 
notation the indices that indicate $G$-actions since the $G$-action is not relevant to checking that the diagrams commute.

\[ \xymatrix{ 
{^n}\!S^V \sma \sD_V(\mathbf m,\mathbf n)\sma Y_m \ar[d]_{\mathrm{\id} \sma \delta \sma \mathrm{id}}  
\ar[rrr]^-{H_0 \sma \mathrm{id}} & & & {^m}\!S^V  \sma Y_m \ar[d]^{\mathrm{id}\sma \de} \\
{^n}\!S^V \sma \sD_V(\mathbf m,\mathbf 1)^n \sma Y_m  \ar[d]_{\nu\sma \mathrm{id}} 
&&& {^m}\!S^V \sma Y_1^m \ar[d]^{\nu}\\
S^V\sma \sD_V(\mathbf m,\mathbf 1) \sma Y_m  \ar[rrruu]_-{H_0\sma \mathrm{id}}  \ar[d]
& &  &  S^V\sma Y_1  \ar[d] \\
\Sigma^V \bL\bD_V Y \ar[r]_-\cong  & \Sigma^V \bC_V \bL Y \ar[rr]_-{\hat{\alpha}} && \Sigma^V Y_1\\
}\]

\[ \xymatrix{ 
{^n}\!S^V \sma \sD_V(\mathbf m,\mathbf n)\sma Y_m \ar[d]_{\mathrm{\id} \sma \delta \sma \mathrm{id}}  
\ar[rrr]^-{\id \sma \tha} & & & {^n}\!S^V \sma Y_n \ar[d]^{\mathrm{id}\sma \de} \\
{^n}\!S^V \sma \sD_V(\mathbf m,\mathbf 1)^n \sma Y_m  \ar[d]_{\nu\sma \mathrm{id}} 
&&& {^n}\!S^V \sma Y_1^n \ar[d]^{\nu}\\
S^V\sma \sD_V(\mathbf m,\mathbf 1) \sma Y_m  \ar[rrr]^-{\id\sma \tha}  \ar[d]
& & &  S^V\sma Y_1  \ar[d] \\
\Sigma^V \bL\bD_V Y \ar[rrr]_-{\SI^V \bL\tha}  & && \Sigma^V Y_1\\
}\]

Here $\tha$ denotes the action of $\bD_V$ on $Y$, which is given by the adjoints of the $G$-maps $Y\colon \sD_V(\mathbf m,\mathbf n)\rtarr \sT_G(Y_m,Y_n)$. 
Both top pieces of the diagrams commute by formal inspection, the trapezoid in the first diagram commutes by the definitions of $H_0$ and $\hat{\al}$, as recalled
in the previous section, and the lower rectangle in the second diagram commutes by definition.  It is not hard to check that the maps $\om$ are maps of $\sI_G$-$G$-spaces 
and that they are compatible with the structure maps, so that they give a map of orthogonal $G$-spectra.

\section{Proofs of technical results about  the operadic machine}\label{SegOpPf}

We prove  Theorems  \ref{lemmaspecial} and  \ref{homotopMT} in this section.  

\subsection{Levelwise $G$-equivalences of categories of operators over $\sF_G$}\label{levelwise}
With the notations and hypotheses of \autoref{lemmaspecial}, we must prove that the map
 \[\nu_G\colon \sD_G(\bm^{\al}, \bn^{\be}) \rtarr \sD'_G(\bm^{\al}, \bn^{\be})\]
 is a weak $G$-equivalence for all $\bm^{\al}$ and $\bn^{\be}$. Recall that $\sD_G(\bm^{\al}, \bn^{\be})$ is 
 just $\sD(\mathbf m,\mathbf n)$ with the $G$-action given by $ g\cdot f= \be(g) \circ (gf) \circ \al(g^{-1})$. 
 
 Let $H$ be a subgroup of $G$. We claim that there is a homeomorphism
 \[\big[\sD_G(\bm^{\al},\bn^{\be})\big]^H \cong \coprod_{\phi} \prod _i \sC(d_i)^{\LA _i},\]
where $\phi$ runs over the $H$-equivariant maps $\bm^{\al} \rtarr \bn^{\be}$, $i$ runs over the $H$-orbits of $\bn^{\be}$, and $d_i$ and $\LA_i$ depend only on $\phi$ and $H$, with $\LA_i$ in $\bF_{d_i}$. This homeomorphism is moreover compatible with the map $\nu_G$. Thus  our hypothesis gives that the map
\[(\nu_G)^H\colon \big[\sD_G(\bm^{\al}, \bn^{\be})\big]^H \rtarr \big[\sD'_G(\bm^{\al}, \bn^{\be})\big]^H\]
 is a weak equivalence for all subgroups $H$, as wanted.
 
 To prove the claim, recall that 
 \[\sD(\mb{m},\mb{n})=\coprod_{\phi\colon \mathbf{m}\rtarr \mathbf{n}} \prod_{1\leq j\leq n} \sC(\phi_j),\]
 where $\phi_j=|\phi^{-1}(j)|$.
 Using \autoref{defGops} and in particular \autoref{obsGops}, the new action on $\sD(\mb{m},\mb{n})$ is given by
 {\small{
\[
 g\cdot(\phi; x_1, \dots, x_n) = (\be(g)\phi\al(g^{-1}); gx_{\be(g^{-1})(1)}\si_{\be(g^{-1})(1)}(g^{-1}), 
 \dots, gx_{\be(g^{-1})(n)}\si_{\be(g^{-1})(n)}(g^{-1})).
\]}}

\noindent
Here $\si_j(g^{-1})\in \Sigma_{(\phi\com\al(g^{-1}))_j}= \Sigma_{\phi_j}$ is that permutation of 
$(\phi\com\al(g^{-1}))_j$ letters which converts 
the natural ordering of $(\phi\com\al(g^{-1}))^{-1}(j)$ as a subset of $\{1,\dots,m\}$ to
its ordering obtained by regarding it as $\coprod_{\ph(i)=j}\al(g)(i)$, so ordered that $\al(g)(i)$ precedes $\al(g)(i')$ if $i< i'$. 

Note that the component corresponding to $\phi$ is nonempty in the $H$-fixed points if and only if $\phi$ is $H$-equivariant. 
In what follows we fix such a $\phi$. Careful analysis of the definition of $\si_j(h)$ shows that for $j\in\{1,\dots,n\}$, and $h,k\in H$, we have
\begin{equation}\label{sigma}
\si_j(hk)=\si_j(h)\si_{\be(h^{-1})(j)}(k).
\end{equation}

The $H$-action shuffles the indices within each $H$-orbit of $\bn^{\be|_H}$, so it is enough to consider each $H$-orbit separately. We can assume then that the $H$-action on $\bn^{\be|_H}$ is transitive. The rest of the proof is analogous to the proof of \autoref{lem:lafixedpoints}. 

Since $\phi$ is $H$-equivariant and the action is transitive, all the sets $\phi^{-1}(j)$ have the same cardinality, say $d$. Let $K$ be the stabilizer of $1\in \mb{n}$ under the action of $H$. By \autoref{sigma}, $\si_1$ restricted to $K$ is ahomomorphism, and thus 
\[\LA=\{(k,\si_1(k)\mid k\in K\} \subseteq G \times \SI_d\]
is a subgroup that belongs to $\bF_d$. To complete the proof of the claim, we note that the projection to the first coordinate induces a homeomorphism
\[\big(\sC(d) \times \cdots \times \sC(d)\big) ^H \rtarr \sC(d)^{\LA}.\]
One can easily check that if $(x_1,\dots,x_n) \in \sC(d)^n$ is an $H$-fixed point, then $x_1$ is a $\LA$-fixed point, since for all $k\in K$ we have that
\[(k,\si_1(k))\cdot x_1 = kx_1\si_1(k^{-1}) = kx_{\be(k^{-1})(1)}\si_{\be(k^{-1})(1)}(k^{-1})=x_1,\]
the last equality being true by the assumption that $(x_1,\dots, x_n)$ is fixed by $H$.
To construct an inverse, for every $j$ choose $h_j\in H$ such that $\be(h_j)(1)=j$. Note that choosing these amounts to choosing a system of coset representatives for $H/K$. Consider the map $\sC(d) \rtarr \sC(d)^n$ that sends $x$ to the $n$-tuple with $j$th coordinate
\[x_j=h_jx\si_1(h_j^{-1}).\]
We leave it to the reader to check that this map restricts to the fixed points and is inverse to the projection.

\subsection{The structure of $\bD X$}\label{BDXStruc}

In the rest of this section,   let $\sC_G$ be a $\SI$-free $G$-operad and let $\bD$ be the monad on
$\PI$-$G$-spaces associated to the category of operators $\sD=\sD(\sC_G)$.  Part (i) of  \autoref{homotopMT} asserts that $\bD$ preserves \gen-equivalences,
and its proof is the hardest equivariant work we face.  It involves a detailed combinatorical analysis of the structure of $\bD X$ for a $\PI$-$G$-space $X$. This entails 
combinatorial analysis of $\PI$ and $\sF$ that will also be relevant to the technical proofs 
for the Segal machine in \autoref{SecSegDet}. 

Fix $n$.   Then the definition of  $(\bD X)_n$ given in \autoref{DX} implies that it is the quotient
\[\bigg( \bigvee_{q} \sD(\mb{q}, \mb{n}) \sma X_q\bigg)/(\sim) \]
where $\sim$ is the equivalence relation specified by 
$$(\ps^*d; x)\sim (d; \ps_{*}x)$$ 
for $d \in \sD(\bf q,\bf n)$, $\ps\in \PI(\bf p, \bf q)$ and $x\in X_p$.  By \autoref{OK}, we can replace wedges and smash products by disjoint unions and products, that is, $(\bD X)_n$ is the quotient 
\[\bigg( \coprod_{q} \sD(\mb{q}, \mb{n}) \times X_q\bigg)/(\sim). \]

Recall that the morphism space $\sD(\bf q,\bf n)$ is given by the disjoint union of components indexed on all $\ph\colon \bf{q}\rtarr \bf{n}$ in $\sF$ 
$$\sD(\mb{q},\mb{n}) = \coprod_{\ph\in \sF( \mb{q}, \mb{n})} \prod_{1\leq j\leq n}   \sC_G( \ph_j),$$ where $\ph_j= |\ph^{-1}(j)| $.
The basepoint is the component indexed on $\ph = 0_{q,n}$.  We write a non-basepoint morphism as $(\phi;c)$, where 
$c = (c_1,\dots, c_n)$ with $c_j \in \sC_G(\ph_j)$. For  a morphism $\ps\colon \bf  p\rtarr \bf q$ in $\PI$,  write 
\[\ps^*\colon \prod_{1\leq j \leq n} \sC_G(\ph_j) \rtarr \prod_{1\leq j \leq n} \sC_G((\ph\circ\ps)_j)\] 
for the map $\sD(\bf q,\bf n)\rtarr \sD(\bf p,\bf n)$ induced by $\ps$ from the component of $\ph$  to the component of $\ph\com \ps$, and write  
$\ps_{*}\colon X_p \rtarr X_q$ for the induced morphism
giving the covariant functoriality of $X$.     Then the equivalence relation $\sim$ takes the form
\begin{equation}\label{sim}
(\ph\com \ps; \ps^*c; x)\sim (\ph; c; \ps_{*}x)
\end{equation}
for $(\ph;c)\in \sD(\bf q,\bf n)$ and $x\in X_p$. We shall use the identifications induced by $\sim$ to cut down on the number of components 
that need be considered.  To this end, we first describe the structure of $\PI$ and $\sF$, partially following \cite[\S5]{MT}.  

\begin{defn}\label{PIF}
Recall that $\PI$ is the subcategory of $\sF$ with the same objects
and those maps $\ph\colon \mathbf{m} \rtarr \mathbf{n}$ such that $\ph_j \leq 1$ for $1\leq j\leq n$.  
A map $\pi\in \PI$ is a \emph{projection} if $\pi_j= 1$ for $1\leq j\leq n$.  A map
$\io\in \PI$ is an \emph{injection} if $\io^{-1}(0) = \{0\}$. The permutations are the maps in $\PI$ that 
are both injections and projections. A projection or injection is \emph{proper} if it is not a permutation.
Recall that $\inj$ is the subcategory of injections in $\PI$.  
\end{defn}

\begin{defn}\label{effective}
A map $\ph\in \sF$ is \emph{ordered} (or more accurately monotonic) if $i<j$ implies $\ph(i) \leq \ph(j)$; 
note that this does not restrict the ordering of those $i$ such that $\ph(i) = k$ for some fixed $k$.
A map $\epz\in \sF$ is {\em effective} if $\epz^{-1}(0) = \{0\}$, and an effective map $\epz$ 
is {\em essential} if it is surjective, that is, if $\epz_j \geq 1$ for $1\leq j\leq n$.
\end{defn}

Observe that every morphism of $\PI$ is a composite of proper projections, proper injections, and 
permutations, and that $\sF$ is generated under wedge sum and composition by $\PI$ and the single
product morphism $\varphi_2\colon \mathbf{2}\rtarr \mathbf{1}$.

\begin{lem}\label{factor} A map $\ph\colon \mathbf{m}\rtarr \mathbf{n}$ in $\sF$ factors as the composite 
$\io\com \epz\com \pi$ of a projection $\pi$, an essential 
map $\epz$ and an injection $\io$, uniquely up to permutation.  That is, given two such decompositions
of $\ph$, there are permutations $\si$ and $\ta$ making the following diagram commute.
\[ \xymatrix{
& \mathbf{q} \ar[rr]^-{\epz} \ar[dd]^{\si} & & \mathbf{r}  \ar[dr]^{\io} \ar[dd]^{\ta} &\\
\mathbf{m} \ar[ur]^-{\pi} \ar[dr]_{\pi'} & & & & \mathbf{n} \\
& \mathbf{q} \ar[rr]_-{\epz'} & & \mathbf{r} \ar[ur]_-{\io'} & \\} \]
\end{lem}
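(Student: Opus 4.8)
The plan is to produce the factorization directly and then verify uniqueness by a straightforward diagram chase. For a map $\ph\colon\mathbf m\rtarr\mathbf n$ in $\sF$, let $r=|\ph(\mathbf m)\setminus\{0\}|$ be the number of non-basepoint values actually hit, and let $q=|\ph^{-1}(\mathbf n\setminus\{0\})|$ be the number of non-basepoint elements of $\mathbf m$ that are not sent to the basepoint. First I would define $\pi\colon\mathbf m\rtarr\mathbf q$ to be the projection that collapses $\ph^{-1}(0)$ to the basepoint and is a bijection on $\ph^{-1}(\mathbf n\setminus\{0\})$; since $\pi$ hits every element of $\mathbf q$ exactly once and only $0$ maps to $0$, it is a projection in $\PI$. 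Then there is an induced surjection $\bar\ph\colon\mathbf q\rtarr\mathbf r'$ onto the image, which is effective and surjective, hence essential; call it $\epz$ after fixing an order-preserving identification of the image with $\mathbf r$. Finally $\io\colon\mathbf r\rtarr\mathbf n$ is the inclusion of the image, which satisfies $\io^{-1}(0)=\{0\}$, so it is an injection in $\PI$. By construction $\io\com\epz\com\pi=\ph$. The only genuine content here is checking that $\PI=\{$projections$\}\cdot\{$permutations$\}\cdot\{$injections$\}$ interacts correctly, and that a surjective effective map is exactly an essential map, both of which are immediate from \myref{PIF} and \myref{effective}.

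For uniqueness, suppose $\ph=\io\com\epz\com\pi=\io'\com\epz'\com\pi'$ are two such decompositions, with $\pi\colon\mathbf m\rtarr\mathbf q$, $\epz\colon\mathbf q\rtarr\mathbf r$, $\io\colon\mathbf r\rtarr\mathbf n$ and similarly primed. I would first observe that $\io$ and $\io'$ have the same image, namely $\ph(\mathbf m)$: since $\epz,\epz'$ are surjective and $\pi,\pi'$ are surjective onto their targets, the image of $\ph$ equals the image of $\io$ equals the image of $\io'$. Hence $|\mathbf r|=|\ph(\mathbf m)\setminus\{0\}|$ is the same for both decompositions, and there is a unique permutation $\ta\colon\mathbf r\rtarr\mathbf r$ with $\io'\com\ta=\io$ (the injections $\io,\io'$ are both bijections onto $\ph(\mathbf m)$, so $\ta$ is the transport of structure). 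Next I would identify $\mathbf q$ with $\ph^{-1}(\mathbf n\setminus\{0\})$: since $\epz$ is effective, $\pi^{-1}(0)=(\epz\pi)^{-1}(0)=(\io\epz\pi)^{-1}(0)=\ph^{-1}(0)$, and since $\pi$ is a projection it restricts to a bijection $\pi^{-1}(\mathbf n\setminus\{0\})\rtarr\mathbf q$; the same holds for $\pi'$. Composing these two bijections gives the required permutation $\si\colon\mathbf q\rtarr\mathbf q$ with $\pi'\com\si^{-1}\cdots$ — more precisely, $\si$ is defined so that $\si\com\pi=\pi'$ on non-basepoint elements, which forces it on all of $\mathbf q$ since both are surjective.

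It then remains to check that the middle square commutes, i.e.\ $\epz'\com\si=\ta\com\epz$. This follows by cancellation: we have $\io'\com(\epz'\com\si)\com\pi=\io'\com\epz'\com\pi'=\ph=\io\com\epz\com\pi=\io'\com(\ta\com\epz)\com\pi$, and since $\pi$ is an epimorphism in $\sF$ (it is surjective) and $\io'$ is a monomorphism (it is injective away from the basepoint, and the basepoint is forced), we may cancel to get $\epz'\com\si=\ta\com\epz$. This gives exactly the commuting diagram in the statement, with the two permutations $\si$ and $\ta$.

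The main obstacle — really the only place where care is needed — is bookkeeping the basepoint and the ordering conventions: one must be sure that ``essential'' (effective plus surjective) is precisely the class of maps that can occupy the middle slot, that the permutations $\si,\ta$ are well-defined on the basepoint (automatic, since all maps are based), and that the choice of order-preserving identification of $\ph(\mathbf m)$ with $\mathbf r$ in the existence half does not create a spurious ambiguity beyond the permutation freedom already allowed. None of this is deep, but it is the kind of argument where a sloppy treatment of $\ph^{-1}(0)$ versus $\pi^{-1}(0)$ can go wrong, so I would state the effectivity identity $\pi^{-1}(0)=\ph^{-1}(0)$ explicitly and use it as the hinge of the uniqueness argument.
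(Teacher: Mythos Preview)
Your argument is correct and follows essentially the same approach as the paper: both identify $\pi$ by $\ph^{-1}(0)$, $\io$ by the image of $\ph$, and $\epz$ as what remains in between. The paper's proof is a terse sketch that simply asserts each factor is determined ``up to order'' by these invariants, whereas you carry out the explicit construction of $\si$ and $\ta$ and verify the middle square via epi--mono cancellation; this added care is welcome and fills in exactly what the paper leaves implicit.
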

\begin{proof} The projection $\pi$ is determined up to order by which $i\geq 1$ in $\mathbf{m}$ are mapped to 
$0$ in $\mathbf{n}$. The injection $\io$ is determined up to order by which $j\geq 1$ in $\mathbf{n}$ are not in 
the image of $\ph$. Up to order, $\epz$ is the wedge sum in $\sF$ of the product maps 
$\varphi_{\ph_j}\colon \mathbf{\boldsymbol\ph_j}\rtarr \mathbf{1}$, where $\ph_j = |\ph^{-1}(j)|$ for those $j$ such that $1\leq j\leq n$ and
$\ph^{-1}(j)$ is nonempty. Up to permutation, these $\ph_j$ run through the numbers $\epz_j$, $1\leq j\leq r$. 
\end{proof}

\begin{rem}\label{matchup}  We remark that $\PI$ and $\sF$ are 
dualizable Reedy categories,  as 
defined by Berger and Moerdijk \cite[Definition 1.1 and Example 1.9(b)]{BergMoer}, 
although we have chosen not to use that language. They write 
$\sF^+$ for the monomorphisms and $\sF_{-}$ for the epimorphisms in $\sF$.  We have factored epimorphisms into 
composites of projections and essential maps to make the structure clearer.
\end{rem}

A map $\ph\colon \bf  q\rtarr \bf n$ in $\sF$ is ineffective if and only if it factors as a composite 
$$\xymatrix@1{\bf q  \ar[r]^-{\pi} & \bf p \ar[r]^-{\ze} & \bf n,\\} $$
where $p=q- \ph_0$, $\pi$ is the proper ordered projection such that $\pi(k) = 0$ if and only if $\ph(k) = 0$, and $\ze$ is an effective morphism.
Then $\ze_j = \ph_j$ for $j\geq 1$ and, as in \autoref{obsGops},  $\pi^*(c)=c$ for any $c\in \prod_j \sC_G(\ze_j)$. Therefore 
$$(\ph;c;x)=(\ze\circ\pi;\pi^*(c);x)\sim (\ze;c;\pi_{*}(x)).$$ 
This says both that we may restrict to those wedge summands that are indexed on the effective morphisms of $\sF$, ignoring the ineffective
ones, and that we can ignore the proper projections in $\PI$, restricting further analysis to $\sim$ applied to morphisms of $\inj\subset \PI$. 
Here we must start paying attention to permutations.

\begin{lem}\label{order}
If $\epz\colon \mathbf{p}\rtarr \mathbf{n}$ is an effective morphism in $\sF$, there is a permutation $\nu\in \SI_p$ such that 
$\epz\com\nu$ is ordered; $\nu$ is not unique, but the ordered morphism $\epz \circ \nu$ is independent of the choice. 
\end{lem}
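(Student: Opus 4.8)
The plan is to analyze the effective map $\epz\colon \mathbf{p}\rtarr \mathbf{n}$ orbit by orbit over the fibers $\epz^{-1}(j)$ for $1\leq j\leq n$. Since $\epz$ is effective, $\epz^{-1}(0)=\{0\}$, so the nonzero elements of $\mathbf{p}$ are partitioned into the nonempty blocks $\epz^{-1}(1),\dots,\epz^{-1}(n)$, and $\sum_{j} |\epz^{-1}(j)| = p$. First I would observe that asking for $\epz\com\nu$ to be ordered (monotonic) is exactly asking that the permutation $\nu^{-1}$ send the natural block decomposition $\{1,\dots,p\} = \epz^{-1}(1) \sqcup \cdots \sqcup \epz^{-1}(n)$ of the domain to the decomposition into the consecutive intervals $\{1,\dots,|\epz^{-1}(1)|\}$, $\{|\epz^{-1}(1)|+1,\dots,|\epz^{-1}(1)|+|\epz^{-1}(2)|\}$, and so on: a monotonic map $\mathbf{p}\rtarr\mathbf{n}$ with prescribed fiber sizes is completely determined, and it sends the first $|\epz^{-1}(1)|$ elements to $1$, the next $|\epz^{-1}(2)|$ to $2$, etc. So the ordered map $\epz\com\nu$ is forced: its $j$th fiber is the $j$th consecutive interval of the appropriate length.

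Next I would construct $\nu$ explicitly. Enumerate each fiber $\epz^{-1}(j) = \{a_{j,1} < a_{j,2} < \cdots < a_{j,s_j}\}$ in increasing order, where $s_j = |\epz^{-1}(j)|$. Define $\nu\in\SI_p$ (fixing $0$) by sending the $k$th element of the $j$th consecutive interval to $a_{j,k}$; that is, $\nu$ lists the elements $a_{1,1},\dots,a_{1,s_1},a_{2,1},\dots,a_{2,s_2},\dots,a_{n,1},\dots,a_{n,s_n}$ in order. Then $\epz\com\nu$ sends the first $s_1$ elements to $1$, the next $s_2$ to $2$, and so on, and within each interval the map is constant, hence $\epz\com\nu$ is monotonic. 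This gives existence.

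For the (non)uniqueness: any other choice $\nu'$ with $\epz\com\nu'$ ordered must, by the first paragraph, satisfy $\epz\com\nu' = \epz\com\nu$ since the ordered map with those fiber sizes is unique; equivalently $\nu^{-1}\nu'$ preserves each fiber $\epz^{-1}(j)$ setwise, so $\nu^{-1}\nu'$ lies in the Young subgroup $\SI_{s_1}\times\cdots\times\SI_{s_n}\subset\SI_p$, which is nontrivial as soon as some $s_j\geq 2$. Hence $\nu$ is not unique in general, but the composite $\epz\com\nu$ is. None of this is hard; the only point requiring a little care is being precise that "ordered" does not constrain the internal order of a single fiber (as already flagged in Definition \ref{effective}), so that the ambiguity is exactly the Young subgroup and nothing more — that is the step I would state most carefully, but it is routine bookkeeping rather than a genuine obstacle.
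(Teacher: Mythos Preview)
Your proof is correct and complete. The paper states this lemma without proof, treating it as an elementary observation; your argument supplies exactly the routine verification one would expect, including the identification of the ambiguity in $\nu$ with the Young subgroup $\SI_{s_1}\times\cdots\times\SI_{s_n}$ (which the paper calls $\SI(\epz)$ in the immediately following definition and \myref{order2}).
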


Applying $\sim$ to the permutations $\nu$, we can further restrict to components indexed on ordered effective morphisms.  
We abbreviate notation.

\begin{notn} We say that an ordered effective morphism in $\sF$ is an $OE$-function.  If $\epz$ is effective and $\epz\com \nu$ is ordered, we call it the $OE$-function associated to $\epz$.  We let $\sE(\bf{p},\bf{n})$ denote the set of 
all $OE$-functions $\bf{p}\rtarr \bf{n}$.
\end{notn} 

\begin{defn} Let $\epz \colon \mb{p} \to \mb{n}$ be an $OE$-function, with  $\epz_j = |\epz^{-1}(j)|$.  The sum of the $\epz_j$ is $p$ and we define 
$$\SI(\epz) =\SI_{\epz_1}\times \cdots \times \SI_{\epz_n} \subset \SI_p,$$
where the inclusion is determined by identifying  $\bf p$ with $\bf{\boldsymbol\epz_1} \wed \cdots \wed \bf{\boldsymbol\epz_n}$. 
In other words, we partition $\{1, \dots, p\}$ into $n$ blocks of letters, as dictated by $\epz$. 
\end{defn}

\begin{lem}\label{order2} 
If $\epz\colon \mb{p} \to \mb{n}$ is an $OE$-function and $\nu\in \SI_p$, then $\epz \com \nu$ is ordered (and hence equal to $\epz$) if and only 
$\nu$ is in the subgroup $\SI(\epz)$.
\end{lem}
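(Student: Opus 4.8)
\textbf{Proof plan for \myref{order2}.}

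The plan is to argue by a direct comparison of how $\epz\com\nu$ reorders elements within each fiber. First I would set up notation: since $\epz$ is an $OE$-function $\mb{p}\rtarr\mb{n}$, the fibers $\epz^{-1}(1),\dots,\epz^{-1}(n)$ are consecutive blocks of $\{1,\dots,p\}$ in increasing order, of sizes $1_\epz,\dots,n_\epz$ respectively, and $\epz$ is constant on each block. The subgroup $\SI(\epz)\subset\SI_p$ is exactly the group of permutations that preserve each of these blocks setwise.

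For the ``if'' direction: suppose $\nu\in\SI(\epz)$. Then $\nu$ maps each block $\epz^{-1}(j)$ onto itself, so for any $i\in\epz^{-1}(j)$ we have $\nu(i)\in\epz^{-1}(j)$ and hence $(\epz\com\nu)(i)=\epz(\nu(i))=j=\epz(i)$. Thus $\epz\com\nu=\epz$, which is ordered by hypothesis. For the ``only if'' direction: suppose $\epz\com\nu$ is ordered. I would show $\nu(\epz^{-1}(j))=\epz^{-1}(j)$ for each $j$ by induction on $j$. Since $\epz\com\nu$ is an effective ordered map agreeing with $\epz$ on fiber sizes — indeed $|(\epz\com\nu)^{-1}(j)| = |\epz^{-1}(j)| = j_\epz$ because $\nu$ is a bijection — and both $\epz$ and $\epz\com\nu$ are $OE$-functions $\mb{p}\rtarr\mb{n}$ with the same fiber-size sequence $(1_\epz,\dots,n_\epz)$, they must be the \emph{same} $OE$-function: an ordered effective map is determined by the sequence of its fiber sizes (its fibers are forced to be the consecutive blocks of those sizes). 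Hence $\epz\com\nu=\epz$, i.e.\ $\epz(\nu(i))=\epz(i)$ for all $i$, which says precisely that $\nu$ carries each fiber $\epz^{-1}(j)$ into $\epz^{-1}(j)$; since $\nu$ is a bijection and the fibers partition $\mb{p}$, it carries each fiber \emph{onto} itself, so $\nu\in\SI(\epz)$. The parenthetical claim ``hence equal to $\epz$'' in the statement then comes for free from this same uniqueness of ordered effective maps with a given fiber-size sequence, which was essentially already recorded in \myref{order}.

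The main obstacle, such as it is, is just being careful about the bookkeeping: one must note that composing with a permutation $\nu$ does not change the multiset of fiber sizes, only permutes which preimages land where, and then invoke the rigidity of ordered effective maps (they are uniquely pinned down by their fiber-size sequence). Once that rigidity statement is cleanly isolated — it follows immediately from the definition of ordered, since the fibers of an ordered surjection onto $\mb{n}$ must be read off left to right as consecutive intervals — both implications are one-line verifications. I would present the rigidity observation first as a small remark, then give the two short implications as above.
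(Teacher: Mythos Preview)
Your argument is correct; the paper states this lemma without proof, treating it as an immediate consequence of the definitions, and your verification is exactly the elementary check the paper is tacitly invoking. One small expository remark: you announce an induction on $j$ but then (rightly) abandon it for the cleaner global observation that an $OE$-function is determined by its fiber-size sequence, so you may as well drop the mention of induction.
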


The equivalence relation $\sim$ is defined in terms of precomposition of morphisms of $\sF$ with morphisms of $\PI$,
while the action of $\PI$ on $\bD X$ is defined in terms of postcomposition of morphisms of $\sF$ with morphisms of $\PI$.
Especially for permutations, these are related.  We discuss composition with permutations on both sides in the following three remarks.

\begin{rem}\label{tau}
Let $\epz\colon \bf p\rtarr \bf n$ be an $OE$-function and let $\si\in \SI_n$. Define $\ta(\si)\in \SI_p$  to be the permutation that 
permutes the $n$ blocks of size $\{\epz_1, \dots, \epz_n\}$  as  $\si$ permutes $n$ letters. 
Then $\si \com \epz \com  \ta(\si)^{-1}$ is again ordered, and it is the $OE$-function associated to $\si\com \epz$.  Moreover, the 
function $\ta = \ta_{\epz}\colon \SI_n\rtarr \SI_p$ is a homomorphism. Inspecting our identifications and using \autoref{obsGops}, 
we see that  postcomposition with $\si$ sends a point with representative $(\epz;c_1,\dots,c_n;x)$ to the point with representative
\begin{equation}\label{sigmaact}
 \big(\si\epz\tau(\si)^{-1}; c_{\si^{-1}(1)} , \dots , c_{\si^{-1}(n)}; \tau(\si)_\ast x\big).
 \end{equation}
\end{rem}

\begin{rem}\label{partition}  One can think of an $OE$-function $\epz\colon \bf{p}\rtarr \bf{n}$ as an ordered partition of the set $\{1,\dots,p\}$ into
$n$ ordered subsets.  Observe that $\epz$ is essential if and only if $\epz_j > 0$ for all  $1\leq j\leq n$, so that our $n$ subsets are all nonempty.  
Define the signature of $\epz$ to be the unordered set of numbers $\{\epz_1,\dots,\epz_n\}$.   The action of $\SI_n$ permutes partitions with the same signature (and thus the same $p$).  The $OE$-functions $\epz$ and $\si \com \epz \com \tau(\si)^{-1}$ have the same signature, and any two ordered partitions with the same signature are connected this way.
\end{rem}

\begin{rem} If $\epz\colon \bf p\rtarr \bf n$ is an $OE$-function and $\rh\in \SI_p$, then 
$$(\epz\com \rh; c; x)  \sim (\epz; (\rh^{-1})^*c;\rh^{-1} _{*} x).$$
\end{rem} 

We have now accounted for $\sim$ applied to all proper projections and to all permutations.  It remains to consider proper
injections $\io$. For any such $\io\colon \bf p\rtarr \bf q$, there is a permutation $\nu\in \SI_p$ such that $\io \com \nu$ is ordered.
Recall that we have the ordered injections $\si_i\colon  \bf{p-1} \rtarr \bf p$ that skip $i$, $1\leq i\leq p$.  Every proper ordered
injection is a composite of such $\si_i$, so it remains to account for $\sim$ applied to the $\si_i$. 
These give an equivalence relation on

$$\coprod_q \coprod_{\epz\in \sE(\bf{q},\bf{n})} \bigg( \prod_{1\leq j\leq n} \sC_G(\epz_j)\bigg) \times_{\SI(\epz)} X_{q}$$
whose quotient is $(\bD X)_n$.   The component with $q=0$ gives the basepoint.

The monad $\bD$, like the monad $\bC_G$, is filtered.   Its $p$th filtration at level $n$, denoted $F_p(\bD X)_n$, is the image of the components indexed 
on $q\leq p$. We can think of the quotient as given by filtration-lowering  ``basepoint identifications'', namely 
\begin{equation}
(\epz; c_1,\dots, c_n; (\si_i)_{*} x) \sim (\epz\com \si_i; c_1, \dots, c_{i-1}, \si_r c_i, c_{i+1}, \dots,c_n; x),
\end{equation} 
for some $i=1,\dots,q$. Here $r$ is the position of $i$ within its block of $\epz_j = |\epz^{-1}(j)|$ letters, where $j= \epz(i)$, and $\si_r\colon \sC_G(\epz_j) \rtarr \sC_G(\epz_j -1)$ 
is the map from 
\autoref{degensig}.\footnote{For consistency of notation here, we might have written $\si_r^*$ instead of $\si_r$, as appears in \autoref{degensig}.} In equivalent abbreviated notation, we write this as
\begin{equation}\label{Yes2}
(\epz; c ; (\si_i)_{*} x) \sim (\epz\com \si_i; \si_i^*c ; x)
\end{equation} 

We give a definition of \emph{latching  spaces} for functors $ \PI \rtarr \UG$, in the spirit of the usual definition for simplicial spaces. This definition coincides with the one in \cite{BergMoer} when considering $\PI$ as a dualizable Reedy category (see \autoref{matchup}).
Via \autoref{PIReedytrue} below, this concept will allow us to make inductive arguments using the pushout diagram \autoref{pushout}.  This is similar to classical arguments involving simplicial spaces.

\begin{defn}\label{PIReedy}  For a functor $X\colon \PI \rtarr \UG$, the ordered injections $\si_i\colon \mathbf{n-1}\rtarr \mathbf{n}$ 
induce maps $(\si_i)_*\colon X_{n-1} \rtarr X_n$.  A point of $X_n$ in the image of some $(\si_i)_\ast$ is said to be degenerate.   We define the $n$th 
latching space of $X$
to be the set of degenerate points in $X_n$:
\[L_nX = \bigcup _{i=1}^n (\si_i)_*(X_{n-1}).\]
It is a $(G\times \SI_n$)-space, and the  inclusion $L_nX\rtarr X_n$ 
is a $(G\times \SI_n)$-map. For a $G$-space $X$, $L_n\bR X$ is the subspace 
of $X^n$ consisting of those points at least one coordinate of which is the basepoint.
\end{defn} 

\begin{lem}\label{PIReedytrue}  If $X$ is a $\PI$-$G$-space, then the inclusion  $L_n X\rtarr X_n$ is a $(G\times\SI_n)$-cofibration.
\end{lem}
\begin{proof}
This is a direct application of  \cite[Theorem A.2.7]{BVbook}.
\end{proof}

Following up \autoref{PIReedy}, it is helpful to be more explicit about degenerate elements.

\begin{defn} Fixing an $\epz\in \sE(\bf p, \bf n)$, write $[c;x]$ for an element of
$$ \prod_{1\leq j\leq n} \sC_G(\epz_j )\times_{\SI(\epz)} X_{p},$$ 
meaning that $(c;x)$ is a representative element for an orbit $[c;x]$  
under the action of $\SI(\epz)$. The element  $x\in X_p$ is degenerate  if $x\in L_pX$, that is, if $x =(\si_i)_{*} y$ for some $y\in X_{p-1}$ and some $i$, and we say that $(\epz;[c;x])$  is degenerate if $x$ is degenerate. Since $\tau\com \si_i$ is a proper injection for any $\ta\in \SI(\epz)$ and any $i$, the 
condition of being degenerate is independent of the orbit representative $(c;x)$.
\end{defn}

By use of  \autoref{Yes2}, we reach the following description of the elements of $(\bD X)_n$.    

\begin{lem}  A point of $(\bD X)_n$ has a unique nondegenerate representative $(\epz; [c;x])$.
\end{lem}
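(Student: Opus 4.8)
\textbf{The plan is to} show that every point of $(\bD X)_n$ has a representative $(\epz;[c;x])$ with $x$ nondegenerate, and that two such nondegenerate representatives must be equal. The existence half is a downward induction on filtration. Given a point $p\in(\bD X)_n$, pick any representative $(\epz;[c;x])$ with $\epz\in\sE(\mathbf q,\mathbf n)$ and $x\in X_q$; this is possible by the reductions carried out above, which allow us to assume the indexing morphism of $\sF$ is an $OE$-function and the remaining precomposition identifications $\sim$ are generated by the ordered injections $\si_i$. If $x$ is degenerate, write $x=(\si_i)_*y$ for some $y\in X_{q-1}$ and some $i$; then by $(\ref{Yes2})$,
\[
(\epz;[c;x]) = (\epz;[c;(\si_i)_*y]) \sim (\epz\com\si_i;[\si_i^*c;y]),
\]
and after replacing $\epz\com\si_i$ by its associated $OE$-function (using \myref{order} and \myref{order2} to absorb the necessary permutation into the orbit), we obtain a representative of strictly lower filtration. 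Since filtration is bounded below by $0$ (the $q=0$ component being the basepoint, whose unique element is nondegenerate as $X_0=\ast$), this process terminates in a nondegenerate representative.

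For uniqueness, the key point is that the equivalence relation $\sim$ on
\[
\coprod_q \coprod_{\epz\in\sE(\mathbf q,\mathbf n)}\Bigl(\prod_{1\le j\le n}\sC_G(j_\epz)\Bigr)\times_{\SI(\epz)} X_q
\]
is generated precisely by the filtration-lowering relations $(\ref{Yes2})$ together with the orbit identifications already folded into the $\times_{\SI(\epz)}$; in particular every generating relation $(\epz;[c;(\si_i)_*x])\sim(\epz\com\si_i;[\si_i^*c;x])$ has a degenerate element on its left side. Hence if $(\epz;[c;x])$ and $(\epz';[c';x'])$ are both nondegenerate and equivalent, they are connected by a finite zigzag of generating relations, and I claim this zigzag must be trivial: any nontrivial step out of a nondegenerate representative would have to run in the direction that \emph{raises} filtration (since the only generators available at a nondegenerate point send it to a point of higher filtration, by $(\ref{Yes2})$ read right-to-left), but then to return to another nondegenerate representative one must eventually step back down, and one checks that a down-step followed by an up-step (or vice versa) on the same injection cancels, while on different injections a standard simplicial-identity argument (the commutation relations among the $\si_i$ and the corresponding $\si_r$ on $\sC_G$) shows the composite is again reachable by a shorter zigzag. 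Formalizing this, the cleanest route is to exhibit a set-theoretic section: define a map from $(\bD X)_n$ to the set of nondegenerate pairs by sending a point to the nondegenerate representative produced by the existence argument, check it is well-defined by verifying invariance under each generator of $\sim$ (each such check is an instance of a simplicial identity in $X$ combined with the degeneracy formula $(\ref{degensig})$ in $\sC_G$), and observe it is inverse to the evident inclusion.

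\textbf{The main obstacle} is the well-definedness verification in the uniqueness step: one must confront the interaction between the degeneracy operators $\si_i$ on $X$ (and the permutations reordering blocks of an $OE$-function) and the operadic degeneracies $\si_r$ on the $\sC_G(j_\epz)$, showing that the "reduce to nondegenerate form" procedure is confluent. This is exactly parallel to the classical proof that $\bC_G X$ has a canonical filtration with free $\SI_j$-action on filtration quotients (compare \cite[Proposition A.10]{MayGeo} and the analysis in \cite[\S5]{MT}), and the $\SI$-freeness of $\sC_G$ is not needed here — only the nondegeneracy of basepoints in $X$ and the reduced structure of $\sC_G$. I expect the argument to follow \cite[\S5]{MT} closely, with the extra bookkeeping coming from tracking $OE$-functions and the subgroups $\SI(\epz)$ rather than plain symmetric groups; none of the steps is deep, but assembling the confluence claim cleanly requires care with \myref{tau} and the homomorphisms $\ta_\epz\colon\SI_n\rtarr\SI_p$.
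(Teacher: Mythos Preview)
The paper gives no proof of this lemma: it simply asserts the statement as following ``by use of (\ref{Yes2})'' from the preceding reductions, and immediately proceeds to the pushout description of the filtration, citing \cite[p.~218]{MT} for the nonequivariant case. Your argument is correct and supplies exactly the details the paper leaves implicit; the existence half via filtration-lowering and the uniqueness via confluence of the generating relations is the standard route, and it is the same approach.
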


Just as nonequivariantly  (\cite[p. 218]{MT}), we have pushouts of $(G\times \SI_n)$-spaces
\begin{equation}\label{pushout} \xymatrix{  \coprod_{\epz\in \sE(\bf{p},\bf{n})} \big{(} \prod_{1\leq j\leq n} \sC_G(\epz_j)\big{)} \times_{\SI(\epz)}  L_{p}X \ar[r]^-{\nu} \ar[d]
& F_{p-1} (\bD X)_n \ar[d] \\
\coprod_{\epz\in \sE(\bf{p},\bf{n})}  \big{(}\prod_{1\leq j\leq n} \sC_G(\epz_j)\big{)} \times_{\SI(\epz)} X_{p} \ar[r] 
& F_{p} (\bD X)_n \\} \end{equation}
With notation as in \autoref{Yes2}, the map $\nu$ sends a point with orbit representative $(\epz; c; (\si_i)_{*} x)$ to the point with orbit 
representative $(\epz\com \si_i;  \si_i^*c; x)$.

By \autoref{PIReedytrue}, when $X$ is a $\PI$-$G$-space  the inclusion $L_pX \rtarr  X_p$ is a 
$(G\times \SI_p)$-cofibration for each $p$.   Then each component of the left vertical map is a $(G\times \SI_p)$-cofibration before passing to  orbits under 
 $\SI(\epz)$, so the map on orbits is a $G$-cofibration by \cite[Lemma A.2.3]{BVbook}. Since $\SI_n$ acts by permuting components and the factors of the displayed products, it follows that the left vertical map is  a 
$(G\times\SI_n)$-cofibration, hence so is the right vertical arrow.  

\subsection{The proof that $\bD$   preserves $\PI$-$G$-spaces}\label{DXReedy}We here complete the proof of \autoref{Dgood} by proving that $\bD$ preserves the cofibration condition in \autoref{Fspace}. By \autoref{onlyordered}, we must prove that for every ordered injection $\phi \colon \bm \rtarr \bn$ in $\PI$, the induced map $\phi_* \colon (\bD X)_m \rtarr (\bD X)_n$ is a $(G\times \SI_\ph)$-cofibration. To do so, we adopt the following notation.

Let $T\subset \mb{n}\setminus \{0\}$. We write $ \SI_T$ for the subgroup of $\SI_n$ of those permutations $\tau$ such that $\tau(T)=T$. Note that $\SI_T$ consists of those permutations that act separately within $T$ and its complement. As a group, $\SI_T$ it is isomorphic to $\SI_{|T|} \times \SI_{n-|T|}$. 

\begin{notn}\label{sigmaT}
We denote by $\si_T \colon \mb{n-|T|} \rtarr \mb{n}$ the ordered injection that misses the elements of $T$. It can be written as $\si_T=\si_{i_k}\cdots \si_{i_1}$ where $i_1<\cdots < i_k$ are the elements of $T$. If $T$ is empty we use the convention that $\si_T=\id$ (which makes sense as the empty composition). For a $\PI$-$G$-space $X$, there is an induced map $(\si_T)_\ast\colon X_{n-|T|} \rtarr X_n$, which for the sake of readability we denote by $\si_T$. Then, for a $\PI$-$G$-space $X$, 
\[\si_T X_{n-|T|} = \bigcap _{i\in T} \si_i X_{n-1}.\] 
For the case $T=\emptyset$, we interpret this as $\si_T X_n = X_n$.
\end{notn}

Note that any ordered injection $\phi$ is of the form $\si_T$ where $T=\bn \setminus \im(\phi)$, and in that case, $\SI_\ph=\SI_T$. Thus, it suffices to show that for all subsets $T$, the maps 
\begin{equation}\label{indT}
\si_T\colon (\bD X)_{n-|T|} \rtarr (\bD X)_n
\end{equation} 
are  $(G\times \SI_T)$-cofibrations. Note that the action of $\SI_T$ on $(\bD X)_{n-|T|}$ is given by restricting to the action on the block ${\bf n}\setminus T$ and identifying it with the 
set ${\bf n -|T|}$.  

Consider the cube obtained by mapping the pushout square of \autoref{pushout} for $(\bD X)_{n-|T|}$ to the pushout square \autoref{pushout} for $ (\bD X)_n$. 
Write $\si_T$ for the maps from the four corners of the first square to the four corners of the second square.
We will prove by induction on $p$ that the map 
\begin{equation}\label{indp}
\si_T \colon F_p(\bD X)_{n-|T|} \rtarr F_p(\bD X)_n
\end{equation}
is a $(G\times \SI_T)$-cofibration, and we assume this for the map with $p$ replaced by $p-1$.  The vertical maps in the diagram \autoref{pushout} for $\bD X_{n-|T|}$ are 
$(G\times \SI_{n-|T|})$-cofibrations, so they are $(G\times \SI_T)$-cofibrations via the action described above. The vertical maps in the diagram \autoref{pushout}  for $\bD X_n$ are 
$(G\times \SI_n)$-cofibrations, so in particular they are also $(G \times \SI_T)$-cofibrations.

The map $\si_T$ on the left corners of the diagram is given by
\[(\epz, [(c_1, \dots, c_{n-|T|}), x)] \mapsto (\si_T\circ \epz, [(d, x)]),\]
where $d$ is the $n$-tuple given by
\[d_j=\begin{cases} 0\in \sC_G(0) &\text{if } j \in T\\
 c_{\si_T^{-1}(j)} & \text{if } j \not\in T
 \end{cases}
 \]
 It is not hard to see that for the left entries of the pushout diagram, the map $\si_T$ is the inclusion of those components labeled by maps $\epz \colon \mb{p} \rtarr \mb{n}$ that miss the elements of $T$. The groups $\SI_{n-|T|}$ and $\SI_n$ act on  the source and target, respectively, as stated in \autoref{tau}. In particular, both actions shuffle components, hence the inclusion of components is a $(G\times \SI_T)$-cofibration.  By the induction hypothesis,  the map connecting the top right corners of the cube is also a $(G\times \SI_T)$-cofibration. 

\autoref{glueingcof} below, which concerns the preservation of cofibrations by pushouts, implies that the map \autoref{indp} connecting the bottom right corners of the cube is a $(G\times \SI_T)$-cofibration,  as claimed, noting that
\[\si_T \Bigg(\coprod_{\epz\in \sE(\bf{p},\bf{n-|T|})}  \big{(}\prod_{j=1}^{n-|T|} \sC_G(\epz_j)\big{)} \times_{\SI(\epz)} X_{p}\Bigg) \cap \Bigg( \coprod_{\epz\in \sE(\bf{p},\bf{n})}  \big{(}\prod_{j=1}^n \sC_G(\epz_j)\big{)} \times_{\SI(\epz)} L_pX \Bigg)\]
is equal to
\[\si_T \Bigg( \coprod_{\epz\in \sE(\bf{p},\bf{n-|T|})}  \big{(}\prod_{j=1}^{n-|T|} \sC_G(\epz_j)\big{)} \times_{\SI(\epz)} L_pX\Bigg).\] 

\autoref{sequentialcolim} below, which concerns the preservation of cofibrations by sequential colimits implies that the map of colimits
\[\si_T\colon (\bD X)_{n-|T|}= \colim_p F_p (\bD X)_{n-|T|} \rtarr \colim_p F_p (\bD X)_n= (\bD X)_n\]
is  a $(G\times \SI_T)$-cofibration. To see this, we must check that the intersection condition  $$\si_T(F_p(\bD X)_{n-|T|}) \cap F_{p-1}(\bD X)_n = \si_T(F_{p-1}(\bD X)_{n-|T|})$$ 
of  \autoref{sequentialcolim} is satisfied. One inclusion is obvious. For the other, take an element $(\epz,[c,x])\in F_p(\bD X)_{n-|T|} \setminus F_{p-1}(\bD X)_{n-|T|}$; in particular, $x\in X_p\setminus L_pX$. Then $\si_T(\epz,[c,x])=(\si_T\epz,[\si_T^*c,x])\in F_p (\bD X)_n \setminus F_{p-1} (\bD X)_n$, as required.  

This completes the proof that the map \autoref{indT} is a $(G\times \SI_T)$-cofibration.

\subsection{The proof that $\bD$ preserves \gen-equivalences}\label{HARD}

We assume given an \gen-equivalence $f\colon X\rtarr Y$ between $\PI$-$G$-spaces.
\autoref{homotopMT}(i) asserts that the map $\bD f\colon \bD X\rtarr \bD Y$ is an \gen-equivalence.  We shall prove it by showing by induction on $p$
that  $f$ induces an $\bF_n$-equivalence  $F_{p} (\bD X)_n\rtarr F_{p} (\bD  Y)_n$ for each $n$ and each $p\geq 0$, 
there being nothing to prove when $p=0$.  By the usual gluing lemma on pushouts, proven equivariantly in
\cite[Theorem A.4.4]{BV} (but also a model theoretic formality), it suffices to prove that the maps induced by $f$ on the source 
and target of the left vertical arrow in \autoref{pushout}  induce equivalences on $\LA$-fixed point spaces, where $\LA\subset G\times \SI_n$ 
and $\LA\cap \SI_n= \{e\}$.   We have $\LA =\{ (h,\al(h)) \mid h\in H\}$ for some subgroup  $H$ of $G$ and homomorphism 
$\al\colon H\rtarr \SI_n$, and we regard $\bf n$ as a based $H$-set via  $\al$.  Fixing $\LA$ for the rest of the section, 
we shall prove \autoref{homotopMT}(i) by analyzing $\LA$-fixed points.  

We first consider the target,  that is the lower left corner of the diagram.
To clarify the argument, we separate out some of its combinatorics before proceeding.

\begin{defn} Let $\epz\colon \mathbf{p}\rtarr \mathbf{n}$ be an $OE$-function, let $\ta\colon \SI_n\rtarr \SI_p$ be the homomorphism
determined by $\epz$ as defined in \autoref{tau}, and define $\be\colon H\rtarr \SI_p$ to be the composite homomorphism $\ta\al$.  Say that 
$\epz$ is $\LA$-fixed if $\al(h)\epz = \epz \be(h)$ for all $h\in H$. Note that this implies that $\epz_j=\epz_k$ if $j$ and $k$ are in the same 
$H$-orbit.

Define $\sE(\mathbf p, \mathbf n)^{\LA}$ to be the set of all $\LA$-fixed $OE$-functions $\mathbf{p}\rtarr \mathbf{n}$. Fix 
$\epz \in \sE(\mathbf p, \mathbf n)^{\LA}$. Say that a function 
$$\ga= (\ga_1,\dots,\ga_n)\colon H \rtarr \SI(\epz)$$ 
is {\em{admissible}}, or admissible with respect to $\al$, if
 \begin{equation}\label{sigma2}
 \ga_j(hk)=\ga_j(h)\ga_{\al(h)^{-1}(j)}(k)
 \end{equation}
 for $h,k\in H$ and $1\leq j\leq n$.   For any function $\ga \colon H \rtarr \SI(\epz)$, 
 define a function\footnote{We use the notation $\cdot$ since we often use juxtaposition to mean composition in this section.} 
 $\ga\cdot \be\colon H\rtarr \SI_p$ by
 $ (\ga\cdot \be)(h) = \ga(h)\be(h).$
 \end{defn}
 
We leave the combinatorial proof of the following lemma to the reader. When the action of $H$ on ${\bf n}\setminus \{0\}$ 
has a single orbit, there is a conceptual rather than combinatorial proof using wreath products.
 
  \begin{lem}  Fix $\epz\in \sE(\mathbf{p},\mathbf{n})^{\LA}$.  A function $\ga\colon H\rtarr \SI(\epz)$ is admissible with respect to $\al$ if and only 
  if $\ga\cdot \be$  is a homomorphism $H\rtarr \SI_p$. 
 \end{lem}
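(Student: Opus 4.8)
The statement is a purely combinatorial identity about permutation groups. Fix $\epz\in\sE(\mb p,\mb n)^{\LA}$, so that $\al(h)\epz=\epz\be(h)$ for all $h\in H$ where $\be=\ta\al$ and $\ta=\ta_\epz$. The key observation is that $\be$ itself is a homomorphism (it is a composite of two homomorphisms), and $\be(h)$ preserves the block partition of $\{1,\dots,p\}$ into the $n$ blocks of sizes $j_\epz$, permuting the $j$th block onto the $\al(h)(j)$th block in an order-preserving way. Concretely, $\be(h)$ restricted to the $j$th block is the unique order isomorphism onto the $\al(h)(j)$th block; this uses $j_\epz=k_\epz$ whenever $j,k$ lie in the same $H$-orbit, which is part of being $\LA$-fixed. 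Thus the factor $\SI(\epz)=\SI_{1_\epz}\times\cdots\times\SI_{n_\epz}$ is a normal subgroup of the subgroup of $\SI_p$ generated by $\SI(\epz)$ and the image of $\be$, and conjugation by $\be(h)$ sends the $j$th coordinate factor $\SI_{j_\epz}$ to the $\al(h)(j)$th coordinate factor.

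\textbf{The computation.} For $\ga\colon H\to\SI(\epz)$ write $\ga(h)=(\ga_1(h),\dots,\ga_n(h))$ with $\ga_j(h)\in\SI_{j_\epz}$, sitting in the $j$th block. First I would record the conjugation formula: for $h\in H$ and $\de=(\de_1,\dots,\de_n)\in\SI(\epz)$,
\[
\be(h)\,\de\,\be(h)^{-1} = \bigl(\de'_1,\dots,\de'_n\bigr), \qquad \de'_{\al(h)(j)} = \text{(conjugate of $\de_j$ by the order iso of the $j$th block onto the $\al(h)(j)$th block)}.
\]
Because we are working inside symmetric groups and the two blocks have the same size, this conjugate is literally $\de_j$ once we identify both blocks with $\{1,\dots,j_\epz\}$ via their order isomorphisms; this is exactly the identification already built into the definition of $\SI(\epz)\subset\SI_p$ and of $\ta$. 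Hence in coordinate form $\bigl(\be(h)\de\be(h)^{-1}\bigr)_{\al(h)(j)}=\de_j$, i.e. $\bigl(\be(h)\de\be(h)^{-1}\bigr)_{j}=\de_{\al(h)^{-1}(j)}$. Now compute, for $h,k\in H$,
\[
(\ga\cdot\be)(h)\,(\ga\cdot\be)(k) = \ga(h)\be(h)\ga(k)\be(k) = \ga(h)\,\bigl(\be(h)\ga(k)\be(h)^{-1}\bigr)\,\be(h)\be(k) = \ga(h)\,\bigl(\be(h)\ga(k)\be(h)^{-1}\bigr)\,\be(hk),
\]
using that $\be$ is a homomorphism. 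Comparing with $(\ga\cdot\be)(hk)=\ga(hk)\be(hk)$ and cancelling $\be(hk)$ on the right, we see that $\ga\cdot\be$ is a homomorphism if and only if $\ga(hk)=\ga(h)\cdot\bigl(\be(h)\ga(k)\be(h)^{-1}\bigr)$ for all $h,k$. By the conjugation formula just derived, the $j$th coordinate of the right-hand side is $\ga_j(h)\,\ga_{\al(h)^{-1}(j)}(k)$, which is precisely the admissibility condition \eqref{sigma2}. This gives both implications at once.

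\textbf{Main obstacle.} There is no deep obstacle; the proof is elementary once the bookkeeping is set up. The one point requiring genuine care is the conjugation formula $\bigl(\be(h)\de\be(h)^{-1}\bigr)_{j}=\de_{\al(h)^{-1}(j)}$ — one must check that no extra permutation creeps in on the individual blocks, i.e. that $\be(h)$ really does map the $j$th block to the $\al(h)(j)$th block by the order isomorphism and nothing more. This is exactly where the definition of $\ta_\epz$ in \myref{tau} (permuting whole blocks the way $\al(h)$ permutes $n$ letters) is used, together with $\LA$-fixedness to guarantee the blocks being matched have equal size. Once this is verified, everything else is a two-line manipulation, and the conceptual shortcut via wreath products (in the transitive case, $\SI(\epz)\rtimes H$ realized inside $\SI_{j_\epz}\wr\SI_m$) makes the structure transparent but is not needed for the general statement. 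Therefore I would simply cite the conjugation formula as a straightforward unwinding of definitions and present the displayed computation above as the proof.
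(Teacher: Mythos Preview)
Your proof is correct and is precisely the kind of combinatorial verification the paper has in mind: the paper actually leaves this lemma to the reader, remarking only that in the single-orbit case there is a conceptual argument via wreath products. Your computation via the conjugation formula $\bigl(\be(h)\de\be(h)^{-1}\bigr)_j=\de_{\al(h)^{-1}(j)}$ is exactly right, and your identification of the one genuine point of care (that $\ta(\al(h))$ carries block $j$ onto block $\al(h)(j)$ by the order isomorphism, which requires $\LA$-fixedness so the blocks have equal size) is spot on.
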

 
The following result is the central step of the proof of \autoref{homotopMT}(i). It identifies the $\LA$-fixed points of the bottom left corner of the pushout diagram \autoref{pushout}.

\begin{prop}\label{heart} Let $\LA=\{(h, \al(h))\}$ and assume that the action of $H$ on ${\bf n}\setminus \{0\}$ defined by $\al$ is transitive. Then there is a 
natural homeomorphism 
\[
 \xymatrix{ 
  \Big(\coprod_{\epz\in\sE(\bf{p},\bf{n})} \big( \big{(}\prod_{1\leq j\leq n} \sC_G(\epz_j)\big{)} 
\times_{\SI(\epz)} X_{p}  \big) \Big)  ^\LA 
\ar[d]^-{\om} \\ 
\coprod_{\epz \in \sE(\mb{p},\mb{n})^\LA} \Big(\coprod_{\ga\colon H\rtarr \SI(\epz)} \sC_G(\epz_1)^{\LA_{\ga}^1}\times X_p^{\LA_{\ga}} \Big) /\SI(\epz).}
\]
In the target, the second wedge runs over all admissible functions 
$$\ga=(\ga_1,\dots,\ga_n)\colon H\rtarr \SI(\epz);$$
the groups $\LA_{\ga}$ and $\LA_{\ga}^1$ are specified by
 $$\LA_{\ga}=\{(h, (\ga\cdot \be)(h)) | h\in H\} \subset  G\times \SI_p $$
 and 
$$\LA_{\ga}^1 =\{(k, \ga_1(k)) | k\in K\}\subset G\times \SI_{\epz_1},$$
 where $K\subset H$ is the isotropy group of $1$ under the action of $H$ on $\bf n$ given by $\al$. 
 \end{prop}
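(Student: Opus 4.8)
The plan is to compute the $\LA$-fixed points of $\coprod_{\epz}(\prod_j \sC_G(j_\epz))\times_{\SI(\epz)}X_p$ componentwise, using that $\LA$ projects isomorphically onto $H$ and acts by permuting wedge summands over $\sE(\bf p,\bf n)$ via $\epz\mapsto \al(h)\circ\epz\circ\be(h)^{-1}$ together with the twisted diagonal action inside each summand described in Remark~\myref{tau}, formula \eqref{sigmaact}. First I would observe that a summand indexed by $\epz$ can contribute to the $\LA$-fixed points only if $\epz$ lies in the orbit of a $\LA$-fixed $OE$-function; choosing $\epz\in \sE(\bf p,\bf n)^\LA$ as orbit representative reduces the problem to analyzing, for a fixed such $\epz$, the fixed points of a single summand under the residual stabilizer, which (after the usual bookkeeping with $\SI(\epz)$) is exactly the group of pairs $(h,\ga(h)\be(h))$ where $\ga\colon H\to\SI(\epz)$ ranges over admissible functions: the admissibility condition \eqref{sigma2} is precisely the statement, proved in the preceding lemma, that $\ga\cdot\be$ is a homomorphism, i.e.\ that $\LA_\ga=\{(h,(\ga\cdot\be)(h))\}$ is a genuine subgroup of $G\times\SI_p$. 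This is where the hypothesis that $H$ acts transitively on $\bf n\setminus\{0\}$ enters: it forces all $j_\epz$ to coincide (so the product $\prod_j\sC_G(j_\epz)$ is a power of a single $\sC_G(q)$, $q=1_\epz$) and makes the $\SI(\epz)$-quotient manageable.

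Next I would identify $((\prod_{1\le j\le n}\sC_G(j_\epz))\times_{\SI(\epz)}X_p)^{\LA_\ga}$ with $\sC_G(1_\epz)^{\LA_\ga^1}\times X_p^{\LA_\ga}$. This is the core homeomorphism and it is a direct analogue of \myref{lem:lafixedpoints}: projection to the coordinate indexed by a chosen element in each $H$-orbit of $\bf n$ (here, since the action is transitive, the single orbit, with stabilizer $K$ of $1$) gives a map $(\prod_j\sC_G(q))^{\LA_\ga}\to \sC_G(q)^{\LA_\ga^1}$ with $\LA_\ga^1=\{(k,\ga_1(k))\mid k\in K\}$, and one constructs the inverse by choosing coset representatives $h_j\in H$ with $\al(h_j)(1)=j$ and spreading a $\LA_\ga^1$-fixed point $x$ to the tuple with $j$th entry $h_j x\,\ga_1(h_j^{-1})$, exactly as in the cited lemma. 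The twist by $\si_1(\cdot)$ appearing in \myref{obsGops}/\eqref{sigma} is what produces the $\ga_1$ factor; the verification that the resulting tuple is $\LA_\ga$-fixed and that the two maps are mutually inverse is the same computation as in \myref{lem:lafixedpoints} and in \myref{lemmaspecial}, with $\ga_1$ in place of $\si_1$. The $X_p^{\LA_\ga}$ factor is untouched, since $\LA_\ga$ acts on $X_p$ through the single permutation $(\ga\cdot\be)(h)$, which is literally how the action on the second variable was defined.

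Finally I would assemble the pieces: summing over orbit representatives $\epz\in\sE(\bf p,\bf n)^\LA$ and over admissible $\ga\colon H\to\SI(\epz)$, with the residual $\SI(\epz)$-action (which permutes the admissible $\ga$'s by conjugation and acts compatibly on $\sC_G(1_\epz)^{\LA_\ga^1}\times X_p^{\LA_\ga}$) giving the quotient $/\SI(\epz)$ in the statement, yields the claimed homeomorphism $\om$. Naturality in $X$ is clear since every construction (projection, spreading via coset representatives, passage to quotients) is natural. I expect the main obstacle to be purely combinatorial rather than homotopy-theoretic: keeping the $\SI(\epz)$-quotient, the twisted $\al$-action on wedge summands, and the $\si$-permutations from \myref{obsGops} in sync long enough to see that the stabilizer of a summand is exactly $\{(h,\ga(h)\be(h))\}$ for admissible $\ga$, and that distinct admissible $\ga$'s modulo $\SI(\epz)$ give distinct pieces — in other words, correctly matching the orbit–stabilizer bookkeeping on the left with the indexing set $\{(\epz,\ga)\}/\SI(\epz)$ on the right. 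Once that indexing is nailed down, the fixed-point identification reduces to \myref{lem:lafixedpoints} verbatim.
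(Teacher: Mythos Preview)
Your proposal is correct and follows essentially the same route as the paper: restrict to $\LA$-fixed $\epz$, extract an admissible $\ga$ from each fixed orbit, and identify the resulting piece with $\sC_G(1_\epz)^{\LA_\ga^1}\times X_p^{\LA_\ga}$ via projection to the first coordinate and spreading by coset representatives, exactly as in \myref{lem:lafixedpoints}. One small caution: the expression $\big((\prod_j\sC_G(j_\epz))\times_{\SI(\epz)}X_p\big)^{\LA_\ga}$ does not literally make sense, since $\LA_\ga\subset G\times\SI_p$ does not act on that quotient; rather, $\ga$ is not chosen in advance but is \emph{uniquely determined} by any lift of a $\LA$-fixed orbit via the $\SI$-freeness of $\sC_G$ (this is where that hypothesis enters), and it is this uniqueness that both makes the forward map $\om$ well-defined and forces the resulting $\ga$ to satisfy the admissibility condition \eqref{sigma2}.
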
 
 \begin{proof} 
 Since $\al(k)(1) = 1$ for $k\in K$, $\ga_1$ is a homomorphism $K\rtarr \SI_{\epz_1}$ by specialization of \autoref{sigma2}.  In the target, we pass to orbits 
 from the $\SI(\epz)$-action defined on the term in parentheses by
   \[ \rh(\ga;c;x) = (\rh\ast \ga;c\rh_1^{-1}; \rh_{*}x), \] 
 Here $\rh = (\rh_1,\dots, \rh_n)$ is in $\SI(\epz)$,  $\ga$ is admissible, 
 $c\in \sC_G(\epz_1)^{\LA_\ga^1}$, and $x\in X_p^{\LA_{\ga}}$.
The $j$th coordinate of $\rh\ast \ga$ is defined by 
 \begin{equation}\label{star}
  (\rh \ast \ga)_j(h) = \rh_j\ga_j(h)\rh^{-1}_{\al(h)^{-1}(j)}.
  \end{equation}
 A quick check of definitions shows that
\[ (\rho \ast \ga) \cdot \be = \rho (\ga \cdot \be) \rho ^{-1},\]
which also implies that $\rh\ast\ga$ is admissible since $\ga$ is admissible. 
 Similarly, $c\rh^{-1}_1$ is fixed by $\LA_{\rh\ast \ga}^1$
since $c$ is fixed by $\LA_{\ga}^1$ and $\rh_{*}(x)$ is fixed by $\LA_{\rh\ast\ga}$ since $x$ is fixed by $\LA_{\ga}$.   Thus the action makes sense.
Moreover, as we shall need later, this action is free.  If  $\rh(\ga;c;x) =  (\ga;c;x)$, then $c\rh_1^{-1} = c$ and thus $\rh_1 = 1$ since $\SI_{\epz_1}$ 
acts freely on $\sC_G(\epz_1)$.   Also, $\rh\ast \ga =\ga$ and thus $\rh_j^{-1}\ga_j(h)\rh_{\al(h)^{-1}(j)} = \ga_j(h)$ for all $h$. Taking $j=1$, this 
implies that $\rh_{\al(h)^{-1}(1)} = 1$ for all $h$. Since we are assuming the action of $H$ induced by $\al$ on $\mb{n}\setminus \{0\}$ is transitive, this implies that $\rh = 1\in \SI(\epz)$. 

We turn to the promised homeomorphism. By \autoref{sigmaact}, for a point $z$ represented by 
$(\epz;c_1,\dots,c_n;x)$, $c_j\in \sC_G(\epz_j)$ and $x\in X_p$,
$(h,\al(h))z$ is represented by
$$ \big(\al(h)\epz \be(h)^{-1}; hc_{\al(h)^{-1}(1)}, \dots, hc_{\al(h)^{-1}(n)};\be(h)_\ast(hx)\big)$$
where, as before, $\be=\ta\al$.  Assume that $z$ is fixed by $\LA$. Then we must have $\al(h)\epz \be(h)^{-1}  = \epz$, 
so that $\al(h)\epz = \epz \be(h)$ and thus $\epz\in \sE(\mathbf{p},\mathbf{n})^{\LA}$.  We must also have 
 $$(c_1, \dots, c_n; x)\sim \big(hc_{\al(h)^{-1}(1)}, \dots hc_{\al(h)^{-1}(n)}; \be(h)_{*}(hx)\big ),$$ 
so that for each $h\in H$ there exists $\ga(h)=\ga_1(h)\times \cdots \times \ga_n(h)\in \SI(\epz)$ such that
\begin{equation}\label{fixed} c_j\ga_j (h)= hc_{\al(h)^{-1}(j)} \ \text{   and   } \ x=\ga(h)_\ast \be(h)_\ast (hx) .
\end{equation}
Note that for any given $n$-tuple $(c_1, \dots, c_n)$, $\ga(h)$ is unique since the action of $\SI(\epz)$ 
on $\prod_{1\leq j\leq n} \sC_G(\epz_j)$ is free.  For $h,k\in H$ and $1\leq j\leq n$,
 \begin{align*} c_j\ga_j(hk)&=(hk)c_{\al(hk)^{-1}(j)}\\
 &=h(kc_{\al(k)^{-1}(\al(h)^{-1}(j))})\\
 &=hc_{\al(h)^{-1}(j)}\ga_{\al(h)^{-1}(j)}(k)\\
 &=c_j\ga_j(h)\ga_{\al(h)^{-1}(j)}(k).
 \end{align*}
 Since the action of $\SI_{\epz_j}$ on $\sC_G(\epz_j)$ is free, this implies that \autoref{sigma2} holds, so that $\ga$ is admissible.  Note that we have not yet used that the action given by $\al$ is transitive.
   
 Now the map $\om$  is defined by
 $$\om(\epz;c_1,\dots, c_n;x) = (\epz;\ga; c_1; x).$$ 
We see from \autoref{fixed} that $x$ is in $X_p^{\LA_{\ga}}$ and that $c_1$ is in $\sC_G(\epz_1)^{\LA_{\ga}^1}$, the latter using the fact that $K$ is the isotropy group of $1\in \mb{n}\setminus \{0\}$.
 We must check that $\om$  is well-defined.  Thus suppose that
 $$(\epz; c_1,\dots, c_n; x)\sim (\epz; d_1,\dots, d_n; y).$$ 
 
  Then there exists $\rh\in \SI(\epz)$ such that $c_j=d_j\rh_j$ and $y=\rh_\ast x$. Using \autoref{star} and  \autoref{fixed},     we see that 
  \begin{eqnarray*}
 hd_{\al(h)^{-1}(j)} &=& h c_{\al(h)^{-1}(j)}\rh^{-1}_{\al(h)^{-1}(j)}\\
 &=&c_j\ga_j(h)\rh^{-1}_{\al(h)^{-1}(j)}\\
 &=& d_j\rh_j\ga_j(h)\rh^{-1}_{\al(h)^{-1}(j)}\\
 &=& d_j(\rh\ast\ga(h))_j.
 \end{eqnarray*}
 Thus, comparing with \autoref{fixed} for $(d_1,\dots,d_n; y)$, and using the freeness of the action, we see that 
 $$ \om (\epz;d_1,\dots, d_n; y) = (\epz;\rh\ast \ga, d_1, y) = (\epz;\rh\ast \ga, c_1 \rh_1^{-1},\rh_{*} x) = \rh(\epz;\ga;c_1;x),$$ 
 so that the targets of our equivalent elements are equivalent. 
  
 Clearly $\om$ is continuous since it is obtained by passage to orbits from a (disconnected) cover by restriction to subspaces 
 of the projection that forgets the  coordinates $(c_2,\dots,c_n)$.
 
 To define $\om^{-1}$, first choose coset representatives for $H/K$ where $K$ is the isotropy group of $1$, that is, choose $h_j\in H$ such that $\al(h_j)(1)=j$ for $1\leq j\leq n$,
 taking $h_1 = e$.  Then define $\om^{-1}$ by
 $$\om^{-1} (\epz;\ga; c; x) = (\epz; c_1,\dots, c_n; x)$$ 
 where  $c_j=h_jc\ga_j(h_j)^{-1}$.  Note that $c_1 = c$ and that $\om^{-1}$ does not depend on the choice of coset representatives.  Here $\epz$ is $\LA$-fixed,  $\ga\colon H \rtarr \SI(\epz)$ is admissible, $c\in \sC_G(\epz_1)^{\LA_{\ga}^1}$ and $x\in X_p^{\LA_{\ga}}$. 
 We must show that $\om^{-1} (\epz;\ga; c; x)$ is fixed by $\LA$. First, note that $(h,\al(h))$ sends $\epz$ to $\al(h)\epz \be(h)^{-1}=\epz$, since $\epz \in \sE(\mb{p},\mb{n})^{\LA}$. Omitting $\epz$ from the notation for readability, 
  \begin{eqnarray*}
  (h,\al(h))(c_1,\dots, c_n,x)&=& (hc_{\al(h)^{-1}(1)},\ \dots,\  hc_{\al(h)^{-1}(n)}, \be(h)_\ast(hx)) \\
 &=& ( hc_{\al(h)^{-1}(1)},\  \dots,\  hc_{\al(h)^{-1}(n)}, \ga(h)^{-1}_\ast(x))\\
 &\sim & ( hc_{\al(h)^{-1}(1)}\ga_1(h)^{-1},\ \dots,\ hc_{\al(h)^{-1}(n)}\ga_n(h)^{-1},x).
 \end{eqnarray*}
 We claim that $hc_{\al(h)^{-1}(j)}\ga_j(h)^{-1}=c_j.$ The definition of $c_{\al(h)^{-1}(j)}$ gives us the identification
 \begin{eqnarray*}
 hc_{\al(h)^{-1}(j)}\ga_j(h)^{-1} &=& h\big( h_{\al(h)^{-1}(j)}c\ga_{\al(h)^{-1}(j)}(h_{\al(h)^{-1}(j)})^{-1}\big)\ga_j(h)^{-1}.
 \end{eqnarray*}
 Now note that $$\al(hh_{\al(h)^{-1}(j)})(1)=\al(h)(\al(h)^{-1}(j))=j,$$ 
 hence $hh_{\al(h)^{-1}(j)}$ is in the coset represented by $h_j$. Thus there exists a $k\in K$ such that $hh_{\al(h)^{-1}(j)}=h_jk$. Since $\ga$ satisfies equation \autoref{sigma2}, 
 \begin{eqnarray*}
 \ga_j(h)\ga_{\al(h)^{-1}(j)}(h_{\al(h)^{-1}(j)}) &=& \ga_j(hh_{\al(h)^{-1}}(j))\\
 &=&\ga_j(h_jk)\\
 &=&\ga_j(h_j)\ga_{\al(h_j)^{-1}(j)}(k)\\
 &=& \ga_j(h_j)\ga_1(k).
 \end{eqnarray*}
Thus $$hc_{\al(h^{-1})(j)}\ga_j(h)^{-1}=h_jkc\ga_1(k)^{-1}\ga_j(h_j)^{-1}=h_jc\ga_j(h_j)^{-1}=c_j$$ as claimed. Therefore the map $\om^{-1}$ really does land in the $\LA$-fixed points.

To show that $\om^{-1}$ is well-defined, note that if $(\epz;\ga;c;x)\sim (\epz,\rh\ast \ga;c\rh_1^{-1},\rh_{*}x)$ for some $\rh \in \SI(\epz)$, then  $\om^{-1}$ sends the latter to $(\epz;d_1,\dots,d_n;\rh_{*}x)$, where
\begin{align*}
d_j=&h_jc\rh_1^{-1} (\rh_j\ga_j(h_j)\rh_{\al(h_j)^{-1}(j)}^{-1})^{-1}\\
=&h_jc\rh_1^{-1} (\rh_j\ga_j(h_j)\rh_1^{-1})^{-1}\\
=& h_j c \rh_1^{-1}\rh_1\ga_j(h_j)^{-1}\rh_j^{-1}\\
=& c_j\rh_j^{-1}.
\end{align*}
Thus 
$$ (d_1,\dots,d_n,\rh_\ast x) = \rh \cdot(c_1,\dots, c_n, x),$$ 
so that $\om^{-1}$ is well-defined. This map is clearly continuous.

It is easy to see that the map forward and the map backward composed in either order are the identity, hence we have the claimed homeomorphism.
\end{proof}

The restriction to transitive action by $\al$ in the previous result serves only to simplify the combinatorics.  The following remark indicates the changes that are needed to deal with the general case.

\begin{rem} When the action of $H$ on ${\bf n}\setminus \{0\}$ is not transitive, we argue analogously to \autoref{lem:lafixedpoints} and \autoref{lemmaspecial}
to obtain an analogous homeomorphism.  We break the $H$-set ${\bf n}\setminus \{0\}$ given by $\al$ into a disjoint union of orbits $H/K_a$  of size
$n_a = |H/K_a|$,  where $\sum_a n_a= n$ and $K_a$ is the isotropy group of the initial element, denoted $1_a$,  in its orbit in  ${\bf n}\setminus \{0\}$.  That 
breaks $\bf n$ into the wedge of subsets ${\bf  n}_a$ and breaks $\SI(\epz)$ into a product of subgroups 
$\SI(\epz(a)) = \prod_{j\in H/K_a} \SI(\epz_j)$.   
Paying attention to the ordering, the product of the 
$\sC_G(\epz_j)$ in the source of $\om$ breaks into the product over $a$ of those $\sC_G(\epz_j)$ such that $j$ is in 
the $a$th orbit of $\mathbf{n}\setminus \{0\}$. To generalize the target 
of $\om$ accordingly, define subgroups 
$$\LA_{\ga}^{1_a} =\{(k, \ga_{1_a}(k)) | k\in K_a\}\subset G\times \SI_{\epz_{1_a}}$$ 
and replace $\sC_G(\epz_1)^{\LA_{\ga}^{1}}$ by the product over $a$ of the $\sC_G(\epz_{1_a})^{\LA_{\ga}^{1_a}}$.  With these changes of source and target and just a bit of extra bookkeeping, it is straightforward to state and prove the general analogue of \autoref{heart}. 
\end{rem}

In the single orbit case, observe that if $f\colon X_p\rtarr Y_p$ is a $\LA_{\ga}$-equivalence, it induces an equivalence on the target of
$\om$ before passage to quotients under the action of $\SI(\epz)$.  Since the $\SI(\epz)$ action is free, the equivalence passes to the quotients. Generalizing to the multi-orbit case, this concludes our proof that we have a $\LA$-equivalence in the lower left corner of the pushout diagram \autoref{pushout}. 

We next consider the upper left corner of \autoref{pushout}.  Precisely the same argument as that  just given, but with $X_p$ replaced by $L_pX$, identifies the 
$\LA$-fixed subspace of the upper left corner in terms of appropriate fixed point subspaces of  $L_pX$.  Therefore the same 
argument as that just given shows that the following result implies that $f$ induces an equivalence on the upper left corner, as needed to
complete the proof of \autoref{homotopMT}(i).

\begin{prop}\label{genlatching} If $f \colon X \rtarr Y$ is an \gen-equivalence of $\PI$-$G$-spaces, 
then \linebreak
$f\colon L_nX \rtarr  L_nY$ is an $\bF_n$-equivalence for each $n$.
\end{prop}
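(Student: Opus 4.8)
\textbf{Proof plan for \myref{genlatching}.}
The strategy is to reduce the statement about $L_nX$ to a statement about the spaces $X_m$ for $m<n$ to which the hypothesis applies. Recall that $L_nX = \bigcup_{i=1}^n \si_i(X_{n-1})$, and that for a subset $T\subset \mb{n}\setminus\{0\}$ the ordered injection $\si_T\colon \mb{n-|T|}\rtarr \mb{n}$ identifies $\si_T X_{n-|T|}$ with the intersection $\bigcap_{i\in T}\si_i X_{n-1}$ (see \myref{sigmaT}). The first step is therefore to express $L_nX$ as an iterated pushout along the maps $\si_T$: for each $k\geq 1$, the $k$th filtration $\bigcup_{|T|\leq\, \text{suitable}}$ of $L_nX$ is built from the previous one by gluing in the contributions of the injections of corank $k$, with intersections controlled by the colimit/intersection lemmas (\myref{glueingcof} and \myref{sequentialcolim}) that were already used in \S\ref{DXReedy} to prove Reedy cofibrancy of $\bD X$. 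The Reedy cofibrancy of $X$ is exactly what makes these gluings homotopically well-behaved; it guarantees the relevant inclusions are $(G\times\SI)$-cofibrations.

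The second step is the fixed-point analysis. Fix $\LA = \{(h,\al(h))\mid h\in H\}\in \bF_n$, so $\mb{n}$ is an $H$-set via $\al$. Passing to $\LA$-fixed points commutes with the colimits and pushouts above (since the inclusions are cofibrations, so the gluing lemma for $\LA$-fixed points applies, as in \cite[Theorem A.4.4]{BV}). On each pushout summand we must understand $\big(\si_T X_{n-|T|}\big)^{\LA}$. Here the key point is the same kind of combinatorics as in \myref{lem:lafixedpoints} and \myref{heart}: an injection $\si_T$ contributes to the $\LA$-fixed points only when its image $\mb{n}\setminus T$ is an $H$-subset of $\mb{n}$, i.e.\ $T$ is $H$-invariant; in that case the $H$-action restricts to $\mb{n-|T|}$, giving a homomorphism $\al_T\colon H\rtarr \SI_{n-|T|}$ and hence a subgroup $\LA_T = \{(h,\al_T(h))\}\in \bF_{n-|T|}$, and $\si_T$ induces a homeomorphism $(X_{n-|T|})^{\LA_T}\iso (\si_T X_{n-|T|})^{\LA}$. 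Thus, after taking $\LA$-fixed points, $L_nX$ becomes an iterated pushout/colimit of spaces of the form $(X_m)^{\LA'}$ for $m = n-|T| < n$ and $\LA'\in \bF_m$, assembled along cofibrations, and similarly for $L_nY$; the map $f$ is compatible with this whole diagram.

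The third and final step is to invoke the hypothesis. Since $f$ is an \gen-equivalence of $\PI$-$G$-spaces, by \myref{speciallevel} each $f_m\colon X_m\rtarr Y_m$ is a weak $\bF_m$-equivalence, so every map $(X_m)^{\LA'}\rtarr (Y_m)^{\LA'}$ occurring in our diagram is a weak equivalence. By the gluing lemma for pushouts along cofibrations and the fact that filtered colimits along cofibrations preserve weak equivalences, the induced map on the iterated colimit is a weak equivalence; that is, $(L_nX)^{\LA}\rtarr (L_nY)^{\LA}$ is a weak equivalence. Since $\LA\in\bF_n$ was arbitrary, $f\colon L_nX\rtarr L_nY$ is an $\bF_n$-equivalence. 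The main obstacle is bookkeeping in the second step: correctly organizing $L_nX$ as a cofibrant pushout diagram indexed by $H$-invariant subsets $T$ and checking that the intersection conditions needed for \myref{glueingcof} and \myref{sequentialcolim} hold after passing to $\LA$-fixed points; this is entirely parallel to the argument already carried out for $\bD X$ in \S\ref{DXReedy}, so it should go through with the analogous estimates.
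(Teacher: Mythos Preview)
Your approach is essentially the same as the paper's. The paper makes your second step precise via two preliminary lemmas: one identifying $(L_nX)^\LA$ as the union $\bigcup_U \si_U\big((X_{n-|U|})^{\LA_U}\big)$ indexed by the $H$-\emph{orbits} $U$ of $\mb{n}\setminus\{0\}$ (rather than all $H$-invariant subsets), and one computing the pairwise intersections of these pieces as $\si_{U\cup V}\big((X_{n-|U\cup V|})^{\LA_{U\cup V}}\big)$. The paper then builds $(L_nX)^\LA$ by adjoining one orbit at a time via explicit pushout squares, proving the needed cofibration property of the attaching map by induction on $n$ (using Reedy cofibrancy of $X$) and the weak equivalence by induction on the number of orbits adjoined via the gluing lemma. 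Your proposed filtration by corank of $\si_T$ and indexing by all $H$-invariant $T$ would also work and leads to the same conclusion, but the orbit indexing gives a smaller, linearly ordered diagram and avoids some redundancy.
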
 

To prove this, we need more combinatorics to identify $(L_nX)^{\LA}$, where $\LA \in \bF_n$.   We again take
$\LA = \{(h,\al(h)) \mid   \al\colon H\rtarr \SI_n \}\subset G\times \SI_n$ and again view $\mb{n}\setminus \{0\}$ as an $H$-set via $\al$.  For a 
subset $U$ of $\mb{n}\setminus \{0\}$ with $u$ elements, recall from \autoref{sigmaT} that $\si_U\colon \mb{n-u} \rtarr \mb{n}$ 
denotes the ordered injection that skips the elements in $U$.  It 
is a composite of degeneracies $\si_U=\si_{i_k} \cdots \si_{i_1}$ where $i_1 < \cdots < i_k$ are the elements of $U$.  Given $1\leq i\leq n$, let $\pi_i \colon \mb{n} \rtarr \mb{n-1}$ be the ordered projection that sends $i$ to $0$. Similarly, define $\pi_U \colon \mb{n} \rtarr \mb{n-u}$ to be the ordered projection that sends the elements of $U$ to $0$; explicitly, $\pi_U=\pi_{i_1} \cdots \pi_{i_k}$.  Note that $\pi_U \si_U$ is the identity, and 
\[\si_U\pi_U(i)=\begin{cases} i & \text{if }i\not\in U\\
0 & \text{if } i\in U.
\end{cases}\]

\begin{rem} The maps $\si_i$ correspond to the degeneracies in $\DE^{op}$ via the inclusion $F\colon \DE^{op} \rtarr \sF$, except there is a shift since we are indexing on the non-zero elements of $\mb{n}$. The maps $\pi_i$ are mostly invisible to $\DE$. The collection of maps $\{\si,\pi\}$ satisfies the following subset of the simplicial relations, as can be easily checked.
\begin{align*}
 \pi_i \pi_j & = \pi_{j-1}\pi_i  \quad\text{if } i<j\\
 \si_j\si_i&=\si_i\si_{j-1}  \quad\text{if } i< j\\
 \pi_i\si_j&=\si_{j-1}\pi_i \quad\text{if } i< j\\
 \pi_i \si_i &= \id.
\end{align*}
\end{rem}

Now assume that $U\subset \mb{n}\setminus \{0\}$ is a $H$-subset of $\mb{n}\setminus\{0\}$ and note that its complement is also
a $H$-subset of $\mb{n}\setminus \{0\}$. For $h\in H$, define
$$\al_U(h) = \pi_U\al(h) \si_U\colon  \mb{n-u}\rtarr  \mb{n-u}.$$
This is essentially the restriction of $\al(h)$ to $\mb{n}\setminus U$, but using the ordered inclusion $\si_U$ to identify 
that set with $\mb{n - u}$.  Note that $\al_U(e) = \id$ and that
\begin{equation}\label{alphaj}
\al(h)\si_U= \si_U\pi_U\al(h) \si_U = \si_U\al_U(h)
\end{equation}
since $\si_U\pi_U$ is the identity on $\mb{n}\setminus U$ and $0$ on $U$ and since $\al(h)\si_U$ is $0$ on $U$ and takes
$\mb{n}\setminus U$ to itself.  This implies that $\al_U$ is a homomorphism $H \rtarr \SI_{n-u}$ since
\[\al_U(h)\al_U(k)=\pi_U\al(h)\si_U\pi_U\al(k)\si_U=\pi_U\al(h)\al(k)\si_U=\pi_U\al(hk)\si_U=\al_U(hk),\]
where the second equality uses \autoref{alphaj} with $h$ replaced by $k$.  Thus we can define
$$\LA_U=\{(h,\al_U(h)) \mid h \in H\}.$$ 
We have the following identification of $(L_nX)^{\LA}$.  Henceforward
we abbreviate notation for the action of $\sF$ on $X$, writing $\si_U$ for ${\si_U}_{*}$ and so forth.

\begin{lem}\label{lambdalatching} 
\[ (L_nX)^\LA = \bigcup \si_U\bigg((X_{n-u})^{\LA_U}\bigg)\]
where the union runs over the $H$-orbits $U\subset \mb{n}\setminus \{0\}$. 
\end{lem}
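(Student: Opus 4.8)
The plan is to verify the two inclusions of \myref{lambdalatching} directly, using the combinatorics of the maps $\si_U$ and $\pi_U$ developed just above. Recall that $L_nX = \bigcup_{i=1}^n \si_i(X_{n-1})$, so a point $x\in X_n$ lies in $L_nX$ if and only if $x = \si_i(y)$ for some $i\in\{1,\dots,n\}$ and some $y\in X_{n-1}$; equivalently, $x$ is in the image of $\si_U$ for some nonempty $U\subset \mb n\setminus\{0\}$, and then $y = \pi_U(x)$ since $\pi_U\si_U = \id$. So the combinatorial content is to sort out, for $x\in(L_nX)^\LA$, exactly which $\si_U$-images can contain $x$, and to see that we may take $U$ to be $H$-stable.

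First I would prove the easy inclusion ``$\supseteq$''. Fix an $H$-orbit $U\subset\mb n\setminus\{0\}$ and take $x = \si_U(z)$ with $z\in(X_{n-u})^{\LA_U}$. For $h\in H$ we compute, using (\ref{alphaj}) and the $\LA_U$-invariance of $z$,
\[
(h,\al(h))\cdot x = \al(h)\bigl(h z\bigr)\text{-image}\ldots
\]
more precisely $\al(h)_*\,h\,\si_U(z) = \al(h)_*\si_U(hz) = \si_U(\al_U(h)_*\,hz) = \si_U(z) = x$, since $(hz$ transforms by $\al_U(h)^{-1}$ as $z$ is $\LA_U$-fixed$)$. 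Hence $x\in(L_nX)^\LA$. This shows the right-hand side is contained in $(L_nX)^\LA$.

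For the reverse inclusion ``$\subseteq$'', the key step is: given $x\in(L_nX)^\LA$, I claim there is an $H$-\emph{stable} nonempty subset $W\subset\mb n\setminus\{0\}$ with $x\in\si_W(X_{n-|W|})$. Indeed, let $D = \{\,i : x\in\si_i(X_{n-1})\,\}$ be the set of ``degenerate directions'' of $x$; this is nonempty since $x\in L_nX$. The $\LA$-invariance of $x$ forces $D$ to be $H$-stable: if $x = \si_i(y)$ then applying $(h,\al(h))$ and using $\al(h)\si_i = \si_{\al(h)(i)}\al'(h)$ for the appropriate restricted permutation $\al'$ shows $x\in\si_{\al(h)(i)}(X_{n-1})$, so $\al(h)(i)\in D$. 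Moreover, being in the image of $\si_i$ for all $i$ in a set $W$ is equivalent to being in the image of $\si_W$ (this uses the simplicial relations $\si_j\si_i = \si_i\si_{j-1}$ for $i<j$ and a pushout/intersection argument, exactly the content of \myref{sigmaT} that $\si_WX_{n-|W|}=\bigcap_{i\in W}\si_iX_{n-1}$). Taking $W = D$, we get $x = \si_W(z)$ for a unique $z = \pi_W(x)\in X_{n-|W|}$. Then $W$ decomposes into $H$-orbits $U_1,\dots,U_r$, and since $\si_W = \si_{U_r}\si_{U_{r-1}}\cdots$ up to reindexing, we can write $x = \si_{U}(z')$ for any single $H$-orbit $U\subseteq W$ with $z' = \pi_U(x)$ in $X_{n-|U|}$. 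Finally I must check $z' \in (X_{n-|U|})^{\LA_U}$: for $h\in H$, $\al_U(h)_*\,h\,z' = \al_U(h)_*\,h\,\pi_U(x) = \pi_U(\al(h)_*\,h\,x) = \pi_U(x) = z'$, where the middle equality is the naturality relation $\al_U(h)\pi_U = \pi_U\al(h)$ obtained by applying $\pi_U$ to (\ref{alphaj}) and using $\pi_U\si_U = \id$. This gives $x\in\si_U\bigl((X_{n-|U|})^{\LA_U}\bigr)$, completing the inclusion.

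The main obstacle I anticipate is the purely set-theoretic bookkeeping in the reverse inclusion: verifying that the ``degenerate directions'' set $D$ is $H$-stable and that simultaneous membership in the $\si_i$-images for $i\in D$ is the same as membership in the single $\si_D$-image. The latter is where one genuinely uses that $X$ is a $\PI$-$G$-space (functoriality in $\PI$, not just $\sF$) together with the relevant simplicial identities among the $\si_i$; it is the same kind of ``union of faces'' reasoning used implicitly in \myref{PIReedy} and \myref{sigmaT}, and I would either invoke that discussion or spell out the short induction on $|D|$. Everything else is a direct computation with the homomorphism identities for $\al_U$ established above.
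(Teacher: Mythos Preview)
Your proposal is correct and follows essentially the same approach as the paper's proof: both verify the two inclusions directly using the identity $\al(h)\si_U = \si_U\al_U(h)$ from (\ref{alphaj}) together with the intersection description $\si_U X_{n-u} = \bigcap_{i\in U}\si_i X_{n-1}$ from \myref{sigmaT}. The only cosmetic differences are that the paper picks a single degenerate direction $i$ and takes $U$ to be its $H$-orbit (rather than introducing the full set $D$ of degenerate directions and then choosing an orbit inside it), and that the paper verifies $\LA_U$-fixedness of $z=\pi_U x$ via injectivity of $\si_U$ rather than via the dual relation $\al_U(h)\pi_U = \pi_U\al(h)$ you use; both arguments are equivalent and equally short.
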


\begin{proof}  For $U \subset \mb{n}\setminus \{0\} $ and $z\in (X_{n-u})^{\LA_U}$,  $\si_Uz$ is a $\LA$-fixed point since
\[(h,\al(h))\cdot (\si_Uz) = \al(h)(h\si_Uz)=\al(h)\si_U(hz)=\si_U\al_U(h)(hz)=\si_Uz,\]
for $h\in H$; the next to last equality holds by \autoref{alphaj} and the last holds since $z$ is a $\LA_U$-fixed point. 
This gives one inclusion. 

For the other inclusion, we first note that the action of $\SI_n$ on $L_nX$ can be expressed as follows.   
Let $\rh\in \SI_n$ and  $x\in L_nX$, so that $x=\si_iy$ for some $i$, $1\leq i\leq n$, and  some $y\in X_{n-1}$.  
Then
\[\rh x = \rh \si_i y = \si_{\rh(i)} \tilde{\rh} y\]
where $\tilde{\rh}=\pi_{\rh(i)}\rh \si_i$ is a permutation in $\SI_{n-1}$, as is easily checked. Now suppose that 
$x$ is a $\LA$-fixed point and let $U$ be the $H$-orbit of $i$ in $\mb{n}\setminus \{0\}$. Then $x\in \si_j(X_{n-1})$ for all $j\in U$
since
\[x=(h,\al(h))\cdot x = \al(h)(hx)=\al(h)(h\si_iy)=\al(h)\si_i(hy)=\si_{\al(h)(i)}\widetilde{\al(h)}(hy)\]
for $h\in H$. It follows that $x = \si_Uz$, where $z=\pi_U x\in X_{n-u}$.  We claim that $z$ is a $\LA_U$-fixed
point.  Indeed 
\[\si_Uz=x=\al(h)(hx)=\al(h)\si_U(hz)=\si_U\al_U(h)(hz),\]
and the claim follows since $\si_U$ is injective.  This gives the other inclusion.
\end{proof}

Next we consider the intersection of the subspaces corresponding to two such subsets $U$.
 
\begin{lem}\label{intersection}
Let $U$ and $V$ be disjoint $H$-subsets of the action of $H$ on $\mb{n}\setminus \{0\}$ given by $\al$. Let $U$ have $u$ elements and
$V$ have $v$ elements.  Then
\begin{align*}
\si_U \bigg((X_{n-u})^{\LA_U}\bigg)  \cap \si_V \bigg((X_{n-v})^{\LA_V}\bigg) 
& =\si_{U \cup V} \bigg((X_{n-u-v})^{\LA_{U\cup V}}\bigg) \\
& =\si_U\si_{\tilde{V}} \bigg( (X_{n-u-v})^{(\LA_U)_{\widetilde{V}}}\bigg),
\end{align*}
where $\widetilde{V}$, also with $v$ elements, is the subset of $\mb{n}\setminus (U\cup \{0\})$ that $\si_U$ maps 
onto the subset $V$ of $\mb{n}\setminus\{0\}$.
\end{lem}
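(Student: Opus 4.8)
The statement is a purely combinatorial identification of an intersection of images of degeneracy-type inclusions in a $\PI$-$G$-space $X$, together with the fixed-point subspaces cut out by the twisted subgroups $\LA_U$, $\LA_V$. The plan is to reduce everything to the non-equivariant (``ambient'') identity $\si_U(X_{n-u}) \cap \si_V(X_{n-v}) = \si_{U\cup V}(X_{n-u-v})$ inside $L_nX$, which holds because $X$ is Reedy cofibrant (so the relevant latching inclusions are closed inclusions) and because the simplicial-type relations among the $\si$'s and $\pi$'s listed just before \myref{lambdalatching} let one compute: if $x = \si_U z = \si_V z'$ then applying $\pi_U$ and $\pi_V$ appropriately shows $x$ lies in the image of $\si_{U\cup V}$, and conversely $\si_{U\cup V} = \si_U \si_{\widetilde V}$ where $\widetilde V = \pi_U(V)$ is the preimage of $V$ under the order-preserving bijection $\mb{n}\setminus U \to \mb{n}$ induced by $\si_U$. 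This factorization $\si_{U\cup V} = \si_U\si_{\widetilde V}$ (with $\widetilde V$ having $v$ elements since $U\cap V = \emptyset$) is exactly the second equality claimed, modulo fixed points, and it follows from the relation $\si_j\si_i = \si_i\si_{j-1}$ for $i<j$ applied repeatedly to the generators.

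\textbf{Key steps.} First I would establish the set-level (underlying-space) intersection formula $\si_U(X_{n-u})\cap\si_V(X_{n-v}) = \si_{U\cup V}(X_{n-u-v})$ and the factorization $\si_{U\cup V} = \si_U\si_{\widetilde V}$, using only the $\si,\pi$ relations and the injectivity of each $\si_W$ (which holds since each coordinate inclusion of a nondegenerate basepoint is a closed inclusion). Second, I would intersect with the $\LA$-fixed points: by \myref{lambdalatching} applied to the orbit decomposition, an element of $\si_U((X_{n-u})^{\LA_U})\cap \si_V((X_{n-v})^{\LA_V})$ is of the form $\si_{U\cup V}z$ with $z \in X_{n-u-v}$, and one must check $z$ is fixed by $\LA_{U\cup V}$. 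This is where I would unwind the definitions: $\al_{U\cup V}(h) = \pi_{U\cup V}\al(h)\si_{U\cup V}$, and using (\ref{alphaj}) twice — once for $U$, once for $\widetilde V$ relative to the restricted action — gives $\al_{U\cup V}(h) = (\al_U)_{\widetilde V}(h)$ and the fixed-point condition for $z$ under $\LA_{U\cup V}$ is literally the same as the fixed-point condition under $(\LA_U)_{\widetilde V}$. Third, I would check the reverse inclusion, which is the easy direction: $\si_{U\cup V}((X_{n-u-v})^{\LA_{U\cup V}})$ is visibly contained in both $\si_U((X_{n-u})^{\LA_U})$ (apply the one-orbit-at-a-time argument of \myref{lambdalatching} with $z' = \si_{\widetilde V}z$, noting $z'$ is $\LA_U$-fixed because $z$ is $(\LA_U)_{\widetilde V}$-fixed) and symmetrically in $\si_V$.

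\textbf{Main obstacle.} The genuine bookkeeping difficulty is keeping straight the three different ``copies'' of the index set: $V$ sits inside $\mb{n}\setminus\{0\}$, $\widetilde V$ sits inside $\mb{n}\setminus(U\cup\{0\})$, and after applying $\si_U$ one identifies the latter with the former. One must verify that $\si_U$ carries the order-preserving inclusion $\si_{\widetilde V}\colon \mb{n-u-v}\to \mb{n-u}$ correctly onto $\si_V$ post-composed, i.e.\ $\si_U\si_{\widetilde V} = \si_{U\cup V}$ as maps $\mb{n-u-v}\to\mb{n}$, and that the twisted homomorphisms match, $(\al_U)_{\widetilde V} = \al_{U\cup V}$, so that $(\LA_U)_{\widetilde V} = \LA_{U\cup V}$. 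Both are straightforward once one commits to a fixed convention for how $\si_W$ and $\pi_W$ reorder indices, but they require care because $U$ and $V$ need not be ``interleaved'' in any simple way inside $\mb{n}$. I expect the whole argument to be elementary modulo this careful index-chasing, and I would present it by first proving the underlying-space statement, then the homomorphism-matching lemma $(\al_U)_{\widetilde V} = \al_{U\cup V}$, and finally assembling the two inclusions; the fixed-point verifications are then immediate from (\ref{alphaj}) and \myref{lambdalatching}.
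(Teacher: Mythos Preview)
Your proposal is correct and follows essentially the same route as the paper's proof: both establish the factorization $\si_{U\cup V}=\si_U\si_{\widetilde V}$ and the matching $(\LA_U)_{\widetilde V}=\LA_{U\cup V}$, then handle the two inclusions by writing $x=\si_{U\cup V}z$ with $z=\pi_{U\cup V}x$ and invoking the argument of \myref{lambdalatching} for the fixed-point verifications. Two small remarks: Reedy cofibrancy is not needed for the underlying-space intersection $\si_U(X_{n-u})\cap\si_V(X_{n-v})=\si_{U\cup V}(X_{n-u-v})$, since each $\si_T$ is split injective via $\pi_T\si_T=\id$ and the claim follows from the $\si,\pi$ relations alone; and when you invoke \myref{lambdalatching}, note (as the paper does) that its argument applies to arbitrary $H$-subsets, not only to orbits, which is what you need here since $U$, $V$, and $U\cup V$ are $H$-subsets.
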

\begin{proof}
The notation obscures the fact that $\tilde{V}$ depends on $U$, but we always use it directly following the $U$ to which it pertains. 
Note that the last equality follows from the facts that 
$\si_{U\cup V}=\si_U\si_{\tilde{V}}$ and $\LA_{U\cup V}=(\LA_U)_{\tilde{V}}$.
Suppose that $x$ is in the intersection. Then $x=\si_iy_i$ for all $i\in U$ and also for all $i\in V$, hence 
 $x=\si_{U\cup V} z$, where $z=\pi_{U\cup V} x \in X_{n-|U\cup V|}$.  By the same argument as in the
 proof of the previous lemma, $z$ is a $\LA_{U\cup V}$-fixed point.

For the opposite inclusion, let $x=\si_{U\cup V}z$, where $z\in (X_{n-|U\cup V|})^{\LA_{U\cup V}}$.  Then $x = \si_U y_U$
where $y_U=\si_{\tilde{V}}z$ is a $\LA_U$-fixed point by the same argument as in the previous proof.  Indeed, its first part
works for all $H$-subsets, not just orbits, to give that $x$ is a $\LA$-fixed point, and then its second part gives that
$y_U$ is a $\LA_U$-fixed point.  The symmetric argument gives that $x = \si_Vy_V$ where $y_V$ is a $\LA_V$-fixed point.
\end{proof}

Finally, we use these lemmas to prove \autoref{genlatching}.
 
\begin{proof}[Proof of \autoref{genlatching}] 
We first observe that an argument similar to the proof of \autoref{lambdalatching} shows that for all orbits (in fact, all $H$-subsets) $U$ of $\mb{n}$,
\[\si_U\big((X_{n-u})^{\LA_U}\big)=\big(\si_U(X_{n-u})\big)^\LA=\si_U(X_{n-u})\cap X_n^\LA.\]
Since $\si_U$ is a closed inclusion, this shows that $\si_U\big((X_{n-u})^{\LA_U}\big)$ is closed in $X_n^\LA$. 
Let $U_1, \dots, U_m$ be the orbits of the $H$-set $\mb{n}\setminus \{0\}$, with corresponding cardinalities $u_1,\dots,u_m$. For $1\leq k\leq m$, we have
\begin{eqnarray*}
 \si_{U_k}(X_{n-u_k}^{\LA_{U_k}}) \cap \bigg( \bigcup_{i=1}^{k-1} \si_{U_i}(X_{n-u_i}^{\LA_{U_i}}) \bigg)
 &=&  \bigcup_{i=1}^{k-1}  \si_{U_k}(X_{n-u_k}^{\LA_{U_k}}) \cap \si_{U_i}(X_{n-u_i}^{\LA_{U_i}})\\
&=& \bigcup_{i=1}^{k-1}  \si_{U_k}\si_{\tilde{U_i}} (X_{n-u_k-u_i}^{(\LA_{U_k})_{\tilde{U_i}}})\\
&=&  \si_{U_k} \big( \bigcup_{i=1}^{k-1} \si_{\tilde{U_i}} (X_{n-u_k-u_i}^{(\LA_{U_k})_{\tilde{U_i}}}) \big)
\end{eqnarray*}
where the next to last equality holds by \autoref{intersection} and the others are standard set manipulations.  We therefore have inclusions
which give the following pushout diagram.
\begin{equation}\label{pushouttoo}
\xymatrix{
\si_{U_k}\big( \bigcup_{i=1}^{k-1} \si_{\tilde{U_i}}(X_{n-u_k-u_i}^{(\LA_{U_k})_{\tilde{U_i}}})\big) \ar[d] \ar[r] & \bigcup_{i=1}^{k-1} \si_{U_i}(X_{n-u_i}^{\LA_{U_i}}) \ar[d]\\
 \si_{U_k}(X_{n-u_k}^{\LA_{U_k}}) \ar[r]& \bigcup_{i=1}^{k} \si_{U_i}(X_{n-u_i}^{\LA_{U_i}}) }
 \end{equation}
This diagram is clearly a pushout of sets. It is a pushout of spaces since the lower horizontal and the right vertical
arrows are closed inclusions by our first observation, so that their target has the topology of the union.  By \autoref{lambdalatching}, 
the lower right corner is $(L_nX)^{\LA}$ when $k=m$. 

We claim that the left vertical arrow and therefore the right vertical arrow is a cofibration for each $k\leq m$,
the assertion being vacuous if $k=1$.  We prove this by induction on $n$.  Thus suppose it holds for all values 
less than $n$. In particular, assume that it  holds for each $n-u_k$. Note that the orbits of 
$\mb{n}\setminus (U_k\cup \{0\})$ are $\tilde{U}_1,\dots ,\tilde{U}_{k-1}, \tilde{U}_{k+1}, \dots, \tilde{U}_m$. 
Since $\si_{U_k}$  (or any restriction of it to a subspace) is a homeomorphism onto its image, it suffices to prove that
the left vertical map is a cofibration before application of $\si_{U_k}$.  
With $n$ replaced by $n-u_k$ and with each 
$\tilde{U_i}$ referring to $U_k$, the induction hypothesis applied to right vertical arrows gives that 
$$\bigcup_{i=1}^{k-1}  \si_{\tilde{U_i}}(X_{n-u_k-u_i}^{(\LA_{U_k})_{\tilde{U_i}}}) 
\rtarr \bigcup_{i=1,\dots,k-1,k+1}  \si_{\tilde{U_i}}(X_{n-u_k-u_i}^{(\LA_{U_k})_{\tilde{U_i}}}) $$
and each map
$$\bigcup_{i=1,\dots,k-1,k+1,\dots,k+j-1}  \si_{\tilde{U_i}}(X_{n-u_k-u_i}^{(\LA_{U_k})_{\tilde{U_i}}})  
\rtarr \bigcup_{i=1,\dots,k-1,k+1,\dots,k+j}  \si_{\tilde{U_i}}(X_{n-u_k-u_i}^{(\LA_{U_k})_{\tilde{U_i}}}), $$
$2\leq j\leq m$, is a cofibration.  When $j=m$, the target of the last map is $(L_{n-u_k}X)^{\LA_{U_k}}$, and
$$(L_{n-u_k}X)^{\LA_{U_k}}\rtarr (X_{n-u_k})^{\LA_{U_k}}$$ 
is a cofibration by \autoref{PIReedytrue}.   Application of $\si_{U_k}$ to the composite of these cofibrations 
gives the left vertical arrow, completing the proof of our claim.

This allows us to prove by induction on $k$ that we have a weak equivalence
$$ \bigcup_{i=1}^{k} \si_{U_i}(X_{n-u_i}^{\LA_{U_i}}) \rtarr  \bigcup_{i=1}^{k} \si_{U_i}(Y_{n-u_i} ^{\LA_{U_i}}) $$ 
for any $k\leq m$.  Since the $\si_{U_i}$ are homeomorphisms onto their images, the base case $k=1$ holds by our 
assumption on $f$. The inductive step is an application of the gluing lemma to the pushout diagram \autoref{pushouttoo}. 
For $k=m$, this gives that 
$$(L_nX)^\LA \rtarr (L_nY)^\LA$$ is a weak equivalence, as required. 
\end{proof}

\section{Proofs of technical results about the Segal machine}\label{SEGALPf}

We prove Propositions \ref{consist} and \ref{cute} in \autoref{SecSegDet},
focusing on the combinatorial analysis of the simplicial, conceptual, and homotopical versions of the Segal machine.
We prove the group completion property, \autoref{Ggpcomp}, in \autoref{groupcomp}.  We prove the positive linearity property, 
\autoref{Omnibus}, in \autoref{cof}.  The main step is to verify that the relevant $\sW_G$-$G$-spaces satisfy the wedge axiom
formulated in \autoref{wedax1}, and we prove that in \autoref{WEDGE}.  We  prove \autoref{keystone} in \autoref{heredity}.

\subsection{Combinatorial analysis of $A^{\bullet}\otimes_{\sF} X$}\label{SecSegDet}

Let $X$ be an $\sF$-$G$-space. We first compare the functor $A^{\bullet}\otimes_{\sF} X$ with geometric realization.  
Recall that the objects of $\DE$ are the ordered finite sets $[n] =\{0,1,\dots,n\}$ and its 
morphisms are the nondecreasing functions.  As in \autoref{SEGALsimp}, let $F$ denote 
the simplicial circle $S^1_s = \Delta[1]/\partial \Delta[1]$ viewed as a functor $\DE^{op} \rtarr \sF$.
Take the topological circle to be $S^1 = I/\pa I$.

\begin{rem}\label{delf}  The functor $F$ sends the ordered set $[n]$ to the based set $\mathbf{n}$. For a map $\ph\colon [n]\rtarr [m]$ 
in $\DE$,  $F\ph\colon \mathbf{m}\rtarr \mathbf{n}$ sends $i$ to $j$ if $\ph(j-1) < i \leq \ph(j)$
and sends $i$ to $0$ if there is no such $j$.  Thus
\[ (F\ph)^{-1}(j) = \{ i \mid \ph(j-1) < i \leq \ph(j)\} \ \ \text{for} \ \ 1\leq j\leq n. \]
If $\de_i\colon [n-1] \rtarr [n]$ and $\si_i\colon [n+1] \rtarr [n]$,
$0\leq i\leq n$, are the standard face and degeneracy maps that skip or repeat $i$ in the target,
then $F\de_i = d_i\colon \mathbf{n}\rtarr \mathbf{n-1}$ is the ordered surjection that repeats $i$ for $i < n$ but
sends $n$ to $0$ if $i = n$,  and 
$F\si_i = s_i\colon \mathbf{n}\rtarr \mathbf{n+1}$ is the ordered injection\footnote{In \autoref{operadicmachine} and \autoref{SegOpPf}  it was 
denoted $\si_{i+1}$ as a map in the category $\inj$ of finite sets and injections.}  that skips $i+1$.  Note in 
particular that $F\de_1 = \varphi_2\colon \mathbf{2} \rtarr \mathbf{1}$, which sends $1$ and $2$ to $1$. 
In $\sF$, we also have ordered projections $\pi_i\colon \mathbf{n}\rtarr \mathbf{n-1}$, used in \autoref{HARD}, that are mostly invisible to $\DE$. 
The map $\pi_i$ sends $i$ to $0$ and it sends $j$ to $j$ if $j<i$ and to $j-1$ if $j>i$. 
\end{rem}

To prove \autoref{cute}, we must compare 
$$ |X| = X \otimes_{\DE} \DE \ \ \text{with} \ \  X(A) : = \bP(X)(A) = A^{\bullet} \otimes_{\sF} X$$
when $A=S^1$. To aid in the comparison, we rewrite $|X|$ as $\DE \otimes_{\DE^{op}} X$. Here $\DE$ on 
the left is the covariant functor $\DE\rtarr \sU$  that sends $[n]$ to the topological simplex 
$$\DE_n = \{(t_1,\dots, t_n) \mid 0 \leq t_1 \leq \cdots \leq t_n \leq 1\}.$$
Nowadays it is more usual to use tuples $(s_0,s_1, \dots, s_n)$ such that $0\leq s_i\leq 1$
and $\sum_i s_i = 1$, but the formulae $s_i = t_{i+1}-t_i$ and $t_i = s_0 + \cdots + s_{i-1}$ 
translate between the two descriptions. For $0\leq i\leq n$, the face map 
$\de_i\colon \DE_{n-1}\rtarr \DE_{n}$ and the degeneracy map $\si_i\colon \DE_{n+1}\rtarr \DE_n$ 
are given by
\[\de_i(t_1,\dots, t_{n-1}) = \left\{ \begin{array}{lll}
(0,t_1,\dots, t_{n-1}) & \mbox{if $i=0$} \\
(t_1, \dots t_{i-1}, t_i, t_i, t_{i+1},\dots t_{n-1}) & \mbox{if $0 < i < n$} \\
(t_1,\dots, t_{n-1},1) & \mbox{if $i = n$}
\end{array} \right. \]
\[ \si_i (t_1,\dots, t_{n+1}) = (t_1, \dots, t_{i}, t_{i+2}, \dots, t_{n+1}). \]
Map  $\DE_n$ to $(S^1)^n$  by sending  $(t_1,\dots,t_n)\in \DE_n$ 
to $(t_1,\dots,t_n)\in (S^1)^n$. Looking at the definition of the functor $F$, we see that this defines
a map $\xi\colon \DE\rtarr (S^1)^{\bullet}$ of cosimplicial spaces,\footnote{Warning: we are thinking of 
both source and target as cosimplicial {\em unbased} spaces.}  where $(S^1)^{\bullet}$ is a cosimplicial 
space by pullback along $F$. Therefore $\xi$ induces a natural map 
\[ \xi_\ast \colon |X| = \DE \otimes_{\DE^{op}} X\rtarr (S^1)^{\bullet}\otimes_{\sF} X = X(S^1). \]

Recall that every point of $|X|$ is represented by a unique point $(u,x)$ such that $u\in \DE_p$ 
is an interior point and $x\in X_p$ is a nondegenerate point \cite[Lemma 14.2]{Mayss}. Said another way,
$|X|$ is filtered with strata 
\[ F_p|X| \setminus F_{p-1}|X| = (\DE_p \setminus \pa \DE_p) \times (X_p \setminus L_pX), \]
where $L_pX$, the $p$th latching space,  is the union of the subspaces $s_i(X_{p-1})$ (see \autoref{latch}).  
The construction of
$F_p|X|$ from $F_{p-1}|X|$ is summarized by the concatenated pushout diagrams 
\begin{equation}\label{grpushout}  \xymatrix{
\pa \DE_p \times L_pX \ar[r] \ar[d] &  \DE_p \times L_p X \ar[d] &\\
\pa \DE_p \times X_p\ar[r] & \DE_p \times L_pX \cup \pa \DE_p \times X_p \ar[d] 
\ar[r] & F_{p-1} |X| \ar[d] \\
& \DE_p\times X_p \ar[r] & F_p|X|\\} \end{equation}

We shall describe $X(A)$ similarly for all $A\in G\sW$, and we shall specialize to $A=S^1$ to 
see that $\xi_{*}$ is a natural homeomorphism, using results about the structure of $\sF$ 
recorded in \autoref{BDXStruc}. 

\begin{rem}\label{matchup2}  The functor $F$ is a map of generalized Reedy categories in the sense of \cite{BergMoer}.
Recall that the latching $G$-space $L_pX\subset X_p$ of an $\sF$-$G$-space $X$ is defined to be the
latching space of its underlying $\PI$-$G$-space, as defined in \autoref{PIReedy}.  The $\sF$-$G$-space $X$ also has a latching space 
when regarded as a simplicial $G$-space via $F$.  Direct comparison of definitions shows that these two
latching spaces are the same.
\end{rem}

By \autoref{OK}, the $G$-space $X(A)$ is the quotient of $\coprod_{n\geq 0} A^n\times X_n$ obtained by identifying
$(\ph^*(a),x)$ with $(a,\ph_{*}(x))$ for all $\ph\colon \mathbf{m}\rtarr \mathbf{n}$ in $\sF$, $a\in A^n$, and 
$x\in X_m$.  Here $\ph^*(a_1,\dots,a_n) = (b_1,\dots,b_m)$ where $b_i = a_{\ph(i)}$, with 
$b_i = \ast$ if $\ph(i) = 0$, and $\ph_{*}(x)$ is given by the covariant functoriality of $X$. 
The image of $\coprod_{n\leq p}A^n\times X_n$ is topologized as a quotient and denoted $F_pX(A)$, 
and $X(A)$ is given the topology of the union of the $F_pX(A)$. 

\begin{notn} For an unbased $G$-space $U$, the {configuration space} $\mb{Conf}(U,p)$ is the $G$-subspace 
of $X^p$ of points $(u_1,\dots,u_p)$ such that $u_i\neq u_j$ for $i\neq j$.  For a based $G$-space
$A$, the {\em based fat diagonal}\, $\de A^p\subset A^p$ is the $G$-subspace of points 
$(a_1,\dots,a_p)$ such that either some $a_i$ is the basepoint or $a_i=a_j$ for some $i\neq j$.  
Observe that
$$ A^p \setminus \de A^p = \mb{Conf}(A\setminus \{\ast\},p).$$
\end{notn}

\begin{lem}\label{strata} $F_0X(A) = \ast\times X_0$.  For $p\geq 1$, the stratum $F_pX(A)\setminus F_{p-1}X(A)$ is 
$$\mb{Conf}(A\setminus \{\ast\},p) \times_{\SI_p} (X_p\setminus L_pX).$$ 
The construction of $F_pX(A)$ from $F_{p-1} X(A)$ is summarized by the concatenated 
pushout diagrams
\[  \xymatrix{
\de A^p \times_{\SI_p} L_pX \ar[r] \ar[d] &  A^p \times_{\SI_p} L_pX \ar[d]& \\
 \de A^p \times_{\SI_p} X_p \ar[r] & A^p \times_{\SI_p} L_pX \cup \de A^p \times_{\SI_p} X_p \ar[d] 
\ar[r] &  F_{p-1} X(A) \ar[d] \\
& A^p\times_{\SI_p} X_p \ar[r] & F_pX(A)\\} \]
\end{lem}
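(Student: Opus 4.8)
The strategy is to mimic the well-understood combinatorial filtration of geometric realization, which is recalled in (\ref{grpushout}), but with the simplex category $\DE$ replaced by $\sF$ throughout. The filtration of $X(A) = A^{\bullet}\otimes_{\sF} X$ is forced on us by the coequalizer presentation from \myref{OK}: $X(A)$ is the quotient of $\coprod_n A^n \times X_n$ by the relations $(\ph^*a, x)\sim (a,\ph_*x)$, and $F_pX(A)$ is the image of $\coprod_{n\leq p} A^n\times X_n$. First I would identify which representatives $(a,x)\in A^n\times X_n$ are ``nondegenerate'' in the sense that they cannot be pushed to a lower filtration. Using the factorization of maps in $\sF$ from \myref{factor} (projection, then essential, then injection, unique up to permutation) together with \myref{order}, \myref{order2}, and the discussion of how $\sim$ applied to proper projections and permutations cuts down the indexing set (exactly as in \S\ref{BDXStruc}), one sees that a point of $X(A)$ has a representative $(a,x)$ with $x\in X_p\setminus L_pX$ and $a\in A^p$, and that the remaining relations come from the ordered injections $s_i$ (equivalently the maps $\si_i$ in $\inj$) and from the $\SI_p$-action. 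This gives the stratum description: after removing the $\SI_p$-orbits of points with a repeated or basepoint coordinate — i.e. the fat diagonal $\de A^p$ — what is left is $\mb{Conf}(A\setminus\{\ast\},p)\times_{\SI_p}(X_p\setminus L_pX)$, which is exactly the claimed formula.

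Second, with the nondegeneracy analysis in hand, I would assemble the two concatenated pushout squares. The key combinatorial input is that the relations identifying $F_pX(A)$-classes with $F_{p-1}X(A)$-classes are precisely: (a) the relation coming from $x\in L_pX$ (i.e. $x = s_{i*}y$ for some $y\in X_{p-1}$), which lets us absorb one coordinate of $a$ using the contravariant action of $s_i$ on $A^\bullet$, and (b) the relation coming from $a\in\de A^p$, i.e. some coordinate of $a$ is the basepoint (absorbed by a projection $\pi_i$, landing in filtration $p-1$) or two coordinates agree (absorbed using the essential map $\ph_2\colon \mathbf 2\rtarr\mathbf 1$ after a permutation, again dropping filtration). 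Taking the $\SI_p$-orbits throughout — $\SI_p\subset\PI$ acts both on $A^p$ by permuting coordinates and on $X_p$ — yields exactly the stated diagram: the pushout of $A^p\times_{\SI_p}L_pX \leftarrow \de A^p\times_{\SI_p}L_pX \rightarrow \de A^p\times_{\SI_p}X_p$ glued to $F_{p-1}X(A)$ along the map $A^p\times_{\SI_p}L_pX\cup \de A^p\times_{\SI_p}X_p\rightarrow F_{p-1}X(A)$, with total pushout $A^p\times_{\SI_p}X_p\rightarrow F_pX(A)$. One must check that the relevant inclusions are closed so that the pushouts of sets are pushouts of $G$-spaces; this follows from the standing nondegeneracy assumptions on basepoints and Reedy cofibrancy, exactly as in the treatment of (\ref{grpushout}) and the analogous pushout (\ref{pushout}) for $\bD X$.

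I expect the main obstacle to be the bookkeeping in step one: carefully verifying that after using $\sim$ to discard ineffective morphisms of $\sF$, proper projections, and permutations, the only surviving identifications between distinct filtration levels are those generated by the $\si_i$ (for the $X$-side, via $L_pX$) and by the fat-diagonal collapses (for the $A$-side). This is the direct analogue of the nondegenerate-representative lemma for $\bD X$ proved in \S\ref{BDXStruc}, and the same combinatorics of $OE$-functions and the subgroups $\SI(\epz)$ apply, but here the covariant variable is a general $A^\bullet$ rather than a Steiner-operadic functor, which actually simplifies matters since $A^\bullet$ is literally the represented functor $\sF(-,\cdot)$ evaluated appropriately. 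Once the stratum and the pushout presentation are in place, \myref{strata} follows, and specializing to $A = S^1$ and comparing with (\ref{grpushout}) via the cosimplicial map $\xi\colon \DE\rtarr (S^1)^{\bullet}$ — noting $\de(S^1)^p$ corresponds to $\pa\DE_p$ under the quotient $\DE_p\rtarr (S^1)^p$ and that $\mb{Conf}(S^1\setminus\{\ast\},p) = \mb{Conf}(I\setminus\pa I,p)$ has the open simplex $\DE_p\setminus\pa\DE_p$ as a $\SI_p$-equivariant deformation retract — will yield that $\xi_*$ is a natural $G$-homeomorphism, proving \myref{cute}. (The same filtration also feeds directly into the proof of \myref{consist}, matching the inductive simplicial machine $\bS_G^NX$ of \myref{notone} with $\bU_{G\sP}\bP X$ level by level.)
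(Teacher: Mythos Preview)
Your proposal is correct and follows essentially the same approach as the paper: use the factorization of morphisms in $\sF$ from \myref{factor} to reduce every point to a canonical representative, then read off the stratum and the pushout presentation. The paper's actual proof is more elementary than your plan suggests---it does not invoke the $OE$-function and $\SI(\epz)$ machinery of \S\ref{BDXStruc} at all, but simply observes directly that projections in $\sF$ eliminate basepoint coordinates of $a$, permutations together with the maps $\ph_i\colon \mathbf{i}\rtarr \mathbf{1}$ eliminate repeated coordinates, injections reduce $x$ to a nondegenerate point, and $\SI_p$-orbits are taken to avoid double counting; this already gives the result. (Minor slip: $A^{\bullet}$ is the contravariant variable, not the covariant one.)
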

\begin{proof} Using projections in $\sF$, every point of 
$\coprod_{n\geq 1}A^n\times X_n$ is equivalent to a point $(a,x)$ such that either $n=0$ or no 
coordinate of $a$ is the basepoint of $A$. Using permutations and multiplication maps 
$\varphi_i\colon \mathbf{i} \rtarr \mathbf{1} $ when $i$ coordinates of $a$ are equal, every point is 
equivalent to a point $(a,x)$ such that $a$ has no 
repeated coordinates.  We must take orbits under the action of $\SI_p$ as stated to avoid 
double counting of elements. Using injections, every point is equivalent to a point 
$(a,x)$ such that $x$ is nondegenerate.   Taking care of the order in which the cited 
operations are taken, using \autoref{factor}, the conclusion follows.
\end{proof}

It is now easy to see that $\xi\colon |X|\rtarr X(S^1)$ is a homeomorphism.

\begin{proof}[Proof of \autoref{cute}] As noted in \autoref{matchup2}, the latching subspaces $L_pX$ for $X$ as a $\DE^{op}$-$G$-space 
and as an $\sF$-$G$-space agree. We consider the strata on the filtrations for $|X|$ and $X(S^1)$ and find that $\xi$ defines homeomorphisms
\[(\DE_p\setminus \partial \DE_p) \times (X_p \setminus L_p X) \rtarr \mb{Conf}(S^1\setminus \{*\},p)\times _{\SI_p} (X_p \setminus L_pX). \]
To see this, identify $\mb{Conf}(S^1\setminus \{\ast\},p)$ with $\mb{Conf}(I\setminus \{0,1\},p)$.  Then  $\xi$ sends  a point $((t_1,\dots,t_p),x)$  
in the domain with $0<t_1<\cdots<t_p<1$  to the equivalence class of $((t_1,\dots,t_p),x)$ in the target. Note that  $((t_1,\dots,t_p),x)$  is the unique representative of its class such that the coordinates $t_i$ are in increasing order.\end{proof}

\begin{proof}[Proof of \autoref{consist}] Recall from \autoref{class} that we have the classifying $\sF$-$G$-space $\bB X$ 
with $p$th space $|X[p]|$ and that $(\bS_G^CX)_n = (\bB^nX)_1$.  We must prove that $(\bB^nX)_1$ is 
homeomorphic to $X(S^n)$, and  \autoref{cute} shows that $|X[p]|\iso X[p](S^1)$.  For any $A\in G\sW$,
we have an $\sF$-$G$-space $A\otimes X$ with $p$-th space $(X[p])(A)$. Thus $\bB X \iso S^1\otimes X$.
We claim that $S^n\otimes X$ is isomorphic to $\bB^n X$; evaluating at $p=1$, this gives the 
desired homeomorphism.  Since $S^n = S^1\sma S^{n-1}$, the claim is an immediate induction from
the following result, which is essentially Segal's \cite[Lemma 3.7]{Seg}.
\end{proof}

\begin{prop}\label{ravel} For $A,B\in G\sW$, $A\otimes (B\otimes X)$ and $(A\sma B)\otimes X$
are naturally isomorphic.
\end{prop}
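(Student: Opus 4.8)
The statement asserts a natural isomorphism $A\otimes(B\otimes X)\iso (A\sma B)\otimes X$ of $\sF$-$G$-spaces, where $C\otimes X$ denotes the $\sF$-$G$-space with $q$th space $(X[q])(C)=C^{\bullet}\otimes_{\sF}X[q]$ and $X[q](\mathbf p)=X(\mathbf p\sma\mathbf q)$. The plan is to work one level $\mathbf q$ at a time and reduce everything to a coequalizer manipulation, using the strict associativity and unitality of $\sma\colon\sF\times\sF\rtarr\sF$ recorded in \myref{opsonF}. First I would unwind the definitions: the $\mathbf q$th space of $(A\sma B)\otimes X$ is $(A\sma B)^{\bullet}\otimes_{\sF}X[q]$, while the $\mathbf q$th space of $A\otimes(B\otimes X)$ is $A^{\bullet}\otimes_{\sF}(B\otimes X)[q]$, and $(B\otimes X)[q]$ is the $\sF$-$G$-space sending $\mathbf p$ to $(X[q])[p'](B)$ evaluated appropriately — more precisely $(B\otimes X)[q](\mathbf p)=(B\otimes X)(\mathbf p\sma\mathbf q)=B^{\bullet}\otimes_{\sF}X[p\cdot q]$ where I am using that $X[p\sma q](\mathbf r)=X(\mathbf r\sma\mathbf p\sma\mathbf q)=X[q](\mathbf r\sma\mathbf p)=(X[q])[p](\mathbf r)$, which is exactly the place where strict associativity of $\sma$ is used.

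Next I would assemble the iterated tensor product. Both sides are, after expanding, quotients (coequalizers) of
\[ \bigvee_{\mathbf m,\mathbf p} A^{m}\sma B^{p}\sma X(\mathbf m\sma\mathbf p\sma\mathbf q) \]
by the two actions of $\sF$ (one on the $A$-variable via $\mathbf m$, one on the $B$-variable via $\mathbf p$), together with the identification of $A^m\sma B^p$ with $(A\sma B)^{mp}$ coming from the fact that $(A\sma B)^{\bullet}$ is the represented functor $\sW_G(-,A\sma B)$ and $\sW_G(\mathbf m\sma\mathbf p,A\sma B)\iso\sW_G(\mathbf m,A)\sma\sW_G(\mathbf p,B)$ — here I would cite \myref{AA} and the computation $A^{\bullet}\com\sma\iso A^{\bullet}\barwedge B^{\bullet}$ style identity, and the fact that the smash product functor $\sma\colon\sF\times\sF\rtarr\sF$ lets us reindex $\bigvee_{\mathbf m,\mathbf p}(\cdots)$ against $\bigvee_{\mathbf n}(\cdots)$ with $\mathbf n=\mathbf m\sma\mathbf p$. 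This is essentially the enriched Fubini theorem for tensor products of functors, the same principle invoked in \myref{halfsmash}; I would either cite \cite[Lemma 19.7]{Shul} or give the direct comparison of coequalizer diagrams. The upshot is a natural isomorphism of $G$-spaces at each level $\mathbf q$.

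Then I would check naturality in $\mathbf q$, i.e. that the level-$\mathbf q$ isomorphisms assemble to a map of $\sF$-$G$-spaces. This is a routine diagram chase: a morphism $\mathbf q\rtarr\mathbf q'$ in $\sF$ induces $X[q]\rtarr X[q']$ via smashing, and all the reindexing isomorphisms above are plainly compatible with smashing on the $\mathbf q$-factor because $\sma$ is functorial and lexicographic ordering is used consistently on both sides. I would also note naturality in $A$, $B$, and $X$, which again is immediate from the construction. Finally, the $G$-actions: every map in sight is built from $G$-maps (the action of $\sF$, which is $G$-trivial, and the diagonal $G$-actions on the $A^m$, $B^p$, $X(\cdots)$ factors), so the isomorphism is $G$-equivariant; I would remark this in one sentence, pointing back to \myref{OK} for why passing between the based ($\sma$) and unbased ($\times$) coequalizer presentations is harmless.

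\textbf{Main obstacle.} The only genuinely fiddly point is the bookkeeping of orderings under the iterated smash: one must verify that the canonical associativity isomorphism $(\mathbf m\sma\mathbf p)\sma\mathbf q\iso\mathbf m\sma(\mathbf p\sma\mathbf q)$ in $\sF$, which is the identity under lexicographic ordering, is compatible with the reindexing $A^m\sma B^p\iso(A\sma B)^{mp}$ on both sides — equivalently, that the two ways of producing the level-$\mathbf q$ isomorphism (grouping $A$ with $B$ first, versus grouping $B$ with $X[q]$ first) agree. Because \myref{opsonF} guarantees $\sma$ is strictly associative and unital on $\sF$, there is no coherence ambiguity and this comes down to a direct inspection; I expect it to be a short but careful calculation rather than a conceptual difficulty. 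An alternative, cleaner route that avoids even this is to observe that $C\otimes X=C^{\bullet}\otimes_{\sF}X[-]$ is computed by the two-sided bar construction up to the levelwise $G$-homotopy equivalence of \myref{GOODbar} and then invoke the categorical Fubini theorem as in \myref{halfsmash} directly; but since \myref{ravel} is an honest isomorphism, not just an equivalence, I would prefer the point-set argument above in keeping with the paper's stated philosophy of preserving point-set structure.
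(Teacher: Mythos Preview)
There is a genuine gap in your approach. The claimed identification $\sW_G(\mathbf m\sma\mathbf p,A\sma B)\iso\sW_G(\mathbf m,A)\sma\sW_G(\mathbf p,B)$, i.e.\ $(A\sma B)^{mp}\iso A^m\sma B^p$, is simply false. For instance with $m=2$, $p=1$ it would assert $(A\sma B)\times(A\sma B)\iso (A\times A)\sma B$. Consequently there is no isomorphism of $(\sF\times\sF)$-diagrams between $(A\sma B)^{\bullet}\com\sma$ and $A^{\bullet}\barwedge B^{\bullet}$, and the enriched Fubini theorem does not apply: the left-hand side $A\otimes(B\otimes X)$ at level $\mathbf q$ is a quotient of $\coprod_{i,j}A^i\times B^j\times X_{jiq}$, whereas the right-hand side is a quotient of $\coprod_k (A\sma B)^k\times X_{kq}$, and these coproducts are not related by any reindexing along $\sma\colon\sF\times\sF\rtarr\sF$ (note $\sma$ is neither essentially surjective in any useful sense nor does it admit the required splitting of $(A\sma B)^k$).

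The paper's proof is not a Fubini argument at all but proceeds by writing down explicit mutually inverse maps. The forward map sends $((a_1,\dots,a_i),(b_1,\dots,b_j),x)$ to $((a_r\sma b_s)_{r,s},x)$, landing in the $k=ji$ summand; this much is the obvious map $A^i\times B^j\rtarr(A\sma B)^{ji}$, which is \emph{not} a homeomorphism. The crucial point, which your proposal misses entirely, is the construction of the inverse: given $((c_1\sma d_1,\dots,c_k\sma d_k),x)$ with $x\in X_{kq}$, one sends it to $((c_1,\dots,c_k),(d_1,\dots,d_k),\DE_*(x))$ using the diagonal $\DE\colon\mathbf k\rtarr\mathbf k\sma\mathbf k$ in $\sF$ (applied as $\DE\sma\id\colon\mathbf k\sma\mathbf q\rtarr\mathbf k\sma\mathbf k\sma\mathbf q$). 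This diagonal is the non-formal ingredient that makes the argument work, and it is available precisely because $\sF$ has a diagonal (finite sets are cocommutative coalgebras under $\sma$); it has no analogue in a generic enriched-category setting and is invisible to Fubini-type reasoning. The two maps become inverse only after passing to the coequalizer, and the paper (following Segal) omits that verification, pointing instead to the stratum description of \myref{strata}.
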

\begin{proof} Recall that $X[p]$ has $j$th space $X_{pj}$, that is $X(\bp\sma\bj)$.
Thus $X[p](B)$ is a quotient of $\coprod_{j} B^j \times X(\bp\sma \bj)$. 
We can write it schematically as $B^{\bullet}\otimes _{\sF} X(\bp \sma \bullet)$.
Writing $\star$ for another schematic variable, we can write the $p$-th space of 
$A\otimes (B \otimes X)$ as
\[ A^\star \otimes_\sF(B^\bullet \otimes_\sF X(\bp \sma \star \sma \bullet)).\]
It is a quotient of $\coprod_{i,j} A^i \times B^j \times X_{pji}$. We define a map
\[A\otimes (B\otimes X) \rtarr (A\sma B)\otimes X\] 
by passage to coequalizers from the maps that send 
\[((a_1,\dots, a_i),(b_1,\dots,b_j),x) \ \ \ \text{to} \ \ \ ((a_r\sma b_s),x) \]
where the $a_r$'s are in $A$, the $b_s$'s are in $B$, and $x\in X_{pij}$. Here $(a_r\sma b_s)$
means the set of $a_r\sma b_s$ in lexicographic order. Indeed, since $\mb{i }\sma \mb{j}$ is ordered lexicographically and $X_{pij}$ is defined in terms of $\bi\sma \bj$, we must order the set $\{a_r\sma b_s\}$ to match. In the other direction, given

$a_t\sma b_t$ for $1\leq t\leq k$ and $x\in X_{pk}$ we map 
$$((a_1\sma b_1,\dots, a_k\sma b_k),x)\ \ \ \text{to}\ \ \ ((a_1,\dots, a_k),(b_1,\dots,b_k),\DE_{*}(x)),$$
where $\DE\colon \bp \sma \bk \rtarr \bp \sma \bk \sma \bk$ is $\id \sma \DE$. Following
Segal \cite[p. 304]{Seg}, ``we shall omit the verification that the two maps are well-defined and inverse to each other".
It can be seen in terms of the explicit description of the filtration strata in \autoref{strata}.  
\end{proof}

\subsection{The proof of the group completion property}\label{groupcomp}

We sketch the proof of \autoref{Ggpcomp} by reducing it to details in \cite[Section 15]{MayClass}.  We also give some relevant background and references.  We first reduce the proof to that of a nonequivariant generalization, \autoref{Classthm},  of the result for classifying spaces of monoids that is proven in \cite{MayClass}.  We then exlain how the details in \cite{MayClass} actually apply to prove \autoref{Classthm}, even though its statement does not seem to appear in the literature.
 
Let $X$ be a special $\sF$-$G$-space.  Then the Segal maps 
$$\de^H\colon X^H_n\rtarr (X_1^n)^H = (X_1^H)^n$$ 
are weak equivalences and $X^H$ is a nonequivariant special $\sF$-space. We emphasize that we
only need this classical condition: we do not require $X$ to be \gen-special.

It is convenient but not essential to modify the definition of a special $\sF$-$G$-space by
requiring the Segal maps $\de$ to be $G$-homotopy equivalences rather than just weak
$G$-equivalences, and then their fixed point maps $\de^H$ are homotopy equivalences. 
We can make this assumption without loss of generality since we can replace given special $\sF$-$G$-spaces 
by cofibrant approximations, for which the assumption holds.

We give $X_1$ a Hopf $G$-space structure by choosing a $G$-homotopy inverse to $\de$ when $n=2$
and using $\varphi_2$. Then $X_1$ and each $X_1^H$ are homotopy associative and commutative, as in
our standing conventions about Hopf $G$-spaces in \autoref{prelimspace}.  We could instead work with weak 
Hopf $G$-spaces, but doing so explicitly only obscures the exposition.

We must prove that the canonical map $\et\colon X_1\rtarr \OM |X|$ is a group completion in the sense 
of \autoref{Defngpcomp}. Passage to $H$-fixed point spaces commutes with realization, as it is a finite limit. It also commutes with taking loops since $G$ acts trivially on $S^1$.  Thus the equivariant case of \autoref{Ggpcomp} 
follows directly from the nonequivariant case.  We therefore take $G=e$ and ignore equivariance in 
the rest of this section.

If $M$ is a topological monoid, we use its product to define a simplicial space $B_{*} M$ with 
$B_nM = M^n$. Then $|B_{*}M|$ is just the classical classifying space $BM$. When $M$ is commutative, 
$B_{*} M$ is the simplicial space obtained by pullback of the evident special $\sF$-space with 
$n$th space $M^n$. When $X$ is a special $\sF$-space its first space 
$X_1$ plays the role of $M$. Since $X_0=*$, $X_1$ has a specified unit
element $e$. Spaces of the form $X_1$ for a special $\sF$-space $X$ give the Segal version of 
an $E_{\infty}$-space.

It makes sense to ask that a simplicial space $X$ be reduced and special, since we can
use iterated face maps to define Segal maps $X_n\rtarr X_1^n$. The Segal maps of $\sF$-spaces
pull back along the map $F\colon \DE^{op}\rtarr \sF$ of \autoref{SEGALsimp} to examples of these more general Segal maps. 
Then $X_1$ is a Hopf space  with product induced by a homotopy inverse to the second Segal map and the map $d_1\colon X_2\rtarr X_1$.  Since $\varphi_2 = Fd_1$, this product on reduced special simplicial spaces generalizes the product on the space $X_1$ of a special $\sF$-space $X$.  Spaces of the form $X_1$ for a reduced special  simplicial space $X$ give the Segal version of an $A_{\infty}$-space.  The group completion theorem for (reduced) special $\sF$-spaces is a special case of the following more general result.

\begin{thm}\label{Classthm} If $X$ is a reduced special simplicial space such that 
$X_1$ and $\OM |X|$ are homotopy commutative,  then  
$\et\colon X_1\rtarr \OM |X|$ is a group completion.
\end{thm}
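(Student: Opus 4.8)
The strategy is the classical one, following \cite[\S15]{MayClass}: build a homology fibration from $X$ and apply the group completion theorem for homology fibrations. The key is that for a reduced special simplicial space $X$, the simplicial space $B_\sbt X_1$ (with $B_n X_1 = X_1^n$, using the Hopf product) receives a levelwise weak equivalence from the Segal maps $X_n \to X_1^n$, so $|X| \simeq B X_1$ up to the issue that $X_1$ is only an $A_\infty$ (indeed $E_\infty$) space rather than a genuine monoid. First I would reduce to the case where $X_1$ is an honest topological monoid: replace the reduced special simplicial space $X$ by the nerve of a suitable topologically-enriched monoidal or Segal-monoid rectification, or more directly, observe that the statement only depends on the weak homotopy type of the map $\et$ and use the standard fact (as in \cite{MayClass}) that one may replace $X_1$ by a weakly equivalent topological monoid $M$ with a weak equivalence $B_\sbt M \simeq X$ of simplicial spaces, so that $|X| \simeq BM$ and $\et$ is identified with the standard map $M \to \OM BM$.

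Once $X_1$ is replaced by a topological monoid $M$, the heart is the assertion that $M \to \OM BM$ is a group completion, which is the classical homological stability/group completion statement. The plan is: (1) form the ``telescope'' $M_\infty = \mathrm{hocolim}(M \xrightarrow{\cdot m_1} M \xrightarrow{\cdot m_2} \cdots)$ over a cofinal sequence of elements of $\pi_0 M$, so that $\pi_0 M_\infty$ is the Grothendieck group of $\pi_0 M$ and $H_*(M_\infty)$ is the localization $H_*(M)[\pi_0 M^{-1}]$ for any field of coefficients; (2) realize $\OM BM$ as the homotopy fiber over a point of the evident map from the bar construction, using that $B_\sbt M$ is special to identify the path-loop fibration; (3) invoke the homology fibration theorem (McDuff--Segal): the map $M \times B_\sbt M \to B_\sbt M$ given by ``multiply and include'' is, after passing to the relevant localization, a homology fibration with fiber $M_\infty$ over the basepoint component, whence $M_\infty \xrightarrow{\simeq_{H_*}} \OM_0 BM$, and since $\OM BM$ is homotopy commutative (and so its components are all homotopy equivalent and it is a simple space), this upgrades to a homology equivalence $M_\infty \to \OM BM$ onto the full loop space. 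Composing with $M \to M_\infty$ gives exactly the two defining properties of a group completion: the $\pi_0$ statement by construction of the telescope, and the homology-localization statement on each component.

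The main obstacle is making the homology fibration argument rigorous with honest point-set control, since the original Segal argument is sketchy precisely here. Concretely, one must check that $B_\sbt M$ is a \emph{proper} (Reedy cofibrant) simplicial space so that realization is homotopy-invariant and commutes with the relevant homology computations — but this is exactly the Reedy cofibrancy that we have built in throughout (see \myref{PlenzWeak} and \myref{blanket}), applied to $B_\sbt M$, and the degeneracies of $B_\sbt M$ are insertions of the nondegenerate basepoint/unit, hence cofibrations, so \myref{PlenzReedy} applies. The second delicate point is the spectral sequence or direct-limit comparison showing that the telescoped homology is the localization; here one uses that $X_1$ (hence $M$) is homotopy commutative, so $\pi_0 M$ is a commutative monoid and left-multiplication operators commute up to homotopy, making the colimit identification and the identification of $H_*(M)[\pi_0 M^{-1}]$ with $\mathrm{colim}\, H_*(M)$ straightforward. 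Since this is the one step where the literature is genuinely thin, I would give the comparison of the bar-construction path fibration with the homology fibration $M \times B_\sbt M \to B_\sbt M$ in full, citing McDuff--Segal for the homology fibration criterion and using properness to pass to geometric realizations; everything else is bookkeeping. Finally, one records that a group completion of a $G$-connected (here, after fixed points, connected) Hopf space is automatically a weak equivalence, which is the observation already made in \myref{Defngpcomp}, so no separate argument is needed in that degenerate case.
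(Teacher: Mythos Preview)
Your proposed argument is correct in outline---the McDuff--Segal telescope/homology-fibration route does prove the result---but it is \emph{not} the approach the paper takes, and it is also not the approach of \cite[\S15]{MayClass}, despite your citation. The paper follows May's original argument: rather than rectifying $X_1$ to a monoid and invoking a homology-fibration theorem, it works directly with the reduced special simplicial space $X$ via a spectral sequence comparison. Concretely, one uses the adjunction $(T,S)$ between reduced special simplicial spaces and based spaces, where $T=|{-}|$ and $(SK)_p$ is the space of singular $p$-simplices sending all vertices to the basepoint (so $(SK)_1=\OM K$). The unit $\psi\colon X\to ST X$ has $\psi_1=\eta$, and $T\psi$ is a weak equivalence. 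The homology spectral sequence of $|X|$ has $E^2_{p,q}=\Tor^{H_*(X_1)}_{p,q}(R,R)$ by speciality and K\"unneth; comparing with the analogous spectral sequence for $ST X$ via a tailored comparison theorem, and using the classical fact that $\Tor^A(R,R)\to\Tor^{\bar A}(R,R)$ is an isomorphism for the localization $A\to\bar A$, one extracts that $H_*(X_1)[\pi_0^{-1}]\to H_*(\OM|X|)$ is an isomorphism by a generators-and-relations induction on degree.

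The trade-offs: your route is more geometric and perhaps more widely known today, but it front-loads a nontrivial rectification step (replacing $X$ by the nerve of an honest monoid) that you wave past, and it requires choosing a cofinal sequence in $\pi_0$, which is where homotopy commutativity enters for you. The paper's route avoids rectification entirely and treats the special simplicial space directly; homotopy commutativity enters only to ensure the target Hopf algebra structure behaves, and the argument is purely algebraic after setting up the spectral sequences. The paper in fact remarks that one \emph{could} reduce to the monoid case and quote the known result, but chooses the direct argument instead.
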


Rather than give full details, we describe the conceptual framework for this nonequivariant result.  Just as there is a uniqueness result for infinite loop space machines, there is a uniqueness result for $1$-fold delooping machines.  It was first developed by Thomason \cite{Thom} and Fiedorowicz \cite{Fied}.  A later paper by Dunn \cite{Dunn} combined features of \cite{Fied, Thom} to give a treatment closely parallel to that in \cite{MT}, and it  gives a similar uniqueness theorem for $n$-fold delooping machines.   

\autoref{Classthm} is a result about Segal's $1$-fold delooping machine.   The papers just cited make clear that $X_1$ is naturally equivalent to a topological monoid $M$ and that $\et$ is equivalent to the natural inclusion $\ze\colon M\rtarr \OM BM$.  The proof makes use of the Moore loop space functor.  See \cite[Theorem 2.5 and Lemma 2.8]{Thom} or \cite[Example 2.8]{Dunn}.  These sources also show that $\et$ and $\ze$ induce group completions on $\pi_0$; an easy direct argument proves that when $\pi_0(M)$ is abelian \cite[Lemma 15.2]{MayClass}.  

The sources \cite{Thom, Fied, Dunn} do not consider the homological group completion property.  That would not hold in the generality in which they work since it requires some homotopy commutativity.  It was proven for $\ze\colon M\rtarr \OM BM$ 
with homotopy commutativity weakened to the assumption that left and right multiplication by any element are homotopic in \cite[\S15]{MayClass}.  However, the proof is actually given in the context of the Segal machine,  and it  can easily be adapted to prove \autoref{Classthm} directly for $\et$.  

Indeed, its starting point is Segal's based variant of the standard adjunction between topological spaces and simplicial sets.  That gives an adjunction $(T,S)$ relating the category of reduced special  simplicial spaces, denoted $\sS^+\sT$ in \cite[Section 15]{MayClass}, to the category $\sT$.   Here the functor $T = |-|$ is geometric realization. The functor $S$ is a reduced version of the total singular complex. For a  based space $K$, $S_pK$ is the set of $p$-simplices $\DE_p\rtarr K$ that map all vertices to the basepoint.  In particular, $S_1K = \OM K$.  Let $\ph\colon TS K \rtarr K$ and $\ps\colon X\rtarr ST X$ be the counit and unit of the adjunction.  Then \cite[Proposition 15.5]{MayClass} gives the following result.

\begin{prop}  If $K\in \sT$ is path connected, then $\ph\colon TS K\rtarr K$ is a weak equivalence.
For any $X\in \sS^+\sT$,  $T\ps\colon TX\rtarr TSTX$ is a weak equivalence.
\end{prop}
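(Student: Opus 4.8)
The statement is a standard simplicial-homotopical fact, and my plan is to reduce both assertions to well-understood properties of the geometric realization functor $T = |{-}|$ and the reduced singular functor $S$, following \cite[\S15]{MayClass}. For the first assertion, that $\ph\colon TSK\rtarr K$ is a weak equivalence when $K$ is path connected, the idea is that $TSK = |S_\sbt K|$ where $S_\sbt K$ is the reduced singular complex, which differs from the ordinary total singular complex $\mathrm{Sing}_\sbt K$ only in the $0$-simplices (and the degenerate simplices built from them). One first observes that the inclusion of reduced into ordinary singular complexes is a weak equivalence of simplicial sets when $K$ is path connected, because every singular simplex is homotopic rel vertices—after a path in $K$—to one whose vertices are all at the basepoint; this is precisely where path connectedness enters. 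Then realization preserves weak equivalences of simplicial sets, and the ordinary counit $|\mathrm{Sing}_\sbt K|\rtarr K$ is the classical weak equivalence. Composing, $\ph$ is a weak equivalence.

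For the second assertion, that $T\ps\colon TX\rtarr TSTX$ is a weak equivalence for any $X\in\sS^+\sT$, the plan is to invoke the first assertion applied to $K = TX = |X|$. Since $X$ is reduced, $|X|$ is path connected (its $0$-skeleton is a point), so the counit $\ph\colon TSTX = TS(TX)\rtarr TX$ is a weak equivalence by the first part. Then the triangle identity for the adjunction $(T,S)$ gives $\ph\circ T\ps = \id_{TX}$, so $T\ps$ is a one-sided homotopy inverse to a weak equivalence, hence itself a weak equivalence (by the two-out-of-three property, or directly since $\ph$ being a weak equivalence with $\ph\circ T\ps=\id$ forces $T\ps$ to induce isomorphisms on all homotopy groups).

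\textbf{Main obstacle.} The only real content is the first assertion, and within it the point requiring care is the comparison between the reduced singular complex $S_\sbt K$ and the ordinary singular complex: one must check that restricting attention to singular simplices sending all vertices to the basepoint does not change the homotopy type when $K$ is path connected. This is a simplicial-set deformation-retraction argument (choose paths from each point to the basepoint, use them to slide simplices), entirely parallel to the verification that $S_\sbt$ lands in reduced special simplicial \emph{spaces} and that $(T,S)$ is an adjunction, both of which are already set up in the cited \cite[Definition 15.3]{MayClass}. Since this is exactly \cite[Proposition 15.5]{MayClass}, I would simply cite that proposition rather than reprove it, noting that the proof there is stated in sufficient generality (weakening homotopy commutativity to the condition that left and right translations are homotopic) to apply in our setting.
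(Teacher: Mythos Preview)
Your proposal is correct and matches the paper's approach: the paper gives no proof at all, simply citing \cite[Proposition~15.5]{MayClass}, and you likewise conclude by deferring to that reference. Your added sketch---reducing the second assertion to the first via the triangle identity $\ph\circ T\ps=\id_{TX}$ together with path connectedness of $|X|$ (from $X_0=\ast$)---is a clean and correct elaboration that the paper does not spell out.
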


From here, the main tool is a comparison of the standard homology spectral sequence of the filtered space $TX= |X|$
to  the analogous spectral sequence for $TST X$, where $X$ is as in \autoref{Classthm}.  The map $\ps$ induces a map of spectral sequences for homology with coefficients in a field $k$ that converges to the homology isomorphism induced by the weak equivalence  $TX\rtarr TST X$.   It is clearly an isomorphism on $E^{\infty}$, and its map of $E_2$-terms is
\[  Tor^{\et_*}(\id,\id) \colon Tor^{H_*(X_1)}(k,k) \rtarr Tor^{H_*(\OM |X|)}(k, k). \]
A version of the comparison theorem for spectral sequences gives sufficient information about $E^2$ to deduce \autoref{Classthm} from a classical result of Cartan and Eilenberg  \cite[Proposition VI.4.1.1]{CE}  to the effect that $Tor$ commutes with localization. We refer the reader to \cite[Section 15]{MayClass} for the details in the case of $\ze\colon M\rtarr \OM BM$, but the details are exactly the same for $X$ as in \autoref{Classthm}.

\subsection{The positive linearity theorem}\label{cof}

We prove \autoref{Omnibus} in this section and the next. In both, let $G$ be a group and let $X$ be an $\sF$-$G$-space.
For a based $G$-CW complex $A$, write $Y(A)$ ambiguously for either $B(A^{\bullet},\sF,X)$, where $X$ is special, or 
$B(A^{\bullet},\sF_G,\bP X)$, where $X$ is \gen-special; here $\bP X$ is a special $\sF_G$-$G$-space, and up to ismorphism all special 
$\sF_G$-$G$-spaces are of this form.  Under either assumption, we write $Y_*(A)$ for the simplicial bar construction
whose realization is $Y(A)$.   These notations remain fixed throughout these two subsections.  Recall \autoref{simpre}.  As asserted in
\autoref{blanket} and is easily checked using that the degeneracy maps are given by identity maps of $\sF$ and $\sF_G$, $Y_{*}(A)$ is Reedy cofibrant in the simplicial sense.  Remember that we are assuming that given based $G$-spaces are nondegenerately based.

We start with the proof of the easier part of \autoref{Omnibus}.

\begin{lem}\label{conn2} The $\sW_G$-$G$-space $Y$ preserves connectivity.
\end{lem}
\begin{proof}
Let $A\in G\sW$ be $G$-connected, so that $A^H$ is connected for all $H$. We must show that $Y(A)$ is $G$-connected. 
Using that passage to fixed points commutes with pushouts one leg of which is a closed inclusion, it is 
easily checked that geometric realization and the bar construction commute with passage to $H$-fixed points. 
Using this, we see 
that $Y(A)^H$ is the geometric realization of a simplicial space with $0$-simplices given by 
\begin{equation}\label{1}\bigvee_{n} (A^H)^{n} \sma X_n^H
\end{equation} 
or
\begin{equation}\label{2}\bigvee_{\bn^{\al}} (A^{\bn^{\al}})^H \sma  (( \bP X)(\bn^{\al}))^H,\end{equation} 
depending on whether $Y(A)$ is $B(A^{\bullet},\sF,X)$ or 
$B(A^{\bullet},\sF_G, \bP X)$ for an  $\sF$-$G$-space $X$.
We claim that the space of $0$-simplices is connected in either case, so that the geometric realization $Y(A)^H$ is also connected. In the first case, it is clear that the space \autoref{1} is connected since we assume that $A^H$ is connected, and in the second case, the space \autoref{2} is connected because $(A^{\bn^{\al}})^H$ is connected. To see that, note that $(A^{\bn^{\al}})^H\cong (A^n)^\LA$ where $\LA = \{(h,\al(h)) \mid h \in H\}\subset G\times \SI_n$.  Thus by \autoref{lem:lafixedpoints},
$$(A^{\bn^{\al}})^H \cong \prod A^{K_i},$$
where the product is taken over the orbits of the $H$-set $\bn^{\al|_H}$ and the $K_i\subset H$ are the stabilizers of elements in the corresponding orbit. Again, by our assumption that the $A^{K_i}$ are connected  it follows that $(A^{\bn^{\al}})^H$ is connected.
\end{proof}

To prove that the $\sW_G$-$G$-space $Y$ is positive linear, as defined 
in \autoref{lineardefn},  we first isolate properties that, together with preservation of connectivity, imply positive linearity.

\begin{defn}\label{wedax1}  A $\sW_G$-$G$-space $Z$ satisfies the
wedge axiom if for all $A$ and $B$ in $G\sW$ the natural map
\[   \pi\colon Z(A\wed B) \rtarr Z(A)\times Z(B) \]
induced by the canonical $G$-maps $A\wed B\rtarr A$ and $A\wed B\rtarr B$ is a weak $G$-equivalence.
\end{defn}

\begin{defn}\label{realax1} Let $Z$ be a $\sW_G$-$G$-space and consider simplicial based $G$-CW complexes $A_{*}$.
\begin{enumerate}[(i)]
\item  $Z$ commutes with geometric realization if the natural $G$-map 
$$|Z(A_{*})|\rtarr Z(|A_{*}|)$$ 
is a homeomorphism.  
\item $Z$ preserves Reedy cofibrancy if the simplicial $G$-space $Z(A_{*})$ is Reedy cofibrant when $A_{*}$ 
is Reedy cofibrant. 
\end{enumerate}
\end{defn} 

Our $\sW_G$-$G$-space $Y$ satisfies all of these properties. We record the results here, with the proofs delayed to later.

\begin{prop}\label{wedax2} The $\sW_G$-$G$-space $Y$ satisfies the wedge axiom.
\end{prop}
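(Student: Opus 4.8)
The plan is to prove the wedge axiom for $Y$ by a direct, simplicial-level analysis of the two-sided bar construction, exploiting the fact that the left variable $A^{\bullet}$ is a represented functor that takes wedges of $G$-sets (or $G$-CW complexes) to products. First I would recall that $Y(A)$ is the geometric realization of $B_{\sbt}(A;X)$, which is Reedy cofibrant, and that $\pi\colon Y(A\vee B)\rtarr Y(A)\times Y(B)$ is the realization of a simplicial map; since realization commutes with finite products and with passage to fixed points, and since a levelwise weak $G$-equivalence between Reedy cofibrant simplicial $G$-spaces realizes to a weak $G$-equivalence by \myref{PlenzWeak}, it suffices to prove that at each simplicial degree $q$ the map
\[
B_q(A\vee B;X)\rtarr B_q(A;X)\times B_q(B;X)
\]
is a weak $G$-equivalence. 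Actually the cleanest route is to observe that all the identifications needed are concentrated in the \emph{left} variable, so the real content is a statement about $(A\vee B)^{\bullet}$ versus $A^{\bullet}$ and $B^{\bullet}$.

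The key step, then, is the following. For a finite based $G$-set $C=(\mathbf{n},\al)$, the value of the represented functor on a wedge is $\sT_G(C, A\vee B)$, and a based map $\mathbf{n}\rtarr A\vee B$ is determined by a partition of $\mathbf{n}$ into the preimage of $A\setminus\{\ast\}$, the preimage of $B\setminus\{\ast\}$, and the preimage of the basepoint, together with based maps on the two non-basepoint blocks. In other words, $(A\vee B)^{(\mathbf n,\al)}$ decomposes, non-naturally in $\mathbf n$ but naturally enough for the bar construction, as a disjoint union over such splittings of $A^{(\mathbf p,\al')}\times B^{(\mathbf r,\al'')}$. Plugging this into $B_q((A\vee B);X)=\bigvee_{(\mathbf n_q,\al_q),\dots} (A\vee B)^{(\mathbf n_q,\al_q)}\wedge(\cdots)_+$ and using the action of $\PI_G$ (or $\PI$) on the right — specifically the projections in $\PI_G$, which allow one to absorb the basepoint block, and the maps collapsing one block to the basepoint, which allow one to relate a splitting to the two ``pure'' terms — one checks that after passing through the equivalence relation of the bar construction the only surviving contributions are indexed by the two pure wedge summands, and the map $\pi$ identifies $B_q((A\vee B);X)$ with the product $B_q(A;X)\times B_q(B;X)$ up to a deformation that is a $G$-homotopy equivalence. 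This is exactly the combinatorial mechanism already used in \S\ref{SecSegDet} (see \myref{strata}) to cut down the filtration strata of $A^{\bullet}\otimes_{\sF}X$; here one runs the same argument degreewise. For the $\sF_G$ version one uses \myref{OK} to pass between wedges/smash products and disjoint unions/products, and the factorization of \myref{compFFG} and \myref{TensorEqual} to reduce, if convenient, to the $\sF$ statement prolonged along $\bP$; alternatively one argues directly with $\sF_G$ since the representability argument above was already phrased $G$-equivariantly.

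The main obstacle I anticipate is bookkeeping rather than conceptual: one must be careful that the decomposition of $(A\vee B)^{\bullet}$ is compatible with the $\sF_G$-action (equivalently $\sF$-action) appearing as the middle variables $\sE(n_{i-1},n_i)$ in the bar construction, so that the claimed splitting of $B_q((A\vee B);X)$ is respected by all faces $d_i$ and degeneracies $s_i$ — in particular by $d_0$, which uses the evaluation map of the left variable. The annoyance is that a general morphism $\mathbf{m}\rtarr\mathbf{n}$ in $\sF_G$ need not preserve the splitting of $\mathbf{n}$ induced by a map to $A\vee B$, so the decomposition is not a decomposition of functors; the remedy, as in \myref{strata}, is first to use projections in $\PI_G$ to normalize to splittings with empty basepoint block, and then to observe that the remaining middle morphisms, acting covariantly via $X$, only shuffle and merge indices \emph{within} one of the two blocks once one has used the collapse maps to annihilate the cross-terms. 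Once this normalization is in place, the weak $G$-equivalence at each degree $q$ is immediate from the nondegenerately based hypothesis and \myref{lem:lafixedpoints} (to handle fixed points of the powers that appear), and \myref{PlenzWeak} finishes the realization. I would also note explicitly that the empty-set edge case $A=\ast$ or $B=\ast$ is trivial because $Y(\ast)=\ast$, as follows from \myref{duh} together with the contractibility of the bar construction on a point.
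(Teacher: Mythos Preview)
Your levelwise claim is false, and this is a genuine gap rather than bookkeeping. At simplicial degree $q$ there are \emph{no} identifications in the bar construction: $B_q(A;X)$ is simply the wedge over tuples $(n_0,\dots,n_q)$ of $A^{n_q}\sma(\sF_G(n_{q-1},n_q)\times\cdots\times X(n_0))_+$, with no coequalizer taken. A point of the product $B_q(A;X)\times B_q(B;X)$ therefore consists of two \emph{independent} choices of tuple, two independent strings of morphisms, and two independent elements $x\in X(n_0)$, $y\in X(m_0)$; but the image of $\pi$ only hits pairs in which both coordinates share the \emph{same} tuple, the same morphisms, and the same $x$. Already with $A=B=S^1$ and $X$ nontrivial the two sides have different homotopy types at each $q\geq 1$. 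The ``equivalence relation of the bar construction'' you invoke is the coequalizer defining a tensor product, not something present at a fixed simplicial level, and the projections in $\PI_G$ give face maps between levels, not identifications within one. In particular your normalization step, modeled on \myref{strata}, applies to $A^{\bullet}\otimes_{\sF}X$ (a coend) and not to the individual $B_q$.

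The paper's proof supplies exactly the missing idea: an auxiliary bar construction $Z(A,B)$ built over $\sF_G\times\sF_G$ with right variable $(\bP X)(\bullet\vee\star)$, so that the $A$-side and $B$-side carry independent indexing. One then constructs functors $F\colon \sC(A,B;X)\to\sC(A\vee B;X)$ sending $(\mu,\nu,x)$ to $(\mu\vee\nu,x)$ and $Q\colon \sC(A,B;X)\to\sC(A;X)\times\sC(B;X)$ using the projections $X(a\vee b)\to X(a)\times X(b)$. The map $F$ is a $G$-homotopy equivalence via an explicit inverse $F^{-1}$ (built from the splitting $\sigma_\omega$ of a map $\omega\colon f\to A\vee B$) and natural transformations $\Id\Rightarrow F^{-1}F$, $\Id\Rightarrow FF^{-1}$; the map $Q$ is a levelwise weak $G$-equivalence \emph{precisely because $X$ is special} (these are Segal maps). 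Note that your argument never invokes specialness of $X$ in any essential way, which is a further sign that it cannot succeed: the wedge axiom fails for general $X$.
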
 

\begin{prop}\label{realax2} The $\sW_G$-$G$-space $Y$ commutes with realization and preserves Reedy cofibrancy.
\end{prop}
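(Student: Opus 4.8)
The plan is to verify each of the two claims in \myref{realax2} by reducing to well-known formal properties of geometric realization and the two-sided bar construction, using that for a fixed based $G$-CW complex $A$ the functor $B(A^{\bullet},\sF,-)$ (respectively $B(A^{\bullet},\sF_G,-)$) is built from smash products, wedges, and geometric realization, all of which commute with colimits. First I would observe that by the definition of the $\sW_G$-$G$-space $Y$ we have, for a simplicial based $G$-CW complex $A_\sbt$, a bisimplicial $G$-space $(B_\sbt(A_p;X))$ (with $p$ the external simplicial direction coming from $A_\sbt$), and that $|Y(A_\sbt)|$ and $Y(|A_\sbt|)$ are two ways of taking the diagonal/total realization of this bisimplicial object. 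Concretely, at the level of $q$-simplices, $B_q(A;X)$ is a wedge over tuples $(\mathbf n_0,\dots,\mathbf n_q)$ of half-smash products $A^{\bullet}(\mathbf n_q)\sma (\sF(\mathbf n_{q-1},\mathbf n_q)\times\cdots\times X_{n_0})_+$, and the only dependence on $A$ is through the represented functor $A^{\bullet}(\mathbf n_q) = \sW_G(\mathbf n_q, A)\iso A^{n_q}$, i.e. through finite cartesian (equivalently, since the indexing sets are based and finite, finite smash-after-adjoining-basepoints) powers of $A$. Since finite products and geometric realization of simplicial spaces commute (the standard fact, proved via the skeletal filtration, as in \cite[Corollary 11.6]{MayGeo} or as recalled around \myref{PlenzCof}), and since smashing with a fixed based space and forming wedges commute with realization, we get $|B_q(A_\sbt;X)| \iso B_q(|A_\sbt|;X)$ naturally in $q$. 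Commuting the two realizations (by the usual Fubini/diagonal argument for bisimplicial spaces) then yields the required homeomorphism $|Y(A_\sbt)|\rtarr Y(|A_\sbt|)$. The same argument applies verbatim with $\sF$ replaced by $\sF_G$ and $X$ by $\bP X$, remembering the $G$-action on the hom $G$-sets of $\sF_G$, which is irrelevant to the realization-commutation since realization is computed on underlying spaces and $G$-actions are induced.

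For the second claim, that $Y$ preserves Reedy cofibrancy, I would argue as follows. Suppose $A_\sbt$ is a Reedy cofibrant simplicial based $G$-CW complex, so each latching map $L_pA_\sbt \rtarr A_p$ is a $G$-cofibration. We must show each latching map $L_p(Y(A_\sbt)) \rtarr Y(A_p)$ is a $G$-cofibration. Since $Y(-) = B(-^{\bullet},\sF,X)$ is itself a geometric realization of a simplicial $G$-space and, by the first part, this realization commutes with the external realization, it suffices to check the corresponding Reedy cofibrancy statement one bar-degree at a time: at bar-degree $q$, the functor $A\mapsto B_q(A;X)$ is, up to the wedge over tuples, a wedge of functors of the form $A\mapsto A^{n}\sma (\text{fixed based $G$-space})$ (with $\SI_n$ acting, but that does not affect the cofibration question after passing to the relevant quotients as in \myref{OK}). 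Now the key point is that the functor $A\mapsto A^n$ sends $G$-cofibrations of based $G$-CW complexes, and more generally Reedy cofibrant simplicial $G$-spaces, to Reedy cofibrant simplicial $G$-spaces: this is the classical fact about the simplicial $n$-fold smash/product that underlies \myref{PlenzCof}, and it follows from Lillig's union theorem (\myref{PlenzReedy}) together with the observation that $L_p(A_\sbt^n) = \bigcup_i \si_i(A_{p-1}^n)$ is built out of the $\si_i$-images, which are $G$-cofibrations since the $\si_i\colon A_{p-1}\rtarr A_p$ are (being retracts of identity-like maps in $A_\sbt$ Reedy cofibrant). Assembling over the wedge summands and bar-degrees, and using that pushouts along $G$-cofibrations are $G$-cofibrations, we conclude that the latching maps of $Y(A_\sbt)$ are $G$-cofibrations. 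Hence $Y(A_\sbt)$ is Reedy cofibrant.

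I expect the main obstacle to be purely bookkeeping rather than conceptual: carefully justifying the commutation of the two simplicial directions (the bar direction and the direction coming from $A_\sbt$) and tracking the half-smash products and the wedges over tuples of objects of $\sF$ (or $\sF_G$), so that the reduction to ``realization commutes with finite products'' and ``the $n$-fold power preserves Reedy cofibrancy'' is clean. A secondary point requiring a word of care is that for $\sF_G$ one must work with the actual $G$-sets $\sF_G(A,B)$, so the represented functors $A^{\bullet}$ are $(Y_1^n)^{\al}$-type twisted powers rather than naive powers; but since all the cofibration and realization statements are checked on underlying nonequivariant spaces and the $G$-actions are induced, no new difficulty arises. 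I would state the bisimplicial Fubini step and the ``$n$-fold power is Reedy cofibrant'' step as the two lemmas doing the real work, cite \myref{PlenzReedy}, \myref{PlenzCof}, and \myref{OK} for their ingredients, and leave the tuple-indexed bookkeeping to the reader as is done elsewhere in the paper.
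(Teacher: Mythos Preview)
Your argument for the first claim (commutation with realization) is essentially the paper's: reduce via the bisimplicial Fubini argument (the paper packages this as \myref{realize2}) to showing $|B_q(A_\sbt;X)| \cong B_q(|A_\sbt|;X)$ for each bar-degree $q$, which follows from commutation of realization with finite products and half-smash products, using the description of $B_q$ in \myref{halfsmash}.

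For Reedy cofibrancy, your ingredients are right but the assembly step---``it suffices to check the Reedy cofibrancy statement one bar-degree at a time'' followed by ``assembling over bar-degrees using that pushouts along $G$-cofibrations are $G$-cofibrations''---is imprecise as stated. Knowing that $p\mapsto B_q(A_p;X)$ is Reedy cofibrant for each $q$ does not by itself give that $p\mapsto |B_\sbt(A_p;X)|$ is; you still need to pass a cofibration statement through realization in the bar direction, and that is exactly the content of \myref{PlenzCof}. The paper organizes this more cleanly: by \myref{PlenzReedy} it suffices to show each degeneracy $S_i = Y(s_i)\colon Y(A_{n-1})\rtarr Y(A_n)$ is a $G$-cofibration. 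But $S_i$ is the realization in the bar direction of a map of Reedy cofibrant simplicial $G$-spaces which on $q$-simplices is $(s_i)^{n_q}\sma\id$, a $G$-cofibration; \myref{PlenzCof} then gives that $S_i$ is a $G$-cofibration. This two-step route (degeneracy criterion plus \myref{PlenzCof}) avoids any direct analysis of the latching spaces of $Y(A_\sbt)$ and makes transparent where the Reedy cofibrancy of $A_\sbt$ enters (only through the $s_i$ being cofibrations). Your approach would ultimately unwind to the same thing once made precise.
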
 

Granting these results, \autoref{Omnibus} is an application of the following result.

\begin{thm}\label{Omnibus2} Let $Z$ be a $\sW_G$-$G$-space that satisfies the wedge axiom, commutes with geometric realization,
preserves Reedy cofibrancy, and preserves connectivity.  Then $Z$ is positive linear.
\end{thm}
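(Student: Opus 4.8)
The plan is to deduce positive linearity from the wedge axiom by a standard Segal--Woolfson argument, reducing a fibration sequence to a quasifibration via a filtration of the cofiber $Cf$ and an appeal to the wedge axiom at each filtration stage, using commutation with geometric realization to bootstrap from the space level to the $\sW_G$-$G$-space level. Fix a $G$-map $f\colon A\rtarr B$ in $G\sW$ with $A$ (hence also $B$) $G$-connected, and let $i\colon B\rtarr Cf$ be the inclusion into the cofiber. We must show that $Z(A)$ maps by a weak $G$-equivalence to the homotopy fiber of $i_*\colon Z(B)\rtarr Z(Cf)$.

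\textbf{First step: mapping cylinders and a cofiber sequence.} First I would replace $f$ by the inclusion of $A$ into the reduced mapping cylinder $Mf$, so that $B\simeq Mf$ and $Cf = Mf/A$. Since $A$ is $G$-connected, $Cf$ is a $G$-connected based $G$-CW complex, and I build a standard skeletal-type filtration $\ast = (Cf)_0 \subset (Cf)_1 \subset \cdots$ of $Cf$ by $G$-CW subcomplexes (or rather the filtration of $Mf$ by $A$ together with the cells of $Cf$ attached one at a time, pushing out along $G$-CW pairs $(D^n\times G/H, S^{n-1}\times G/H)$). Applying $Z$ to this filtration, I get a filtration of $Z(Cf)$. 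The wedge axiom together with the cofiber sequences $A\rtarr (Cf)_{k-1}\text{-}\mathrm{stuff}\rtarr (Cf)_k$ controls how $Z$ behaves on each pushout: at each stage we are attaching a wedge of spheres, and the wedge axiom says $Z$ turns the relevant wedge into a product, so that $Z$ applied to the attaching pushout is, up to weak $G$-equivalence, again a pushout of the expected form.

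\textbf{Second step: from cofibrations to quasifibrations.} The heart of the argument is the Dold--Thom style statement that the map
\[
Z(A)\rtarr \mathrm{hofib}\bigl(Z(B)\xrightarrow{i_*} Z(Cf)\bigr)
\]
is a weak $G$-equivalence. I would prove this by checking it on $H$-fixed points for each closed subgroup $H\subset G$; since $Z$ preserves connectivity and commutes with realization, passage to fixed points is compatible with all the constructions. Nonequivariantly, one shows by induction over the filtration of $Cf$ and the standard Dold quasifibration criterion (a map that is a quasifibration over each piece of an open cover, and over the intersections, and such that the pieces are built by attaching cells along cofibrations, is a quasifibration) that $Z(Mf)\rtarr Z(Cf)$ is a quasifibration with fiber $Z(A)$ over the basepoint. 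The wedge axiom is exactly what makes the inductive step work: attaching an $n$-cell to $(Cf)_{k-1}$ to form $(Cf)_k$ corresponds, after applying $Z$, to gluing in a product factor indexed by that cell, and the quasifibration property is preserved under such gluing by Dold's criterion. Commutation with geometric realization and preservation of Reedy cofibrancy are needed to handle the cells of positive dimension: an $n$-cell $S^{n-1}\rtarr D^n$ is the realization of a Reedy cofibrant simplicial based $G$-space, so $Z$ of the attaching data is the realization of a Reedy cofibrant simplicial object built levelwise from wedge-axiom data, and hence behaves homotopically as expected by \myref{PlenzWeak} and \myref{PlenzCof}.

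\textbf{Main obstacle.} The delicate point is the base of the induction and the local triviality: one must verify that $Z$ applied to the inclusion of a single cell $S^{n-1}\rtarr D^n$ (or the half-open-neighborhood version used in Dold's criterion) yields a genuine quasifibration, which requires the $G$-connectivity hypothesis on $A$ (this is the ``positive'' in positive linear) to know that the relevant base spaces are $G$-connected so that Dold's numerable-cover criterion applies and all path components are detected correctly. I expect the bookkeeping of the $H$-fixed point versions of the filtration --- ensuring $(Cf)^H$ is filtered compatibly and that $Z$ preserves connectivity on each fixed-point stratum so that the fixed-point quasifibration argument runs --- to be the most technically involved part, but it is routine given the four stated properties of $Z$. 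Once the quasifibration is established, the long exact sequence of a quasifibration gives that $Z(A)\rtarr Z(B)\rtarr Z(Cf)$ is a fibration sequence up to weak $G$-equivalence, which is the definition of positive linearity in \myref{lineardefn}.
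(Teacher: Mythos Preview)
Your approach has a genuine gap: the wedge axiom controls $Z(A\vee B)$, but cell attachment is a pushout $C_{k-1}\cup_{S^{n-1}\times G/H} (D^n\times G/H)$, not a wedge. You assert that ``$Z$ applied to the attaching pushout is, up to weak $G$-equivalence, again a pushout of the expected form,'' but that is an excision statement for $Z$, strictly stronger than the wedge axiom and essentially equivalent to what you are trying to prove. The Dold--Thom style induction you sketch would need, at each stage, to already know that $Z$ takes a single cell-attaching cofiber sequence to a fiber sequence; there is no base case, since even the attachment of one $G$-orbit is an instance of the conclusion rather than a consequence of the wedge axiom. Your proposed use of ``commutes with realization'' and ``preserves Reedy cofibrancy'' (to handle $D^n$ as a realization) does not help here, because the levels of the relevant simplicial model of a disk are not wedges of anything useful.

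The paper's argument sidesteps this by reducing entirely to wedges. One constructs a simplicial based $G$-space $W_\sbt(B,A)$ with $W_q(B,A)=B\vee\,{}^{q}\!A$ and explicit faces and degeneracies so that $|W_\sbt(B,A)|\cong Cf$ and $|W_\sbt(\ast,A)|\cong \Sigma A$. Because $Z$ commutes with realization and preserves Reedy cofibrancy, $Z(Cf)\cong |Z(W_\sbt(B,A))|$, and the wedge axiom applied \emph{levelwise} gives a level weak $G$-equivalence from $Z(W_\sbt(B,A))$ to the simplicial $G$-space $[q]\mapsto Z(B)\times Z(A)^q$ (and similarly with $B=\ast$). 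The projection $Z(B)\times Z(A)^q\rtarr Z(A)^q$ is a levelwise Hurewicz $G$-fibration with fiber $Z(B)$; since $Z$ preserves connectivity, each $Z(A)^q$ is $G$-connected, so the equivariant theorem that realization of a simplicial Hurewicz $G$-fibration over a Reedy cofibrant levelwise $G$-connected base is a $G$-quasifibration yields the fiber sequence $Z(B)\rtarr Z(Cf)\rtarr Z(\Sigma A)$. Specializing to $f=\id_A$ gives $Z(A)\simeq \Omega Z(\Sigma A)$, and splicing the two fiber sequences produces the desired $Z(A)\rtarr Z(B)\rtarr Z(Cf)$. The key idea you are missing is this simplicial resolution of $Cf$ by wedges, which is precisely what lets the wedge axiom do all the work.
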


The proof centers around the following construction of cofiber sequences in terms of wedges and
geometric realizations; it is a corrected version and equivariant generalization of a construction due to Woolfson \cite{Woolf}.

\begin{con} Let $f\colon A\rtarr B$ be a map in $G\sW$ with cofiber $i\colon B\rtarr Cf$, where 
$A$ is $G$-connected. We give an elementary simplicial description of $Cf$ in terms of wedges.
 Let $^q\! A$ denote the wedge of $q$ copies of $A$, labelling the $i$th wedge
summand as $A_i$ and setting $^0\!A = \ast$. We define a simplicial $G$-space $W_{*}(B,A)$  whose 
space of $q$-simplices 
$W_q(B,A)$ is $B\wed\, ^{q}\!A$.  Define face and degeneracy operators $d_i$ and $s_i$ with domain $W_q(B,A)$ 
for $0\leq i\leq q$ as follows.
\begin{enumerate}[$\bullet$]
\item All $d_i$ and $s_i$ map $B$ onto $B$ by the identity map.
\item $d_0$ maps $A_1$ to $B$ via $f$ and maps $A_j$ to $A_{j-1}$ by the identity map if $j>1$.
\item $d_i$, $0 < i < q$, maps $A_j$ to $A_j$ by the identity map if $j<i$ and maps $A_j$ to $A_{j-1}$ 
by the identity map if $j>i$. 
\item $d_q$ maps $A_j$ to $A_j$ by the identity map if $j<q$ and maps $A_q$ to $\ast$.
\item $s_i$ maps $A_j$ to $A_j$ by the identity map if $j\leq i$ and 
maps $A_j$ to $A_{j+1}$ by the identity map if $j > i$.
\end{enumerate}

The simplicial identities are easily checked.\footnote{Our specification of faces is a correction of the specification in \cite[Lemma 1.10]{Woolf}.} Note that there is not much choice: the $s_i$ must be 
inclusions with $s_is_j = s_{j+1}s_i$ for $i\leq j$ and they must satisfy $d_is_i = \id = d_{i+1}s_i$.
If we specify the $s_i$ as stated, then the $d_i$ must be as stated except in the exceptional cases
noted for $d_0$ and $d_q$.   
\end{con}

\begin{lem} The realization $|W_{*}(B,A)|$ is homeomorphic to $Cf$.
In particular, the realization $|W_{*}(\ast,A)|$ is homeomorphic to $\SI A$.
\end{lem}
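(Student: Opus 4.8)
The plan is to exhibit $|W_\sbt(B,A)|$ as built from $B$ by attaching a cone on $A$, and to identify that cone concretely with the geometric realization of the standard simplicial model of the cone. First I would observe that $W_\sbt(B,A)$ contains the constant simplicial $G$-space $B_\sbt$ (with all $W_q(B_\sbt) = B$) as a simplicial subspace: the structure maps restricted to $B$ are all identities, so $|B_\sbt| \iso B$ sits inside $|W_\sbt(B,A)|$ as a $G$-subspace. The quotient simplicial $G$-space $W_\sbt(B,A)/B_\sbt$ has $q$-simplices ${}^q\!A$ with the induced face and degeneracy operators, and I claim this quotient is precisely the simplicial $G$-space $A \otimes \Delta[1]/(A\otimes\partial\Delta[1])$ — that is, the simplicial based $G$-space representing $\SI A$. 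Concretely, $\Delta[1]_q$ has $q+2$ elements and $\Delta[1]_q/\partial\Delta[1]_q$ has $q+1$ non-basepoint elements, matched by sending the $i$-th wedge summand $A_i$ of ${}^q\!A$ to $A$ smashed with the $i$-th interior vertex of $\Delta[1]_q$; one checks directly from the definitions that the specified $d_i$ and $s_i$ on the $A_j$ correspond under this bijection to the simplicial operators on $\Delta[1]_q/\partial\Delta[1]_q$ (the collapse of $A_1$ to $B$ under $d_0$ and of $A_q$ to $\ast$ under $d_q$ are exactly the two boundary identifications that are being collapsed in the quotient by $B$). Since realization commutes with quotients and since $|A_\sbt \sma (\Delta[1]/\partial\Delta[1])_\sbt| \iso A \sma |\Delta[1]/\partial\Delta[1]| \iso A\sma S^1 = \SI A$, this identifies $|W_\sbt(B,A)|/B$ with $\SI A$.

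Next I would promote this to the statement $|W_\sbt(B,A)| \iso Cf$. Geometric realization preserves pushouts of simplicial $G$-spaces, so it suffices to realize $W_\sbt(B,A)$ as a pushout of simplicial $G$-spaces whose realization is visibly the pushout defining $Cf = B\cup_f CA$. The natural candidate: $W_\sbt(B,A)$ is the pushout of the diagram $B_\sbt \xleftarrow{\ \ } (CA)_\sbt^{\partial} \xrightarrow{\ \ } (CA)_\sbt$, where $(CA)_\sbt$ is the simplicial cone $A_\sbt \sma \Delta[1]_\sbt$ with one cone point collapsed, $(CA)_\sbt^\partial$ is its ``base'' copy of $A$, and the left map is $f$. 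More explicitly, $W_q(B,A) = B\wed {}^q\!A$ receives $B$ and receives the base $A$ of the cone via the wedge summand $A_1$ mapped to $B$ by $d_0$-compatibility; I would write down the evident maps at each simplicial level, check they commute with faces and degeneracies, and check the pushout property levelwise (which is immediate since wedge is a coproduct in based $G$-spaces). Realizing, the base $A$ goes to $A$, the simplicial cone realizes to the topological cone $CA$ with the collapsed vertex as cone point (the collapse of $A_1$ by $d_0$ into $B$ is the attaching, and the collapse of $A_q$ by $d_q$ is what makes the top a point), and $B_\sbt$ realizes to $B$, so the realized pushout is exactly $B\cup_f CA = Cf$.

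Finally, the second sentence of the lemma is the special case $B = \ast$ and $f$ the unique map $A\rtarr\ast$: then $Cf = C(A\rtarr\ast) = \Sigma A$, since the cone on $\ast$ collapses and one is left with the unreduced—here reduced, as everything is based—suspension; alternatively it falls straight out of the identification of $W_\sbt(B,A)/B_\sbt$ with the simplicial suspension of $A$ above, now with nothing to quotient. I expect the main obstacle to be purely bookkeeping: verifying carefully that the face and degeneracy operators of $W_\sbt(B,A)$ as listed match, under the vertex-relabeling bijection, the standard simplicial operators on $A_\sbt \sma (\Delta[1]/\partial\Delta[1])_\sbt$, and in particular that the two exceptional cases ($d_0$ on $A_1$, $d_q$ on $A_q$) are exactly the boundary collapses — this is the content-free but error-prone heart of the argument, and is precisely where the erroneous faces of \cite[Lemma 1.10]{Woolf} failed the simplicial identities, so some care is warranted there. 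Everything else is formal: realization commutes with pushouts and with smashing with a fixed $G$-space, and $|\Delta[1]/\partial\Delta[1]| \iso S^1$.
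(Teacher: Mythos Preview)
Your approach is correct in outline and would go through once the bookkeeping is done, but it is considerably more elaborate than the paper's argument. The paper's proof is a direct three-line computation: one observes that every point of $W_q(B,A)$ for $q\geq 2$ is degenerate (for $q\geq 2$ the images of the $s_i$ already cover all of $B\vee {}^q\!A$), so the realization is entirely determined by levels $0$ and $1$. Identifying $\DE_1$ with $I$, the realization is then the quotient of $B\times\{\ast\} \amalg (B\vee A)\times I$ by the relations $(b,t)\sim (b,\ast)$ (from $s_0 b = b$), $(a,0)\sim (f(a),\ast)$ (from $d_0 a = f(a)$), and $(a,1)\sim \ast$ (from $d_1 a = \ast$), which is visibly $Cf = B\cup_f (A\sma I)$.

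Your structural route --- exhibiting $W_\sbt(B,A)$ as a simplicial pushout of the constant $B_\sbt$, the constant $A_\sbt$, and a simplicial cone, then invoking that realization preserves colimits --- is more conceptual and explains \emph{why} the answer is a mapping cone rather than merely verifying it. But it requires setting up the simplicial cone model and matching all structure maps (in particular getting straight which vertex of $\Delta[1]$ is the apex and which the base; your phrase ``receives the base $A$ of the cone via the wedge summand $A_1$'' conflates the constant base subspace with a particular wedge summand at level $1$ and would need to be made precise). The paper's observation that the simplicial object is $1$-skeletal short-circuits all of this: once there are no nondegenerate simplices above degree $1$, the realization is just a quotient of a cylinder, and the three identifications fall out of $s_0$, $d_0$, $d_1$ immediately.
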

\begin{proof}  Clearly every point of $W_q(B,A)$ for $q\geq 2$ is degenerate. Therefore, identifying $\DE_1$ with
$I$, the realization is the quotient of $B\times\{*\}\amalg (B\wed A)\times I$ 
obtained by the identifications
\[  (b,t) \sim (b,*) \ \ \text{for} \ \  b\in B \ \ \text{and} \ \ t\in I \ \ \text{since} \ \ s_0 b = b \]
\[  (a,0) \sim (f(a),*) \ \  \text{for}\ \ a\in A \ \ \text{since} \ \ d_0(a) = f(a) \]
\[  (a,1) \sim (\ast,*) \ \  \text{for}\ \ a\in A \ \ \text{since} \ \  d_1(a) = \ast \]
It is simple to verify that the result is homeomorphic to $Cf = B\cup_f (A\sma I)$.
\end{proof}
 
We record the following result, which is the equivariant generalization of  \cite[Theorem 12.7]{MayGeo}. It is proven
by applying that result to $H$-fixed point simplicial spaces for all subgroups $H$ of $G$.  A $G$-map $f$ is a 
$G$-quasifibration if each $f^H$ is a quasifibration. Similarly, a map $p$ of simplicial based $G$-spaces is a simplicial 
based Hurewicz $G$-fibration if each $p^H$ is a simplicial based Hurewicz fibration in the sense of \cite[Definition 12.5]{MayGeo}).

\begin{thm}\label{GeoTheo} Let $E_{*}$ and $B_{*}$ be simplicial based $G$-spaces and let $p_{*}\colon E_{*}\rtarr B_{*}$ be a simplicial based Hurewicz $G$-fibration with fiber $F_{*} = p_{*}^{-1}(\ast)$. If each $B_q$ is $G$-connected and $B_{*}$ is Reedy cofibrant, then the realization $|p_{*}|\colon |E_{*}|\rtarr |B_{*}|$ is a $G$-quasifibration 
with fiber $|F_{*}|$.
\end{thm}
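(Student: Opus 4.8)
The plan is to reduce the equivariant statement to the nonequivariant theorem \cite[Theorem 12.7]{MayGeo} by passing to $H$-fixed points for each closed subgroup $H\subset G$, exactly as we did for \myref{PlenzWeak} and \myref{PlenzCof}. First I would recall that geometric realization of simplicial $G$-spaces commutes with passage to $H$-fixed points: $|E_\sbt|^H = |E_\sbt^H|$ and similarly for $B_\sbt$ and $F_\sbt$; this is the standard computation of elements of $|E_\sbt|$ in nondegenerate form $|x,u|$, already invoked in \S\ref{groupcomp}. Granting this, for each fixed $H$ we have a simplicial based Hurewicz fibration $p_\sbt^H\colon E_\sbt^H\rtarr B_\sbt^H$ with fiber $(p_\sbt^H)^{-1}(\ast) = F_\sbt^H = (F_\sbt)^H$, since taking $H$-fixed points commutes with taking the fiber over the basepoint (which is $G$-fixed).

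Second, I would check the hypotheses of \cite[Theorem 12.7]{MayGeo} levelwise on $H$-fixed points. By definition of a simplicial based Hurewicz $G$-fibration, each $p_\sbt^H$ is a simplicial based Hurewicz fibration in the sense of \cite[Definition 12.5]{MayGeo}. The assumption that each $B_q$ is $G$-connected means precisely that each $B_q^H$ is path connected. The assumption that $B_\sbt$ is Reedy cofibrant means that each latching map $L_nB\rtarr B_n$ is a $G$-cofibration; as noted in the remark on $G$-cofibrations in \S\ref{simpre}, passage to $H$-fixed points over $H$ takes a $G$-cofibration to a cofibration, and since geometric realization and the formation of latching spaces both commute with $(-)^H$ (the latching space being a colimit of copies of the lower spaces along degeneracies), $B_\sbt^H$ is Reedy cofibrant. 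Hence \cite[Theorem 12.7]{MayGeo} applies to $p_\sbt^H$ and shows that $|p_\sbt^H|\colon |E_\sbt^H|\rtarr |B_\sbt^H|$ is a quasifibration with fiber $|F_\sbt^H|$.

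Third, I would reassemble: under the identifications $|E_\sbt|^H \cong |E_\sbt^H|$, $|B_\sbt|^H \cong |B_\sbt^H|$, and $|F_\sbt|^H \cong |F_\sbt^H|$, the map $|p_\sbt|^H$ is the quasifibration $|p_\sbt^H|$ with fiber $|F_\sbt|^H$ over the basepoint. Since this holds for every closed subgroup $H$, the $G$-map $|p_\sbt|\colon |E_\sbt|\rtarr |B_\sbt|$ is a $G$-quasifibration with fiber $|F_\sbt|$, as asserted.

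The only genuine content here is the compatibility of geometric realization with $H$-fixed points and the verification that Reedy cofibrancy of $B_\sbt$ is inherited by $B_\sbt^H$; both are routine but worth stating, since the whole reduction rests on them. The main thing to be careful about — the nearest thing to an obstacle — is making sure the fiber is handled correctly: one must observe that $(p_\sbt^H)^{-1}(\ast) = (p_\sbt^{-1}(\ast))^H$ because the basepoint section is $G$-fixed, so there is no discrepancy between ``the $H$-fixed points of the fiber'' and ``the fiber of the $H$-fixed map''. Once that is in place, the result follows immediately from the nonequivariant case with no further work.
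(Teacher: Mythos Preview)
Your proposal is correct and follows exactly the approach the paper indicates: the paper simply says the result ``is proven by applying that result to $H$-fixed point simplicial spaces for all closed subgroups $H$ of $G$,'' and you have spelled out precisely that reduction, including the verifications that geometric realization, Reedy cofibrancy, connectivity, the Hurewicz fibration condition, and the fiber all pass correctly to $H$-fixed points.
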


\begin{proof}[Proof of \autoref{Omnibus2}] Let $f\colon A\rtarr B$ be a map in $G\sW$, where $A$ is $G$-connected. We must show that application of 
$Z$ to the cofiber sequence 
$$ \xymatrix@1{ A \ar[r]^-{f} & B \ar[r]^-{i} & Cf \\} $$
gives a fiber sequence.

Let $p_{*}\colon W_{*}(B,A)\rtarr W_{*}(\ast,A)$ be the map of simplicial $G$-spaces given by
sending $B$ to the basepoint and let $i_{*}\colon B_{*} \rtarr W_{*}(B,A)$ be the inclusion, where $B_{*}$ denotes
the constant simplicial space at $B$.  On passage to realization, these give the 
canonical maps
\[ \xymatrix@1{  B \ar[r]^-{i}  & Cf \ar[r]^-{p} & \SI A\\}. \]
We apply our given $\sW_G$-$G$-space $Z$ to these maps to obtain the sequence
\[ \xymatrix@1{  Z(B) \ar[r]^-{Z(i)}  & Z(Cf) \ar[r]^-{Z(p)} & Z(\SI A)\\}. \]
We claim that it is a fiber sequence.  Since $Z$ commutes with realization and since $Z(B) \iso |Z(B_{*})|$,
this sequence is $G$-homeomorphic to the sequence 
\[ \xymatrix@1{    |Z(B_{*})| \ar[rr]^-{|Z(i_{*})|} & & |Z(W_{*}(B,A))| \ar[rr]^-{|Z(p_{*})|} 
& & |Z(W_{*}(\ast,A))|.  \\}  \]
On $q$-simplices, before realization, we have the commutative diagram
 \[ \xymatrix{    
Z(B) \ar[rr]  \ar@{=}[d]& & Z(B\wed\, ^q\!A ) \ar[rr] \ar[d] & & Z(^q\!A) \ar[d] \\
Z(B) \ar[rr] && Z(B)\times Z(A)^q \ar[rr] && Z(A)^q \\} \]
where the horizontal arrows are given by the evident inclusions and projections.  The vertical arrows
are the canonical maps and are weak $G$-equivalences by the wedge axiom.   The simplicial $G$-spaces  $W_{*}(B,A)$ and $W_{*}(\ast,A)$ are
trivially Reedy cofibrant, hence so are $Z(W_{*}(B,A))$ and $Z(W_{*}(\ast,A))$. The nondegeneracy of 
basepoints implies that  $Z(B)\times Z(A)^{\bullet}$ and $Z(A)^{\bullet}$ are also Reedy cofibrant.  Therefore
the vertical arrows become weak $G$-equivalences after passage to realization.   Since $A$ and therefore $Z(A)$ is
$G$-connected, \autoref{GeoTheo} applies to 
show that the realization of the bottom row is a fibration sequence up to homotopy.
Thus we have the fiber sequence 
\[ Z(B)\rtarr Z(Cf) \rtarr Z(\SI A) \]
and therefore also the fiber sequence
\[ \OM Z(\SI A) \rtarr Z(B)\rtarr Z(Cf). \]
Specializing to the map $\id\colon A \rtarr A$, we also 
have a fiber sequence
\[ Z(A) \rtarr Z(CA) \rtarr Z(\SI A). \]
Since $Z(CA)$ is $G$-contractible, this implies that we 
have a weak $G$-equivalence $Z(A) \rtarr \OM Z(\SI A)$ and therefore the desired fiber sequence
\[ Z(A) \rtarr Z(B)\rtarr Z(Cf). \qedhere\]
\end{proof}

The following result will apply to show that $Y$ commutes with realization.

\begin{lem}\label{realize2}
Let $Z$ be a $\sW_G$-$G$-space such that $Z(A)$ is naturally isomorphic to $|Z_{*}(A)|$ for some functor 
$Z_{*}$ from $\sW_G$ to simplicial based $G$-spaces.  If the natural $G$-map
$$|Z_q(A_{*})|\rtarr Z_q(|A_{*}|)$$ 
is a $G$-homeomorphism for all simplicial based $G$-spaces $A_{*}$ and all $q\geq 0$, then the natural map
$$|Z(A_{*})|\rtarr Z(|A_{*}|)$$
is a $G$-homeomorphism.
\end{lem}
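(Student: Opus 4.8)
The plan is to reduce the general statement to a double-realization swap. We have two bisimplicial (or more precisely, simplicial-in-two-ways) based $G$-spaces in sight: given a simplicial based $G$-space $A_\sbt$, form the bisimplicial based $G$-space $(p,q) \mapsto Z_q(A_p)$. Its realization can be computed by first realizing in the $p$-direction and then in the $q$-direction, or vice versa, and these two orders of realization agree (up to natural $G$-homeomorphism); this is the standard fact that geometric realization of bisimplicial spaces is independent of the order in which one realizes, which follows by a Fubini argument for coends and works equivariantly since fixed points and colimits along closed inclusions behave well, or simply by applying the nonequivariant fact levelwise on $H$-fixed points.

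\begin{proof}[Proof of \myref{realize2}]
Let $A_\sbt$ be a simplicial based $G$-space. Consider the bisimplicial based $G$-space $B_{\sbt,\sbt}$ defined by $B_{p,q} = Z_q(A_p)$, where the $p$-direction structure maps are induced by those of $A_\sbt$ and the functoriality of $Z_q$, and the $q$-direction structure maps are the simplicial structure maps of $Z_\sbt(A_p)$. Realizing first in the $p$-variable and then in the $q$-variable, the hypothesis that $|Z_q(A_\sbt)| \to Z_q(|A_\sbt|)$ is a $G$-homeomorphism for each $q$ identifies the result with $|Z_\sbt(|A_\sbt|)| \iso Z(|A_\sbt|)$. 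Realizing first in the $q$-variable and then in the $p$-variable gives $|\, p \mapsto |Z_\sbt(A_p)|\,| \iso |Z(A_\sbt)|$, using the natural isomorphism $Z \iso |Z_\sbt|$ at each level $p$. Since the two orders of realization of a bisimplicial based $G$-space agree up to natural $G$-homeomorphism---this is checked on $H$-fixed point spaces for all closed subgroups $H$, using that geometric realization commutes with passage to fixed points (as in the proof of \myref{conn2}) and the standard nonequivariant fact about realizing bisimplicial spaces in either order---we conclude that $|Z(A_\sbt)| \rtarr Z(|A_\sbt|)$ is a $G$-homeomorphism.
\end{proof}

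The one point that deserves care, and which I expect to be the main (albeit minor) obstacle, is bookkeeping the two uses of geometric realization and confirming that the hypothesized levelwise $G$-homeomorphisms $|Z_q(A_\sbt)| \to Z_q(|A_\sbt|)$ are compatible with the $q$-direction simplicial structure maps, so that they assemble into a map of simplicial objects whose realization is the desired comparison map. This compatibility is immediate from naturality of the hypothesized isomorphisms in the $\sW_G$-variable, since the $q$-direction face and degeneracy maps of $Z_\sbt$ are natural transformations of functors on $\sW_G$; so in the end this is a routine diagram chase rather than a genuine difficulty. Everything else is formal once one knows realization of bisimplicial based $G$-spaces is order-independent, which itself reduces to the nonequivariant statement by taking $H$-fixed points.
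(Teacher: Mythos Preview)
Your proof is correct and follows exactly the same approach as the paper: form the bisimplicial $G$-space $(p,q)\mapsto Z_q(A_p)$, use the hypothesis to identify realization first in $p$ then in $q$ with $Z(|A_\sbt|)$, use $Z\cong |Z_\sbt|$ to identify realization first in $q$ then in $p$ with $|Z(A_\sbt)|$, and invoke order-independence of iterated realization. One minor remark: the order-independence of bisimplicial realization is already a formal Fubini-for-coends argument and yields a $G$-homeomorphism directly, so there is no need to pass to $H$-fixed points for that step.
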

\begin{proof} For a bisimplicial $G$-space, realizing first in one direction and then the other gives a space 
that is $G$-homeomorphic to the one obtained by realizing in the opposite order. Let $A_{*}$ be a simplicial $G\sW$-space.
Then $Z(A_p) = |q\mapsto Z_q(A_p)|$, so 
\[|Z(A_{*})|=|p\mapsto |q\mapsto Z_q(A_p)||.\]
By the assumption on $Z_q$,
 \[Z(|A_{*}|)=|q\mapsto Z_q(|p\mapsto A_p|)|\cong |q\mapsto |p\mapsto Z_q(A_p)||.\]
The result follows.
\end{proof}

\begin{proof}[Proof of \autoref{realax2}]  We first prove that $Y$ commutes with realization by checking that 
the natural map 
$$ |Y_q(A_{\bullet})| \rtarr Y_q(|A_{\bullet}|)$$
is a $G$-homeomorphism for all $q\geq 0$, so that \autoref{realize2} applies.  Writing out the simplicial bar construction, we see that $Y_q(A_{\ast})$ 
is a wedge of simplicial $G$-spaces of the form  $A_{\ast}^n\sma K$ for some $n\geq 0$ and some $G$-space $K$. 
Since realization commutes with wedges and smash products with $G$-spaces, this gives the proof.

Now assume that $A_{*}$ is  Reedy cofibrant. We must prove that $Y(A_{*})$ is Reedy cofibrant. 
Let $Y= B((-)^\bullet, \sF, X)$; the proof in the case $Y=B((-)^\bullet, \sF_G, \bP X)$ is the same.   By \autoref{PlenzReedy}, it suffices to show 
that all of the degeneracy maps
$$ S_i\colon B((A_{n-1})^\bullet,\sF,X) \xrightarrow{B((s_i)^\bullet,\id,\id)} B((A_n)^\bullet,\sF,X),$$ 
are $G$-cofibrations. Using a shorthand notation, the maps $S_i$ are geometric realizations of maps of Reedy cofibrant simplicial $G$-spaces 
that are given on $q$-simplices by the $G$-cofibrations
$$(s_i)^{n_q}\sma (\id)  \colon \bigvee_{n_0,\dots, n_q} (A_{n-1})^{n_q} \sma (-) \rtarr
\bigvee_{n_0,\dots, n_q} (A_{n})^{n_q} \sma (-).$$
By \autoref{PlenzCof}, the $S_i$  are therefore $G$-cofibrations.
\end{proof} 

\subsection{The proof that the wedge axiom holds}\label{WEDGE}
In this section we prove \autoref{wedax2}, which says that the $\sW_G$-$G$-space $Y$ satisfies the wedge axiom. 
 \begin{proof}[Proof of \autoref{wedax2}]  
 We write the proof for $Y(A) = B(A^{\bullet},\sF_G,\bP X)$, where $X$ is \gen-special.
The proof for $Y(A) = B(A^{\bullet},\sF,X)$, where $X$ is (classically) special, is essentially the same, but  simpler since it is simpler to keep track of equivariance in that case.  For convenience of notation, we write $a,b,c,d,e,f$ for based finite $G$-sets, that is objects of $\sF_G$.  We write $X(a)$ for the value of $\bP X$ on $a$ and $A^a$ for the based $G$-space $\sT_G(a,A)$, with $G$ acting by conjugation. 

Recall from \autoref{Bsmash} that $B(A^\bullet,\sF_G,\bP X)=B^\sma(A^\bullet,\sF_G,\bP X)$. As in \cite[Theorems 2.5 and 3.15]{GMMO}, the quotient map 
\[B^\times(A^\bullet,\sF_G, \bP X) \rtarr B(A^\bullet,\sF_G, \bP X)\]
is a $G$-equivalence.  Thus it suffices to prove the result for $Y(A)=B^\times(A^\bullet,\sF_G, \bP X)$.\footnote{The defects of $B^\times$ pointed out in
\autoref{UhOh} are irrelevant here.}

To do so, we shall construct a $G$-homotopy commutative diagram of $G$-spaces
\begin{equation}\label{wedgy}
\xymatrix{
& Z(A,B) \ar[dl]_F \ar[rd]^Q &\\
Y(A\wed B) \ar[rr]_-{P}  && Y(A)\times Y(B)\\}
\end{equation}
in which $F$ and $Q$ are weak $G$-equivalences and $P$ is the canonical map induced by the projections 
$\pi_A \colon A\wed B\rtarr A$ and $\pi_B\colon A\wed B\rtarr B$.  
This will prove that $P$ is a weak $G$-equivalence.  

In this subsection, we abbreviate notation by writing $\sC(A;X)$ for the category internal to $G\sU$ whose nerve is $B^\times_{*}(A,\sF_G,\bP X)$ and whose 
classifying $G$-space is therefore $Y(A)$.  Recall from \autoref{Groth} that the object and morphism $G$-spaces of $\sC(A; X)$ are
\[\coprod_{a} A^a \times X(a)\] 
and 
\[\coprod_{a,c} A^c \times \sF_G(a,c) \times X(a).\]
Its source and target $G$-maps $S$ and $T$ are induced from the evaluation maps of the contravariant $G\sU_{\ast}$-functor $A^{\bullet}$ and the covariant
$G\sU_{\ast}$-functor $X$ from $\sF_G$ to $\sT_G$.  The identity and composition $G$-maps $I$ and $C$ are induced from identity morphisms and composition in $\sF_G$.

Analogously, we define $Z(A,B)$ to be the classifying $G$-space of the category $\sC(A,B;X)$  internal to $G\sU$ 
whose nerve is the simplicial bar construction
$$B^\times_{*}(A^\bullet \times B^\star;\sF_G \times \sF_G; (\bP X)(\bullet \wed \star)).$$  
Its object and morphism $G$-spaces are 
\[\coprod_{a,b} (A^a\times B^b) \times X(a\wed b)\]
and 
\[\coprod_{a,b,c,d } (A^c \times B^d) \times \sF_G(a,c)\times \sF_G(b,d) \times X(a\wed b).\]

Here $S$ and $T$ are induced from the evaluation maps of $A^\bullet \times B^\star$ and the composite of 
$\wed \colon \sF_G \times \sF_G \rtarr \sF_G$ with the evaluation maps of $\bP X$.  Again, identity and composition maps are induced
from identity morphisms and composition in $\sF_G$. 

Using categories, functors, and natural transformations to mean these notions internal to $G\sU$ in what follows,
we shall define functors giving the following diagram of categories and shall prove that it is commutative up to 
natural transformation.
\begin{equation}\label{wedgytoo}
\xymatrix{
& \sC(A,B;X) \ar[dl]_{F} \ar[rd]^{Q}  &\\
\sC(A\wed B;X) \ar[rr]_-{P}  && \sC(A;X)\times \sC(B;X)\\} 
\end{equation}
Passing to classifying $G$-spaces, this will give the diagram \autoref{wedgy}.   

To define $F$ and $Q$, it is convenient to write elements of
$A^a$ as based maps $\mu\colon a\rtarr A$, with $G$ acting by conjugation on maps.    The functor $F$ sends an object  $(\mu,\nu, x)$ to $(\mu \wed \nu, x)$
and sends a morphism  $(\mu ,\nu, \ph,\ps,x)$ to $(\mu \wed \nu, \ph \wed \ps,x)$.  The functor $Q$ sends an object $(\mu,\nu, x)$ to $(\mu,x_a)\times (\nu,x_b)$ 
where $x_a$ and $x_b$ are obtained from $x\in X(a\wed b)$ by using the $G$-maps induced
by the projections $\pi_a\colon a\wed b\rtarr a$ and $\pi_b\colon a\wed b\rtarr b$.  It sends a morphism $(\mu,\nu,\ph,\ps,x)$ to the morphism
$(\mu,\ph,x_a)\times (\nu,\ps,x_b)$.  As in \autoref{wedgy}, $P$ is induced by the projections 
$\pi_A$ and $\pi_B$.  Noting that $\pi_a$ and $\pi_b$ are $G$-fixed morphisms of $\sF_G$ and that $\pi_A\com (\mu\wed \nu) = \mu\com \pi_a$ and  
$\pi_B\com (\mu\wed \nu) = \nu\com \pi_b$, we see that the morphisms 
$$(\mu,\pi_a,x)\times (\nu,\pi_b, x)\colon (\pi_A\com (\mu\wed \nu),x)\times (\pi_B\com (\mu\wed \nu),x) \rtarr (\mu,x_a)\times (\nu,x_b)$$
give a natural transformation $P\com F\rtarr Q$ in diagram \autoref{wedgytoo} that induces a $G$-homotopy $P\com F \rtarr Q$ in diagram \autoref{wedgy}.

While $Q$ need not be an equivalence of categories of any sort, we see from the assumption that $\bP X$ is special and our use of the projections $\pi_a$ and $\pi_b$ that $Q$  gives a level weak equivalence of simplicial $G$-spaces on passage to nerves. Reedy cofibrancy of the bar constructions then implies that the induced map $Q$ in \autoref{wedgy} 
is a weak equivalence of classifying $G$-spaces.  To complete the proof, we shall construct  a functor $F^{-1}\colon \sC(A\wed B;X)\rtarr \sC(A,B; X)$  and natural transformations 
$ \Id\rtarr F^{-1}\com F$ and $\Id \rtarr F\com F^{-1}$.  Passing to classifying $G$-spaces, this will imply that $F$ in \autoref{wedgy} is a $G$-homotopy equivalence.

For $\om\colon f\rtarr A\wed B$, define $\om_A = \pi_A\com \om$ and $\om_B = \pi_B\com\om$.  Define 
$\si_{\om}\colon f\rtarr f\wed f$, called the splitting of $\om$, by 
 \[
 \si_{\om} (j) = \begin{cases}
 j & \text{in the first copy of $f$ if     }  \om(j)\in A\setminus \ast\\
 j & \text{in the second copy of $f$ if  }  \om(j)\in B\setminus \ast\\
 \ast & \text{if }  \om(j)= \ast
 \end{cases}
\]
Observe that $\om$ factors as the composite 
$$ \xymatrix@1{ f \ar[r]^-{\si_{\om}} & f\wed f \ar[r]^{\om_A \wed \om_B} & A\wed B.\\}$$
Define $F^{-1}$ on objects by 
$$ F^{-1}(\om,x) = (\om_A,\om_B, (\si_{\om})_{*}(x)). $$
For a morphism $(\om, \ph, x)$, $\om\colon f\rtarr A\wed B$, $\ph\colon  e\rtarr f$, and $x\in X(e)$, observe that
$$ (\om \com \ph)_A  = \om_A \com \ph   \ \ \text{and} \ \ (\om \com \ph)_B = \om_B \com \ph.$$
Define $F^{-1}$ on morphisms by
$$F^{-1}(\om,\ph,x) = (\om_A,\om_B, \ph,\ph, (\si_{\om\com \ph})_{*}(x)).$$
A check of definitions using the commutative diagram
 \[ \xymatrix{
e \ar[r]^{\phi} \ar[d]_{\si_{\om\com \ph}}& f \ar[d]_-{\si_{\om}} \ar[r]^-{\om} & A\wed B \\
e\wed e \ar[r]_-{\ph \wed \ph} & f\wed f \ar[ru]_-{\om_A\wed \om_B} \\ } \]
 shows that $S\com F^{-1} = F^{-1} \com S$ and $T\com  F^{-1} = F^{-1} \com T$, and $F^{-1}$ is 
 clearly compatible with composition and identities. It is easily checked that $F^{-1}$ is continuous on object and 
 morphism $G$-spaces, but equivariance is a little more subtle. We first claim that $\si_{g\cdot \om} = g\cdot \si_{\om}$. 
 Since $(g\cdot \om)(j) = g\om(g^{-1}j)$, 
  \[
 \si_{g\cdot \om} (j) = \begin{cases}
 j & \text{in the first copy of $f$ if     }  g\om(g^{-1}j)\in A\setminus \ast\\
 j & \text{in the second copy of $f$ if  }  g\om(g^{-1}j)\in B\setminus \ast\\
 \ast & \text{if }  g\om(g^{-1}j)= \ast.
 \end{cases}
\]
On the other hand, using the definition of $\si_{\om}$ and the fact that $gg^{-1} = 1$, 
\[ g\cdot \si_{\om}(j) = g\si_{\om}(g^{-1}j) = 
\begin{cases}
 j & \text{in the first copy of $f$ if     }  \om(g^{-1}j)\in A\setminus \ast\\
 j & \text{in the second copy of $f$ if  }  \om (g^{-1}j)\in B\setminus \ast\\
 g \ast = \ast & \text{if }  \om(g^{-1}j)= \ast.
 \end{cases}
 \]
 Observing that $gz \in A \setminus \ast$ if and only if $z\in A \setminus \ast$ and similarly for $B$, we see that 
 these agree, proving the claim.  Since $\pi_A$ is a $G$-map, we also have 
 \[ g\cdot \om_A = g\cdot (\pi_A\com \om) = g\pi_A\om g^{-1} =  \pi_A g\om g^{-1} = \pi_A\com g\cdot \om = (g\cdot \om)_A, \]
 and similarly for $B$. Putting these together gives 
 \begin{eqnarray*}
 F^{-1}(g\cdot(\om,x)) = F^{-1}(g\cdot \om, gx) & = &\big((g\cdot\om)_A,(g\cdot \om)_B,(\si_{g\cdot \om})_{*}(gx)\big) \\
 & = & \big (g\cdot \om_A, g\cdot \om_B, (g\cdot \si_{\om})_{*}(gx)\big).
 \end{eqnarray*}
 Since the evaluation map $\sF_G(a,b) \sma X(a)\rtarr X(b)$ is a $G$-map, this is equal to 
 \[ \big(g\cdot \om_A, g\cdot \om_B, g\cdot ((\si_{\om})_{*}x)\big) = g\cdot F^{-1}(\om,x). \]
 This shows that $F^{-1}$ is a $G$-map on objects. Following the same argument and using that 
 $g\cdot (\om\com \ph) = (g\cdot \om)\com (g\cdot \ph)$, we see that $F^{-1}$ is a $G$-map on morphisms.

Now consider the composite $F\com F^{-1}$. It sends the object $(\om,x)$ in $\sC(A\wed B;X)$ to the object $(\om_A\wed \om_B, (\si_{\om})_{*}x)$. Here
$(\om_A\wed \om_B, \si_{\om}, x)$ is a morphism from $(\om,x)$ to $(\om_A\wed \om_B,(\si_{\om})_{*}x)$.  We claim that this morphism
is the component at $(\om,x)$ of a natural transformation $\Id\rtarr F\com F^{-1}$. These morphisms clearly give a continuous map from the object $G$-space of $\sC(A\wed B;X)$ to its morphism $G$-space, and it is not hard to check naturality 
using the diagram just above.  To show equivariance, if $g\in G$ then
\[g\cdot(\om_A\wed \om_B, \si_{\om}, x) = (g\cdot(\om_A \wed \om_B), g\cdot \si_{\om}, gx) = ((g\cdot\om)_{A}\wed (g\cdot\om)_{B}, \si_{g\cdot \om}, gx),\]
which is precisely the component at $g\cdot(\om,x) = (g\om, gx)$.
 
The composite $F^{-1}\com F$ sends an object $(\mu,\nu,x)$ in $\sC(A,B;X)$ to the object
$$(\pi_A \circ (\mu\wed \nu),\pi_B \circ (\mu\wed \nu), (\si_{\mu \wed \nu})_{*}x)=(\mu\circ \pi_a, \nu\circ \pi_b, (\si_{\mu\wed \nu})_{*}x).$$
Note that $\si_{\mu\wed \nu}$ is given by $\tilde \si_{\mu} \wed \tilde \si_{\nu}$, 
where $\tilde\si{_\mu} \colon a \rtarr a \wed b$ is the inclusion, 
except that if $\mu(j)=\ast $, then $\tilde\si_{\mu} (j)=*$, and similarly for $\tilde\si_{\nu}\colon b\rtarr a\wed b$. 
Here  
$$ \big( \pi_A \circ (\mu\wed \nu), \pi_B \circ (\mu \wed \nu), \tilde\si_{\mu}, \tilde{\si}_{\nu}, x \big)$$
is a morphism in $\sC(A,B;X)$ from $(\mu,\nu,x)$ to $\big(\pi_A \circ (\mu\wed \nu), \pi_B \circ (\mu \wed \nu), (\si_{\mu\wed \nu})_{*}x\big)$. 
To see that the source of this morphism is as claimed, observe that
$\mu \circ \pi_a \circ \tilde\si_{\mu} = \mu$
since $\pi_a \circ \tilde{\si}_{\mu}=\id$ except on those $j$ such that $\mu(j)=\ast$, and similarly for $\nu$. 
The naturality follows from the equations 
\[\tilde\si_{\mu} \circ \phi = (\phi \wed \psi) \circ \tilde\si_{\mu\com\phi} \ \ \text{and} \ \  
\tilde\si_{\nu} \circ \psi = (\phi \wed \psi) \circ \tilde\si_{\nu\com\psi},\]
which are easily checked. The continuity of the assignment is also easily verified.  For $g\in G$,  a verification 
similar to that for $F\com F^{-1}$ shows that $g$ acting on the component of our natural transformation at 
$(\mu,\nu,x)$ is the component of the transformation at  $g\cdot (\mu,\nu,x) = (g\mu,g\nu,gx)$.  
\end{proof}

\subsection{The heart of the Segal machine}\label{heredity}  We prove \autoref{part2} and \autoref{keystone} in this subsection.  Recall from \autoref{specialWG} that $Z$ is a special $\sW_G$-$G$-space if it preserves connectivity and its restriction to $\sF_G$ is a special $\sF_G$-$G$-space.   Recall also that  $Z[A](B) = Z(A\sma B)$  for  $A, B\in \sW_G$. Since $A\sma B$ is $G$-connected if either $A$ or $B$ is $G$-connected and since smashing preserves cofiber sequences, the proof of the following lemma is immediate from the definitions. 

\begin{lem}\label{lindetail}  Suppose that  $Z$ is positive linear and preserves connectivity.  Then $Z[A]$ is positive linear and preserves connectivity for any $A$.   If $A$ is $G$-connected, then $Z[A]$ is linear rather than just positive linear.
\end{lem}

Given \autoref{lindetail}, to prove \autoref{part2} it remains to prove that the restriction of a $\sW_G$-$G$-space $Z[A]$  to a $\sF_G$-$G$-space is special when $A$ is $G$-connected.  After the preliminary \autoref{hearty}, we prove this as \autoref{specious}. 

Until otherwise specified, we assume that $Z$ is a positively linear and special $\sW_G$-$G$-space.

\begin{lem}\label{hearty} Let $\al$ be a finite $G$-set and let $Y[\al]$ denote the restriction of $Z[\al]$ to $\sF_G$.  Then $Y[\al]$ is a special 
$\sF_G$-$G$-space.
\end{lem}
\begin{proof}  We are given that $Y[\mathbf{1}]$ is a special $\sF_G$-$G$-space.   For a finite $G$-set $\be$,  we have the analogue for $\sF_G$ of the diagram \autoref{Bspecial}:
\[ \xymatrix{
Y[\al](\be) \ar[r]^-{\de}  \ar@{=}[d]& Y[\al](\mathbf{1})^{\be} \ar@{=}[r] & Y(\al)^{\be} \ar[d]^{\de^{\be}}\\
Y(\al\sma\be) \ar[r]_-{\de} & Y(\mathbf{1})^{\al\sma\be} \ar@{=}[r] & (Y(\mathbf{1})^{\al})^{\be}. \\} \]
The right and bottom arrows $\de$ are weak $G$-equivalences, hence so is the top arrow $\de$.  Thus $Y[\al]$ is special. 
\end{proof}

\begin{lem}\label{specious} Let $A \in \sW_G$ be $G$-connected and let $Y[A]$ denote the restriction of $Z[A]$ to $\sF_G$.  Then $Y[A]$ is a special $\sF_G$-$G$-space.
\end{lem}
 \begin{proof}  Let $S^1_s$ again be the simplicial circle.  Its $q$-simplices ${(S^1_s)}_q$ are finite based sets, hence are finite based $G$-sets with trivial $G$-action.  By \autoref{hearty}, for each finite $G$-set $\al$ we have the simplicial special $\sF_G$-space  with $q$-simplices 
$Y[({S^1_s)}_q\sma \al]$.  Its geometric realization gives a special $\sF_G$-$G$-space isomorphic to $Y[S^1\sma \al]$.  In particular, we can take $\al$ to be a finite wedge of based $G$-sets $(G/H)_+$.  

By positive linearity, for a cofiber sequence
$$ \xymatrix@1{ A\ar[r] & B \ar[r] & C} $$
in $\sW_G$ where $A$ is $G$-connected and a finite based $G$-set $\al$ we have  an induced map of fiber sequences 
\[ \xymatrix{ Z(A\sma \al) \ar[r] \ar[d]_{\de}& Z(B\sma\al) \ar[r] \ar[d]^{\de} & Z(C\sma\al) \ar[d]^{\de}\\
Z(A)^{\al} \ar[r] & Z(B)^{\al}  \ar[r] & Z(C)^{\al}. \\} \]
Therefore, if $Y[A]$ and $Y[B]$ are special, then so is $Y[C]$.
By definition, any $A\in \sW_{G}$ is a finite based $G$-CW complex.   When $A$  is $G$-connected, we can replace $A$ by a $G$-homotopy equivalent based $G$-CW complex whose $0$-skeleton is a point and whose attaching maps are based $G$-maps defined on $G$-spheres $G/H_+\sma S^n$ for $n\geq 1$.
By induction on $n$, using the cofiber sequences $S^{n-1} \rtarr D^n \rtarr S^n$, each $Y[G/H_+\sma S^n]$  is special.  From here, we see that $Y[A]$ is special by induction on the number of cells in $A$.
\end{proof}

The rest of this section is devoted to the proof of \autoref{keystone}. From now on, we assume that $Z$ is special and linear (not just positively linear).  We have in mind the $Z[A]$ for a $G$-connected $A \in \sW_G$.  The proof of \autoref{specious} simplifies to give the following analog.

\begin{lem}\label{specious2}   When $Z$ is special and linear, the restriction $Y[A]$ of $Z[A]$ to $\sF_G$ is a special $\sF_G$-$G$-space
for any  $A \in \sW_G$.
\end{lem}

We restate \autoref{keystone}.  The rest of this subsection is devoted to its proof, following \cite{Seg2, Shim}.

\begin{thm}\label{part3}  If $Z$ is a special and linear $\sW_G$-$G$-space, then the structure map 
$$\tilde{\si}\colon Z(S^0) \rtarr \OM^VZ(S^V)$$ 
is a weak $G$-equivalence.
\end{thm}

We first recall from \autoref{structuremaps} the general definition of such structure maps.
For based $G$-spaces $A, B, C$ in $\sW_G$ and a based $G$-map 
$$\mu\colon B\rtarr \sW_G(A,C),$$
composition with an application of $Z$ gives a based $G$-map 
$$ B\rtarr \UG(Z(A),Z(C)).$$
In turn, by adjunction, this gives a based $G$-map
$$ \hat{\mu}\colon Z(A)\rtarr \UG(B,Z(C)).$$
Explicitly, for $z\in Z(A)$ and $b\in B$,
$$\hat{\mu}(z)(b) = Z(\mu(b))(z).$$
When  $\mu\colon B \rtarr \sW_G(A,A\sma B)$ is the adjoint of the identity map of $A\sma B$, this is a natural map 
$$\tilde{\si} \colon Z(A) \rtarr \UG(B, Z(A\sma B)).$$
Taking  $A=S^0$ and $B = S^V$, this is the adjoint structure map
$$ \tilde{\si}\colon Z(S^0)\rtarr  \OM^V Z(S^V).$$ 

For a subspace $M$ of $V$, let $M_{\epz}$ be the open $\epz$-neighborhood of $M$ in $V$.  Explicitly, 
$$  M_{\epz} =\{v\in V \, |  \,  \text{there exists\ } m\in M \text{\ such that\ } |v-m| < \epz \} \subset  V.$$
Write $(-)^c$ for one-point compactification, which we recall is functorial on proper  maps between locally compact spaces,  and  denote points at infinity by $\infty$.  Recall that $(-)^c$ converts cartesian products to smash products and that  $A^c = A_+$ if $A$ is compact.  For a  small $\epz$, let  $0_{\epz}$ be the $\epz$-neighborhood of the origin in $V$.  Then $0_{\epz}^c$ is isomorphic to the one-point compactification $S^V$ of $V$ and we treat the evident isomorphism as an identification from now on. 

For any compact (unbased) $G$-subspace $M$ of $V$ we have a $G$-map 

\begin{equation}\label{epzmap} M \rtarr \ul{G\sU}(0_\epz, M_\epz),\ \ \ \ 
x \longmapsto (v \mapsto x+v).\end{equation}  Applying a  Pontryagin-Thom construction gives a based $G$-map 
$$\mu\colon M_{+}\rtarr \sW_G(M_{\epz}^c, 0_{\epz}^c).$$
Explicitly, for  $x\in M$ and  $y\in M_{\epz}$, $\mu(x)(y) = y-x$ if $|y-x| < \epz$ and $\mu(x)(y) = \infty$ otherwise.  
As described above, this gives rise to a map
\begin{equation}\label{hatmu}\hat{\mu}_M \colon Z(M_{\epz}^c) \rtarr \UG(M_+, Z(S^V)).\end{equation}

 \begin{rem} We may take $M = \{0\}$.  Then $\mu\colon S^0 = M_+ \rtarr \sW_G(0_{\epz}^c,0_{\epz}^c)$
 sends $0$ to the identity map and thus $\hat{\mu}_{\{0\}}$  can be identified with  the identity map of $Z(S^V)$.
 \end{rem}

We will prove that when $M=S$ is the unit sphere in $V$, the map $\hat{\mu_S}$ is an equivalence. Before we prove this, we set up a little bit more notation.
Consider the radial projection $$\pi\colon S_\epz= \{ v \in V \mid  1-\epz<|v|<1+\epz\} \rtarr S$$ given by $\pi(v)=v/|v|$. For a subset $X$ of $S$, let
\[\widehat{X} = \pi^{-1} (X) = \{ v \in V \mid  1-\epz<|v|<1+\epz \text{ and } \pi(v)\in X\}.\]
Thus $\widehat{X}$ is the subset $X$ on the sphere, thickened by $\epz$ in the radial direction.  Note that $\widehat{X} \subseteq X_\epz$, but these are in general not equal. They are equal in the case of $X=S$.
Finally, again for $X$ a subset of $S$, let 
\[\widetilde{X}=X-(S-X)_{2\epz}=\{x\in X \mid |x-v|\geq 2\epz \text{ for all } v\in S-X\}\footnote{The original sources \cite{Seg, Shim} use $\epz$ in this definition, but it seems to us that we need $2\epz$ in order to obtain the restricted map \autoref{epzmaprestricted} for subspaces $X$ of $S$}.\]  Thus $\widetilde{X}$ is the subset $X$ on the sphere, with a $2\epz$-neighborhood cropped out in the tangent direction. Note that $\widetilde{S}=S$.  With this definition,  for any subset $X$ of $S$ the $G$-map of \autoref{epzmap} with  $M=S$ restricts to a $G$-map 
\begin{equation}\label{epzmaprestricted}
\al_X\colon \widetilde{X} \rtarr \ul{G\sU}(0_\epz,\widehat{X}).
\end{equation}
Again using the Pontryagin-Thom construction  we obtain a map
\[\mu_X \colon \widetilde{X}_+ \rtarr \sW_G(\widehat{X}^c,0_\epz^c)=\sW_G(\widehat{X}^c,S^V),\]
which again gives rise to a map
\[\hat{\mu}_X \colon Z(\widehat{X}^c) \rtarr \UG(\widetilde{X}_+,Z(S^V))\] for any subset $X$ of $S$.  When $X=S$, this is the map $\hat{\mu}_S$ that we constructed in \autoref{hatmu} above (with $M=S$).

\begin{thm}\label{Segalpf}   Let $Z$ be a special  $\sW_G$-$G$-space. Then the map \[\hat{\mu}_S \colon Z(S_\epz^c) \rtarr \UG(S_+,Z(S^V))\]
is a weak $G$-equivalence. 
\end{thm}

\begin{proof}
By Illman \cite{Illman2}, there exists an equivariant triangulation of $S$.  Let $\{C_\la\}_{\la \in \LA}$ be the collection of open stars of open simplexes.  This is an open cover of $S$. We index the stars by the barycenter of the simplex, that is, $\la$ is the barycenter of the simplex for which $C_\la$ is the star. Taking a refinement if necessary, we can assume that for all $g\in G$, either $g\cdot C_\la=C_\la$, or $g\cdot C_\la$ and $C_\la$ are disjoint. From now on, we let $2\epz$ be small enough compared to all the radii of all the simplexes in the triangulation, so that  $X$  deformation retracts onto $\widetilde{X}$ for any $X$ which is  a $G$-stable union of $C_\la$'s.

Let $X=\bigcup_{\la \in T} C_\la$ be a $G$-stable union of $C_\la$'s. We claim that the map $\hat{\mu}_X$, and in particular $\hat{\mu}_S$, is a $G$-equivalence.  We proceed by induction on the number of $G$-orbits contained in the indexing set $T$ for the open stars in $X$, which we recall is a set of barycenters. When $T$ is a single orbit, the inclusion $T \rtarr X$ induces $G$-equivalences $T_+ \rtarr \widetilde{X}_+$ and $T_+ \sma S^V \rtarr \widehat{X}^c$. Via these equivalences, $\hat{\mu}_X$ can be identified with
 \[Z(T_+\sma S^V) \rtarr \UG(T_+,Z(S^V)),\]
 which is a $G$-equivalence since $Z[S^V]$ is special by \autoref{part2}.  
 For the inductive step, we take $G$-stable unions of stars $X$ and $Y$, assume the conclusion for $X$, $Y$ and $X\cap Y$, and prove the conclusion for $X\cup Y$.  The open stars of a triangulation form a covering in the sense that the intersection of any two stars is a star, so that $X\cap Y$ is indeed also a $G$-stable union of stars, possibly empty. The conclusion that $\hat{\mu}_{X\cup Y}$ is an equivalence follows by induction from the following diagram, using the claim that its front and back squares are homotopy cartesian since we are assuming  that $\hat{\mu}_{X}$,  $\hat{\mu}_{Y}$ and  $\hat{\mu}_{X\cap Y}$ are equivalences.
 \[
\xymatrix{
Z(\widehat{(X\cup Y)}^c)  \ar[rr]^-{}  \ar[dd]_{} \ar[dr]^{\hat{\mu}_{X\cup Y}}& & Z(\widehat{X}^c) \ar[dd]^(.3){}|\hole \ar[dr]^{\hat{\mu}_X}& \\
&  \UG( \widetilde{X\cup Y}_+,Z(S^V)) \ar[rr]^(.3){}  \ar[dd]_(.3){} & & \UG(\widetilde{X}_+,Z(S^V))\ar[dd]^{}  \\
Z(\widehat{Y}^c)  \ar[rr]^(.3){}|(.48)\hole  \ar[dr]_{\hat{\mu}_Y}& & Z(\widehat{(X\cap Y)})^c) \ar[dr]^{\hat{\mu}_{X\cap Y}}    & \\
&  \UG(\widetilde{Y}_+,Z(S^V)) \ar[rr]^(.3){}_(.3){}  & &\UG(\widetilde{X \cap Y}_+,Z(S^V))
   \\
}
\]
It thus remains to show that the front and back squares are homotopy cartesian. The front square is homotopy cartesian because the square 
 \[\xymatrix{
 \widetilde{X\cap Y} \ar[r] \ar[d] & \widetilde{X}\ar[d]\\
 \widetilde{Y} \ar[r] & \widetilde{X \cup Y}
 }
 \]
 is homotopy cocartesian, since $X$, $Y$ are open in $X\cup Y$, and $\epz$ was chosen small enough so that there is a $G$-deformation retraction $\widetilde{W}\simeq W$ for all $G$-stable unions of open stars $W$ in our triangulation. For the back square, note that 
 \[(\widehat{X\cup Y}- \widehat{X})^c\rtarr \widehat{X\cup Y}^c \text{   and   } (\widehat{Y}- \widehat{X\cap Y})^c\rtarr \widehat{Y}^c\]
  are $G$-cofibrations. Moreover, we have that 
 \[ \widehat{X\cup Y}- \widehat{X} = \widehat{Y}- \widehat{X\cap Y}\]
   since both sets consist of  those $v\in S_\epz$ such that $\pi(v) $ is in $Y$ but not in $X$.\footnote{Note that this equality would not be true if instead of taking the inverse images under the radial projections $S_\epz\to S$ we took actual $\epz$ neighborhoods $X_\epz$ of subsets of the sphere.}  Identifying the cofibers, we then get a diagram of homotopy cofiber sequences: 
  \[\xymatrix{
(\widehat{X\cup Y} - \widehat{X})^c \ar[r] \ar@{=}[d] & (\widehat{X\cup Y})^c \ar[d] \ar[r] & \widehat{X}^c \ar[d]\\
(\widehat{Y} - \widehat{X\cap Y})^c \ar[r] & \widehat{Y}^c \ar[r] & \widehat{X\cap Y}^c.
 }
 \]
 Since $Z$ is linear, after applying $Z$, we get a diagram of fiber sequences
 \[\xymatrix{
Z((\widehat{X\cup Y} - \widehat{X})^c) \ar[r] \ar@{=}[d] & Z((\widehat{X\cup Y})^c) \ar[d] \ar[r] & Z(\widehat{X}^c) \ar[d]\\
Z((\widehat{Y} - \widehat{X\cap Y})^c) \ar[r] & Z(\widehat{Y}^c) \ar[r] & Z(\widehat{X\cap Y}^c).
 }
 \]
 In particular, the right square in the diagram is homotopy cartesian.
 \end{proof}
 
 We now give the proof of \autoref{part3}.
 
 \begin{proof}[Proof of \autoref{part3}]
  Let $S$ and $D$ be the unit sphere and disk in $V$, and let $i\colon S\to D$ be the inclusion.
 
 Define 
$p\colon D_{\epz}^c \rtarr S_{\epz}^c$  by $p(x) = x$ if $x \in S_{\epz}$ and  $p(x)=\infty$ otherwise.  Then the following diagram commutes.
\[ \xymatrix{
S_{+} \ar[d]_{i} \ar[r]^-{\mu}  & \sW_G(S_{\epz}^c,0_{\epz}^c) \ar[d]^{p^*} \\
D_{+} \ar[r]_-{\mu}  & \sW_G(D_{\epz}^c,0_{\epz}^c) \\}
\]  
Note that $D_\epz\backslash S_\epz \to D_\epz$ is proper, let $j \colon (D_{\epz} \setminus S_{\epz})^c \rtarr D_{\epz}^c$ be  the inclusion,
and observe that the quotient of $j$ is the $G$-space $S_{\epz}^c$ with quotient map $p$.  

Define  
\begin{equation}\label{subtle}
\nu \colon S^V\cong D/S \rtarr \sW_G((D_{\epz} \setminus S_{\epz})^c, 0_{\epz}^c)
\end{equation}
by applying a relative Pontryagin-Thom construction.  Explicitly, $\nu$ is the based $G$-map such that if $x\in D\setminus S$ and 
$y\in D_{\epz}\setminus S_{\epz}$, then $\nu(x)(y) = y-x$
if $|y-x|< \epz$ and $\nu(x)(y) = \infty$ otherwise.   Then the following diagram commutes, where $q$ is the quotient map.   
\[ \xymatrix{
D_{+} \ar[d]_{q} \ar[r]^-{\mu}  & \sW_G(D_{\epz}^c, 0_{\epz}^c) \ar[d]^{j^*} \\
S^V\cong D/S \ar[r]_-{\nu}  & \sW_G((D_{\epz} \setminus S_{\epz})^c ,0_{\epz}^c)\\}
\]

Just as $\mu$ gives rise to $\hat{\mu}$, $\nu$ gives rise to the map $\hat{\nu}$ in the following diagram.   By easy diagram chases, writing $S^V$ for $0_{\epz}^c$, the squares above imply that the squares in the diagram commute.
\begin{equation}\label{keydia}
\xymatrix{
Z((D_{\epz}\setminus S_{\epz})^c) \ar[r]^-{\hat{\nu}}  \ar[d]_{Z(i)} & \UG(D/S, Z(S^V))\ar[d]^{q^*}\\
Z(D_{\epz}^c) \ar[d]_{Z(p)}  \ar[r]^-{\hat{\mu}} & \UG(D_+, Z(S^V)) \ar[d]^{j^*} \\
Z(S_{\epz}^c) \ar[r]_-{\hat{\mu}} &  \UG(S_+, Z(S^V))  \\}
\end{equation} 
The right column is clearly a fiber sequence.  We claim that 
 \begin{equation}\label{yeahy}
\xymatrix@1{(D_{\epz} \setminus S_{\epz})^c  \ar[r]^-{j} &  D_{\epz}^c \ar[r]^-{p}  & S_{\epz}^c.\\}
\end{equation}
is a cofiber sequence so that, since $Z$ is linear, the left hand column  is also a fiber sequence.

 Observe that $D_{\epz} \setminus S_{\epz}$ is the closed disk $D(1-{\epz})$ of radius $1-\epz$ in $V$.   Applying $(-)^c$ adds a disjoint basepoint. Therefore the inclusion 
$$ S^0 =\{0\}\amalg \{\infty\} \rtarr (D_{\epz} \setminus S_{\epz})^c \iso D(1-{\epz})_+$$ 
is a $G$-homotopy equivalence. The inclusion 
$$ j\colon  D(1-{\epz})_+ \rtarr D_{\epz}^c \iso S^V,$$
which sends the disjoint basepoint to the north pole (point at $\infty$) and sends the disk of radius $1-\epz$ to its image around the south pole (the point $0\in V$), is a $G$-cofibration. We conclude that \autoref{yeahy} is $G$-equivalent to the cofiber sequence
$$ S^0 \rtarr S^V \rtarr S^V/S^0.$$

In the diagram \autoref{keydia} the bottom horizontal arrow is a weak $G$-equivalence by \autoref{Segalpf}, and the middle horizontal arrow is easily seen to be a weak equivalence since $D_\epz^c\cong S^V$ and $D_+\simeq S^0$. Hence the top horizontal arrow $\hat{\nu}$ is also a weak equivalence.  Its domain is weak $G$-equivalent to  $Z(S^0)$ and its target is weak $G$-equivalent to $\OM^VZ(S^V)$.  Under these equivalences, $\hat{\nu}$ agrees with $\hat{\si}$, at least up to sign.  On the one hand, for $z\in Z(S^0)$ and $y\in S^V$, 
$$\tilde{\si}(z)(y) = Z(\id(y))(z),$$
where $\id(y)\colon S^V\rtarr \sW_G(S^0,S^V)$ is given by $\id(y)(0)=y$.  On the other hand, for $x\in D/S\iso D(1-\epz)/S(1-\epz)$, we have 
$$\hat{\nu}(z)(x) = Z(\nu(x))(z),$$ 
where $\nu\colon D(1-\epz)_+ \rtarr \sW_G(S^0,S^V)$ satisfies  $\nu(x)(0) = x \in U_{\epz}^c$. 
Therefore $\tilde{\si}$ and $\hat{\nu}$ agree modulo identifications of spheres.  The standard choice of homeomorphism $S^V\iso D/S$ would introduce a sign \cite[Section 13.2]{Concise}, but that is irrelevant to the conclusion that $\tilde{\si}$ is a weak $G$-equivalence since $\hat{\nu}$ is a weak $G$-equivalence.
 \end{proof}

\section{General topological groups and compact Lie groups}\label{general}

As said in the introduction, we have so far focused on finite groups $G$.  This should seem strange since the
 basic foundations for equivariant stable homotopy theory, for example in \cite{LMS}, deal with compact Lie groups and do not greatly simplify when specialized to finite groups. However, that is not true of equivariant infinite loop space theory.   We explain here what is and is not true for infinite groups. 

We consider general topological groups and classical $G$-spectra in \autoref{classical}.  To avoid pathology, we assume throughout  that the identity element of $G$ is a nondegenerate basepoint.   Subgroups of $G$ are understood to be closed, and homomorphisms are understood to be continuous. We consider the Segal and operadic machines for compact Lie groups and genuine $G$-spectra in \autoref{Segcom} and \autoref{opercom} and their comparison in \autoref{CONCLUDE}.

In brief,  the group $G$ could be any topological group in all of our formal theory about classical $G$-spectra.  When considering genuine $G$-spectra, our formal theory makes sense for general compact Lie groups and everything we do remains valid provided that we restrict attention to $G$-representations whose isotropy groups are of finite index in $G$.  A quick explanation is that orbit $G$-spectra of compact Lie groups are self-dual only for such subgroups.  As examples, it is often sensible in applications to remember that maximal tori have finite index in their normalizers.  For example, multiplicative norms were first defined for  subgroups of finite index in compact Lie groups, in \cite[Definition 3.6]{GMMU}.  They were used there to give a partial analogue in  equivariant complex cobordism of the Atiyah-Segal completion theorem in equivariant $K$-theory.
The special role played by finite index subgroups suggests generalization to profinite groups, but we have not pursued that idea.

\subsection{Classical $G$-spectra for topological groups $G$}\label{classical}

It is straightforward to generalize infinite loop space theory so as to accept equivariant input and to deliver classical equivariant spectra as output.   The input can be $\sF$-$G$-spaces or it can be $G$-spaces with actions by classical $E_{\infty}$ $G$-operads, as defined below. Moreover, the nonequivariant comparison of infinite loop space machines of \cite{MT}, recalled in \autoref{MT}, generalizes effortlessly to this context.  Any infinite loop space machine landing in classical $G$-spectra is equivalent to the classical Segal machine.  Modifying what needs to be modified, our new explicit comparison of the Segal and operadic machines also works for general topological groups and classical $G$-spectra.

Although this all works for any $G$, caveats appear immediately.  We started work by defining families $\bF_n$ in \autoref{famFn} and considering finite $G$-sets in \autoref{finiteGset}.  The definitions and conventions there make sense in general, but they force focus on the subgroups $H$ of finite index in $G$.  

\begin{rem}\label{HomoGSi} Since our homomorphisms are continuous, any $\al\colon G\rtarr F$, where $F$ is a finite group, 
factors  through a homomorphism $\pi_0(G)\rtarr F$. In particular, there are no non-trivial $\al$ if $G$ is connected.  
There are also infinite discrete groups $G$ that admit no non-trivial $\al$ and have no non-trivial finite dimensional representations.  Thus although the families $\bF_n$ can be defined in general, they are mainly of interest when $G$ is finite. 
Note that the kernel of any homomorphism $\al\colon G\rtarr F$ must have finite index in $G$ and, if $G$ acts on a finite set $S$, then the isotropy group of any element of $S$ must have finite index in $G$.  \end{rem}

\begin{rem}  Since the objects of $\sF_G$ are finite $G$-sets, the category  $\sF_G$ sees only those orbits $G/H$ such that $H$ has finite index in $G$. Provided we restrict attention to such subgroups, we can use $\sF_G$ for general topological groups $G$, but there seems little point in doing so.  \end{rem}

 For the classical Segal machine for general topological groups,  we ignore $\sF_G$ and restrict attention to $\sF$ and the notions of specialness and of level $G$-equivalence that are defined in (ii) and (iv) of  \autoref{weakFn}.   When there are no non-trivial homomorphisms $G\rtarr \SI_n$, for example when $G$ is connected,  a $\PI$-$G$-space is \gen-special if and only if it is special.    The notion of an \gen-special $\PI$-$G$-space is only of substantial interest when  $G$ is finite, although \autoref{sectionFFG} gives some motivation that applies in general.

In \autoref{prelim}, the discussion of orthogonal $G$-spectra applies verbatim to compact Lie groups $G$ and everything else applies verbatim to general $G$. 
In \autoref{SegSec} and much of \autoref{SegHom}, everything applies verbatim to general $G$, except that everything referring to the families $\bF_n$ and to $\sF_G$ is best ignored.  The classical Segal machine appears in \autoref{notone} and in Propositions \ref{consist} and \ref{itsago}, as discussed in \autoref{bar2}.  We reiterate that this works precisely as written for any topological group $G$.  We reformulate  \autoref{Omnibus} to emphasize its perhaps surprising generality. 

\begin{thm}\label{Omnibus3}  Let $G$ be a topological group and let $X$ be a special
$\sF$-$G$-space.  Then the $\sW_G$-$G$-space that sends $A$ to $B(A^{\bullet},\sF,X)$
is positive linear and preserves connectivity. 
\end{thm} 

Although unrestricted, this result is the starting point towards understanding the genuine Segal machine.  Subsections \ref{SecSegDet} through \ref{WEDGE} apply to general topological groups and are largely devoted to the proof of \autoref{Omnibus3}. 
Focusing on categories of operators over $\sF$ rather than over $\sF_G$, the generalization of the Segal machine in \autoref{SegGen}, the generalization of the operadic machine in \autoref{OperadGen}, and the comparison of machines in \autoref{Equivalence}  work  just as written in the generality of classical infinite loop space machines for general topological groups.  

\subsection{The Segal machine for compact Lie groups}\label{Segcom}

We summarize our conclusions for the Segal machine.  For any topological group $G$, we have a functor $\bS_G^C$ that
takes  $\sF$-$G$-spaces to $G$-prespectra. It has four variants. The first uses 
Segal's original simplicially defined inductive machine.  The second is the conceptual machine,
and the third and fourth  are composites that first take $\sF$-$G$-spaces to $\sW_G$-$G$-spaces by a choice of bar constructions and then take $\sW_G$-$G$-spaces to $G$-prespectra, exactly as explained in \autoref{itsago}.
Restricting attention to special $\sF$-$G$-spaces $X$, the four choices are equivalent and the functor $\bS_G^C$ assigns a positive classical $\OM$-$G$-spectrum to $X$, together with a group completion $\et\colon X_1 \htp  (\bS_G^C X)_0 \rtarr \OM(\bS_G^C X)_1$.

If $H$ is a subgroup of $G$, with inclusion denoted $\io\colon H\rtarr G$, then we have various functors $\io^*$ that
restrict given $G$ actions to $H$ actions. By inspection, these functors commute  with all constructions in sight. For example,  $\io^*\bS_G^C X\iso \bS_H^C \io^*X$. These functors also commute with the group completion maps $\et$, so that  
$$\et\io^* = \io^*\et\colon \io^* X_1\rtarr \OM(\bS_H^C \io^*X)_1.$$ 

For finite groups $G$, if $X$ is \gen-special rather than just special, we have a genuine positive $\OM$-$G$-spectrum $\bS_G X$ with underlying classical positive $\OM$-$G$-spectrum $\bS_G^C X$.   Formally we are using a forgetful functor from genuine orthogonal $G$-spectra indexed on a complete $G$-universe to classical $G$-spectra indexed on the trivial 
universe. Restriction to subgroups works the same way on the level of genuine $G$-spectra as it does on the level of classical $G$-spectra.  Note that if $H$ is a finite subgroup of a topological group $G$ and $X$ is an $\sF$-$G$-space which is \gen-special as an $\sF$-$H$-space, then $\io^*\bS^C_G X\iso \bS^C_H\io^*X$ is the underlying classical $\OM$-$H$-spectrum of a genuine $\OM$-$H$-spectrum. 

Now let $G$ be a compact Lie group.  One might hope to apply the Segal machine to somehow construct genuine $\OM$ $G$-spactra from functors defined on 
$\sF$-$G$-spaces.  However, Blumberg shows in \cite[Appendix C]{Blum} that the Segal machine does not give such an infinite loop space machine even when $G=S^1$, and in fact there is no  condition on an $\sF$-$S^1$-space that can imply that it gives a positive  $\OM$-$S^1$-spectrum.   It does not help to switch from $\sF$ to functors defined on $\sF_G$, and this illustrates the problem.  We cannot expect functors defined on the category of finite $G$-sets to give homotopical information about representations that have isotropy groups of infinite index in $G$.  Nevertheless, as said before, when we restrict to  $G$-representations with isotropy groups of finite index, everything we say about finite groups applies equally well for compact Lie groups.   We explain in outline how this works.   In effect, the following  startng point describes restriction to those based finite $G$-CW complexes that are built up from finite $G$-sets.

\begin{defn}\label{WGf}  Define $\sW^f_G$ to be the full $G\sU_*$-subcategory of $\sW_G$ whose objects are the based finite $G$-CW complexes whose isotropy groups have finite index in $G$.   Note that $\sF_G \subset \sW_G$.  Define a $\sW^f_G$-$G$-space to be a $G\sU_{\ast}$-functor 
$\sW^f_G\rtarr \UG$.  By results of  Illman \cite{Illman2},  $S^V$ is in $\sW_G$ for all $G$-representations $V$.   Define $\sI_G^f$ to be the full subcategory of $\sI_G$ that is obtained by restricting to those $G$-representations such that $S^V$ is in $\sW_G^f$, and define $G\sS^f$ to be the analogue of the category of orthogonal $G$-spectra $G\sS$ (see  \autoref{OrthSpec}) defined by replacing $\sI_G$ with $\sI_G^f$.  Define $\bU_G^f\colon \Fun(\sW^f_G,\sT_G)\rtarr G\sS^f$ to be the evident forgetful functor.  It fits into the commutative diagram of forgetful functors
\[   \xymatrix{
\Fun(\sW_G,\UG) \ar[d]_{\bU_G}   \ar[r]  &  \Fun(\sW^f_G,\UG) \ar[d]^{\bU^f_G} \\
G\sS \ar[r] & G\sS^f. } \]
We say that a $G$-spectrum $E$ is a {\em positive finite index $\OM$-$G$-spectrum} if the maps $\tilde{\si}\colon E(V)\rtarr \OM^W E(V\oplus W)$ are weak $G$-equivalences when the isotropy groups of $V$ and $W$ have finite index in $G$ and  $V^G\neq 0$. 
\end{defn}

Our constructions work to obtain an orthogonal $G$-spectrum  $\bS_GX$ from a \gen-special $\sF$-$G$-space  $X$.  Equivalently, we can construct $\bS_G Y$ from a special $\sF_G$-$G$-space  $Y$ and start from there. We think of $Y = \bP X$ or, equivalently, $X = \bU Y$.  This works without restriction to give $\OM$-$G$-spectra when $G$ is finite.  In that case, $\bS_G X$ is a (genuine) positive $\OM$-$G$-spectrum whose bottom structural $G$-map is a group completion of $X_1$.  Prolongation of  $B(\sF,\sF, X)$ to a $\sW_G$-$G$-space does not give a positive $\OM$-$G$-spectrum; prolongation of $B(\sF_G,\sF_G, Y)$ to a $\sW_G$-$G$-space does.  This remains true for compact Lie groups if we restrict attention to $\sW^f_G$ and $\bU_G^f$.  Said differently, prolongation of $B(\sF_G,\sF_G, Y)$ gives rise to a positive finite index $\OM$-$G$-spectrum. 

Therefore, we now modify the definition, \autoref{Gmachine}, of an infinite loop space machine by replacing $G\bf{Sp}_{cp}$ with the category $G\bf{Sp}_{cp}^{f}$ of connective orthogonal $G$-spectra $\{E_V\}$ which restrict to positive $\OM$-$G$-spectra on the subuniverse obtained by allowing only those representations $V$ whose isotropy subgroups $H$ have finite index in $G$.  

Modifying everything by restricting isotropy  subgroups of representations to have finite index in $G$, everything in subsections  \ref{barFG} and  \ref{heredity} works with no changes other than notation to prove the following generalization of the fundamental theorem, \autoref{bigSegal},  about the Segal machine.

\begin{thm}\label{bigSegal2}  Let $G$ be a compact Lie  group and let $X$ be an \gen-special $\sF$-$G$-space. 
Then $\bS_G X$ is a positive finite index $\OM$-$G$-spectrum. Moreover,  if $S^V\in \sW^f_G$ and $V^G\neq 0$, then the composite 
$$ X_1 \rtarr B(\sF_G,\sF_G,\bP X)_1 = (\bS_G X)(S^0) \rtarr \OM^V(\bS_GX)(S^V)$$ 
of $\et_1$ and the structure $G$-map is a group completion. 
\end{thm}

The restriction to finite index isotropy groups is illuminated by consideration of duality and the Wirthm\"uller isomorphism.  The suspension $G$-spectrum 
$\SI^{\infty}_+ G/H$ is self dual if and only if $H$ has finite index in $G$.  In general, its dual is $\SI^{-L}\SI^{\infty}_+ G/H$. 
where $L$ is the tangent representation of $G/H$ at the identity coset.  Similarly, the Wirthm\"uller isomorphism says that, for an $H$-spectrum $D$, the left adjoint $G_+\sma_H D$ of the forgetful functor from $G$-spectra to $H$-spectra is equivalent to its right adjoint $F_H(G_+,D)$ when $H$ has finite index  in $G$, but is equivalent to   $F_H(G_+,\SI^L D)$ in general. A quick proof is given in \cite{MayWirth}. 	It is the appearance of $L$ in these results that prevents $\bS_G$ from giving positive
$\OM$-$G$-spectra and thus prevents us from having a fully satisfactory Segal machine for compact Lie groups.

Blumberg's paper \cite{Blum} focuses on finding necessary and sufficient conditions for a $\sW_G$-$G$-space to restrict to a positive $\OM$-$G$-spectrum for a compact Lie group $G$.  He finds that two conditions are necessary, one essentially being positive linearity and the other being closely related to the Wirthm\"uller isomorphism.  

\subsection{The operadic machine for compact Lie  groups}\label{opercom}

We must now distinguish between classical and genuine $E_{\infty}$ $G$-operads. A nonequivariant operad $\sC$ is said to be an $E_{\infty}$ operad if $\sC_G(n)$ is a universal principal $\SI_n$-bundle for each $n$.  Thus each $\sC_G(n)$ is $\SI_n$-free and contractible.

\begin{defn}  Let $G$ be a topological group.  A{\em classical} $E_{\infty}$ $G$-operad is a nonequivariant $E_{\infty}$ operad regarded as a $G$-trivial $G$-operad.  A $G$-operad $\sC_G$ is a {\em genuine} $E_{\infty}$ $G$-operad if
$\sC_G(n)$ is a universal principal $(G,\SI_n)$-bundle for each $n$.
\end{defn}

Thus the notion of a genuine $E_{\infty}$ $G$-operad is precisely the same as our earlier notion of an $E_{\infty}$ $G$-operad given in \autoref{Einfop}.  We have simply replaced the finite group implicit there by a general topological group here.  We emphasize  that genuine $E_{\infty}$ $G$-operads for compact Lie groups $G$ are useful for other purposes, even though their algebras do not give input to a fully general equivariant infinite loop space machine.  In particular, the linear isometries $G$-operad plays a central role in the construction of a symmetric monoidal category of $G$-spectra in which every object is an $\OM$-$G$-spectrum \cite{EKMM, MM}.   Such a construction is not possible for symmetric or orthogonal $G$-spectra.

For the classical operadic machine, we start with classical $E_{\infty}$ $G$-operads.   Here the nonequivariant recognition principle \cite{MayGeo} generalizes directly.  While \cite{GM3}  focused on finite groups, the basic theory of $G$-operads and  the recognition principle for classical $G$-spectra were developed for general topological groups in \cite[Section 2]{GM3}.   As long as we restrict to those representations $V$ with trivial $G$-action and to classical $G$-spectra, everything works exactly the same way as it does nonequivariantly.  The approximation theorem, \autoref{approx}, generalizes classically since a check of definitions shows that passage to $H$-fixed points gives an application of the nonequivariant approximation theorem for each $n$ (that is, for each $\bR^n$), and each closed subgroup $H$ of $G$.  

Now let $G$ again be a compact Lie group. Then  the operadic machine, like the Segal machine, generalizes to $G$ provided that we restrict attention to those $G$-representations  and $G$-spaces whose isotropy groups have finite index  in $G$.  Everything in the references \cite{GM3, Haus, RS} works exactly as stated for finite groups except for the approximation theorem, which addresses the homotopical behavior of $\al\colon \bC_G X \rtarr \OM^V\SI^V X$.  Even if $G = S^1$, the group completion property of $\al$ fails if $V$ is a general non-trivial representation of $G$, as was first noticed by Segal \cite{Seg2}. His counterexample is presented in  \cite[Appendix B]{Blum}.  

The only further discussion of the compact Lie case of which we are aware is that of Caruso and Waner \cite{CarWan}.   They address the approximation theorem in  \cite[Theorem 1.18]{CarWan}.  Although they restrict to finite groups in the {\em statement} of that result and have written things in a partially stable context, a minor reworking of their proof\footnote{We are indebted to Stefan Waner for discussion of this.} makes clear that the only use of the finiteness of $G$ is to ensure that the $G$-sets $G/H$ that they encounter are finite.   This gives the following conclusion.

\begin{thm}\label{classic2}   \autoref{classic}  remains true if we start with a compact Lie group $G$ and a genuine $E_{\infty}$ operad $\sC_G$ of $G$-spaces, provided that we require $V$ to have finite isotropy groups.  We do  not require the $\sC_G$-space $X$ to have finite isotropy groups.
\end{thm}

\subsection{The comparison theorem for compact Lie groups}\label{CONCLUDE}

From here,  everything that we have said in this paper applies verbatim to compact Lie groups, provided that we restrict attention to  finite index isotropy subgroups.  We focused on finite groups only for clarity of exposition.   We restate the comparison theorem, \autoref{WOW}, in this generality.  

\begin{thm}\label{WOWW}  Let $G$ be a compact Lie group and let $\sD_G$ be the category of operators over $\sF_G$ constructed from a genuine $E_{\infty}$ operad $\sC_G$ of $G$-spaces. For special $\sD_G$-$G$-spaces $Y$, there is a natural zigzag of equivalences of orthogonal $G$-spectra in $G\sS^f$  between $\bS_GY$  and $\bE_GY$.
\end{thm} 

Here we define the homotopy groups $\pi_*^H(X)$ of  $X\in G\sS^f$ for all subgroups $H$ of $G$ exactly as in \autoref{pistar}, but with the 
colimit running over those $V$ whose isotropy groups have finite index in $G$.  We are restricting to the finite index subuniverse of the complete universe used in \autoref{pistar}.  Just as in the complete universe case, we define an equivalence in  $G\sS^f$ to be a $\pi_*$-isomorphism.   With these equivalences, we then have the stable model structure on $G\sS^f$  exactly as in \cite[Theorem III.4.2]{MM}.  The discussion in the following epilogue applies verbatim in this more general context.

\section{Epilogue: model categorical interpretations}\label{epilogue}

Our work can be contextualized in terms of model categories,\footnote{As the referee very helpfully pointed out.} and we do so now. The essential point is that equivariant infinite loop space theory establishes what in 
\cite{MMSS} is called a ``connective Quillen equivalence" between the model category of grouplike input objects of either the Segalic or operadic sort to the connective part of the stable model category of orthogonal $G$-spectra. For the Segal machine, this is documented nonequivariantly in  \cite{BF, MMSS, Schwede}  and equivariantly in \cite{Oster, Sant}; we shall explain that result with different details here. The result for the operadic machine is documented in \cite{Units}, although the result was understood much earlier.

In fact, all of the categories in sight have standard and well-known model structures, so we shall just say what those are rather than reproving things from scratch. There are many adjoint pairs of functors in sight.  Most are Quillen adjoint pairs and some are (connective) Quillen equivalences.   It is reasonable to upgrade the definition of an infinite loop space machine, \autoref{Gmachine}, model theoretically to at least include model structures on the source and target categories and to require a characterization principle in the sense of \autoref{char}.  That is, one can insist on a connective Quillen equivalence between grouplike input and connective output.  Ignoring model structures, it is already apparent that we do have equivalences of homotopical categories in the sense of \cite{DHKS}.

On the input data side,  our categories have several relevant model structures, and a comparison among the roles they play is illuminating. We explain that in \autoref{inmodel}, after recalling some of the relevant structures on underlying ground categories in \autoref{ground}. On the output side, model structures on the relevant categories of $G$-spectra are also well understood and will be summarized as we go. The main point that needs investigation is the relationship between these various model structures on the one hand and the infinite loop space machines and their comparison on the other.  We discuss that for the (generalized) Segal machine in \autoref{modelSegal}.  Aside from understanding the difference between the classical and genuine dichotomy model theoretically, there is little that is new and we shall be brief. The answer for the operadic machine is easier, as in \cite{Units}.  
We discuss the model categorical relevance of the bar construction in \autoref{bar1}.

Much of what  we say in this section applies to general topological groups $G$ and classical $G$-spectra, but our focus is on finite groups $G$ and genuine $G$-spectra.

\subsection{Model structures on $G$-spaces and diagram $G$-spaces}\label{ground}

We have been working throughout in topologically enriched $G$-categories, as we discussed in some detail in \autoref{catpre}.   All have standard model structures. The three ground categories in our work are $G\sU_*$,  $\Fun(\PI,\UG)$ and $\Fun(\PI_G,\UG)$. These are all complete and are all enriched in $G\sU_*$.   Recall that we write  $G\sT$ for the full subcategory of $G\sU_*$ consisting of nondegenerately based  spaces, and we write $\PI[G\sT]=\PI[\sT_G]$ and $\PI_G[\sT_G]$ for the full subcategories of $\Fun(\PI,\UG)$ and $\Fun(\PI_G,\UG)$ satisfying the extra cofibration conditions spelled out in \autoref{Fspace}.

We start with the model structures on $G\sU$; they restrict to give model structures on $G\sU_{\ast}$ 
\cite[Theorem 15.3.6]{morecon}.  The standard $q$-model structure ($q$ for Quillen) on $G\sU$ has the weak $G$-equivalences as its weak equivalences and the Serre $G$-fibrations as its fibrations; these are the $G$-maps whose $H$-fixed point maps are weak equivalences or Serre fibrations for all (closed) subgroups $H$.  The cofibrations are the retracts of $G$-cell complexes, whose attaching maps are defined on $G$-spheres  $G/H\times S^n$.  The cofibrations of $G\sU_{\ast}$ are the based maps whose underlying unbased maps are $G$-cofibrations,  we refer to these as $q$-equivalences, $q$-fibrations, and $q$-cofibrations.  A recent discussion of these model structures on $G$-spaces, with their enrichments, is given in \cite[Section 1]{GMR}.   In $G\sU_*$, the $q$-cofibrations satisfy the  based HEP (homotopy extension property). 

The $h$-model structure ($h$ for homotopy or Hurewicz) on $G\sU$ has the $G$-homotopy equivalences as its weak equivalences and the Hurewicz $G$-fibrations (CHP) and Hurewicz $G$-cofibrations (HEP) as its fibrations and cofibrations.\footnote{Even nonequivariantly, the factorization axioms are subtle. Incorrect arguments in \cite{Cole1, MaySig, morecon} are corrected and generalized by Barthel and Riehl in \cite{BR, BMR};  the generalization is in a categorical context that includes $G\sU$.}  We refer to these as $h$-equivalences, $h$-fibrations, and $h$-cofibrations.  Since $h$-fibrations are $q$-fibrations and $h$-equivalences are $q$-equivalences, we have  a mixed model structure with $m$-fibrations = $h$-fibrations and $m$-equivalences = $q$-equivalences; the $m$-cofibrant objects are the spaces of the homotopy type of CW-complexes \cite{Cole}, \cite[\S17.4]{morecon}.   As discussed in \cite[\S4.1]{MaySig} and \cite{morecon}, classical algebraic topology implicitly focused on the mixed model structure on spaces, and the same is true equivariantly.  We shall return to the $m$-model structure in \autoref{bar1}, but we focus on $q$-model structures until then.

Since $\Fun(\PI,\UG)$ and $\Fun(\PI_G,\UG)$ are diagram categories, there is a large body of work that applies to them; see for example \cite{GM1, GMR, MM, Sant, Oster}.  Consider the following diagram of adjunctions 
\[\xymatrix@1{
G\sU_* \ar@<-.5ex>[r]_-{\bR}  & \ar@<-.5ex>[l]_-{\bL} \Fun(\PI,\UG)  \ar@<-.5ex>[r]_-{\bP}   
& \Fun(\PI_G,\UG) \ar@<-.5ex>[l]_-{\bU} \\} \]
Since $(\bP,\bU)$ is an adjoint equivalence, $(\bU,\bP)$ is also an adjunction, and we write $(\bL_G,\bR_G)$ for the displayed composite adjunction
$(\bL\bU,\bP\bR)$.   

\begin{defn} We define the level $q$-model structures on  $\Fun(\PI,\UG)$ and $\Fun(\PI_G,\UG)$ by letting a map  $f\colon X\rtarr Y$ be a level $q$-equivalence or level $q$-fibration if each $f_n$ or each $f_{\al}$ is a $q$-equivalence of $G$-spaces. We call these the {\em classical} model structure on $\Fun(\PI,\UG)$ and the {\em genuine} model structure on $\Fun(\PI_G,\UG)$.   These are often called projective model structures in the literature. 
\end{defn}

\begin{rem}  We have given the definition in terms of the $q$-model structure, but we could instead use the less familiar $h$ or $m$-model structure, with preference for the last.  Everything works the same way, using the foundational details in \cite{BR, BMR}.  
\end{rem}

These model structures are discussed nonequivariantly in \cite[Sections 6 and 17]{MMSS} and in a more general categorical context in \cite[Section 4.4]{GM1}.  Equivariant details are given in \cite[Sections 4 and 5]{Sant} and \cite[Section 4]{Oster}.   The following central remark uses \autoref{compFFG} to describe the adjoint equivalence  $(\bP,\bU)$ model theoretically. 

\begin{rem}\label{Both} We regard the level model structure on  $\Fun(\PI,\UG)$ as its classical model structure.  Under the adjoint equivalence $(\bP,\bU)$, it
transports to a new  {\em{classical}} model structure on $\Fun(\PI_G,\UG)$ whose weak equivalences are those maps that are level $G$-equivalences {\em{when restricted to $G$-trivial finite $G$-sets.}}  Similarly, we view the level model structure on  $\Fun(\PI_G,\UG)$ as its genuine model structure.  Using \autoref{compFFG},  we see that, under $(\bP, \bU)$, it transports to a new {\em{genuine}} model structure on $\Fun(\PI,\UG)$ whose weak equivalences are the \gen-level equivalences.    

With these definitions, the equivalence $(\bP,\bU)$ is a Quillen equivalence of model categories with respect to {\em either} the classical {\em or} the genuine level model structures on our two categories.   The identity functor on  $\Fun(\PI,\UG)$ or on $\Fun(\PI_G,\UG)$ is the left and right adjoint of a Quillen adjunction from the classical to the genuine model structure.  The latter is a localization of the former.
\end{rem}

\begin{rem}\label{Both2}  Pragmatically, we are only interested in {\em special} input.  Model theoretically, it therefore makes sense to localize the level model structures at Segal maps to obtain a new model structure on $\Fun(\PI,\UG)$ in which the fibrant objects are the (classically) special objects and a new model structure on  
$\Fun(\PI_G,\UG)$ in which the fibrant objects are the (genuinely) special objects. These localizations transfer along the equivalence $(\bP, \bU)$ to give a localization of the  classical model structure on $\Fun(\PI_G,\UG)$ in which the fibrant objects are the $\bP X$, where $X$ is classically special, and a localization of  the genuine model structure on $\Fun(\PI,\UG)$ in which the fibrant objects are the $\bU Y$ such that $Y$ is genuinely special; $(\bP,\bU)$ gives Quillen equivalences of classical and genuine model structures after localization. 
\end{rem}

\subsection{Quillen equivalences among input model categories}\label{inmodel}

We combine and expand the diagrams \autoref{InputDiag} and \autoref{InputDiag2} to discuss the input for our machines model categorically.  Assume given an $E_{\infty}$ operad $\sC_G$.  It has associated categories of operators $\sD$ and $\sD_G$, the first over $\sF$ and under $\PI$ and the second over $\sF_G$ and under $\PI_G$.  Consider the following schematic diagramm where all pairs of arrows represent adjoint functors. 

\begin{equation}\label{InputDiag3} 
\xymatrix{
\Fun(\sF,\UG) \ar@<.5ex>[dd]^{\xi^*}   \ar@<.5ex>[rr]^{\bP} & & \ar@<.5ex>[ll]^-{\bU}  \Fun(\sF_G,\UG) \ar@<.5ex>[dd]^{\xi^*_G}  \ar@<.5ex>[rr]^{\bP}
& & \Fun(\sW_G,\UG)  \ar@<.5ex>[ll]^{\bU}   \ar@<.5ex>[dd]^{\bU_{G\sS}} \\
& & & & \\
\Fun(\sD,\UG) \ar@<.5ex>[uu]^{\xi_*}  \ar@<.5ex>[rr]^{\bP} \ar@<-.5ex>[dd]_{\bL} & & \Fun(\sD_G,\UG)  \ar@<-.5ex>[ddll]_(.6){\bL_G}\ar@<.5ex>[ll]^-{\bU}  
 \ar@<.5ex>[uu]^{{\xi_G}_*} \ar[rr]^-{\bS_G} \ar[ddrr]_{\bE_G}   & &G\sS  \ar@<.5ex>[uu]^{\bP_{G\sS}}\\
 & & & & \\
\sC_G[G\sU_*]  \ar@<-.5ex>[uu]_{\bR} \ar@<-.5ex>[uurr]_(.45){\bR_G}  & & & &  G\sS \\} \end{equation}
Here we write $\xi_*$ and ${\xi_G}_*$ for the {\em categorical} rather than the derived left adjoints to $\xi^*$ and $\xi_G^*$.\footnote{We apologize to the reader for the conflict of notation.  However, when one interprets the bar construction as a cofibrant approximation, the conflict becomes a reasonably standard one.}  
Since $\xi^*\bU =\bU\xi_G^*$, $\bP\xi_*\iso {\xi_G}_*\bP$. By definition, $\bL_G = \bL\bU$ and $\bR_G = \bP\bR$.   Of course, $\bS_G$ and $\bE_G$ are the infinite loop space machines compared in \autoref{Equivalence}, while the composite  $\bU_{G\sS}\bP\bP$ is the conceptual Segal machine we started with. 

We ignore $\sW_G$ and $G\sS$ in this subsection and consider the six adjoint pairs displayed in the left square and triangle. We focus on finite groups and genuine model structures.

\begin{thm}\label{modelin}  The categories in the left square and the triangle have $q$-model structures, and the six adjoint pairs between them are all Quillen equivalences.
\end{thm}
We call all of these \emph{genuine} model structures.
\begin{proof} As noted in  Sections \ref{opermach} and \ref{sectionmonad}, the five categories on the lefthand side can be described as the categories of algebras over certain monads. As such, they 
 all have right adjoint forgetful functors to their ground categories, which are $G\sU_*$, 
 $\Fun(\PI,\UG)$, or $\Fun(\PI_G,\UG)$.  The genuine model structures on the ground categories are described in the previous section.    We say that a map  $f$ in any of our five categories is a  weak  equivalence or fibration if its underlying map in the ground category is a weak equivalence or fibration.
     Then
 \cite[Theorem 16.2.5]{morecon} applies to show that these specifications create the required $q$-model structures in our five categories.   It is then clear that the right adjoints in the left square and triangle preserve fibrations and acyclic fibrations and are thus Quillen right adjoints by \cite[Definition 16.2.1]{morecon}.    The discussion in \autoref{Both} applies verbatim to both pairs  $(\bP,\bU)$, showing that they are Quillen equivalences. That $(\xi_*,\xi^*)$ and $({\xi_G}_*, {\xi^G}^*)$ are Quillen equivalences follows from  \autoref{Reedyprop1} and Theorems \ref{Segalin2} and \ref{Segalin4}, which show that $\xi^*$ and $\xi_G^*$ become equivalences of homotopy categories with inverses the derived versions of  $\xi_{*}$ and ${\xi_{G}}_{*}$.  That  $(\bL,\bR)$ and $(\bL_G, \bR_G)$ are Quillen equivalences follows from
\autoref{Mayin}.
\end{proof}

\begin{rem}  Using \autoref{Both} , we see by the same argument that the previous result is true as stated for classical rather than genuine input to infinite loop space machines for arbitrary topological groups $G$.   In both the classical case and the genuine case,  we can localize at Segal maps as in  \autoref{Both2} and obtain the same conclusions as in \autoref{modelin}.
\end{rem}

The Segal machine focuses on properties of input data, whereas the operadic machine focuses on structure, as the following remark illuminates.  

\begin{rem}  An operad is $\SI$-free if $\SI_j$ acts freely on $\sC(j)$ for all $j$.  A $\SI$-free operad $\sC_G$ of $G$-spaces is {\em {classically}} 
$E_{\infty}$  if each $\sC_G(j)$ is contractible.  This makes sense for any topological group $G$, and classical nonequivariant operadic infinite loop space theory generalizes  effortlessly to give infinite loop space theory with classical (or naive) $G$-spectra as output.  In the body of the paper, we restricted attention to  genuine $E_{\infty}$ operads as defined in \autoref{Einfop}.  Of course, such operads are also classically $E_{\infty}$, giving forgetful passage from genuine to classical operadic infinite loop space theory.   
\end{rem}

\subsection{$\sF_G$-$G$-spaces, $\sW_G$-$G$-spaces, and orthogonal $G$-spectra}\label{modelSegal}

We now turn to the right square of \autoref{InputDiag3}.   We shall be brief since Blumberg has given a good summary  \cite[Appendix A]{Blum}, which itself is largely an equivariantization of material in \cite{MMSS} and a summary of material in \cite{MM}.  We first recall some of the model structures on (orthogonal) $G$-spectra from \cite{MM}.  Here $G$ is a compact Lie group.

\begin{defn} The category of orthogonal $G$-spectra has a level model structure \cite[Section III.2]{MM}.  Using representations as usual to define stable homotopy groups and stable (weak) equivalences \cite[Section  III.3]{MM}, we  define the stable model structure on $G\sS$ to have weak equivalences the $\pi_*$-isomorphisms and  cofibrations the $q$-cofibrations of the level model structure \cite[Section III.4]{MM}. The fibrant objects are the $\OM$-$G$-spectra. 
\end{defn}

We view $\Fun(\sW_G,\UG)$ as an intermediary between the input data of the variant Segal machines and the $G$-spectra output.   This is awkward model theoretically since the relevant comparisons are both given by {\em right adjoint} forgetful functors defined on $\Fun(\sW_G,\UG)$.  A drawback of model category theory in general is that its comparisons are designed to deal with Quillen adjunctions, which are intrinsically directional, and the comparison here is the composite of a left adjoint to $\Fun(\sW_G,\UG)$ and a right adjoint from it to $G\sS$.  We start with the absolute model structure, which makes sense for any topological group $G$.

\begin{defn}   In the {\em absolute} level model structure on  $\Fun(\sW_G,\UG)$, a map $Y\rtarr Z$ is a fibration or weak equivalence if  $Y(A)\rtarr Z(A)$ is a Serre $G$-fibration or a  weak $G$-equivalence for all $A\in \sW_G$.  
\end{defn}

The following observation about  the top right adjunction in \autoref{InputDiag3}  is immediate and applies to any topological group $G$.

\begin{prop}  $(\bP,\bU)$ is a Quillen adjunction from the genuine
model structure on  $\Fun(\sF_G,\UG)$  to the absolute level model structure on $\Fun(\sW_G,\UG)$.  
\end{prop}

Henceforward in this subsection,  take $G$ to be a compact Lie group.

\begin{defn}  In the {\em relative} level model structure on  $\Fun(\sW_G,\UG)$, a map $Y\rtarr Z$ is a fibration or weak equivalence if $Y(S^V)\rtarr Z(S^V)$ is a Serre $G$-fibration or a  weak $G$-equivalence for each $G$-representation $V$.  
\end{defn}

By construction,  the identity functor is the left adjoint of a Quillen adjunction from the relative to the absolute level model structure on $\Fun(\sW_G,\UG)$.
The forgetful functor  $\bU_{G\sS}$ in  \autoref{InputDiag3}  has a left adjoint prolongation functor  $\bP_{G\sS}$, constructed as in \cite[Definition 23.1]{MMSS}.   To interpret it model theoretically,  we use stable model structures on $\Fun(\sW_G,\UG)$.  These structures are discussed in \cite[A.2, A.4]{Blum}.  

\begin{defn} Define the relative and absolute stable model structures on $\Fun(\sW_G,\UG)$ to have weak equivalences the maps $f$ such that 
$\bU_{G\sS} f$  is a $\pi_*$-isomorphism.  The cofibrations are the maps that are cofibrations in the relative or absolute level model structure.  Then a $\sW_G$-$G$-space $Y$ is fibrant in the absolute stable model structure if and only if the structure map
$$  Y(A) \rtarr \OM^{W}Y(\SI^W A)$$
is a weak $G$-equivalence for all $A\in \sW_G$ and all $G$-representations $W$.   
\end{defn}

\begin{thm}\label{MSegal2}\cite[Theorems 1.3,  A.13 and A.19]{Blum}, \cite[Theorem 4.16]{MM}.  Let $G$ be a  compact Lie group.
\begin{enumerate}[(i)]
\item  The identity functor is the left adjoint of a Quillen equivalence from the relative to the absolute stable model structure on $\Fun(\sW_G,\UG)$.
\item  The pair $(\bP_{G\sS}, \bU_{G\sS})$ is a Quillen equivalence from the stable model structure on $G\sS$ to the relative stable model structure on $\Fun(\sW_G,\UG)$.
\end{enumerate}
\end{thm}

\begin{rem}\label{quibble1}  Just as we defined the stable model structures on  $\Fun(\sW_G,\UG)$ starting from level model structures, we can define stable model structures on the four categories in the left square in \autoref{InputDiag3}.  In all of them we say that a map is a stable weak equivalence if it becomes one after passing to 
$\Fun(\sF _G,\UG)$ and applying $\bP$ to land in  $\Fun(\sW_G,\UG)$.   Nonequivariantly, this idea goes back to \cite{BF} and later \cite{Schwede} and \cite[Theorem 18.3]{MMSS}.  It is pursued equivariantly in \cite{Oster, Sant1, Sant}.   It is usual to say that grouplike special objects are ``very special''.  The fibrant objects in these stable model structures are very special.   More precisely, the fibrant $\sF$-$G$-spaces are those that are grouplike and \gen-special and the fibrant $\sF_G$-$G$-spaces are those that are grouplike and special.  With the language of \autoref{char},  this model theoretic approach is studying a characterization principle rather than a recognition principle: it is blind to group completion.\end{rem}

Say that a $\sW_G$-$G$-space $Y$ is connective if the $G$-spectrum  $\bU_{G\sS}Y$ is connective.  The following is an equivariant version of \cite[Theorem 0.10]{MMSS}, and the proof is the same as there.  It remains true if we restrict attention throughout to  subgroups of finite index in a compact Lie group.  

\begin{thm}\label{MSegal1}  Let $G$ be a finite group.   Then $(\bP,\bU)$ is a connective Quillen equivalence from  $\Fun(\sF_G,\UG)$ with its stable model structure to $\Fun(\sW_G,\UG)$ with its absolute stable model structure.  
\end{thm}  
\begin{proof}
The unit $\Id \rtarr \bU\bP$ of the adjunction is the identity, hence a stable equivalence on all $X$.    The counit $\bP\bU \rtarr \Id$ is a stable equivalence when applied to connective stably bifibrant $\sW_G$-$G$-spaces,  just as in \cite[Lemma 18.10]{MMSS}.  
\end{proof}

The following is a quibble about the comparison between what we have proven and model categorical language.

\begin{rem}\label{quibble2}  Recall that a Quillen adjunction $(F,U)\colon \sC\rtarr \sD$ in which $U$  creates the weak equivalences in $\sD$ is a Quillen equivalence if and only the unit $\et\colon Y \rtarr UF Y$ is a weak equivalence for all cofibrant $Y$ \cite[Proposition 16.2.3]{morecon}.  In the introduction, we required of a model infinite loop space machine that it induce an equivalence of homotopy categories from grouplike input to connective output, without a requirement about the point-set level transformations that induce the equivalence.  The Segal machine $\bS_G$ is the composite of the left adjoint in \autoref{MSegal1} and the right adjoint in \autoref{MSegal2}.   It does induce such a connective equivalence of homotopy categories, but  the transformations that exhibit the equivalence are not the unit and counit of a single point-set level Quillen adjunction.
\end{rem}  

As already mentioned, the operadic machine with target category Lewis-May $G$-spectra admits a cleaner model theoretic interpretation in which  the defects of Remarks \ref{quibble1} and \ref{quibble2} are circumvented \cite{MayTwist}.

\subsection{The bar construction model theoretically}\label{bar1}  For background, we recall that, classically, many foundational questions were sensibly swept under the rug with the blanket assumption that all given spaces are of the homotopy types of CW complexes.   In more modern language, that means that all given spaces are cofibrant in the mixed model structure \cite{Cole}, \cite[Section 17.3]{morecon}, and that works just as well for $G$-spaces for any topological group $G$. 

The classical theorems about transfer of $q$-model structures along adjunctions (e.g. \cite[Theorem 16.2.5]{morecon})  have analogs for transfer of $h$-model structures along adjunctions \cite[Theorems 6.12-6.14] {BMR}.  From that starting point, we claim that  all of the $q$-model categories discussed above, including $\sW_G$-$G$-spaces and $G$-spectra,  can also be given $h$-model structures and thus also $m$-model structures.    We shall not prolong matters by giving proofs, but the idea should be clear.   The main point is to check that the hypotheses of the results of \cite{BMR} just cited are satisfied by all adjunctions in sight.
We leave details to the interested reader.

Therefore, assuming that all of our input $G$-spaces, including the $G$-spaces $\sC_G(n)$ of operads, have the homotopy types of $G$-CW complexes, our bar constructions are cofibrant in the relevant mixed model structures, so that our categorical and monadic bar constructions take (levelwise) $G$-CW homotopy types to $G$-CW homotopy types.  We note here that the categorical bar constructions used in the Segal machine can be viewed as  special cases of the monadic bar constructions used in the operadic machine \cite[Section 7]{MZZ}.  These observations lead to the following folklore theorem.  It is labelled as such only because there is as yet no fully worked out proof in the literature, but we believe that arguments  in \cite{BMR, BR} can be adapted to fill the gap. 

\begin{fthm}  The bar construction approximations that we start from in both the Segal and operadic machines can be interpreted as highly structured cofibrant approximations in $m$-model structures.
\end{fthm}

\begin{rem}\label{Shul} While our focus is on our topological context, it has been understood since the 1970's that bar constructions are ``like'' cofibrant approximations.  One precise formulation of ``like'' is given in \cite{Shul} in the context of the homotopical categories of \cite{DHKS}.\footnote{Bar constructions are not mentioned in \cite{DHKS}, but the definition of left deformable functors \cite[Section 40]{DHKS} seems designed to accommodate them.}  There are many papers with results saying implicitly that bar constructions ``are" cofibrant approximations in certain contexts, without actually mentioning bar constructions explicitly.  See for example \cite[Corollary 14.8.8]{Hirsc},  \cite{Gamb}, and \cite[Propositions 6.9 and 6.10]{Horel}. A general categorical discussion of when bar constructions can be interpreted as cofibrant approximations is given by Shulman in \cite[Sections 7 and 8]{Shul1}.\footnote{Those sections are independent of the type theory focus of that paper.}  In particular, focusing on simplicial model categories, \cite[Corollary 8.16]{Shul1}  gives an explicit general context in which bar constructions are cofibrant replacements, and that can be reworked topologically.  
\end{rem}

\section{Appendix:  Bearding functors $\sD \rtarr G\sU_*$}\label{beard} 

As in \cite{MT}, but working equivariantly, we first generalize \autoref{MTbeard} from $\sF$  to a general $G$-category of  operators $\sD$ associated to a $G$-operad $\sC_G$ and then prove the generalization.

\begin{prop}\label{MTbeard2}
There is a functor $W\colon \Fun(\sD, G\sU_*)\rtarr   \Fun(\sD, G\sU_*)$  together with a natural transformation $\pi\colon W\rtarr \Id$ such that, for any functor $X\colon \sD\rtarr G\sU_*$,  $WX$ is in $\DdashG$ and $\pi\colon WX\rtarr X$ is a levelwise $G$-homotopy equivalence. 
\end{prop}

We break the proof of \autoref{MTbeard2} into  steps, following \cite[Appendix B]{MT}.

\begin{defn}[The definition of $W_nX$ and of $\pi\colon W_nX \rtarr X_n$]  Let $I = [0, 1]$ have basepoint $1$ and the binary operation 
specified by $(s, t) \mapsto \min \{s, t\}$;\footnote{We are following \cite{MT} here, but the product $st$ would work just as well.} thus $I$ is a topological abelian monoid with unit $1$ and so determines an $\sF$-space $I\colon \sF \rtarr \sT$ with $n$th  space $I^n$. Explicitly, the map 
$\ph_\ast\colon I^m\rtarr I^n$ associated to a map $\ph\colon \bm\rtarr \bn$ in  $\sF$ is specified by 
\begin{equation}\label{sillyph}
\ph(s_1, \dots, s_m) = (t_1, \dots, t_n)  \, \, \text{where} \, \,  t_j= \min_{\ph(i) = j}{s_i}.
\end{equation}
Here the minimum of the empty set is interpreted as the basepoint $1$, so that $t_j=1$ if $j\notin \im(\phi)$.   Note that $I$ is an $\sF$-space; for an injection $\ph\colon \bm \rtarr \bn$, the map $\phi_\ast \colon I^m \rtarr I^n$ is indeed a $\SI_\ph$-cofibration. We give $I$ the trivial $G$-action and regard $I$ as a functor $\sD\rtarr \TG$ by pullback along $\xi\colon \sD \rtarr \sF$, thus obtaining a $\sD$-$G$-space.

For a functor $X\colon \sD\rtarr G\sU_*$,  we have a product functor  $I\times X\colon \sD \rtarr G\sU_*$ with $n$th based $G$-space $I^n\times X_n$, and the projection $I \times X \rtarr X$ is natural with respect to $\sD$.  Define $W_nX$ to be the subspace of $I^n\times X_n$ which consists of those points $(t,x)$, 
$t = (t_1, \dots, t_n)$, such that $x\in (\io_t)_*(X_m)$, where $\io_t\colon \bm\rtarr \bn$ is that ordered injection whose image is the set of $j$, $1\leq j\leq n$, such that  $t_j=0$.  Note that 
$\io_t =\id\colon \bn\rtarr \bn$ if all $t_j=0$ and that $m=0$ if no $t_j=0$.  The latter implies that the basepoint $((1,\dots, 1), *)$ of $I^n \times X_n$ is in $W_n X$.

Define a based $G$-map $\pi\colon W_nX \rtarr  X_n$ by $\pi(t,x) = x$. Observe that $X_n$ embeds in $W_nX$ as the set of points $((0,\dots, 0), x)$ and is an unbased $G$-deformation retract of $W_nX$ via the deformation $h$ given by $h((t, x), s) = (t-st, x)$ for $0\leq s\leq 1$, where $st = (st_1,\dots, st_n)$.  Thus $\pi$ is a levelwise $G$-homotopy equivalence, although the deformations are not based homotopies.
\end{defn}

Quoting \cite[Remarks B.2]{MT},  $W_nX$ should be thought of as obtained from $X_n$  by growing a beard consisting of a thick whisker $I^{n-m}$ attached to each point of $\io(X_m)$ for each ordered injection $\io\colon \bm\rtarr \bn$, but with (implicit) identifications of partial whiskers corresponding to compositions of injections. When  the underlying $\PI$-$G$-space of $X$ is $\bR Y$ for a based $G$-space $Y$, $W_nX$ is $(I \wed Y)^n$, where $I$ is given the basepoint $0$ when forming the wedge. 

\begin{lem} As $n$-varies,  the $W_n X$ define a subfunctor  $WX$ of $I\times X$.
\end{lem}
\begin{proof}
The claim is that the action map  $\sD(\bm,\bn) \sma  (I^m \times X_m) \rtarr I^n\times X_n$ restricts to a map  
$\sD(\bm,\bn)\sma W_m X  \rtarr W_n X$.  Let  
\[(\ph;c) = (\ph;c_1, \cdots, c_n)\in \sD(\bm,\bn),\]
 where $\ph\colon \bm\rtarr \bn$ is a map in $\sF$ and $c_j$ is an element of  $\sC(\ph_j)$, and let $(s, x)$ be an element of $W_mX$. We must show that the element 
 $(\ph;c)(s,x) = (t, (\ph;c)(x))$ of $I^n\times X_n$ is in $W_nX$.  Let  $\io_s\colon \bp \rtarr \bm$ be that ordered injection  whose image is the set of $i$, $1\leq i\leq m$, such that  $s_i=0$ and let  $\io_t\colon \bq\rtarr \bn$
be that ordered injection  whose image is the set of $j$, $1\leq j\leq n$, such that  $t_j=0$. We are given that  $x = \io_s(x')$ for some $x'\in X_p$, and we must show that 
$(\ph;c)(x)$ is in $\io_t(X_q)$. It suffices to show that there exists $(\ze,d)\in \sD(\bp,\bq)$ such that 
\[(\ph;c)\com \io_s =  \io_t \com (\ze;d).\]
Indeed, if such $(\ze,d)$ exists, then 
\[(\phi;c)(x) = (\phi;c)(\io_s(x'))=\io_t((\ze;d)(x')),\]
as needed.

Since $t_j = \min_{\ph(i)=j}s_i$, we have that $i\in \im(\io_s)$ implies that $\ph(i) \in \im (\io_t)$.   There is thus a unique map $\ze\colon \bp\rtarr \bq$ in $\sF$ such the $\io_t \com \ze = \ph\com \io_s$.  When $\sD =\sF$, we are done.

For  $\sD = \sD(\sC_G)$, the definition of $\sD$ gives that 
\[(\ph, c)\com \io_s = (\ph\com \io_s;\bar{c})\]
 where $\bar{c} = (\bar c_1,\cdots, \bar c_m)$ for certain $\bar c_j$.  Since $\ph\com \io_s = \io_t\com \ze$, we have $\bar c_j = \ast\in \sC_G(0)$ if $j\notin \im(\io_t)$. Let $d$ be the $q$-tuple obtained by deleting from $\bar{c}$ the entries corresponding to those $j\notin \im(\io_t)$. Using the definition of composition in $\sD$ one can check that
 $$  (\ph\com \io_s;\bar{c}) = \io_t \com (\ze; d).$$ \qedhere

\end{proof}

The following result completes the proof of  \autoref{MTbeard2}.

\begin{lem} $WX$ is in $\DdashG$.
\end{lem}
\begin{proof}  We must show that if $\ph\colon \bm \rtarr \bn$ is an injection in $\PI$ and $\SI_{\ph}$ is the group of permutations $\si\colon \bn\rtarr \bn$ such that $\si \ph = \ph$, then 
$\ph\colon  W_mX\rtarr W_nX$ is a $(G\times \SI_{\ph})$-cofibration.  We claim first that
$$ \ph(W_mX) = \{(t,x) | t_j = 1  \ \ \text{for all} \ \ j\notin \im(\ph) \} \subset W_n X.$$
Since $\ph$ is an injection and $WX\subset I\times X$, we certainly have $t_j=1$ if $j\notin \im(\ph)$ for any point $(t,x)\in  \ph(W_mX)$.  
Conversely, consider $(t,x)\in  W_nX$ such that $t_j = 1$ for all $j\notin \im(\ph)$.  Let $\io_t\colon \bp \rtarr \bn$ be that ordered injection whose image is the set of $j$, $1\leq j\leq n$, such that  $t_j=0$.  Our conditions imply that $\im(\io_t) \subset \im(\ph)$, hence there is a unique injection $\ze\colon \bp\rtarr \bm$ in $\PI$ such that   $\io_t = \ph \com \ze$.  We have that $x\in \im(\io_t)$ and thus $x \in \im(\ph)$ and $(t,x) \in \ph(W_m  X)$.  This proves the claim.  

It follows readily from this description that $\ph\colon  W_m X \rtarr W_nX$ is a $G\times \SI_{\ph}$-cofibration.   To see this, notice that $\ph$ is induced from $\ph\colon I^m \rtarr I^n$ of \autoref{sillyph}, which is a $\SI_{\ph}$-cofibration as mentioned above, and the isomorphism  $\ph\colon W_m X \rtarr \ph(W_mX)$. 
\end{proof}

\section{Appendix: Realization of levelwise $G$-cofibrations and $G$-equivalences}\label{COF}

We here prove the following results, which generalize Theorems \ref{PlenzWeak} and \ref{PlenzCof} from finite groups to topological groups  $G$.
As before, we assume that the unit of $G$ is a nondegenerate basepoint and we understand subgroups of $G$ to be closed.

\begin{thm}\label{PlenzWeak2} 
Let $f_{*}\colon X_{*}\rtarr Y_{*}$ be a map of Reedy cofibrant simplicial $G$-spaces such that each $f_n$ is a weak $G$-equivalence. 
Then the realization $|f_{*}|\colon |X_{*}|\rtarr  |Y_{*}|$ is a weak $G$-equivalence.
\end{thm}

\begin{thm}\label{PlenzCof2} 
Let $f_{*}\colon X_{*}\rtarr Y_{*}$ be a map of Reedy cofibrant simplicial $G$-spaces such that each $f_n$ is a $G$-cofibration. 
Then the realization $|f_{*}|\colon |X_{*}|\rtarr |Y_{*}|$ is a $G$-cofibration.
\end{thm} 

\autoref{PlenzWeak2} is an immediate consequence of the nonequivariant version and the following result, which is often taken for granted. Jonathan Rubin noticed this gap in the literature and provided the proof that follows.

\begin{lem}\label{Rubin}  Geometric realization of Reedy cofibrant simplicial $G$-spaces commutes with passage to $H$-fixed points for  subgroups $H$ of $G$.
\end{lem}
\begin{proof}
For a $G$-space  $X$,  $X^H$ is the equalizer exhibited in the diagram
$$\xymatrix@1{ X^H \ar[r]^-{i} &  X   \ar@<.5ex>[r]^-{\DE}   \ar@<-.5ex>[r]_-{(h)}  & \prod_{h\in H} X, \\}   $$
where $(h)$ denotes multiplication by $h$ on the $h$th component.
When $G$ is not finite, this is an infinite limit, and realization does not generally commute with infinite limits, not even with infinite products.   Consider the following diagram, where $X_{*}$ is a simplicial $G$-space, $K$ is a space, and $f\colon K \rtarr |X_*|$ is a map such that   $\DE \com f = (h) \com f$.

$$\xymatrix{   |X_{*}^H| \ar[r]^-{|i_*|} &  |X_*|   \ar@<.5ex>[rr]^-(.7){|\DE_*|}   \ar@<-.5ex>[rr]_-(.7){|(h)_*|}   \ar@<.5ex>[drr]^-(.7){\DE}   \ar@<-.5ex>[drr]_-(.7){(h)} 
&  & |\prod_{h\in H} X_*| \ar[d]^{\pi} \\
&  K\ar[u]_{f}   \ar@{-->}[ul]_{\tilde{f}} & & \prod_{h\in H} |X_*|  \\}   $$
The top row is an equalizer and $\pi$ is the natural map.  The maps to products have components given by multiplication by $h$, and the two evident triangles clearly commute.  Moreover,  $|\DE_*|\com f = |(h)_*|\com f$ by inspection of the unique representation of points $f(k) \in |X_*|$ in nondegenerate form \cite[Lemma 11.3]{MayGeo}.  Since the top row is an equalizer, there is a unique map  $\tilde{f}\colon K\rtarr |X_*^H|$ such that $|i_*|\com \tilde{f}= f$.   Therefore $|i_*|$ is the equalizer of $\DE$ and $(h)$, giving the conclusion.
\end{proof}

To prove \autoref{PlenzCof2}, we will need the following two standard results about $G$-cofibrations. Recall that we are working in the category of compactly generated weak Hausdorff spaces, so that  all cofibrations are closed inclusions. We use the convention of identifying the domain of a cofibration with its image.

 \begin{prop}\label{glueingcof} Consider the following diagram in $G$-spaces. 
 \[\xymatrix{
 A \ar[rr] \ar[dd]_i \ar[rd]^{\al} && C \ar '[d] [dd] \ar[rd]^{\ga}\\
 & A'\ar[rr] \ar[dd]_<(.3){i'} && C'\ar[dd]\\
 B \ar '[r] [rr] \ar[rd]_{\be} && D \ar[rd]\\
 & B'\ar[rr]  && D'\\
 }\]
 Assume that the front and back faces are pushouts, $\al$, $\be$, $\ga$, $i$ and $i'$ are $G$-cofibrations, and $A'\cap B = A$ (as subsets of $B'$). Then the map $D \rtarr D'$ is also a $G$-cofibration.
\end{prop}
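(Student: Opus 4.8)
The plan is to reduce to the absolute case and then build up $D'$ from $D$ by attaching the cells of $B'$ relative to $B$. First I would observe that since all the maps in sight are closed inclusions and we are in a convenient category, the hypothesis $A'\cap B = A$ lets us treat $A$, $A'$, $B$, $C$ as honest subspaces of $D'$ via the pushout structures, with $D = B\cup_A C$ and $D' = B'\cup_{A'}C'$. The key point is that, because the back square is a pushout along the $G$-cofibration $\al\colon A\rtarr A'$, we can realize $A'$ as $A$ together with the ``new part'' $A'\setminus A$ (more precisely, using an NDR-pair structure for $(A',A)$), and similarly $B' = B\cup_A A'$ has a description making $(B',B)$ a $G$-cofibration by hypothesis; the same goes for $(C',C)$.

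The main step is to show $(D',D)$ is a $G$-NDR pair, using \cite[Proposition A.2.1]{BV} as recalled in the preliminaries. The natural approach is to produce a $G$-homotopy $h\colon D'\times I\rtarr D'$ deforming $D'$ into $D$ (rel $D$) together with a map $u\colon D'\rtarr I$ with $u^{-1}(0) = D$, assembled from the corresponding data for the $G$-cofibrations $i'\colon A'\rtarr B'$ and $\ga\colon C\rtarr C'$. Concretely: choose NDR data $(h_{B'},u_{B'})$ for $(B',B)$ and $(h_{C'},u_{C'})$ for $(C',C)$; the obstruction to gluing these over $D = B\cup_A C$ is that they need not agree on the image of $A'$. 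Here I would exploit that $(B',B)$ arises as a pushout of $(A',A)$, so the NDR data for $(B',B)$ can be chosen to \emph{extend} fixed NDR data for $(A',A)$ — this is the standard fact that a pushout of an NDR pair along a closed inclusion is an NDR pair compatibly. Doing the same for $(C',C)$ with the \emph{same} chosen data on $(A',A)$ (this is where $A'\cap B = A$ and the compatibility of the two pushouts with $\al$ is essential), the two homotopies and two functions then agree on the overlap $D\cup A' \subseteq D'$, hence glue to give $(h,u)$ on all of $D' = D\cup_{A'} C' = B'\cup_{A'}C'$. Taking $G$-fixed choices throughout (possible since $\al,\be,\ga,i,i'$ are $G$-cofibrations, so by the preliminaries each gives a $G$-NDR pair) makes everything equivariant.

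The hard part will be the bookkeeping that makes the NDR data for $(B',B)$ and $(C',C)$ genuinely \emph{restrict} to a common choice on $(A',A)$: one must check that the retraction-and-homotopy produced by a pushout construction can be taken to be the pushout of the data downstairs, and that the identification $A'\cap B=A$ guarantees no clash when the two are overlaid on $D'$. Once that compatibility is in hand, the verification that the glued $(h,u)$ satisfies the NDR axioms ($h_0=\id$, $h_t|_D = \id$, $h_1(u^{-1}[0,1)) \subseteq D$, $u^{-1}(0)=D$) is routine and local on the two pieces $B'$ and $C'$. I would also note, as a sanity check, that this proposition is the equivariant analogue of the classical gluing lemma for cofibrations (e.g. as in \cite[p.~43]{Concise} or the cube lemma for the Hurewicz model structure), and the argument is exactly the equivariant translation, using that passage to $H$-fixed points of a $G$-NDR pair is an $H$-NDR pair as recorded in the preliminaries, so one could alternatively verify the statement on all fixed-point spaces $(-)^H$ simultaneously and invoke the characterization of $G$-cofibrations via fixed points.
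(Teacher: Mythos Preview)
The paper does not actually prove this; it cites \cite[Proposition 2.5]{LewisOMSIX} for the nonequivariant statement and remarks that the equivariant argument goes through unchanged. Your proposal is an attempt to supply what the paper leaves to that reference.

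The overall strategy---produce $G$-NDR data for $(D',D)$ by assembling data on the pieces $B'$ and $C'$---is the right shape, but the key step does not go through as written. You assert that ``$(B',B)$ arises as a pushout of $(A',A)$'' and use this to extend chosen NDR data for $(A',A)$ to compatible NDR data for $(B',B)$, and likewise for $(C',C)$. But the left face of the cube (with corners $A,A',B,B'$) is \emph{not} assumed to be a pushout; only the front and back faces are. Without that, there is no reason an NDR deformation of $B'$ into $B$ should carry $A'$ into $A'\cap B=A$, let alone restrict to prescribed data on $A'$, so the two NDR structures cannot be made to agree over $A'$ by the mechanism you describe. The proof in the cited reference does not glue two independently chosen NDR structures; it combines the given NDR data for $(A',A)$, $(B',B)$, and $(C',C)$ via explicit $\min/\max$ formulas in the $u$-functions to build a retraction of $D'\times I$ onto $D'\times\{0\}\cup D\times I$ directly, and the hypothesis $A'\cap B=A$ is precisely what makes those formulas well defined on the overlap.

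Your fallback suggestion---verify the statement on $H$-fixed points for all $H$ and deduce the $G$-statement---would require a characterization of $G$-cofibrations by their fixed-point maps, which is not available in general and is not what the paper's preliminaries record. The intended equivariant upgrade is simply to run the nonequivariant NDR-pair argument with $G$-maps throughout.
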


\begin{proof} 
The nonequivariant result is \cite[Proposition 2.5]{LewisOMSIX} and, as pointed out there, the equivariant proof goes through the same way.
\end{proof}

The following result is  given in \cite[Proposition A.4.9]{BVbook} and \cite[Lemma 3.2.(a)]{LewisOMSIX}.

\begin{prop}\label{sequentialcolim}
Let $A_0 \rtarr A_1 \rtarr A_2 \rtarr \cdots$ and $B_0 \rtarr B_1 \rtarr B_2 \rtarr \cdots$ be diagrams of $G$-cofibrations and let $f_i \colon A_i \rtarr B_i$ be a map of diagrams such that each $f_i$ is a $G$-cofibration. Assume moreover that for every $i\geq1$, $A_{i-1}=A_i\cap B_{i-1}$. Then the induced map $\colim_i A_i \rtarr \colim_i B_i$ is a $G$-cofibration.
\end{prop}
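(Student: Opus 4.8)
The plan is to prove Proposition \ref{sequentialcolim} by reducing it to the gluing lemma for $G$-cofibrations (Proposition \ref{glueingcof}) applied at each stage of the colimit, followed by a standard passage to sequential colimits of $G$-cofibrations. The key structural observation is that the hypothesis $A_{i-1} = A_i \cap B_{i-1}$ lets us express each $B_i$ as a pushout $B_i \cong B_{i-1} \cup_{A_{i-1}} A_i$, so that the sequence of maps $f_i$ is built up by iterated cell-like attachments.

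First I would set up, for each $i \geq 1$, the commutative cube whose back face is the pushout square with corners $A_{i-1} \to A_i$, $A_{i-1} \to B_{i-1}$ (this is $f_{i-1}$), and pushout $C := B_{i-1} \cup_{A_{i-1}} A_i$, and whose front face has corners $A_i \to A_i$ (identity), $A_i \to B_i$ (this is $f_i$), and the identity map on $B_i$. The vertical maps of the cube are the structure maps $A_{i-1} \to A_i$, $B_{i-1} \to B_i$, and a comparison map $C \to B_i$. I must first check that $C \to B_i$ is an isomorphism: the map $B_{i-1} \sqcup A_i \to B_i$ (induced by $f_i$ on the $A_i$ summand and the structure map on $B_{i-1}$) passes to the pushout over $A_{i-1}$ precisely because $A_{i-1} = A_i \cap B_{i-1}$ inside $B_{i-1}$ ensures the two composites into $B_i$ agree and that no further identifications occur; surjectivity follows since $B_i = B_{i-1} \cup A_i$ as subsets. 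This identification is exactly parallel to the nonequivariant arguments for relative cell complexes.

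Next I would apply Proposition \ref{glueingcof} to this cube. In the notation there, the roles are: $A = A_{i-1}$, $B = B_{i-1}$, $C = A_i$, $A' = A_i$, $B' = B_i$, $C' = A_i$, with $\alpha = (A_{i-1}\to A_i)$ a $G$-cofibration by hypothesis, $\beta = f_{i-1}$ a $G$-cofibration by hypothesis, $\gamma = \mathrm{id}_{A_i}$ a $G$-cofibration, $i = A_{i-1}\to B_{i-1}$, no wait — I should match carefully so that the hypothesis $A'\cap B = A$ becomes $A_i \cap B_{i-1} = A_{i-1}$, which is exactly our standing assumption; then the conclusion gives that the induced map $D \to D'$, namely $B_{i-1}\cup_{A_{i-1}} A_i \cong B_i \to B_i$ — that is not quite what I want. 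The correct arrangement is to take the cube with back face $\{A_{i-1}, A_i, B_{i-1}, B_{i-1}\cup_{A_{i-1}} A_i\}$ and front face $\{A_i, A_i, B_i, B_i\}$, reading the conclusion as: the structure map $B_{i-1} \to B_i$ is a $G$-cofibration. Thus from the hypotheses that $f_{i-1}$, $f_i$, $A_{i-1}\to A_i$ are $G$-cofibrations and $A_i \cap B_{i-1} = A_{i-1}$, Proposition \ref{glueingcof} yields that $B_{i-1} \to B_i$ is a $G$-cofibration. (The $G$-cofibration hypotheses on $i$ and $i'$ in \ref{glueingcof} are supplied by $A_{i-1}\to A_i$ and its pushout.)

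Finally, with both sequences $A_0 \to A_1 \to \cdots$ and $B_0 \to B_1 \to \cdots$ now known to consist of $G$-cofibrations, I would invoke the standard fact that a sequential colimit of $G$-cofibrations is a $G$-cofibration (each $X_i \to \colim X_j$ is a $G$-cofibration, via the $G$-NDR-pair criterion of the remark in \S\ref{simpre} together with compatibility of the representations/homotopies up the tower, as in \cite[Proposition A.4.9]{BVbook}), and that the induced map on colimits of a levelwise $G$-cofibration of such towers is a $G$-cofibration — which is again precisely the content being cited, so in fact the cleanest route is simply: \emph{the statement as worded is exactly \cite[Proposition A.4.9]{BVbook}}, or deducible from it by the cube argument above. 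The main obstacle is purely bookkeeping: getting the cube's labels to match Proposition \ref{glueingcof} so that the intersection hypothesis $A'\cap B = A$ lines up with $A_i \cap B_{i-1} = A_{i-1}$, and verifying the pushout identification $B_i \cong B_{i-1}\cup_{A_{i-1}} A_i$; once those are in place the result is formal. Given that the paper explicitly attributes this to \cite[Proposition A.4.9]{BVbook} and \cite[Lemma 3.2.(a)]{LewisOMSIX}, I would in practice just cite those references rather than reproduce the cube argument, noting that the equivariant case goes through verbatim from the nonequivariant one since all the ingredients (NDR pairs, pushouts along cofibrations, Lillig's theorem) have equivariant analogues already recalled in \S\ref{simpre}.
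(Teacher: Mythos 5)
The paper offers no proof of \myref{sequentialcolim} at all: it is quoted from \cite[Proposition A.4.9]{BVbook} and \cite[Lemma 3.2.(a)]{LewisOMSIX}, so your closing suggestion to simply cite those sources is exactly what the paper does. The trouble is the cube argument you sketch on the way there, which contains a genuine error. From the hypothesis $A_{i-1}=A_i\cap B_{i-1}$ you conclude that $B_i\cong B_{i-1}\cup_{A_{i-1}}A_i$, asserting that ``$B_i=B_{i-1}\cup A_i$ as subsets.'' That union condition is not among the hypotheses and is false in the situations where the proposition is actually applied: in the proof of \myref{PlenzCof} one takes $A_p=F_p|X|$ and $B_p=F_p|Y|$, and $F_p|Y|$ is in general much larger than $F_{p-1}|Y|\cup |f|(F_p|X|)$ (any nondegenerate $p$-simplex of $Y$ not in the image of $f$ contributes points outside that union); the same is true of the application in \S\ref{DXReedy}. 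Moreover, even granting the identification, what your cube via \myref{glueingcof} would deliver is that $B_{i-1}\rtarr B_i$ is a $G$-cofibration --- but that is already a standing hypothesis (the $B_i$ form a diagram of $G$-cofibrations), so that step is circular and in any case beside the point.

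The actual content of \myref{sequentialcolim} is the cofibration property of the induced map $\colim_i A_i\rtarr \colim_i B_i$, and your proposal never engages with it: your final step observes that this ``is again precisely the content being cited.'' A proof along the lines of the references must construct compatible NDR representations (retractions and homotopies) up the two towers, and the intersection hypothesis $A_{i-1}=A_i\cap B_{i-1}$ is exactly what allows the data chosen through stage $i-1$ to be extended at stage $i$ so that everything passes to the colimit; that is where the real work lies, and it is not recoverable from \myref{glueingcof} together with a bare ``sequential colimits of cofibrations are cofibrations.'' So citing \cite[Proposition A.4.9]{BVbook} and \cite[Lemma 3.2.(a)]{LewisOMSIX} is fine and matches the paper, but the cube argument should be deleted rather than repaired.
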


The following observation is the key to using these results to prove \autoref{PlenzCof}.

\begin{lem}\label{intersectionsi}
Let $f_{*} \colon X_{*} \rtarr Y_{*}$ be a map of simplicial $G$-spaces that is levelwise injective. Then for all $n\geq 1$ and all $0\leq i \leq n-1$,
\[s_i(Y_{n-1}) \cap f_n(X_n) = f_n s_i(X_{n-1})\]
and
\[L_n Y \cap f_n(X_n) = f_n(L_n X).\]
\end{lem}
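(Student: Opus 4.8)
\textbf{Proof plan for \myref{intersectionsi}.}

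The plan is to prove the first identity directly from the simplicial identities together with injectivity, and then to derive the second identity for the latching spaces by taking a union over $i$ and controlling how the various subspaces $s_i(Y_{n-1})$ meet $f_n(X_n)$ inside $Y_n$. For the first identity, the inclusion $f_n s_i(X_{n-1}) \subseteq s_i(Y_{n-1}) \cap f_n(X_n)$ is immediate from naturality of the degeneracies: $f_n s_i = s_i f_{n-1}$. For the reverse inclusion, suppose $y \in s_i(Y_{n-1}) \cap f_n(X_n)$, so $y = s_i(w)$ for some $w \in Y_{n-1}$ and $y = f_n(x)$ for some $x \in X_n$. Apply the face operator $d_i$: since $d_i s_i = \mathrm{id}$, we get $w = d_i(y) = d_i f_n(x) = f_{n-1} d_i(x)$, so $w$ lies in $f_{n-1}(X_{n-1})$, say $w = f_{n-1}(v)$ with $v = d_i(x)$. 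Then $f_n(x) = y = s_i(w) = s_i f_{n-1}(v) = f_n s_i(v)$, and injectivity of $f_n$ gives $x = s_i(v) \in s_i(X_{n-1})$, hence $y = f_n(x) \in f_n s_i(X_{n-1})$.

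For the second identity, recall $L_n Y = \bigcup_{i=0}^{n-1} s_i(Y_{n-1})$ and $L_n X = \bigcup_{i=0}^{n-1} s_i(X_{n-1})$. Since intersection distributes over union, $L_n Y \cap f_n(X_n) = \bigcup_{i=0}^{n-1}\bigl(s_i(Y_{n-1}) \cap f_n(X_n)\bigr) = \bigcup_{i=0}^{n-1} f_n s_i(X_{n-1}) = f_n\bigl(\bigcup_{i=0}^{n-1} s_i(X_{n-1})\bigr) = f_n(L_n X)$, where the second equality is the first part of the lemma and the third uses that $f_n$ commutes with (finite) unions of subspaces.

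I do not expect a genuine obstacle here; the only point requiring a little care is that all the maps in sight are closed inclusions in the category of compactly generated weak Hausdorff spaces, so the set-theoretic manipulations above are also topological identities (the subspaces are closed and the identifications are homeomorphisms onto their images). This is the standard convention in force throughout \S\ref{COF}, and it is exactly what is needed to feed the conclusion into Propositions \ref{glueingcof} and \ref{sequentialcolim} in the proof of \myref{PlenzCof}. Equivariance is automatic since every map involved is a $G$-map and the identities are between $G$-subspaces.
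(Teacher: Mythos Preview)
Your argument is correct and follows essentially the same route as the paper's proof: apply $d_i$ to exploit $d_i s_i = \mathrm{id}$ and naturality of $f$, then take the union over $i$ for the latching statement. One small redundancy: once you have $y = s_i f_{n-1}(v) = f_n s_i(v)$ you already know $y \in f_n s_i(X_{n-1})$, so the appeal to injectivity of $f_n$ to conclude $x = s_i(v)$ is not needed for the stated conclusion (the paper omits it).
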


\begin{proof}
 For the first equality, one of the inclusions is obvious. For the other inclusion, take $y=s_i(y')=f_n(x)$. Then $y'=d_is_i(y')=d_if_n(x)=f_{n-1}d_i(x)$. Thus, $y=s_if_{n-1}d_i(x)=f_ns_id_i(x)\in f_ns_i(X_{n-1})$, as wanted. The second statement is obtained from the first by taking the union over all $i$.
\end{proof} 

\begin{prop}
Let $X_{*}$ and $Y_{*}$ be Reedy cofibrant simplicial $G$-spaces, and let $f_{*} \colon X_{*} \rtarr Y_{*}$ be a level $G$-cofibration. Then for all $n\geq 1$, $L_nX \rtarr L_nY$ is a $G$-cofibration.
\end{prop}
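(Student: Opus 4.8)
The plan is to induct on $n$, filtering the latching space $L_nY$ by the degeneracy operators and comparing with the corresponding filtration of $L_nX$. Recall from \myref{latch} that $L_nY = \bigcup_{i=0}^{n-1} s_i(Y_{n-1})$. I would set $F_{-1} = \emptyset$ and, for $0\leq k\leq n-1$, let $F_k = \bigcup_{i=0}^{k} s_i(Y_{n-1}) \subseteq L_nY$, with the analogous filtration $F_k^X$ of $L_nX$. Since $f_{n-1}$ is a $G$-cofibration, so in particular a closed inclusion, and $f_n$ is injective, \myref{intersectionsi} gives the crucial compatibility $F_k^X = f_n(X_n)\cap F_k$ inside $Y_n$ for each $k$; more precisely, the restriction of $f_n$ carries $F_k^X$ homeomorphically onto $f_n(X_n)\cap F_k$. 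The goal is to prove by induction on $k$ that $F_k^X \rtarr F_k$ is a $G$-cofibration; the case $k = n-1$ is the desired conclusion.

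The inductive step is a standard pushout argument. Each $F_k$ is obtained from $F_{k-1}$ by attaching $s_k(Y_{n-1})\cong Y_{n-1}$ along the intersection $F_{k-1}\cap s_k(Y_{n-1})$. By the simplicial identities, this intersection is $\bigcup_{i<k} s_ks_i(Y_{n-2})\cdots$ — more usefully, using $d_js_k = s_k d_j$-type relations, one identifies $s_k^{-1}(F_{k-1}\cap s_k(Y_{n-1}))$ with a union of degenerate subspaces of $Y_{n-1}$, hence (by Reedy cofibrancy of $Y_\sbt$ and \myref{PlenzReedy}-type reasoning applied one level down, or directly by Lillig's union theorem) the inclusion $F_{k-1}\cap s_k(Y_{n-1})\rtarr s_k(Y_{n-1})\cong Y_{n-1}$ is a $G$-cofibration. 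This exhibits $F_{k-1}\rtarr F_k$ as a pushout of a $G$-cofibration, and the analogous statement holds for $X$. Now I would assemble the cube whose front and back faces are the pushout squares for $X$ and $Y$ at stage $k$, whose vertical maps are induced by $f_n$ (or $f_{n-1}$), and apply \myref{glueingcof}: the hypotheses require that the relevant maps be $G$-cofibrations — which hold by the induction hypothesis ($F_{k-1}^X\rtarr F_{k-1}$), by level $G$-cofibrancy of $f$ ($f_{n-1}\colon Y_{n-1}\rtarr X_{n-1}$ adjusted through $s_k$), and by the cofibrations just established — together with the subset intersection condition, which is exactly the statement $F_{k-1}^X = f_n(X_n)\cap F_{k-1}$ supplied by \myref{intersectionsi}. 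This yields that $F_k^X\rtarr F_k$ is a $G$-cofibration.

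The main obstacle I anticipate is the careful bookkeeping of the intersection conditions: both the pushout-corner condition $A'\cap B = A$ needed to invoke \myref{glueingcof}, and the identification of $F_{k-1}\cap s_k(Y_{n-1})$ as a subspace on which $s_k$ restricts to give something recognizably degenerate one level down. These are precisely the places where \myref{intersectionsi} and the simplicial identities must be combined, and where the hypothesis that $f_\sbt$ is levelwise a closed inclusion (not merely a level weak equivalence) is essential — this is the "necessary hypothesis" alluded to in the footnote to \myref{PlenzCof}. Once these intersections are pinned down, everything else is a formal application of Propositions \ref{glueingcof} and the standard pushout/colimit stability of $G$-cofibrations, with \myref{sequentialcolim} not even needed here since the filtration is finite. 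I would present the $k$-induction cleanly and relegate the simplicial-identity verifications to a short paragraph rather than grinding through all cases.
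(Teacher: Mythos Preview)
Your proposal is correct and follows essentially the same route as the paper: both filter $L_n$ as $F_k = \bigcup_{i\le k} s_i(-_{n-1})$, identify $F_{k-1}\cap s_k(Y_{n-1})$ with $s_k\big(\bigcup_{i<k} s_i(Y_{n-2})\big)$ via the simplicial identities, and run a double induction on $(n,k)$ using the cube over the resulting pushout squares together with \myref{glueingcof} and \myref{intersectionsi}. The only point the paper makes slightly more explicit is that showing the attaching map $\bigcup_{i<k} s_i(Y_{n-2})\rtarr Y_{n-1}$ is a $G$-cofibration genuinely uses the induction hypothesis at level $n-1$ (for all smaller $k$) composed with Reedy cofibrancy, rather than just ``Reedy cofibrancy one level down'' alone; similarly the map $\alpha$ in the cube requires the $(n-1,k-1)$ case of the very statement you are proving, so the induction really is on pairs $(n,k)$.
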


\begin{proof}
 We proceed by induction on $n$. Note that when $n=1$, $L_1X = s_0 X_0$ which is homeomorphic to $X_0$, so there is nothing to prove. For $n>1$, let $k\leq n-1$. Just as nonequivariantly, we have a pushout square 
\begin{equation}\label{Lnpushout}\xymatrix{
s_k(\bigcup_{i=0}^{k-1} s_i(X_{n-2}))  \ar[d] \ar[r]  & \bigcup_{i=0}^{k-1} s_i(X_{n-1}) \ar[d] \\
s_k(X_{n-1})  \ar[r] & \bigcup_{i=0}^{k} s_i(X_{n-1}).
}\end{equation}
We show by induction on $n$ and $k$ that the right vertical map is a $G$-cofibration. By the inductive hypothesis for $n-1$ and the fact that $X_{*}$ is Reedy cofibrant, we have that the composite
\[\bigcup_{i=0}^{k-1} s_i(X_{n-2})\rtarr \bigcup_{i=0}^{k} s_i(X_{n-2})\rtarr \dots \rtarr \bigcup_{i=0}^{n-2} s_i(X_{n-2}) =L_{n-1}X\rtarr X_{n-1}\]
is a $G$-cofibration. Since $s_k$ is a $G$-homeomorphism onto its image, the left hand map in the square is a $G$-cofibration. Therefore, the righthand map in \autoref{Lnpushout} is also a $G$-cofibration by cobase change. 

We apply \autoref{glueingcof} to the pushout in \autoref{Lnpushout} to deduce that the maps
$$ \bigcup_{i=0}^{k} s_i(X_{n-1}) \rtarr  \bigcup_{i=0}^{k} s_i(Y_{n-1}) $$ are $G$-cofibrations. 
In particular, by induction on $n$ and $k$, $L_nX \rtarr L_nY$ is a $G$-cofibration. The required intersection condition follows from \autoref{intersectionsi}.
\end{proof}

\begin{proof}[Proof of \autoref{PlenzCof2}]
 Recall from \autoref{SecSegDet} that $|X_{*}|$ is the colimit of its filtration pieces $F_p |X|$, and that these can be built by the iterated pushout squares \autoref{grpushout}. Note that all of the vertical maps in diagram  \autoref{grpushout} are $G$-cofibrations. 
There is a similar diagram for $|Y_{*}|$ and a map of diagrams induced by $f_{*}$.  The maps between the three corners of the upper squares are $G$-cofibrations. By \autoref{glueingcof}  and \autoref{intersectionsi}, the map between the pushouts of the upper squares is a $G$-cofibration. 

We assume by induction that the map $F_{p-1} |X| \rtarr F_{p-1}|Y|$ is a $G$-cofibration, so again we have that all three maps between the corners of the lower pushout square for $X$ and the one for $Y$ are $G$-cofibrations. \autoref{intersectionsi} implies again that the intersection condition necessary to apply \autoref{glueingcof} holds, and we can deduce by induction that the map $F_{p} |X| \rtarr F_{p}|Y|$ is a $G$-cofibration.

Lastly, we check the intersection condition of \autoref{sequentialcolim}, namely that $f(F_{p}|X|) \cap F_{p-1}|Y|=f(F_{p-1}|X|)$.  One inclusion is  obvious. To see the other inclusion, take an element $(u,f_px)$ in $f(F_p |X|) \setminus f(F_{p-1}|X|) =  (\DE_p \setminus \pa\DE_p) \times f_p(X_p \setminus L_pX)$. Since $f_p$ is injective, we can again use \autoref{intersectionsi} to see that $(u,f_p x) \in F_p Y \setminus  F_{p-1}Y$. Thus  an element in the intersection must be in $F_{p-1}|X|$. By  \autoref{sequentialcolim}, we conclude that the map
$$|f_{*}|\colon |X_{*}|\rtarr |Y_{*}|$$ is a $G$-cofibration.
\end{proof}

\bibliographystyle{plain}
\bibliography{references}

\begin{thebibliography}{10}

\bibitem{Adams}
J.~F. Adams.
\newblock {\em Stable homotopy and generalised homology}.
\newblock Chicago Lectures in Mathematics. University of Chicago Press,
  Chicago, IL, 1995.
\newblock Reprint of the 1974 original.

\bibitem{Units}
Matthew Ando, Andrew~J. Blumberg, David Gepner, Michael~J. Hopkins, and Charles
  Rezk.
\newblock Units of ring spectra, orientations and {T}hom spectra via rigid
  infinite loop space theory.
\newblock {\em J. Topol.}, 7(4):1077--1117, 2014.

\bibitem{BMR}
Tobias Barthel, J.~P. May, and Emily Riehl.
\newblock Six model structures for {DG}-modules over {DGA}s: model category
  theory in homological action.
\newblock {\em New York J. Math.}, 20:1077--1159, 2014.

\bibitem{BR}
Tobias Barthel and Emily Riehl.
\newblock On the construction of functorial factorizations for model
  categories.
\newblock {\em Algebr. Geom. Topol.}, 13(2):1089--1124, 2013.

\bibitem{BergMoer}
Clemens Berger and Ieke Moerdijk.
\newblock On an extension of the notion of {R}eedy category.
\newblock {\em Math. Z.}, 269(3-4):977--1004, 2011.

\bibitem{Blum}
Andrew~J. Blumberg.
\newblock Continuous functors as a model for the equivariant stable homotopy
  category.
\newblock {\em Algebr. Geom. Topol.}, 6:2257--2295, 2006.

\bibitem{BV}
J.~M. Boardman and R.~M. Vogt.
\newblock Homotopy-everything {$H$}-spaces.
\newblock {\em Bull. Amer. Math. Soc.}, 74:1117--1122, 1968.

\bibitem{BVbook}
J.~M. Boardman and R.~M. Vogt.
\newblock {\em Homotopy invariant algebraic structures on topological spaces}.
\newblock Lecture Notes in Mathematics, Vol. 347. Springer-Verlag, Berlin-New
  York, 1973.

\bibitem{BF}
A.~K. Bousfield and E.~M. Friedlander.
\newblock Homotopy theory of {$\Gamma $}-spaces, spectra, and bisimplicial
  sets.
\newblock In {\em Geometric applications of homotopy theory ({P}roc. {C}onf.,
  {E}vanston, {I}ll., 1977), {II}}, volume 658 of {\em Lecture Notes in Math.},
  pages 80--130. Springer, Berlin, 1978.

\bibitem{CE}
Henri Cartan and Samuel Eilenberg.
\newblock {\em Homological algebra}.
\newblock Princeton University Press, Princeton, N. J., 1956.

\bibitem{CCMT}
J.~Caruso, F.~R. Cohen, J.~P. May, and L.~R. Taylor.
\newblock James maps, {S}egal maps, and the {K}ahn-{P}riddy theorem.
\newblock {\em Trans. Amer. Math. Soc.}, 281(1):243--283, 1984.

\bibitem{CarWan}
J.~Caruso and S.~Waner.
\newblock An approximation theorem for equivariant loop spaces in the compact
  {L}ie case.
\newblock {\em Pacific J. Math.}, 117(1):27--49, 1985.

\bibitem{CLM}
Frederick~R. Cohen, Thomas~J. Lada, and J.~Peter May.
\newblock {\em The homology of iterated loop spaces}.
\newblock Lecture Notes in Mathematics, Vol. 533. Springer-Verlag, Berlin-New
  York, 1976.

\bibitem{Cole1}
Michael Cole.
\newblock Many homotopy categories are homotopy categories.
\newblock {\em Topology Appl.}, 153(7):1084--1099, 2006.

\bibitem{Cole}
Michael Cole.
\newblock Mixing model structures.
\newblock {\em Topology Appl.}, 153(7):1016--1032, 2006.

\bibitem{CW}
Steven~R. Costenoble and Stefan Waner.
\newblock Fixed set systems of equivariant infinite loop spaces.
\newblock {\em Trans. Amer. Math. Soc.}, 326(2):485--505, 1991.

\bibitem{Dunn}
Gerald Dunn.
\newblock Uniqueness of {$n$}-fold delooping machines.
\newblock {\em J. Pure Appl. Algebra}, 113(2):159--193, 1996.

\bibitem{DHKS}
William~G. Dwyer, Philip~S. Hirschhorn, Daniel~M. Kan, and Jeffrey~H. Smith.
\newblock {\em Homotopy limit functors on model categories and homotopical
  categories}, volume 113 of {\em Mathematical Surveys and Monographs}.
\newblock American Mathematical Society, Providence, RI, 2004.

\bibitem{motivic}
Elden Elmanto, Marc Hoyois, Adeel~A. Khan, Vladimir Sosnilo, and Maria
  Yakerson.
\newblock Motivic infinite loop spaces.
\newblock {\em Camb. J. Math.}, 9(2):431--549, 2021.

\bibitem{EKMM}
A.~D. Elmendorf, I.~Kriz, M.~A. Mandell, and J.~P. May.
\newblock {\em Rings, modules, and algebras in stable homotopy theory},
  volume~47 of {\em Mathematical Surveys and Monographs}.
\newblock American Mathematical Society, Providence, RI, 1997.
\newblock With an appendix by M. Cole.

\bibitem{Fied}
Z.~Fiedorowicz.
\newblock Classifying spaces of topological monoids and categories.
\newblock {\em Amer. J. Math.}, 106(2):301--350, 1984.

\bibitem{FM}
Z.~Fiedorowicz and J.~P. May.
\newblock Homology operations revisited.
\newblock {\em Canad. J. Math.}, 34(3):700--717, 1982.

\bibitem{Gamb}
Nicola Gambino.
\newblock Weighted limits in simplicial homotopy theory.
\newblock {\em J. Pure Appl. Algebra}, 214(7):1193--1199, 2010.

\bibitem{GMMU}
J.~P.~C. Greenlees and J.~P. May.
\newblock Localization and completion theorems for {${\rm MU}$}-module spectra.
\newblock {\em Ann. of Math. (2)}, 146(3):509--544, 1997.

\bibitem{GM3}
B.~J. Guillou and J.~P. May.
\newblock Equivariant iterated loop space theory and permutative
  {G}--categories.
\newblock {\em Algebr. Geom. Topol.}, 17(6):3259--3339, 2017.

\bibitem{GMMO3}
B.~J. Guillou, J.~P. May, M.~Merling, and A.~Osorno.
\newblock Multiplicative equivariant $k$-theory and the equivariant
  {B}arratt-{P}riddy-{Q}uillen theorem.
\newblock Available at arXiv:2102.13246.

\bibitem{GMR}
Bertrand Guillou, J.~P. May, and Jonathan Rubin.
\newblock Enriched model categories in equivariant contexts.
\newblock {\em Homology Homotopy Appl.}, 21(1):213--246, 2019.

\bibitem{GMMO}
Bertrand Guillou, J.~Peter May, Mona Merling, and Ang\'{e}lica~M. Osorno.
\newblock A symmetric monoidal and equivariant {S}egal infinite loop space
  machine.
\newblock {\em J. Pure Appl. Algebra}, 223(6):2425--2454, 2019.

\bibitem{GM1}
Bertrand~J. Guillou and J.~Peter May.
\newblock Enriched model categories and presheaf categories.
\newblock {\em New York J. Math.}, 26:37--91, 2020.

\bibitem{GMM}
Bertrand~J. Guillou, J.~Peter May, and Mona Merling.
\newblock Categorical models for equivariant classifying spaces.
\newblock {\em Algebr. Geom. Topol.}, 17(5):2565--2602, 2017.

\bibitem{GMMOAddCat1}
Bertrand~J. Guillou, J.~Peter May, Mona Merling, and Ang\'{e}lica~M. Osorno.
\newblock Symmetric monoidal {$G$}-categories and their strictification.
\newblock {\em Q. J. Math.}, 71(1):207--246, 2020.

\bibitem{Haus}
H.~Hauschild.
\newblock \"{A}quivariante {K}onfigurationsr\"aume und {A}bbildungsr\"aume.
\newblock In {\em Topology {S}ymposium, {S}iegen 1979 ({P}roc. {S}ympos.,
  {U}niv. {S}iegen, {S}iegen, 1979)}, volume 788 of {\em Lecture Notes in
  Math.}, pages 281--315. Springer, Berlin, 1980.

\bibitem{HHR}
M.~A. Hill, M.~J. Hopkins, and D.~C. Ravenel.
\newblock On the nonexistence of elements of {K}ervaire invariant one.
\newblock {\em Ann. of Math. (2)}, 184(1):1--262, 2016.

\bibitem{Hirsc}
Philip~S. Hirschhorn.
\newblock {\em Model categories and their localizations}, volume~99 of {\em
  Mathematical Surveys and Monographs}.
\newblock American Mathematical Society, Providence, RI, 2003.

\bibitem{Horel}
Geoffroy Horel.
\newblock A model structure on internal categories in simplicial sets.
\newblock {\em Theory Appl. Categ.}, 30:Paper No. 20, 704--750, 2015.

\bibitem{Illman2}
S\"oren Illman.
\newblock The equivariant triangulation theorem for actions of compact lie
  groups.
\newblock {\em Math. Ann.}, 262(3):487--501, 1983.

\bibitem{MayTwist}
Hana Kong, J.~P. May, and Foling Zou.
\newblock Group completions and the homotopical monadicity theorem.
\newblock In preparation.

\bibitem{LMS}
L.~G. Lewis, Jr., J.~P. May, and M.~Steinberger.
\newblock {\em Equivariant stable homotopy theory}, volume 1213 of {\em Lecture
  Notes in Mathematics}.
\newblock Springer-Verlag, Berlin, 1986.
\newblock With contributions by J. E. McClure.

\bibitem{LewisOMSIX}
L.~Gaunce Lewis, Jr.
\newblock When is the natural map {$X\rightarrow \Omega \Sigma X$}\ a
  cofibration?
\newblock {\em Trans. Amer. Math. Soc.}, 273(1):147--155, 1982.

\bibitem{Lillig}
Joachim Lillig.
\newblock A union theorem for cofibrations.
\newblock {\em Arch. Math. (Basel)}, 24:410--415, 1973.

\bibitem{lurieHA}
Jacob Lurie.
\newblock Higher algebra.
\newblock {\em preprint}.

\bibitem{MM}
M.~A. Mandell and J.~P. May.
\newblock Equivariant orthogonal spectra and {$S$}-modules.
\newblock {\em Mem. Amer. Math. Soc.}, 159(755):x+108, 2002.

\bibitem{MMSS}
M.~A. Mandell, J.~P. May, S.~Schwede, and B.~Shipley.
\newblock Model categories of diagram spectra.
\newblock {\em Proc. London Math. Soc. (3)}, 82(2):441--512, 2001.

\bibitem{MayMOM}
J.~P. May.
\newblock The $2$-monadic theory of operadic multicategories.
\newblock To appear.

\bibitem{MayGeo}
J.~P. May.
\newblock {\em The geometry of iterated loop spaces}.
\newblock Springer-Verlag, Berlin, 1972.
\newblock Lectures Notes in Mathematics, Vol. 271.

\bibitem{MayPerm}
J.~P. May.
\newblock {$E_{\infty }$} spaces, group completions, and permutative
  categories.
\newblock In {\em New developments in topology ({P}roc. {S}ympos. {A}lgebraic
  {T}opology, {O}xford, 1972)}, pages 61--93. London Math. Soc. Lecture Note
  Ser., No. 11. Cambridge Univ. Press, London, 1974.

\bibitem{MayIMon}
J.~P. May.
\newblock The spectra associated to {${\mathcal I}$}-monoids.
\newblock {\em Math. Proc. Cambridge Philos. Soc.}, 84(2):313--322, 1978.

\bibitem{MayPerm2}
J.~P. May.
\newblock The spectra associated to permutative categories.
\newblock {\em Topology}, 17(3):225--228, 1978.

\bibitem{MayOp1}
J.~P. May.
\newblock Definitions: operads, algebras and modules.
\newblock In {\em Operads: {P}roceedings of {R}enaissance {C}onferences
  ({H}artford, {CT}/{L}uminy, 1995)}, volume 202 of {\em Contemp. Math.}, pages
  1--7. Amer. Math. Soc., Providence, RI, 1997.

\bibitem{MayOp2}
J.~P. May.
\newblock Operads, algebras and modules.
\newblock In {\em Operads: {P}roceedings of {R}enaissance {C}onferences
  ({H}artford, {CT}/{L}uminy, 1995)}, volume 202 of {\em Contemp. Math.}, pages
  15--31. Amer. Math. Soc., Providence, RI, 1997.

\bibitem{Concise}
J.~P. May.
\newblock {\em A concise course in algebraic topology}.
\newblock Chicago Lectures in Mathematics. University of Chicago Press,
  Chicago, IL, 1999.

\bibitem{MayWirth}
J.~P. May.
\newblock The {W}irthm\"uller isomorphism revisited.
\newblock {\em Theory Appl. Categ.}, 11:No. 5, 132--142, 2003.

\bibitem{Rant2}
J.~P. May.
\newblock The construction of {$E\sb \infty$} ring spaces from bipermutative
  categories.
\newblock In {\em New topological contexts for {G}alois theory and algebraic
  geometry ({BIRS} 2008)}, volume~16 of {\em Geom. Topol. Monogr.}, pages
  283--330. Geom. Topol. Publ., Coventry, 2009.

\bibitem{Rant1}
J.~P. May.
\newblock What precisely are {$E_\infty$} ring spaces and {$E_\infty$} ring
  spectra?
\newblock In {\em New topological contexts for {G}alois theory and algebraic
  geometry ({BIRS} 2008)}, volume~16 of {\em Geom. Topol. Monogr.}, pages
  215--282. Geom. Topol. Publ., Coventry, 2009.

\bibitem{morecon}
J.~P. May and K.~Ponto.
\newblock {\em More concise algebraic topology}.
\newblock Chicago Lectures in Mathematics. University of Chicago Press,
  Chicago, IL, 2012.
\newblock Localization, completion, and model categories.

\bibitem{MaySig}
J.~P. May and J.~Sigurdsson.
\newblock {\em Parametrized homotopy theory}, volume 132 of {\em Mathematical
  Surveys and Monographs}.
\newblock American Mathematical Society, Providence, RI, 2006.

\bibitem{MT}
J.~P. May and R.~Thomason.
\newblock The uniqueness of infinite loop space machines.
\newblock {\em Topology}, 17(3):205--224, 1978.

\bibitem{MZZ}
J.~P. May, Ruoqi Zhang, and Foling Zou.
\newblock Operads, monoids, monads, and bar constructions.
\newblock Available at arXiv:2003.10934.

\bibitem{Mayss}
J.~Peter May.
\newblock {\em Simplicial objects in algebraic topology}.
\newblock Van Nostrand Mathematical Studies, No. 11. D. Van Nostrand Co., Inc.,
  Princeton, N.J.-Toronto, Ont.-London, 1967.

\bibitem{MayClass}
J.~Peter May.
\newblock Classifying spaces and fibrations.
\newblock {\em Mem. Amer. Math. Soc.}, 1(1, 155):xiii+98, 1975.

\bibitem{Meyer1}
Jean-Pierre Meyer.
\newblock Bar and cobar constructions. {I}.
\newblock {\em J. Pure Appl. Algebra}, 33(2):163--207, 1984.

\bibitem{Meyer2}
Jean-Pierre Meyer.
\newblock Bar and cobar constructions. {II}.
\newblock {\em J. Pure Appl. Algebra}, 43(2):179--210, 1986.

\bibitem{Meyer3}
Jean-Pierre Meyer.
\newblock Bar constructions and {G}rothendieck constructions.
\newblock {\em Bull. Soc. Math. Belg. S\'er. A}, 38:303--311 (1987), 1986.

\bibitem{Oster}
Dominik Ostermayr.
\newblock Equivariant {$\Gamma$}-spaces.
\newblock {\em Homology Homotopy Appl.}, 18(1):295--324, 2016.

\bibitem{Plenz}
Julius Plenz.
\newblock Realization of cyclic spaces.
\newblock Master's thesis, Freie Universit\"at Berlin, 2015.

\bibitem{Puppe}
Volker Puppe.
\newblock A remark on ``homotopy fibrations''.
\newblock {\em Manuscripta Math.}, 12:113--120, 1974.

\bibitem{reedypractice}
Emily Riehl and Dominic Verity.
\newblock The theory and practice of {R}eedy categories.
\newblock {\em Theory Appl. Categ.}, 29:256--301, 2014.

\bibitem{RS}
Colin Rourke and Brian Sanderson.
\newblock Equivariant configuration spaces.
\newblock {\em J. London Math. Soc. (2)}, 62(2):544--552, 2000.

\bibitem{Sant1}
Rekha Santhanam.
\newblock A short treatise on equivariant {$\Gamma$}-spaces.
\newblock Available at arXiv:1505.02894.

\bibitem{Sant}
Rekha Santhanam.
\newblock Units of equivariant ring spectra.
\newblock {\em Algebr. Geom. Topol.}, 11(3):1361--1403, 2011.

\bibitem{Schwede}
Stefan Schwede.
\newblock Stable homotopical algebra and {$\Gamma$}-spaces.
\newblock {\em Math. Proc. Cambridge Philos. Soc.}, 126(2):329--356, 1999.

\bibitem{Seg}
Graeme Segal.
\newblock Categories and cohomology theories.
\newblock {\em Topology}, 13:293--312, 1974.

\bibitem{Seg2}
Graeme Segal.
\newblock Some results in equivariant homotopy theory.
\newblock Preprint, 1975.

\bibitem{ShSh}
Nobuo Shimada and Kazuhisa Shimakawa.
\newblock Delooping symmetric monoidal categories.
\newblock {\em Hiroshima Math. J.}, 9(3):627--645, 1979.

\bibitem{Shim}
Kazuhisa Shimakawa.
\newblock Infinite loop {$G$}-spaces associated to monoidal {$G$}-graded
  categories.
\newblock {\em Publ. Res. Inst. Math. Sci.}, 25(2):239--262, 1989.

\bibitem{Shim2}
Kazuhisa Shimakawa.
\newblock A note on {$\Gamma_G$}-spaces.
\newblock {\em Osaka J. Math.}, 28(2):223--228, 1991.

\bibitem{Shim1}
Kazuhisa Shimakawa.
\newblock Mackey structures on equivariant algebraic {$K$}-theory.
\newblock {\em $K$-Theory}, 5(4):355--371, 1991/92.

\bibitem{Shul1}
Michael Shulman.
\newblock All $(\infty,1)$-toposes have strict univalent universes.
\newblock Available at arXiv:1904.07004.

\bibitem{Shul}
Michael Shulman.
\newblock Homotopy limits and colimits and enriched homotopy theory.
\newblock Available at arXiv:0610194.

\bibitem{St}
Richard Steiner.
\newblock A canonical operad pair.
\newblock {\em Math. Proc. Cambridge Philos. Soc.}, 86(3):443--449, 1979.

\bibitem{Thom}
R.~W. Thomason.
\newblock Uniqueness of delooping machines.
\newblock {\em Duke Math. J.}, 46(2):217--252, 1979.

\bibitem{Woolf}
Richard Woolfson.
\newblock Hyper-{$\Gamma $}-spaces and hyperspectra.
\newblock {\em Quart. J. Math. Oxford Ser. (2)}, 30(118):229--255, 1979.

\end{thebibliography}

\end{document}